\documentclass{article}

\usepackage{bm}
\usepackage{amsmath,amssymb}
\usepackage{amsthm}
\usepackage{graphicx,psfrag,epsf}
\usepackage{enumerate}
\usepackage{enumitem}
\usepackage{url} 
\usepackage{caption}
\usepackage{subcaption}
\usepackage[colorlinks=true,linkcolor=blue]{hyperref}
\usepackage{algorithm}
\usepackage{algpseudocode}
\usepackage{multirow}
\usepackage{mathtools}
\usepackage{pgfplots}
\usepackage{comment}
\usepackage{authblk}

\usepackage[margin=1in]{geometry}

\usepackage{booktabs}
\usepackage{manyfoot}
\DeclareNewFootnote{A}[arabic]
\newtheorem{theorem}{Theorem}[section]
\newtheorem{lemma}[theorem]{Lemma}
\newtheorem{proposition}[theorem]{Proposition}
\newtheorem{corollary}[theorem]{Corollary}
\newtheorem{exmp}{Example}[section]
\newtheorem{definition}{Definition}
\newtheorem{assumption}{Assumption}
\theoremstyle{remark}
\newtheorem{remark}{Remark}

\newcommand{\R}{\mathbb{R}} 
\newcommand{\C}{\mathbb{C}} 
 
\newcommand{\E}{\mathbb{E}}
\newcommand{\calN}{\mathcal{N}}

\newcommand{\calM}{{\cal M}}
\newcommand{\calL}{{\cal L}}
\newcommand{\calX}{{\cal X}}

\newcommand{\calE}{{\cal E}}
\newcommand{\calK}{{\cal K}}
\newcommand{\calJ}{{\cal J}}

\renewcommand{\P}{ {P}}

\newcommand{\Second}{\textup{I}\!\textup{I}}

\usepackage{xcolor}

\allowdisplaybreaks

\begin{document}

\title{Adaptive Bayesian Regression on Data with Low Intrinsic Dimensionality}

\author[1]{Tao Tang}
\author[3]{Nan Wu}
\author[1]{Xiuyuan Cheng}
\author[1,2]{David Dunson}

\affil[1]{{\small Department of Mathematics, Duke University}}
\affil[2]{{\small 
Department of Statistical Science, Duke University}}
\affil[3]{\small Department of Mathematical Sciences, The University of Texas at Dallas
}

\date{\vspace{-30pt}}

\maketitle

\begin{abstract}
We study how the posterior contraction rate under a Gaussian process (GP) prior depends on the intrinsic dimension of the predictors and the smoothness of the regression function. An open question is whether a generic GP prior that does not incorporate knowledge of the intrinsic lower-dimensional structure of the predictors can attain an adaptive rate for a broad class of such structures. We show that this is indeed the case, establishing conditions under which the posterior contraction rates become adaptive to the intrinsic dimension in terms of the covering number of the data domain (the Minkowski dimension) and prove the nonparametric posterior contraction rate, up to a logarithmic factor. When the domain is a compact manifold, we prove the RKHS approximation to intrinsically defined H\"older functions on the manifold of any order of smoothness by a novel analysis, leading to the optimal adaptive posterior contraction rate. We propose an empirical Bayes prior on the kernel bandwidth using kernel affinity and $k$-nearest neighbor statistics, bypassing explicit estimation of the intrinsic dimension. The efficiency of the proposed Bayesian regression approach is demonstrated in various numerical experiments.
\end{abstract}

\section{Introduction}
\label{sec:intro}

Our interest is in nonparametric regression methodology that can adapt to the intrinsic lower-dimensional structure in the predictors to address the curse of dimensionality. For concreteness, we focus on Bayesian Gaussian process (GP) regression, though our theoretical developments have broader ramifications. GP regression is extremely popular in many application areas due to the combination of simplicity, computational tractability, ease of incorporating prior information and flexibility. We consider the following model: 
 \begin{align}
 \label{eq:model}
    Y_i = f^*(X_i)+w_i,\quad 
    w_i \sim \mathcal{N}(0, \sigma^2),\quad i = 1, \cdots,n,  
 \end{align}
where  $X_i \in  \mathcal{X} \subset \mathbb{R}^D$,
$\calX$ is the data domain, $f^*: \mathcal{X} \to \mathbb{R}$ is the true regression function that generated the data,
and $w_i$ is a residual error. We introduce the notation $f$ to denote the inferred regression function. We choose a GP prior for $f$ and ideally would like the resulting posterior for $f$ to concentrate near $f^*$.
We assume $\sigma^2$ is known and fixed for simplicity of exposition, and we discuss possible extensions in Section \ref{sec:discuss}.

It is well known that nonparametric regression is subject to a curse-of-dimensionality problem depending on the number of predictors $D$. Given $n$ independent observations on an $s$ times differentiable $f^*$ on $\R^D$, the minimax 
nonparametric estimation rate of $f^*$ is $n^{-{s}/{(2s+D)}}$ \cite{stone1982optimal}. 
As $D$ is commonly large in modern applications, huge sample sizes may be needed to obtain sufficiently accurate estimates of $f^*$ unless some other structure can be imposed to reduce dimensionality. For example, suppose the predictor domain $\mathcal{X}$ has intrinsic dimensionality $\varrho \ll D$ in a sense we will clarify later. 
A natural question then arises whether nonparametric estimators of $f^*$ can adapt to the (typically unknown) intrinsic structure of the data and address the curse-of-dimensionality by achieving 
the estimation 
rate of $n^{-{s}/{(2s+\varrho)}}$. The focus of this paper is to develop a Bayesian nonparametric regression estimator that is adaptive to the intrinsic structure in $\mathcal{X}$ without requiring prior knowledge of the structure or its dimensionality.

Intrinsic dimensionality in data analysis has been extensively studied in various contexts. For nonparametric regression and classification, a common notion of lower-dimensional structure is sparsity, assuming that a small subset of the features impact the response \cite{ lafferty2008rodeo, jiang2021variable, yang2015minimax}. 
Instead, our focus is on the dimensionality of the feature space $\mathcal{X}$ itself. 
In this context, a popular assumption is that $\mathcal{X}$ corresponds to a smooth manifold $\mathcal{M} \subset \mathbb{R}^D$  \cite{bickel2007local, scott2006minimax, cheng2013local, ye2008learning, ye2009svm}. 
In this work, we consider a broader concept of low-dimensionality defined by the covering number, which includes the Riemannian manifold as a special case. While also obtaining general results on posterior contraction rates depending on the covering number, we show a minimax-optimal adaptive rate $n^{-s/(2s + d)}$ (up to a log factor) in the case of a $d$-dimensional manifold for an intrinsic class of H\"older functions where the smoothness $s$ can be arbitrarily high. 

There is an existing literature using the Minkowski dimension, also known as the box-counting dimension, of the data domain as a notion of intrinsic dimensionality. 
The definition of Minkowski dimension is based on the covering number, and manifolds provide an example of a subset having a low Minkowski dimension. In this context, a variety of nonparametric regression and classification algorithms have been studied, including local polynomial regression, $k$ nearest neighbors, Nadaraya-Watson kernel regression, decision trees and least squares kernel ridge regression \cite{bickel2007local, kpotufe2011k, kpotufe2013adaptivity, kulkarni1995rates, scott2006minimax, hamm2021adaptive}. 
In comparison, low intrinsic dimensionality beyond the manifold setting in Bayesian nonparametric regression has been less developed (except for \cite{castillo2024posterior} which uses deep neural networks, see more below). 
In this work, we derive general conditions to prove posterior contraction rates adaptive to intrinsic data dimensionality measured by the covering number (Minkowski dimension) and thus generalize beyond the manifold assumption. 

Posterior contraction rates for Bayesian nonparametric regression have had considerable development since the seminal work of \cite{ghosal2000convergence,shen2001rates}. 
Adaptive rates of GP regression for $f^*$ defined on $[0,1]^D$ were proved in \cite{van2008rates, van2009adaptive}. A series of subsequent papers analyzed 
 the performance of Bayesian regression under the assumption of low intrinsic data dimensionality. 
\cite{castillo2014thomas} used a heat kernel on a {\em known} manifold within a GP prior and 
proved the  minimax adaptive posterior contraction rate.
\cite{yang2016bayesian} established a minimax optimal adaptive rate in estimating $f^*$ on an unknown manifold. 
The rate adapts to the manifold dimension and smoothness of the regression function, but the function smoothness is restricted to H\"older class with $s \le 2$ 
and the prior for the kernel bandwidth parameter requires knowing or estimating the true manifold dimension $d$.
\cite{dunson2022graph} used graph Laplacians to estimate a GP covariance function incorporating the intrinsic geometry of the manifold
and proved posterior contraction rates for such GPs,
where the regression function lies in a subspace of a Besov space 
(linearly spanned by eigenfunctions of the manifold Laplace-Beltrami operator).

In addition, \cite{rosa2024posterior} considered GP priors having a Mat\'ern kernel on a known compact Riemannian manifold. 
They proved  posterior contraction rates  adaptive to the manifold dimension,
and when the kernel regularity parameter matches the smoothness of $f^*$  the  optimal  rate can be achieved.
Recently, \cite{rosa2024nonparametric} considered Bayesian nonparametric regression on an embedded data manifold based on a graph Laplacian eigen-basis, 
proving an optimal minimax rate adaptive to the manifold dimensionality $d$ and the smoothness $s$ of the regression function $f^*$ for arbitrarily high $s$.
Their regression function is in an extrinsic H\"older class,
and the theory requires high regularity of the data density (of H\"older order $s-1$) when the smoothness $s$ of $f^*$ is high;
the methodology involves eigen-computation of graph Laplacians
and their adaptive prior may need the knowledge of $d$ in practice.

Deep GP priors, which improve flexibility over traditional GP priors through several nested layers of GPs, have seen increasing focus in recent years. \cite{Finocchio23deep} studied posterior contraction rates for a class of deep GP models, but with a gap between their practical methodology and the theoretical model. \cite{castillo2024deep} introduced a deep horseshoe GP prior for data-driven selection of multiscale bandwidths for H\"older functions on Euclidean domains, while proving posterior contraction rates adaptive to the smoothness of $f^*$ and effective dimensionality of the data. 
For deep GPs with heavy-tailed priors, \cite{castillo2024posterior} proved posterior contraction rates adaptive to smoothness of $f^*$ and the Minkowski dimension of the data domain. Our results show that similar automatic adaptivity to low-dimensional data structures and smoothness of regression functions can also be achieved by more traditional kernel methods, which may have advantages in terms of simplicity.

In this work, we focus on GP priors in which the covariance function,
which is in a regularity class that includes the squared exponential, 
is directly computed from the Euclidean coordinates of $X_i$'s. We propose an empirical Bayesian prior that adapts to the intrinsic dimensionality of data and the smoothness of $f^*$, without requiring prior knowledge of either. 
Our main contributions include the following.  
\begin{itemize}  
 \item  
 We consider GP regression on a data domain embedded in high dimensional Euclidean space, 
and establish conditions for 
nonparametric
posterior contraction rates (up to a log factor)
adaptive to low intrinsic data dimensionality, 
measured by the covering number (the Minkowski dimension).
We propose a GP methodology that satisfies these conditions. 
The covariance function of the GP is a kernel defined on the ambient Euclidean space
belonging to a class of functions that satisfy certain technical conditions, 
with the squared exponential kernel being a representative example.

\item 
When the data domain is an (unknown) low-dimensional Riemannian manifold, 
we prove optimal adaptive posterior contraction rates, up to a logarithmic factor, 
for any order of function smoothness ($0 < s < \infty$). The function regularity order $s$ is measured by a H\"older class on the manifold which is intrinsically defined. 
The key element of our analysis is an on-manifold RKHS approximation result of intrinsic manifold H\"older functions that can go to an arbitrarily high order of $s$.

\item 
To avoid including knowledge of the intrinsic dimension in the prior, we propose an empirical Bayes approach using kernel affinity and $k$-nearest neighbor ($k$NN) statistics. 
This approach achieves our theoretical adaptive rates
without prior knowledge or estimation of the intrinsic dimensionality of data. The experimental performance of the proposed method is shown on simulated manifold data and image data. 

\end{itemize}

We start by developing a theoretical framework for general low-dimensional data domains having Minkowski dimension $\varrho$, and then consider the case of manifold data. In the general case, the posterior contraction rate is $n^{- s/(2s + \varrho)}$, and for $d$-dimensional submanifolds, the rate becomes $n^{- s/(2s + d)}$, both up to a log factor. The rate is considered minimax-optimal for the manifold case, and for the general case our result provides an upper bound of the error. In addition, we extend our theory to cover the case of a union of submanifolds having possibly different intrinsic dimensionalities,  as an example of simple stratified spaces (of intrinsically low dimension) 
beyond the setting of a single connected manifold.

The remaining sections of this paper are organized as follows: In Section \ref{sec:pre}, we review the necessary background and notation. Section \ref{sec:pos_rate} introduces conditions governing the posterior contraction rate of the GP on a general set $\mathcal{X}$. In Section \ref{sec:rate_manifold}, we prove the optimal contraction rate for H\"older functions on an unknown manifold
and introduce the proposed empirical Bayes prior. 
Section \ref{sec:num} experimentally evaluates the proposed GP method in comparison to other methods. 
Finally, Section \ref{sec:discuss} discusses future extensions.
The proofs are deferred to Section \ref{sec:proof-lemma-4.1} and Appendices \ref{sec:proof-sec3}-\ref{app:proof-sec4},
and the technical lemmas are in Appendix \ref{ap:A}.

\paragraph*{Notations.}
The notation in this work is standard.
$a \vee b = \max\{ a, b \}$  and $a \wedge b = \min \{ a, b \}$.
$\| \cdot \|_\infty$ stands for the $\infty$-norm in Euclidean space, or the functional $\infty$-norm on $C(\calX)$ where $\calX$ is the data domain, depending on the context.
For the asymptotic notations, 
$f = O(g)$ indicates that there exists a constant $C > 0$ such that $|f| \leq C |g|$ in the limit.
For non-negative $f$ and $g$,
$f \sim g$ if there exist $C_1 > C_2 >0 $ s.t. $ C_2 g \le f \le C_1 g$ in the limit;
$f \lesssim g$ means that there exists a constant $C > 0$ such that $f \leq C g$ in the limit.

\section{Preliminaries}
\label{sec:pre}

In this section, we review background information 
on Gaussian processes (GP), reproducing kernel Hilbert spaces (RKHS), 
RKHS on a measurable subset $\mathcal{X} \subset \R^D$,
and concepts of Riemannian geometry. 
Throughout the work, we focus on the case of compact $\calX$, and assume that $\mathcal{X}$ is a bounded set inside $[0,1]^D$ without loss of generality.

\subsection{Reproducing kernel Hilbert Space}

Reproducing kernel Hilbert spaces (RKHS) are commonly used in studying the theoretical properties of GPs.
See e.g. \cite{aronszajn1950theory,berlinet2011reproducing} for a general introduction 
and \cite{ghosal2017fundamentals} in the context of nonparametric Bayes. 
In this work, we will rely on some RKHS lemmas in characterizing properties of the posterior. 
Here, we provide a brief overview of key definitions and concepts.

A symmetric function $k:\mathbb{X} \times \mathbb{X} \to \mathbb{R}$
is called a positive definite kernel on a non-empty set $\mathbb{X}$ if 
for all $n \in \mathbb{N}$, $x_1, ..., x_n \in \mathbb{X}$ and $\alpha_1, ..., \alpha_n \in \mathbb{R}$, the inequality $\sum_{i} \sum_{j} \alpha_i \alpha_j k(x_i,x_j) \ge 0$ holds. 
Each RKHS on $\mathbb{X}$, denoted by $\mathbb{H}$, corresponds to a unique reproducing kernel $k:\mathbb{X} \times \mathbb{X} \to \mathbb{R}$ satisfying the property 
\begin{equation*} 
   f(x) = \langle f,k(\cdot, x) \rangle_{\mathbb{H}}, \ \  \forall f \in  \mathbb{H}, \quad x \in \mathbb{X},
\end{equation*}
where $\langle \cdot, \cdot \rangle_{\mathbb{H}}$ is the innerproduct of  ${\mathbb{H}}$, and $k$ is positive definite.
Conversely, for any given positive definite kernel $k$, there exists a unique reproducing kernel Hilbert space $\mathbb{H}$ in which $k$ serves as the reproducing kernel. 
The reproducing kernel $k$ can also be represented using the feature map $\Phi : \mathbb{X} \to \mathbb{H}$ as $k(x,y) = \langle \Phi(x), \Phi(y) \rangle_{\mathbb{H}}$, 
and a canonical feature map is  $\Phi(x) := k(x, \cdot)$. 
On a set $\mathbb{X}$, a positive definite kernel $k$ uniquely defines an RKHS associated with $k$. 

Functions belonging to an RKHS can be well approximated by linear combinations of functions of the form 
$k(x_i,x)$.
More specifically, the set $\{f: \mathbb{X} \to \mathbb{R}|\hspace{1.5mm} f(x) = \sum_{i=1}^{m}a_ik(x_i,x), 
\ a_1,\ldots, a_m \in \mathbb{R}, \, x_1, ..., x_m \in \mathbb{X}, \, m \in \mathbb{N} \}$ is dense in $\mathbb{H}$.
When $\mathbb{X}$ is equipped with a measure $dx$,
$k$ and a function $g : \mathbb{X} \to \R$ satisfying integrability conditions,
we also have that 
$f(x) = \int_{\mathbb{X}} k( x, y) g(y)  dy $ is in $\mathbb{H}$
and 
$\| f \|_{\mathbb{H}}^2 = \int_\mathbb{X} \int_\mathbb{X} k(x,y) g(x) g(y) dx dy$.  
A formal statement of this property is given in Lemma \ref{lemma:computation-Hnorm} 
where we consider the RKHS on a subset of $\R^D$, to be introduced below.

\subsection{RKHS on a set and subsets}

In this work, we consider $\mathbb{X} = [0,1]^D$, and data samples lie on a subset $\calX \subset \mathbb{X}$.
We focus on the squared exponential kernel for $\epsilon > 0$ defined as
 \begin{align}\label{eq:def-kernel}
     h_{\epsilon}(x,x') 
     = h\bigg(\frac{\|x-x'\|^2}{\epsilon}\bigg)
     = \exp\bigg(-\frac{\|x-x'
     \|^2}{2\epsilon}\bigg),
 \end{align}
  where $h(r) = e^{-r/2}$,
  and 
  $\|\cdot\|$ is the Euclidean distance in the ambient space $\R^D$. 
  The kernel \eqref{eq:def-kernel} can be defined for all pairs of $x$ and $x'$ in $\R^D$.
  Our theory applies to a general class of $h$ satisfying technical conditions (Assumption \ref{assump:general-h}),
  of which the squared exponential kernel is a representative example.

For any subset $S \subset [0,1]^D$, 
by restricting to when $x, x' \in S$, 
the kernel $h_\epsilon$ induces an RKHS on $S$, which we denote as $\mathbb{H}_{\epsilon}(S)$. 
This allows us to consider $\mathbb{H}_{\epsilon}(\calX)$, where $\calX$ is the data domain.
We provide properties of $\mathbb{H}_{\epsilon}(\calX)$ 
and the connections between 
$\mathbb{H}_{\epsilon}([0,1]^D)$ and $\mathbb{H}_{\epsilon}(\calX)$ 
in Appendix \ref{ap:A}, which will be used in our analysis. 
This work mainly concerns Gaussian processes and RKHS on the data domain $\mathcal{X}$.

\subsection{Gaussian process on a general subset $\mathcal{X}$}

GPs are widely used as priors for unknown functions.
We consider $F^t_x$ as a centered GP indexed by $x \in \mathcal{X}$, where $t>0$ is a kernel bandwidth. 
$F^t_x$ is determined by the covariance function which is assumed to take the form $h_t$ introduced in
\eqref{eq:def-kernel}, that is, 
$\E [F^t_{x} F^t_{x'}]= h_t(x,x')$. Along with a prior $p(t)$ on the bandwidth $t$, the law of the GP provides a prior $\Pi$  for the unknown regression function $f$. Using $f^t$ to denote the value of $f$ for a specific bandwidth $t$, we have  
 \begin{align}\label{eq:prior}
        f^t | t 
        \sim {\rm GP}(0, h_{t}(x, x')),\quad t \sim p(t).
    \end{align}  
The prior $p(t)$ will be carefully constructed to obtain adaptive posterior concentration.
Our prior $\Pi$ and the density $p(t)$ of the prior on $t$
potentially all depend on $n$, and we omit the dependence in the notation.

Suppose data consist of $n$ observations $\{ X_i, Y_i\}_{i=1}^n$.
Let $\mathbf{f} \in \mathbb{R}^n$ denote the values of $f$ at the $X_i$'s, namely $\mathbf{f}_i = f(X_i)$.
A GP prior for $f$ implies that the conditional distribution of $\mathbf{f}$ given $X_1, X_2, \ldots, X_n$,
denoted as $p(\mathbf{f}|X_1, X_2, \ldots, X_n)$, follows a Gaussian distribution $\mathcal{N}(0, \Sigma_{\mathbf{f}\mathbf{f}})$. 
Here, $\Sigma_{\mathbf{f}\mathbf{f}} \in \mathbb{R}^{n \times n}$ represents the covariance matrix induced from the kernel $k$ of the GP,
that is, the $(i, j)$ element of $\Sigma_{\mathbf{f}\mathbf{f}}$ equals $\text{Cov}(f(X_i), f(X_j)) = k(X_i, X_j)$, $1 \leq i, j \leq n$.
By combining the prior distribution $\mathcal{N}(0, \Sigma_{\mathbf{f}\mathbf{f}})$ with the likelihood function in equation \eqref{eq:model}, we can obtain the posterior distribution given the observed data $\{ X_i, Y_i\}_{i=1}^n$,
denoted as $\Pi(\cdot | \{ X_i, Y_i\}_{i=1}^n)$.
This posterior distribution serves as the foundation for conducting inference and making predictions. 
Theoretically, we will analyze the posterior contraction rate as well as the convergence of the posterior mean estimator for $f^*$ defined as
$ \hat{f}(x) = \int f(x) d\Pi(f|\{X_i,Y_i\}_{i=1}^{n})$.

\subsection{Riemannian manifold and intrinsic derivatives}
\label{sec:rem_manifold}

We introduce some notations of differential geometry that are used in our analysis.
All the concepts of Riemannian geometry are standard and can be found in textbooks, e.g., \cite{do1992riemannian,petersen2006riemannian}.
Suppose $(\mathcal{M},g)$ is a $d$-dimensional connected smooth closed (compact and without boundary)  Riemannian manifold isometrically embedded in $ \mathbb{R}^D$ through $\iota: \mathcal{M} \rightarrow \mathbb{R}^D$. 
When there is no confusion, we also denote $\iota(\calM) \subset \R^D$ as the manifold $\calM$.
The metric tensor $g$ is central to the (intrinsic) geometry of $\calM$,
where   we say that a construction is {\it intrinsic} if it is fully determined by $g$
(and not by e.g. the embedding mapping $\iota$).
Otherwise, we say an object is {\it extrinsic}. 
For example, the geodesic distance $d_\calM(x,y)$, the Riemannian volume $dV$, 
the injectivity radius $\xi$,
 normal coordinates,
the Riemannian connection $\nabla$ and covariant derivatives 
are all intrinsic; 
In contrast,  the second fundamental form  $\Second$
and the manifold reach $\tau$
are both associated with  $\iota (\calM)$ and 
are extrinsic.
A detailed review of the notations of 
$d_\calM(x,y)$, $dV$, $\xi$,
the exponential map $\exp_x$, 
normal coordinates, 
geodesic curve,
and covariant derivatives can be found in Appendix \ref{app:diff-geo-lemmas}.
Below we elaborate more on the intrinsic derivatives of a differentiable function on $\calM$.

For $f \in C^k(\calM)$, there are different ways to consider the derivatives of $f$ on $\calM$. A common way is to parametrize $f$ on a geodesic-ball neighborhood of $x$ in normal coordinates, that is,
to consider the composed function $\tilde f := f\circ \exp_x$ as a $C^k$ function on  $B_\xi^{\R^d}(0)$
and then use the standard derivatives of $\tilde f$ in $\R^d$.
In this work, we heavily use the {\it covariant derivative} induced by the Riemannian (Levi-Civita) connection $\nabla$.
The $k$-th covariant derivative of $f$, denoted as $\nabla^k f$, is an order-$k$ tensor field on $\calM$.
See Appendix \ref{app:diff-geo-lemmas} for the formal definition and the concepts of vector/tensor fields on $\calM$.

The covariant derivative is closely related to the $\R^d$-derivative of $\tilde{f}(u)$ in that the two ``coincide'' at $u=0$.
Specifically, let $u$ be the normal coordinates of $\calM$ at $x$, 
$ u \in T_x\calM \cong \R^d$,
and $\tilde f(u) = f ( \exp_x(u) )$ is $C^k$  on $B_\xi^{\R^d}(0)$.
For $v_1, \cdots, v_k \in T_x \calM$, we equivalently denote by $v_i$ the vector in $\R^d$.
Then the covariant derivative $\nabla^k f(x)$ as an order-$k$ tensor on $T_x \calM \times \cdots \times T_x\calM$ can be written as 
$
\nabla^k f(x) (v_1, \cdots, v_k)
= D^k \tilde f (0 )(v_1, \cdots, v_k),
$
where $D$ is the standard derivative in $\R^d$ and $D^k \tilde f(0)$ is an order-$k$ tensor in $\R^d$.
A useful consequence is that when $f \in C^k(\calM)$, $D^k \tilde f (0 )$ is a real symmetric tensor due to symmetry of the partials. 
Then the representation of $\nabla^k f(x)$ by $D^k \tilde f (0 )$ allows us to use the spectral norm of the symmetric tensor 
to define the operator norm of $\nabla^k f(x)$. 
Specifically, by Banach's Theorem \cite{banach1938homogene}, we have 
\[
 \sup_{v_1, \cdots, v_k \in S^{d-1} \subset T_x \calM }
| \nabla^k f(x)( v_1, \cdots, v_k) |
= \sup_{v \in S^{d-1} \subset T_x \calM }
| \nabla^k f(x)( v, \cdots, v) |,
\]
which is defined to be $\|\nabla^k f(x)\|_{op}$.
While the above equality is derived using the normal-coordinate representation of $\nabla^k f(x)$, 
this definition of $\|\nabla^k f(x)\|_{op}$ is intrinsic and independent of the choice of local coordinates at $x$ nor  basis of $T_x\mathcal{M}$.

The covariant derivative is also closely related to the parallel transport (induced by $\nabla$). 
In this work, we will use the parallel transport along the unique minimizing geodesic curve. 
Specifically, for $y \in B_{\xi}(x)$, the {\it parallel transport} along the unique minimizing geodesic from $x$ to $y$ is a mapping $P_{x, y}: T_x \mathcal{M} \rightarrow T_y \mathcal{M}$.
Recall that  the (unique) minimizing geodesic from $x$ to $y$ can be written as $\gamma(t) = \exp_x(tv)$, 
where $\gamma(0) = x$, $\gamma(d_\calM(x,y)) = y$,
and $\dot \gamma(0) = v$ is a unit vector in $T_x \calM$, 
We say that a vector field $U$ is parallel along $\gamma$ if $\nabla_{\dot \gamma} U = 0$ along $\gamma$,
and this is equivalent to that $U(\gamma(t)) = P_{x, \gamma(t)} U(0)$. 
In other words, for any $w \in T_x \calM$, 
define $W$ by $W(\gamma(t)) := P_{x, \gamma(t)} w$, then $W$ is parallel along $\gamma$ 
(the vector field $W$ is only defined on $\gamma$ but this suffices here).
In particular, $\dot \gamma(t)$ is parallel along $\gamma$, namely, $\nabla_{\dot \gamma} \dot \gamma = 0$ along $\gamma$.

We will use the above concepts of $\|\nabla^k f(x)\|_{op}$
and the parallel transport $P_{x, y}$, both of which are intrinsic, 
to define our H\"older class on $\calM$ in Section \ref{sec:rate_manifold}.
A similar construction of a manifold H\"older class was introduced in \cite{ki2021intrinsic}.
Before the end of this subsection, 
we further comment on the Euclidean derivatives $D^k \tilde f $ and compare with the covariant derivatives $\nabla^k f$.
Because the exponential map and normal coordinates are intrinsic, 
the derivatives $D^k \tilde f$ on $B_\xi^{\R^d}(0)$ are also intrinsically defined and are hence a type of intrinsic derivatives. However, when $u \neq 0$, $D^k \tilde f(u)$ generally does not equal $\nabla^k f( \exp_x(u))$. This means that quantities like 
$\sup_{ u \in B_\xi^{\R^d}(0)}  \| D^k \tilde f(u) \|_{op}$, which is defined on a geodesic-ball neighborhood of $x$, 
differ from (is larger than) our notion of $\| \nabla^k f(x) \|_{op}$. A manifold H\"older class was previously defined using the (Euclidean) H\"older norm of $\tilde f$, see e.g. \cite{rosa2024nonparametric}.
Such a definition utilizes an atlas cover of the manifold and considers the (Euclidean) H\"older norm of $\tilde f$ on each atlas.
The resulting H\"older norm thus depends on the chosen atlas cover
(though the norms from different atlas covers are equivalent so the the resulting H\"older class is the same). In contrast, our definition of H\"older norm does not involve any choice of atlas. 
While our H\"older norm differs from those defined from $\tilde f$, the H\"older norms should be equivalent, i.e. bounded by a constant from each other.
In this work, our notion of H\"older norm facilitates the parallel transport techniques used in the quantitative analysis of the H\"older norms.

\section{Posterior contraction rates:
general result}
\label{sec:pos_rate}

In this section, we prove the general result of posterior contraction rates adaptive to the intrinsic low dimensionality of the observed data. 
We will show that the posterior contraction rate 
is at least $ n^{{-s}/{(2s+\varrho)}} $ up to a logarithmic factor, where $s$ depends on the approximation property of the true function $f^*$, and $\varrho$ is the intrinsic low dimensionality of data.
The formal characterization will be detailed in Assumption \ref{assump:A1-A2-prime}. 
These are the most general conditions to show the adaptive posterior contraction rate in this paper, and we will focus on the manifold case in Section \ref{sec:rate_manifold}.

\subsection{Definitions and general assumptions}

We introduce our assumptions on the data distribution and true function $f^*$ 
in the regression model \eqref{eq:model}. We first define the posterior contraction rate.
Let  $\{X_i,Y_i\}_{i=1}^{n}$ denote the observed data, 
and recall that  $\Pi(A|\{X_i,Y_i\}_{i=1}^{n} )$ denotes the posterior of an event $A$ under the prior $\Pi$ as in \eqref{eq:prior}.
Let $d_n$ be a semi-metric measuring the discrepancy between $f$ and $f^*$. 
Following  \cite{ghosal2000convergence,ghosal2007convergence,van2009adaptive},
we say that the posterior contraction rate of the GP prior
with respect to $d_n$
is at least $\varepsilon_n$ if
\begin{align}
    \Pi( d_n(f, f^*) > \varepsilon_n| \{X_i,Y_i\}_{i=1}^{n} ) \to 0 \ \ \textit{in probability as} \ n \to \infty.  \nonumber
\end{align}    

We recall the notation of covering numbers.
Generally, suppose $(E, \| \cdot \|)$ is a normed space and $S \subset E$.
Given $\varepsilon>0$, $ N \subset E$ is called an $\varepsilon$-net of $S$
 if $\forall x \in S$, $\exists s \in N$, s.t. $\| x-s \| \le \varepsilon$. 
The covering number $\calN ( \varepsilon, S, \|\cdot\|)$ of $S$ (under norm $\|\cdot\|$)
is defined to be the minimum cardinality of an $\varepsilon$-net of $S$. 
Namely,
$\calN ( \varepsilon, S, \|\cdot\|) = \min\{ m \in \mathbb{N}: \exists s_1, \cdots, s_m \in E, \, {\rm s.t.} \,  
S \subset \cup_{i=1}^m \bar B_{\| \cdot \|}( s_i, \varepsilon )\} $,
where $\bar B_{\| \cdot \|}(s, \varepsilon):= \{ x \in E, \, \| x- s\| \le \varepsilon \}$ is the $\varepsilon$-ball centered at $s$ under norm $\| \cdot \|$.
We are ready to introduce the assumptions on the data domain $\calX$ and $f^*$.

\begin{assumption}
\label{assump:A1-A2-prime}
For positive constants $\varrho$ and $s$, 

(A1)  Intrinsic low-dimensionality of $\calX$:
 $\calX  \subset [0,1]^D$ and 
there exist positive constants $C_{\mathcal{X}}$
and $r_0 \in (0,1)$ that may depend on $\mathcal{X}$,
s.t.  
\begin{equation}\label{eq:cond-A1-prime}
\calN( r, \calX, \| \cdot \|_\infty) 
\le C_{\mathcal{X}} r^{-\varrho},
\quad \forall r \in (0, r_0].
  \end{equation}
  
(A2) Approximation of $f^*$ by RKHS:
There exist positive constants
 $\epsilon_0$,  
$ \nu_1$, $\nu_2$  
that may depend on $\mathcal{X}$ and $f^*$,
s.t. 
for all $ \epsilon < \epsilon_0$, there is a function $F^\epsilon \in {\mathbb{H}}_{\epsilon}( \calX )$ 
  such that 
  \begin{align}
        \sup_{x \in \calX }| F^\epsilon(x) - f^*(x)|  \le \nu_1  \epsilon^{s/2},
        \quad  
\|F^\epsilon \|^2_{\mathbb{H}_{\epsilon}( \calX )} \le \nu_2  \epsilon^{-\varrho/2}. \label{eq:cond-A2-prime}
    \end{align}
\end{assumption}

In (A1), $\|\cdot \|_\infty$ denotes the $\infty$-norm in $\R^D$. 
One can define the covering number of $\calX$ using another norm $d_D$ in $\R^D$, 
however, as long as the $d_D$-unit ball is contained in the $\|\cdot\|_\infty$-unit ball, 
we have $\calN(r, \calX, \|\cdot\|_\infty) \le \calN(r, \calX, d_D)$.
This includes the case when $d_D$ is the $p$-norm for any $p \ge 1$ in $\R^D$, and our assumption \eqref{eq:cond-A1-prime} is weaker.
The factor $\varrho$ in (A1) corresponds to the intrinsic dimensionality of data,
though technically it is an upper bound on dimensionality in the small $r$ limit.
The limit of ${\log  \calN( r, \mathcal{X}, \|\cdot \|_{\infty})}/{\log({1}/{r})}$ as $r \to 0+$ 
is the Minkowski or box-counting dimension  \cite{falconer2004fractal,hamm2021adaptive}.
If the sup-limit
exists and equals $\varrho_0$, 
$\varrho_0 =  \inf\{  \rho \ge  0, \, \limsup_{r \to 0+} 
\calN( r, \calX, \| \cdot \|_\infty)  r^{\rho} = 0\}$
is called the upper Minkowski dimension of $\calX$, 
then  (A1) holds with 
$\varrho = \varrho_0 + \varepsilon $ for any $\varepsilon  >0$.
The condition (A1) also holds with $\varrho = \varrho_0$ if $  \limsup_{r \to 0+} 
\calN( r, \calX, \| \cdot \|_\infty)  r^{\varrho_0} = 0$.
Our proved rate involves a dimension factor $\varrho$ that either equals to or can be arbitrarily close to the (upper)  Minkowski dimension of $\calX$.  
The factor $s$ in (A2) corresponds to the smoothness of $f^*$. 
We will show in Section \ref{sec:rate_manifold} that (A2) holds when $\calX$ is a smooth manifold and $f^*$ is a $C^s$ H\"older function on $\calX$. Generally, the condition (A2) requires certain regularity of $f^*$ on $\calX$.

Intuitively, the assumption (A1) asks the subset $\calX$ to occupy a small portion of the ambient space $\R^D$
such that the intrinsic complexity of $\calX$ is lower than $D$.
This holds when $\calX$ is restricted to a subspace, which is equivalent to some notion of sparsity. Subspaces are linear subsets of low dimensionality, and assumption (A1) also covers non-linear cases, such as the important case when $\calX$ is a sub-manifold or stratified space; see the examples below.

\begin{exmp}[Low-dimensional manifold]
\label{eg:manifold}
   Let $\mathcal{X} = \mathcal{M} \subset [0,1]^D$ be a $d$-dimensional compact 
   smooth Riemannian manifold 
   isometrically embedded in $\R^{D}$ with $d \in \mathbb{N}$, $d \le D$. 
   One can construct an $r$-covering of $\calM$ 
   which satisfies  
   $\calN( r, \calM, \|\cdot \|_\infty) \le C_{\calM} r ^{-d}$ for all small enough $r$,
   see e.g. \cite{hamm2021adaptive}. 
   (A1) also holds for a submanifold which has less regularity or with boundary, 
   e.g.
   the unit cube  $ [0,1]^d$ satisfies (A1) with $\varrho = d$.
   Similarly, it holds 
    when (the bounded set) $\calX$ is restricted to a low-dimensional subspace in $\R^D$.
\end{exmp}

Because our notion of low dimensionality in (A1) is Minkowski dimension-like and characterized by the covering number,
it is more general than the manifold assumption used in the previous manifold regression literature, such as \cite{bickel2007local, yang2016bayesian}. 

\begin{exmp}[Stratified space]\label{eg:stratified}
   A stratified space \cite{weinberger1994topological} generalizes the concept of a manifold by allowing for more complicated geometric structures, 
   in particular, consisting of different ``strata'' or ``layers,'' each having its own well-behaved geometric properties. The covering dimension of a stratified space measures its topological complexity. It is the maximum dimension among the strata. A stratified space has a finite covering dimension if and only if all of its strata have finite dimensions. A simple example is $\mathcal{X} = \mathcal{M}_1 \bigcup \mathcal{M}_2$
   where $\mathcal{M}_1, \mathcal{M}_2 \subset [0,1]^D$ are two compact connected smooth Riemannian manifolds having dimensions $d_1, d_2 \in \mathbb{Z}^{+}$, respectively. One can verify that in this case $\mathcal{X}$ satisfies assumption (A1) with $\varrho = \max\{ d_1, d_2 \}$, see Lemma \ref{Lemma: dimension of the union}.
\end{exmp}

\subsection{Dimension-adaptive prior of kernel bandwidth}

The prior on the kernel bandwidth $\epsilon>0$ in \eqref{eq:def-kernel} has a key impact on the posterior contraction rate. We denote the prior as $p(t)$, for $t = \epsilon > 0$. For the posterior contraction rate to be adaptive to the intrinsic dimensionality $\varrho$, the 
prior $p(t)$ needs to be carefully constructed.

We introduce a dimension-adaptive prior condition in Assumption \ref{assump:A3}, which allows us to bypass the need to condition on $\varrho$ in the prior for the bandwidth; such conditioning is common practice in the literature with the rescaled Gamma prior providing a notable case, see Example \ref{exmp:gm}. 
In Section \ref{sec:knn_prior} we propose an empirical Bayes prior that satisfies
Assumption \ref{assump:A3} when data lie on a submanifold.
The prior $p(t)$ can depend on sample size $n$ and data $\{ X_i \}_{i=1}^n$,
and we omit such dependence for the brevity of notation.

\begin{assumption}[Condition on the prior of bandwidth $\epsilon$]
\label{assump:A3}
(A3)
Given positive constants $\varrho$ and $s$, 
there exist 
 $c_2 > c_1 > 0$
 and
 $ c_3, a_1, a_2, K_1, K_2, C_1, C_2 > 0$, such that 
\begin{align}
p(t) & \ge C_1t^{-a_1} \exp\Big(-\frac{K_1}{t^{
    \varrho/2}}\Big),
    \quad  \forall t \text{ s.t. } 
    c_1  \le t/ \left( 
    n^{-{2}/{(2s+\varrho)}} (\log n)^{\frac{2(1+D)}{2s +\varrho}} \right)
    \le c_2, \label{eq:prior-lower} \\
p(t) & \le C_2t^{-a_2} \exp\Big(-\frac{K_2}{t^{\varrho/2}}\Big),
      \quad \forall t \text{ s.t. } 
      0 <
      t / \left( n^{-{2}/{(2s+\varrho)}} (\log n )^{\frac{-4(1+D)}{(2+\varrho/s)\varrho}} \right)
      \le c_3. \label{eq:prior-upper}
\end{align}
\end{assumption}

The inequality \eqref{eq:prior-lower} ensures that $p(t)$ is sufficiently large for $t \sim n^{-{2}/{(2s+\varrho)}}$,
and \eqref{eq:prior-upper} ensures that $p(t)$ is close to zero when $t$ is smaller than the order of $n^{-{2}/{(2s+\varrho)}}$. To provide a prior $p(t)$ that satisfies (A3),
one method is to use a rescaled Gamma prior when the intrinsic dimensionality $\varrho$ is known, as shown in the next example. 

\begin{exmp}[Rescaled Gamma prior \cite{van2009adaptive}]
\label{exmp:gm} 
Let ${\rm Ga}(a_0, b_0)$ denote the gamma distribution with 
probability density function proportional to $ t^{a_0-1}e^{-b_{0}t}$, where $a_0, b_0 > 0$ are two constants. 
The rescaled Gamma prior of $\epsilon = t$ is such that 
$t^{-\varrho/2}$ follows the distribution of ${\rm Ga}(a_0, b_0)$.
As has been shown in \cite{van2009adaptive}, this prior $p(t)$ satisfies both of the inequalities in (A3) for all $t > 0$, 
and as a result, this prior satisfies (A3) for any $s \in (0,\infty)$.
\end{exmp}

For Bayesian manifold regression, the rescaled Gamma prior was adopted in  \cite{yang2016bayesian}; they estimate the manifold dimension when not known. While methods are available for manifold dimension estimation \cite{levina2004maximum,farahmand2007manifold}, 
they may encounter difficulties in practice, especially when the sample size is small.
We discuss the manifold data case in more detail in Section \ref{sec:rate_manifold}.
In Section \ref{sec:knn_prior}, we will propose an empirical Bayes prior computed using kernel affinities and $k$-nearest neighbor techniques 
and theoretically show that our prior can satisfy (A3) with high probability (Proposition \ref{prop:v_hat}).
As a result, our prior can achieve the adaptive contraction rate without knowledge of the intrinsic manifold dimension. 
In practice, our prior can perform more stably than the rescaled Gamma prior with estimated manifold dimension.

\subsection{The general result of adaptive rates}\label{subsec:adapt-rates-general}

We are ready to prove the dimension-adaptive posterior contraction rate.
The proofs are provided in Appendix \ref{sec:proof-sec3}.

We consider two scenarios, the {\it fixed design} where the predictors $\{ X_i \}_{i=1}^n$ are given and fixed,
and the {\it random design}
where the marginal distribution of $X_i$ is $P_X$ on $\calX$.
Under the fixed design, the residual $w_i | X \sim \calN( 0, \sigma^2)$ are independent across $i$, and as a result the variables $Y_i | X$ are also independent (but not i.i.d.).
To measure the discrepancy between $f$ and the ground truth $f^*$,  
 the in-sample mean squared error is defined as
 $\| {f} - f^*  \|^2_{n}:= \frac{1}{n}\sum_{i=1}^{n}({f}(X_i) - f^*(X_i) )^2$,
 which is well-defined under the fixed design.
Under the random design, we also  define the population error as
$\| {f} - f^* \|^2_{2}:= \int_{\mathcal{X}} ( {f}(x) - f^*(x))^2 P_X(dx)$.

\begin{theorem}[Fixed design posterior contraction rate]
\label{thm:contraction_n_norm}
   Suppose Assumptions \ref{assump:A1-A2-prime}-\ref{assump:A3} are satisfied with the same positive factors $\varrho$ and $s$.
   Then, there exists a positive constant $C$ s.t.
the posterior contraction rate 
with respect to $\|\cdot \|_n$ 
   is at least 
$\bar \varepsilon_n =  C  n^{-\frac{s}{2s+\varrho}} (\log n )^{\frac{ D+1}{2 + \varrho/s} + \frac{D+1}{2}}
 \lesssim n^{- {s}/{ (2s+\varrho) }}(\log n)^{D+1}$.
\end{theorem}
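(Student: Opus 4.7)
The plan is to apply the general Ghosal–van der Vaart posterior contraction theorem for independent (non-i.i.d.) observations, specialized to the fixed-design Gaussian regression model \eqref{eq:model} with evaluation metric $\|\cdot\|_n$. In this setup it suffices to exhibit a sieve $\mathcal{F}_n \subset C(\calX)$ and verify three conditions for the rate $\bar\varepsilon_n$: a prior concentration bound $\Pi(\|f - f^*\|_\infty \le \bar\varepsilon_n) \ge \exp(-C_0 n \bar\varepsilon_n^2)$, a sieve entropy bound $\log \calN(\bar\varepsilon_n, \mathcal{F}_n, \|\cdot\|_\infty) \lesssim n \bar\varepsilon_n^2$, and a sieve tail bound $\Pi(\mathcal{F}_n^c) \le \exp(-(C_0+4) n \bar\varepsilon_n^2)$. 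The division of labor among the assumptions is transparent: (A2) supplies the approximation ingredient, (A1) drives the $\varrho$-dependent RKHS small-ball and entropy estimates, and (A3) bookends the bandwidth prior so that its lower (resp.\ upper) tail activates the concentration (resp.\ tail) bound.

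The core technical step is a van der Vaart–van Zanten concentration-function estimate for the mixture GP. Fix a target bandwidth $t_n^\star \asymp n^{-2/(2s+\varrho)}(\log n)^{2(1+D)/(2s+\varrho)}$ so that \eqref{eq:prior-lower} is active on a constant window around $t_n^\star$. For $t$ in that window, I would apply (A2) with $\epsilon = t$ to produce $F^t \in \mathbb{H}_t(\calX)$ satisfying $\|F^t - f^*\|_\infty \lesssim t^{s/2}$ and $\|F^t\|_{\mathbb{H}_t(\calX)}^2 \lesssim t^{-\varrho/2}$. The remaining centered small-ball exponent $-\log \Pr(\sup_{x \in \calX}|F_x^t| \le \bar\varepsilon_n)$ is controlled by the $\|\cdot\|_\infty$ metric entropy of the unit ball of $\mathbb{H}_t(\calX)$; using the subset-RKHS machinery of Appendix~A combined with (A1), this entropy depends on $\calX$ only through $\calN(r, \calX, \|\cdot\|_\infty) \le C_\calX r^{-\varrho}$, yielding a small-ball exponent of order $t^{-\varrho/2}$ times a polylog factor. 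The two contributions combine to give a concentration function bounded by $t^{-\varrho/2}\,\mathrm{polylog}(n) \lesssim n \bar\varepsilon_n^2$, which, after integrating against $p(t)$ on the lower-bound window, delivers the prior concentration bound.

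For the sieve I would use a Borell-type decomposition $\mathcal{F}_n = \{f = g + h : \|g\|_\infty \le \bar\varepsilon_n,\ h \in M_n \mathbb{H}_t^1,\ t \in I_n\}$, where $\mathbb{H}_t^1$ is the unit ball of $\mathbb{H}_t(\calX)$, $I_n$ is a window containing $t_n^\star$, and $M_n$ grows as a small polynomial in $n$. Borell's inequality handles the GP mass of the complement inside $I_n$, while the bandwidth tails are handled by (A3): the upper bound \eqref{eq:prior-upper} suppresses the small-$t$ regime (which would generate very rough sample paths and large sieves), and the large-$t$ tail is controlled similarly or by truncation. The sieve entropy is again dominated by the $\|\cdot\|_\infty$ entropy of $M_n \mathbb{H}_t^1$ on $\calX$, which scales with $\varrho$ by (A1) and the subset-RKHS bounds.

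The main obstacle is ensuring that the intrinsic exponent $\varrho$—rather than the ambient $D$—appears in both the small-ball probability and the unit-ball metric entropy. Classical estimates for the squared-exponential GP on $[0,1]^D$ produce a $D$-exponent; replacing it by $\varrho$ on a general subset $\calX$ with Minkowski-type dimension requires the restricted RKHS characterization of Appendix~A and a careful covering argument in which the $\R^D$-volume estimates are replaced by the bound in (A1). Once sup-norm contraction at rate $\bar\varepsilon_n$ is established, the $\|\cdot\|_n$ conclusion is automatic since $\|f - f^*\|_n \le \|f - f^*\|_\infty$; the extra logarithmic factors in the stated rate track through the $(\log n)^{2(1+D)/(2s+\varrho)}$ bandwidth window from (A3) together with the $(\log n)^{(D+1)/2}$ contribution coming from the small-ball exponent.
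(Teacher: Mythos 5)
Your proposal correctly identifies the framework used in the paper: verify the Ghosal--van der Vaart trio of conditions (prior concentration, sieve entropy, sieve complement mass) for the hierarchical GP prior, via the van der Vaart--van Zanten concentration function with a $\varrho$-dependent small-ball/entropy estimate driven by (A1) and the subset-RKHS machinery, an (A2)-supplied RKHS approximant at bandwidth $t_n^\star \asymp n^{-2/(2s+\varrho)}(\log n)^{2(1+D)/(2s+\varrho)}$, a Borell-type sieve, and (A3) to activate the two tails of the bandwidth prior. This is essentially the same route the paper takes in Section~\ref{subsec:proof-sec3-fixed-design}.

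Two minor caveats worth noting. First, your sieve description (``$t\in I_n$ plus truncation for large $t$'') glosses over a genuine subtlety the paper handles explicitly: the sieve $B_{N,r,\delta,\varepsilon'}$ covers \emph{all} $t\ge r^{-2}$, not a bounded window, by splitting into $t\in[r^{-2},\delta^{-2}]$ (absorbed into a single RKHS ball at scale $r^{-2}$ blown up by $(r/\delta)^{D/2}$ via the nesting Lemma~\ref{lemma:compare_rkhs}) and $t>\delta^{-2}$ (where the Lipschitz bound of Lemma~\ref{lemma:rkhs_lip} shows unit-RKHS-ball functions are nearly constant, so they are covered by a net of constants). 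Second, the final sentence inverts the logical direction: the three conditions yield $\|\cdot\|_n$ contraction directly from the general theorem, not sup-norm contraction first; the sup-norm enters only because $\|\cdot\|_n\le\|\cdot\|_\infty$ makes sup-norm concentration and covering \emph{sufficient} to verify the hypotheses.
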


Recall that the posterior mean of $f$ is defined as  $\hat{f} (x) = \int f^t (x) d\Pi(f^t |\{X_i,Y_i\}_{i=1}^{n})$.
The next theorem shows that the estimator $\hat f$ achieves the same adaptive rate of convergence under the fixed design.

\begin{theorem}[Fixed design estimator convergence rate]
\label{thm:fix-design-estimator}
   Under the assumption of Theorem \ref{thm:contraction_n_norm} with $\bar \varepsilon_n$ as therein,
 suppose $f^*$ is bounded on $\calX$.
  Then, 
   with probability tending to one,
   $  \| \hat f - f^* \|_n  \le 3 \bar \varepsilon_n$.
\end{theorem}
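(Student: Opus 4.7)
The plan is to reduce the estimator bound to the already-established posterior contraction bound of Theorem~\ref{thm:contraction_n_norm}, via Jensen's inequality followed by a sieve argument. Since $\hat f(X_i) = \int f(X_i)\, d\Pi(f \mid \{X_j,Y_j\}_{j=1}^n)$, convexity of $t \mapsto t^2$ applied coordinatewise yields
\begin{equation*}
  \|\hat f - f^*\|_n^2 \;\le\; \int \|f - f^*\|_n^2 \, d\Pi(f \mid \{X_j,Y_j\}_{j=1}^n),
\end{equation*}
so it suffices to show the right-hand side is at most $9\bar\varepsilon_n^2$ with probability tending to one, and then take square roots. Set $A_n := \{f : \|f-f^*\|_n \le \bar\varepsilon_n\}$. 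The integral over $A_n$ contributes at most $\bar\varepsilon_n^2$ trivially, so the task reduces to showing that the tail contribution $\int_{A_n^c} \|f-f^*\|_n^2 d\Pi$ is $o(\bar\varepsilon_n^2)$ in probability.

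The crucial observation is that the proof of Theorem~\ref{thm:contraction_n_norm} in fact yields more than mere convergence of $\Pi(A_n^c \mid \mathrm{data})$ to zero. Following the Ghosal--van der Vaart scheme, its proof produces a sieve $\mathcal{F}_n$ (a rescaled-RKHS ball combined with a bandwidth truncation) such that, with probability tending to one, $\Pi(A_n^c \mid \mathrm{data}) \le e^{-C_0 n \bar\varepsilon_n^2}$ and $\Pi(\mathcal{F}_n^c \mid \mathrm{data}) \le e^{-C_1 n \bar\varepsilon_n^2}$, while $\sup_{f \in \mathcal{F}_n} \|f\|_\infty \le M_n$ with $M_n$ polynomial in $n$ (via Borell's inequality and the normalization $h(0)=1$). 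Splitting
\begin{equation*}
\int_{A_n^c} \|f-f^*\|_n^2 d\Pi \;\le\; \int_{A_n^c \cap \mathcal{F}_n} \|f-f^*\|_n^2 d\Pi + \int_{\mathcal{F}_n^c} \|f-f^*\|_n^2 d\Pi,
\end{equation*}
on $A_n^c \cap \mathcal{F}_n$ the integrand is uniformly bounded by $2(M_n^2 + \|f^*\|_\infty^2)$ (using the hypothesis that $f^*$ is bounded), while the posterior mass is at most $e^{-C_0 n \bar\varepsilon_n^2}$; since $n\bar\varepsilon_n^2 \to \infty$ and $M_n$ grows only polynomially, this piece is $o(\bar\varepsilon_n^2)$.

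For the $\mathcal{F}_n^c$ piece, I would apply Cauchy--Schwarz, writing the contribution as $\bigl(\int \|f-f^*\|_n^4 d\Pi\bigr)^{1/2} \Pi(\mathcal{F}_n^c \mid \mathrm{data})^{1/2}$, and control the fourth moment using moments of Gaussian quadratic forms: conditional on the bandwidth $t$, the posterior marginal law of $\mathbf{f}=(f(X_i))_i$ is Gaussian with covariance bounded by the prior covariance, whose entries are at most $h(0)=1$, and integrating out $t$ preserves the bound. Together with boundedness of $f^*$, this gives a polynomial prior/posterior fourth-moment bound, which is crushed by the exponential factor $e^{-C_1 n\bar\varepsilon_n^2/2}$. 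Adding the three pieces yields $\int \|f-f^*\|_n^2 d\Pi \le \bar\varepsilon_n^2 + o(\bar\varepsilon_n^2) \le 9 \bar\varepsilon_n^2$ with probability tending to one, hence $\|\hat f - f^*\|_n \le 3 \bar\varepsilon_n$.

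The main obstacle is that the statement of Theorem~\ref{thm:contraction_n_norm} alone does not suffice: one must reopen its proof to extract (i) the exponential decay $e^{-c n \bar\varepsilon_n^2}$ for $\Pi(A_n^c \mid \mathrm{data})$ and $\Pi(\mathcal{F}_n^c \mid \mathrm{data})$ (rather than the qualitative $o(1)$ recorded in the statement), and (ii) an explicit sieve $\mathcal{F}_n$ on which GP sample paths have at most polynomially growing sup-norm. Once those quantitative ingredients are in hand, the truncation-and-sieve decomposition above is essentially mechanical.
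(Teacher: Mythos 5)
Your proposal is correct in outline but takes a genuinely different route from the paper's. You apply Jensen's inequality first, reducing to a bound on $\int \|f-f^*\|_n^2\,d\Pi$, and then split the posterior integral by a sieve $\mathcal{F}_n$ extracted from the proof of Theorem~\ref{thm:contraction_n_norm}, using a polynomial sup-norm bound on $\mathcal{F}_n$ and a Cauchy--Schwarz/fourth-moment argument off it. The paper instead writes $\hat f - f^* = \textcircled{1}+\textcircled{2}-\textcircled{3}$ as a direct decomposition of the posterior-mean integrand into the ``good'' set $\{\|f^t-f^*\|_n < \bar\varepsilon_n\}$ and its complement (split further by $f^t$ versus $f^*$), bounds $\|\textcircled{1}\|_n \le \bar\varepsilon_n$ by the triangle inequality, and controls the tail piece $\|\textcircled{2}\|_n$ via Cauchy--Schwarz together with an \emph{explicit} Gaussian-posterior computation showing $\int \|f^t\|_n^2 \, d\Pi \le \|Y\|_\infty^2 + \sigma^2$, obtained from $\hat\mu_t = K_t(K_t+\sigma^2 I)^{-1}Y$ and $\mathrm{Tr}(\hat\Sigma_t) \le n\sigma^2$, uniformly in $t$. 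This sidesteps the sieve entirely: the second moment is bounded deterministically given $Y$ regardless of whether the sample path lies in $B_n$, so neither the sup-norm control on $\mathcal{F}_n$ nor any fourth-moment estimate is needed. The only common ingredient beyond the statement of Theorem~\ref{thm:contraction_n_norm} is the exponential posterior-mass bound, which the paper packages in Lemma~\ref{lemma:scale_prob_k}(i) rather than re-deriving in place. Your route is more prior-agnostic (it would go through for non-Gaussian conjugate structures, at the cost of verifying the fourth-moment bound), while the paper's route exploits the GP posterior formula to get a shorter, self-contained argument; as you note, both routes need quantitative contraction, and the paper resolves that dependence cleanly by stating the lemma it needs rather than leaving the reader to reopen the earlier proof.
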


To extend the theory to random design, we will adopt a truncation of $f$ as originally proposed in \cite{yang2016bayesian}. 
We will assume a known upper bound of $f^*$, that is, $\| f^*\|_\infty \le M$, and this $M$ can be any upper bound of $\| f^*\|_\infty$.
For a function $f$ and $A > 0$, define $f_A := (f \vee (-A)) \wedge A$.
We will consider the posterior of $f_M$, and the corresponding posterior mean  estimator is defined as $\hat{f}_M(x) := \int f_M(x) d\Pi(f|\{X_i,Y_i\}_{i=1}^{n})$. 
The theoretical necessity of the truncation lies in that we will use empirical process techniques to bound $\| f-f^* \|_2$ by comparing to $\| f-f^* \|_n$, which would require the function class to be bounded to begin with.

With the truncation, the next theorem proves the adaptive posterior contraction rate 
and the posterior-mean estimator convergence rate under the random design. 

\begin{theorem}[Random design]
\label{thm:contraction_2_norm}
   Suppose Assumptions \ref{assump:A1-A2-prime}-\ref{assump:A3} are satisfied with the same positive factors $\varrho$ and $s$,
and  $X_i$ are i.i.d. samples drawn from some distribution $P_X$ on $\calX$.
   In addition, for some positive constant $M$, $\|f^*\|_{\infty} \le M$. 
   Let  $\bar \varepsilon_n$ be as in Theorem \ref{thm:contraction_n_norm}.
    Then, 
    there exists an absolute constant $c$ s.t. 
\[
    \Pi( \| f_M - f^*\|_2 >   c(M+1) \bar \varepsilon_n  
    | \{X_i,Y_i\}_{i=1}^{n} ) 
    \to 0 \ \ \textit{in probability as} \ n \to \infty.  
\]
Moreover, with probability tending to one, 
    $||\hat{f}_M-f^*||_{2} \le   c(M+1) \bar \varepsilon_n 
    $. 
\end{theorem}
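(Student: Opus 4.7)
My plan is to bootstrap the fixed-design empirical-norm contraction of Theorem \ref{thm:contraction_n_norm} into an $\|\cdot\|_2$ bound, using the boundedness enforced by the truncation at level $M$ together with an empirical-process comparison between $\|\cdot\|_n$ and $\|\cdot\|_2$, and then deduce the estimator bound from the contraction by Jensen's inequality.

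First I would note that since $|f^*(x)|\le M$, for any $f$ one has $|f_M(x)-f^*(x)|\le |f(x)-f^*(x)|$ pointwise, so $\|f_M-f^*\|_n\le \|f-f^*\|_n$. Applying Theorem \ref{thm:contraction_n_norm} (which still applies here since the random design becomes a fixed design after conditioning on $X_1,\ldots,X_n$, and Assumption \ref{assump:A1-A2-prime} holds almost surely along the samples) yields $\Pi(\|f_M-f^*\|_n>\bar\varepsilon_n\mid\{X_i,Y_i\}_{i=1}^n)\to 0$ in probability. The substance is therefore to convert this in-sample bound into an out-of-sample bound for the truncated class $\{f_M:f\in\text{supp}(\Pi)\}$, whose members are all uniformly bounded by $M$ and hence $|f_M-f^*|\le 2M$.

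The key step is to build a sieve $\mathcal{F}_n\subset L^\infty(\calX)$ with two properties: (i) the posterior mass on $\mathcal{F}_n^c$ vanishes in probability, and (ii) the class $\{(f_M-f^*)^2:f\in\mathcal{F}_n\}$ is uniformly bounded by $(2M)^2$ with covering number bounded by the Gaussian process entropy estimates already used to establish Theorem \ref{thm:contraction_n_norm}. The sieve will be a union of scaled RKHS balls $B_n=\bigcup_t M_n\mathbb{H}_{t,1}+\varepsilon_n \mathbb{B}_1$ across admissible bandwidths $t$, of the type appearing in the Ghosal--van der Vaart machinery; prior mass on its complement is controlled by the usual concentration function argument. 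Given (i) and (ii), a one-sided Bernstein/Talagrand tail bound for the centered process $\{\|g\|_n^2-\|g\|_2^2:g=f_M-f^*,f\in\mathcal{F}_n\}$ delivers
\[
\sup_{f\in\mathcal{F}_n}\bigl|\|f_M-f^*\|_2^2-\|f_M-f^*\|_n^2\bigr|\le C(M+1)^2\bar\varepsilon_n^2
\]
with probability tending to one. Combined with the contraction in $\|\cdot\|_n$, this yields $\|f_M-f^*\|_2\le c(M+1)\bar\varepsilon_n$ on a posterior set of mass tending to one, proving the first assertion.

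For the estimator bound, Jensen's inequality applied to the convex functional $g\mapsto\|g-f^*\|_2$ gives
\[
\|\hat f_M-f^*\|_2
\le \int\|f_M-f^*\|_2\,d\Pi(f\mid\{X_i,Y_i\}_{i=1}^n).
\]
Splitting the integral at the threshold $c(M+1)\bar\varepsilon_n$ and using the uniform bound $\|f_M-f^*\|_2\le 2M$ on the tail yields
\[
\|\hat f_M-f^*\|_2 \le c(M+1)\bar\varepsilon_n + 2M\,\Pi\bigl(\|f_M-f^*\|_2>c(M+1)\bar\varepsilon_n\mid\{X_i,Y_i\}_{i=1}^n\bigr),
\]
and the second term is $o_P(1)$ by the first assertion, giving the desired bound (after possibly absorbing a constant).

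The main obstacle is the empirical-to-population-norm comparison in the presence of a data-dependent posterior; a fixed-$g$ Hoeffding bound is useless, so one must pay the price of a uniform/chaining argument on a carefully chosen sieve. The sieve must be thick enough that the posterior lives on it with high probability, but not so large that its metric entropy inflates the deviation $|\|\cdot\|_n^2-\|\cdot\|_2^2|$ beyond $\bar\varepsilon_n^2$. Balancing these two requirements with the adaptive bandwidth prior from Assumption \ref{assump:A3}, and tracking the dependence on $M$ without assuming more than $\|f^*\|_\infty\le M$, is the delicate part of the argument.
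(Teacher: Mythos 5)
Your overall scaffolding matches the paper's: condition on $X$ to reduce to the fixed-design contraction, restrict to a Ghosal--van der Vaart sieve $B_n$ with exponentially small complementary posterior mass, truncate at level $M$, compare $\|\cdot\|_n$ with $\|\cdot\|_2$ over the truncated sieve via an empirical-process argument, and deduce the estimator bound by integrating the posterior. Your treatment of the estimator (Jensen applied directly to $f\mapsto\|f_M-f^*\|_2$ rather than the paper's Pythagorean decomposition of $\int\|f_M-f^*\|_2^2\,d\Pi$) is a slightly more elementary but equally valid shortcut.

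There is, however, a genuine gap in the norm-comparison step. You claim a \emph{uniform additive} deviation
\[
\sup_{f\in\mathcal{F}_n}\bigl|\|f_M-f^*\|_2^2-\|f_M-f^*\|_n^2\bigr|\le C(M+1)^2\bar\varepsilon_n^2
\]
over the entire sieve. This bound is not attainable. The sieve has log-entropy $\lesssim n\bar\varepsilon_n^2$ and envelope $2M$, so Bernstein (or Talagrand's inequality in Bousquet form) gives, for a function $g=f_M-f^*$ with $|g|\le 2M$,
\[
\bigl|P_n g^2 - Pg^2\bigr|\;\lesssim\; M\,\|g\|_2\,\bar\varepsilon_n \;+\; M^2\bar\varepsilon_n^2,
\]
after accounting for the entropy. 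The first term is the variance-driven one, and since $\mathcal{F}_n$ contains functions with $\|g\|_2$ as large as order $M$, the right-hand side is of order $M^2\bar\varepsilon_n$ at such functions --- not $M^2\bar\varepsilon_n^2$. If you only had the achievable additive rate $M^2\bar\varepsilon_n$, then combining with $\|g\|_n\le\bar\varepsilon_n$ yields only $\|g\|_2\lesssim M\sqrt{\bar\varepsilon_n}$, which is strictly weaker than the targeted $(M+1)\bar\varepsilon_n$.

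The resolution --- and what the paper actually does --- is to use the \emph{multiplicative} (peeling) form of the comparison: Lemma 5.16 of van de Geer says that with high probability, $\bigl|\|g\|_n/\|g\|_2-1\bigr|\le\eta$ for all $g$ in the class with $\|g\|_2\ge 32\omega/\eta$, where $\omega$ encodes the entropy. The localization is essential: near the critical scale $\|g\|_2\sim\bar\varepsilon_n$ the deviation of $P_ng^2-Pg^2$ really is of order $M^2\bar\varepsilon_n^2$ (plug $\|g\|_2\sim\bar\varepsilon_n$ into the display above), whereas at larger scales the ratio form is automatically satisfied because the variance scales with $\|g\|_2^2$. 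One then argues that under the event, for any $g$ with $\|g\|_n\le\bar\varepsilon_n$, either $\|g\|_2<128M\bar\varepsilon_n$ (below the threshold where the ratio bound kicks in) or $\|g\|_2\le 2\|g\|_n\le 2\bar\varepsilon_n$, and both branches give $\|g\|_2\lesssim M\bar\varepsilon_n$. You would need to replace your uniform additive bound with this localized ratio argument (or explicitly peel the sieve into shells $\|g\|_2\in[2^k\bar\varepsilon_n,2^{k+1}\bar\varepsilon_n]$ and balance each shell) to close the proof.
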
 

In Theorems \ref{thm:contraction_n_norm}-\ref{thm:contraction_2_norm}, the rate $n^{-{s}/{(2s+\varrho)}}$ only depends on $s$ and $\varrho$,
and the ambient dimensionality $D$ appears in the logarithmic terms. 
The constant $C$ depends on $\varrho$, $s$, $\calX$, $D$, the kernel function $h$,
and also the constants in (A1)(A2)(A3), and the dependence can be tracked in the proofs.
In Appendix \ref{sec:sub_rate}, we also extend our theory through 
a relaxation of Assumption \ref{assump:A3}(A3)
that replaces $\varrho$ with an upper bound $\varrho_+$ in \eqref{eq:prior-lower}
and a lower bound $\varrho_-$ in \eqref{eq:prior-upper} respectively,
see Assumption \ref{assump:A3-mis}. 
This can be intuitively understood as a ``misspecification'' of $\varrho$ between $\varrho_\pm$. 
When $\varrho_\pm$ does not equal to $\varrho$, the contraction rate will degenerate into $\varepsilon_n \sim n^{-r(\varrho, \varrho_\pm,s)}$
(up to a logarithmic factor) where $r(\varrho, \varrho_\pm,s)$ is worse than ${s}/{(2s+\varrho)}$,
and we recover the ${s}/{(2s+\varrho)}$ rate when $\varrho_\pm = \varrho$,
see Theorem \ref{thm:contraction-mis}. 
This theoretical extension allows to establish adaptive posterior contraction rates when using a prior $p(t)$ not exactly but close to satisfying (A3), e.g., the rescaled Gamma prior with a misspecified $\varrho$.

\section{Adaptive rates for data on manifolds}
\label{sec:rate_manifold}

In this section, we focus on the special case where data samples lie on a smooth closed Riemannian manifold of intrinsic dimensionality $d$. 
We will obtain in Section \ref{sec:pos_holder} the optimal posterior contraction rate
$O(n^{-s/(2s+\varrho)})$, up to a logarithmic factor,
when $f^*$ is a $C^{k,\beta}$-H{\"o}lder function on the manifold,
$\varrho = d$ and the smoothness order $s = k+\beta >0$ can be any positive number. 
This relies on a key manifold RKHS approximation result proved in Section \ref{sec:rkhs_appr}.

In Section \ref{sec:knn_prior}, we propose a new empirical Bayes prior based on kernel affinity and $k$NN statistics, 
which enables us to achieve the optimal rate without knowledge of the manifold intrinsic dimension $d$.  
The proofs are postponed to Section \ref{sec:proof-lemma-4.1} and Appendix \ref{app:proof-sec4}.

\subsection{RKHS approximation of H{\"o}lder functions on manifold}
\label{sec:rkhs_appr}

We provide a manifold RKHS approximation result to ensure Assumption \ref{assump:A1-A2-prime}(A2). 
We show RKHS approximation of a class of manifold H{\"o}lder functions that are intrinsically defined; this result can be of independent interest. 

\begin{assumption}[Data manifold]\label{assump:A4} 
The data domain $\calX = \calM $ is a $d$-dimensional smooth
connected
closed Riemannian manifold isometrically embedded in $[0,1]^D \subset \R^D$.
\end{assumption}

As will be shown in the proof of Lemma \ref{lemma:22_holder}, it suffices to have $C^{  \max\{  2k ,3 \} }$ regularity of $\calM$ instead of $C^\infty$
when approximating a target function $f \in C^{k,\beta}(\calM) $. We say that a manifold $\calM$ is $C^r$, $r$ being a positive integer, when both the Riemannian metric $g$ and the embedding map $\iota$ are at least $C^r$. 
In Assumption \ref{assump:A4}, the connectedness of $\calM$ can be removed, which is equivalent to when $\calX$ is a disjoint union of connected manifolds.
Our theory can extend to such cases where each manifold can have distinct dimensionalities, see Appendix \ref{app:extension-theory-manifold-mixed-d}.
In this section, we assume smoothness and connectedness of $\calM$ for simplicity.

Recall the differential geometry notations in Section \ref{sec:rem_manifold}, and in particular, 
the definition of covariant derivatives (with respect to the Riemannian connection $\nabla$) and the parallel transport $P_{x,y}$ along unique minimizing geodesics within the geodesic ball $B_\xi(x)$,
$\xi > 0$ being the injectivity radius of $\calM $.
To simplify notation, when the $k$-th covariant derivative at $x$ is applied to the same vector $v \in T_x \calM$ for $k$ many times, we introduce the notation 
$
\nabla^k_v f(x) : = \nabla^k f(x) (v, \cdots, v).
$
Given a non-negative integer $k $, suppose $f \in C^k(\mathcal{M})$, we define 
$
\| \nabla^k f(x) \|_{op}: = 
 \sup_{v \in S^{d-1} \subset T_x  \mathcal{M}} |\nabla^k_v f(x)|$,
and 
$ \|\nabla^k f\|_{\infty} : = 
\sup_{x \in \mathcal{M}} \| \nabla^k f(x)\|_{op}$.
For $0 <\beta \leq 1$, we define 
$$
L_{k, \beta}(f, x):=  
\sup_{y \in B_{\xi}(x)} 
\sup_{v \in S^{d-1} \subset T_x \mathcal{M} } {|\nabla^k_v f(x) -\nabla^k_{P_{x , y}v} f(y)|}/{d^\beta_{\mathcal{M}}(x,y)},
$$
and we further define $L_{k, \beta}(f) :=\sup_{x \in \mathcal{M}}L_{k, \beta}(f, x)$. 

\begin{definition}[H\"older class on manifold]\label{def:Holder-manifold}
For $k = 0,1,\cdots$,  
$ 0 <\beta \le 1$,
and $f \in C^k(\calM)$,
the H\"older norm of $f$ is defined as
$
\|f\|_{k, \beta} :=\sum_{l=0}^{k} \|\nabla^l f\|_{\infty} + L_{k, \beta}(f).
$
We say $f \in C^{k, \beta}(\mathcal{M})$  whenever $\|f\|_{k, \beta} <\infty$. 
\end{definition}

Recall the kernel function defined in \eqref{eq:def-kernel}, where $h( r )=e^{- r/2}$;  $\calM $ is embedded in $\R^D$ through $\iota:\calM \rightarrow [0,1]^D$.
For $\epsilon>0$ and $f \in L^1(\mathcal{M})$, we define the on-manifold integral operator $G_\epsilon:  L^1(\mathcal{M}) \rightarrow  L^1(\mathcal{M})$ as
\begin{align}\label{eq:def-Geps-integral-operator}
G_\epsilon (f)(x)
: = \frac{1}{(2\pi \epsilon)^{d/2}} \int_{\mathcal{M}} h\Big(\frac{\|\iota(x)-\iota(y)\|^2_{\mathbb{R}^D}}{\epsilon}\Big)f(y)dV(y).
\end{align}

\begin{lemma}
\label{lemma:22_holder}
Under Assumption \ref{assump:A4},
given non-negative integer $k$ and $ 0 < \beta \le 1$,
there exists a constant $\epsilon_1( \calM, d, k )$ such that when $\epsilon < \epsilon_1$,
for any $f \in  C^{k,\beta}(\mathcal{M})$,
there exist $f_j \in C^{k-2j, \beta}(\calM)$, $j=1,\cdots, \lfloor k/2 \rfloor $,
and $R_{f ,\epsilon} \in C(\calM)$ s.t.
\begin{align}\label{G epsilon expansion with remainder}
G_\epsilon (f)(x)
= f(x)
   +\sum_{j=1}^{\lfloor k/2 \rfloor} \epsilon^j f_j(x)  
   + R_{f,\epsilon}(x), 
\end{align}
 
\begin{enumerate}
\item[(i)] 
The remainder term $R_{f ,\epsilon} $ satisfies $\|R_{f ,\epsilon}\|_{\infty} \le \tilde{C}_1(\mathcal{M},d, k) \|f\|_{k, \beta} \epsilon^{(k+\beta)/2}$;

\item[(ii)]
For all $0 \leq j \leq \lfloor k/2 \rfloor$, $\|f_j\|_{k-2j,\beta} \leq \tilde{C}_2(\mathcal{M}, d, k) \|f\|_{k, \beta}$ (when $j=0$, we set $f_0 =f$);
\end{enumerate}

\noindent
where the constants $\tilde{C}_1(\mathcal{M}, d, k)$ and $\tilde{C}_2(\mathcal{M}, d, k)$ are determined by $d$,  $k$,  and the manifold geometry.
Specifically, both constants depend on 
the bounds of
 the curvature tensor of $\calM$,
the second fundamental form of $\iota(\mathcal{M})$ 
and their covariant derivatives;
$\tilde{C}_1(\mathcal{M}, d, k)$ also depends on the volume of $\calM$
and bounds of intrinsic derivatives of the Riemann metric tensor;
$\tilde{C}_2(\mathcal{M}, d, k)$ also depends on the diameter of $\mathcal{M}$.
The small $\epsilon$ 
threshold  $\epsilon_1$
 depends on $d$, $k$, 
 the injectivity
radius of $\mathcal{M}$ and the reach of $\iota(\calM)$.
\end{lemma}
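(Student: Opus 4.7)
My approach is to rewrite $G_\epsilon(f)(x)$ in normal coordinates centered at $x$, localize the integrand to a small geodesic ball where the Gaussian weight concentrates, and then perform a joint Taylor expansion of three manifold ingredients: the volume form, the extrinsic squared distance $\|\iota(x)-\iota(y)\|^2$, and $\tilde f(u) := f(\exp_x u)$. I would first fix a cutoff radius $r_\epsilon = C\sqrt{\epsilon\log(1/\epsilon)}$, chosen below the injectivity radius $\xi$ for $\epsilon<\epsilon_1$. Using comparability of intrinsic and extrinsic distance on a compact embedded manifold, the contribution from outside $B_{r_\epsilon}(x)$ carries a factor $e^{-c\log(1/\epsilon)}$ and is absorbed into $R_{f,\epsilon}$ as an arbitrarily small power of $\epsilon$.

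Inside $B_{r_\epsilon}$, substitute $y = \exp_x(u)$ with $u \in B^{\R^d}_{r_\epsilon}(0)$ and develop three expansions. First, the volume element $dV(y) = \sqrt{\det g(u)}\,du$ expands as $1 + \sum_{j\ge 1} Q_j(u)$ with $Q_j$ a homogeneous polynomial of degree $j$ whose coefficients are built from the curvature tensor of $\calM$ and its covariant derivatives at $x$. Second, $\|\iota(x)-\iota(\exp_x u)\|_{\R^D}^2 = \|u\|^2 - \phi(u)$ with $\phi(u) = O(\|u\|^4)$ a polynomial determined by the second fundamental form $\Second$ and its covariant derivatives at $x$; factoring the Gaussian as $e^{-\|u\|^2/(2\epsilon)}\,e^{\phi(u)/(2\epsilon)}$ and expanding the second factor as a truncated Taylor series is legitimate because each additional power of $\phi$ carries an $\epsilon$-discount under the normalized Gaussian, so truncation at the $\lfloor k/2 \rfloor$-th power suffices. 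Third, Taylor-expand $\tilde f(u)$ to order $k$, using the intrinsic identity $D^j\tilde f(0)(v,\ldots,v) = \nabla^j_v f(x)$ recalled in Section \ref{sec:rem_manifold}; the Lagrange-type remainder is controlled by integrating along the radial geodesic $t\mapsto \exp_x(tu)$ and invoking the parallel-transport-based H\"older modulus $L_{k,\beta}(f)$ from Definition \ref{def:Holder-manifold}, giving a pointwise bound $\|f\|_{k,\beta}\|u\|^{k+\beta}/k!$.

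Multiplying the three expansions, rescaling $u = \sqrt{\epsilon}\,v$, and integrating against the normalized Gaussian $(2\pi)^{-d/2}e^{-\|v\|^2/2}\,dv$, odd-order monomials in $v$ vanish by spherical symmetry, while even-order monomials of combined polynomial degree $2j$ produce exactly an $\epsilon^j$ factor. The degree-zero term reproduces $f(x)$, and each $j\ge 1$ coefficient $f_j(x)$ emerges as a universal contraction of $\nabla^i f(x)$ for $i\le 2j$ with tensors built intrinsically from the curvature of $\calM$, from $\Second$, and from their covariant derivatives at $x$ (to bounded order depending only on $k$). Since these geometric tensors vary smoothly on $\calM$ and $f\in C^{k,\beta}(\calM)$, each $f_j$ lies in $C^{k-2j,\beta}(\calM)$ with $\|f_j\|_{k-2j,\beta} \lesssim \|f\|_{k,\beta}$, yielding conclusion (ii). Conclusion (i) follows by adding (a) the exponentially small far-field tail, (b) the Gaussian moment of the H\"older-Taylor remainder, $\int \|u\|^{k+\beta}(2\pi\epsilon)^{-d/2}e^{-\|u\|^2/(2\epsilon)}\,du \lesssim \epsilon^{(k+\beta)/2}$, and (c) higher-order truncation errors from the volume and $\phi$-expansions, which contribute $O(\epsilon^{(k+1)/2})$.

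The main obstacle I anticipate is the bookkeeping in the multiplication-and-integration step: tracking all cross-terms across the three expansions, verifying that every surviving $\epsilon^j$-coefficient is an intrinsic polynomial combination of the prescribed objects (so that its regularity transfers cleanly from $f$ and from the geometric data), and controlling the constants $\tilde C_1, \tilde C_2$ purely by $d$, $k$, and the listed curvature, second-fundamental-form, diameter, and volume quantities. A subsidiary delicacy is the H\"older-Taylor remainder for $\tilde f$: one must route the estimate through parallel transport along radial geodesics so that the intrinsic norm of Definition \ref{def:Holder-manifold} suffices, rather than comparing $D^k\tilde f(u)$ at $u\neq 0$ with any atlas-based Euclidean H\"older norm, which would introduce a spurious dependence on coordinate systems.
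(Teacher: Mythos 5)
Your proposal follows essentially the same route as the paper's proof: localize to a geodesic ball of radius $\sim\sqrt{\epsilon\log(1/\epsilon)}$, discard the exponentially small far-field tail, Taylor-expand the volume form, the extrinsic squared distance, and $\tilde f$ in normal coordinates, rescale $u=\sqrt{\epsilon}\,v$, kill odd moments by spherical symmetry, and absorb everything of size $O(\epsilon^{(k+\beta)/2})$ into the remainder. Two small remarks on part (i). First, your factoring $e^{-\|\iota(x)-\iota(y)\|^2/(2\epsilon)}=e^{-\|u\|^2/(2\epsilon)}e^{\phi(u)/(2\epsilon)}$ is specific to the squared-exponential kernel, whereas the paper instead Taylor-expands $h\bigl((t^2+\tilde r)/\epsilon\bigr)$ around $t^2/\epsilon$; for $h(r)=e^{-r/2}$ the two manipulations coincide term by term, but the latter is the form that generalizes to the class of $h$ in Assumption \ref{assump:general-h}. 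Second, after rescaling, the $v$-integrals should be extended from the truncated ball to all of $\mathbb{R}^d$ so that the coefficients $f_j$ are genuinely $\epsilon$-independent; the paper carries the resulting tail $\int_{2\delta(\epsilon)/\sqrt{\epsilon}}^{\infty}\cdots$ as an extra remainder term, and your write-up should make that step explicit rather than integrating directly against the full Gaussian before accounting for the truncation.

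The place where your proposal is genuinely thin is part (ii). You assert that each $f_j$ is a universal contraction of $\nabla^{i}f$ ($i\le 2j$) with intrinsic geometric tensors and that regularity therefore transfers cleanly, and you flag in the obstacle paragraph that the parallel-transport comparison is delicate---but you do not supply the device that turns this into a proof. The difficulty is that $f_j(x)$ arises as $\int_{S^{d-1}_x}(\cdots)\,d\theta$, an integral over a sphere sitting in the tangent space $T_x\mathcal{M}$ that moves with $x$, so one cannot simply differentiate under the integral sign. The paper resolves this by introducing a parallel orthonormal frame $\{\mathcal{E}_i\}$ on $B_\xi(x)$, rewriting the integral over a fixed sphere via the map $T(y,\theta)=\sum\theta_i\mathcal{E}_i(y)$, and then computing $\nabla^m_{\dot\gamma}$ along radial geodesics using $\nabla_{\dot\gamma}\mathcal{E}_i=0$ and $\nabla_{\dot\gamma}\dot\gamma=0$, together with Banach's theorem to reduce the operator norm to the diagonal and Lemma \ref{lemma:bound-Lip-tensor-field-derivative} to get the H\"older modulus. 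Your ``universal contraction'' observation is equivalent---$\int_{S^{d-1}}\theta^{\otimes m}d\theta$ is a parallel tensor field built from powers of $g$, so contraction against it commutes with $\nabla$---but that observation is precisely what needs to be established and then exploited, and it constitutes the core of the paper's proof of (ii) and its bound $\|f_j\|_{k-2j,\beta}\lesssim\|f\|_{k,\beta}$. As written, your proposal asserts the conclusion of this step rather than proving it.
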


Our analysis characterizes the specific dependence of the constants on the manifold geometry, including both intrinsic and extrinsic quantities. 
Lemma \ref{lemma:22_holder} can be proved for a broader class of $h$
that satisfies differentiability and subexponential decay on $[0,\infty)$,
namely Assumption \ref{assump:general-h}(i), 
see Remark \ref{rk:diff-decay-h-manifold-integral}. 
This will suffice for the RKHS approximation results in Proposition \ref{prop:appr_s_l_inf} which are based on Lemma \ref{lemma:22_holder}.
For the RKHS covering estimates to hold, one will need additional conditions (sub-exponential decay and radial monotonicity) on the spectral density of the kernel.
The technical conditions on the kernel function are summarized in Assumption \ref{assump:general-h}.
The square exponential kernel corresponding to $h(r)=e^{-r/2}$ is a typical example satisfying these assumptions
and also widely used in practice.

The next proposition constructs an RKHS approximation of a H\"older function $f$ on $\calM$.

\begin{proposition}
\label{prop:appr_s_l_inf}
Under Assumption \ref{assump:A4}, 
given non-negative integer $k$ and $ 0 < \beta \le 1$,
there exists a constant $\epsilon_2(\calM, d, k)$
such that when $\epsilon < \epsilon_2$,
for any $f \in  C^{k ,\beta}(\mathcal{M})$, 
we can find $F = \sum_{i=0}^{\lfloor k/2 \rfloor} \epsilon^i 
 F_i  $ with $F_i  \in C^{k - 2i ,\beta}(\mathcal{M})$, and
\begin{equation}   
\|G_\epsilon (F)- f\|_{\infty} \le \gamma_1(\mathcal{M}, d, k) \|f\|_{k, \beta}  \epsilon^{(k+\beta)/2}, \label{eq:cond-A2_holder_1}
\end{equation}
\begin{equation}
\|G_\epsilon (F) \|^2_{{\mathbb{H}}_{\epsilon}(\calM) } \le \gamma_2(\mathcal{M}, d, k) \|f\|^2_{k, \beta} \epsilon^{-d/2},
\label{eq:cond-A2_holder_2}
\end{equation}
where both constants  $\gamma_1(\mathcal{M}, d, k)$ and $\gamma_2(\mathcal{M}, d, k)$ 
depend on $d$, $k$,  and the manifold geometry, inheriting the $\calM$-dependence from the constants 
$\tilde{C}_1(\mathcal{M}, d, k)$ and $\tilde{C}_2(\mathcal{M}, d, k)$ in Lemma \ref{lemma:22_holder}.
The threshold  $\epsilon_2$
depends on $d$, $k$
and inherits the $\calM$-dependence from the thresholds $\epsilon_1(\calM, d, k)$ 
in Lemma \ref{lemma:22_holder}.
\end{proposition}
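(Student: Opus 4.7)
\textbf{Proof Plan for Proposition \ref{prop:appr_s_l_inf}.}
The plan is to construct $F$ by solving a formal Neumann-type series that inverts $G_\epsilon$ up to order $\epsilon^{(k+\beta)/2}$, exploiting the expansion from Lemma \ref{lemma:22_holder}. Write Lemma \ref{lemma:22_holder} abstractly as $G_\epsilon (g) = g + \sum_{j=1}^{\lfloor k/2\rfloor} \epsilon^j T_j (g) + R_{g,\epsilon}$, where $T_j$ sends $C^{k,\beta}(\calM)$ into $C^{k-2j,\beta}(\calM)$ with operator norm bounded by $\tilde C_2(\calM,d,k)$ (part (ii)), and the remainder $R_{g,\epsilon}$ satisfies the part (i) bound. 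Setting $F_0 := f$ and recursively
\[
F_m := - \sum_{i=0}^{m-1} T_{m-i}(F_i), \qquad m = 1, 2, \ldots, \lfloor k/2 \rfloor,
\]
an induction using part (ii) of Lemma \ref{lemma:22_holder} (noting that $T_j$ loses exactly $2j$ derivatives) shows $F_m \in C^{k-2m,\beta}(\calM)$ with $\| F_m \|_{k-2m,\beta} \le c_m(\calM,d,k) \, \| f \|_{k,\beta}$.

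Next, I would apply the expansion from Lemma \ref{lemma:22_holder} separately to each $F_i$ (using smoothness $k-2i$) and collect powers of $\epsilon$:
\[
G_\epsilon(F) = \sum_{i=0}^{\lfloor k/2\rfloor} \epsilon^i F_i \;+\; \sum_{i=0}^{\lfloor k/2\rfloor} \sum_{j=1}^{\lfloor (k-2i)/2 \rfloor} \epsilon^{i+j} T_j(F_i) \;+\; \sum_{i=0}^{\lfloor k/2\rfloor} \epsilon^i R_{F_i,\epsilon}.
\]
Fix $1 \le m \le \lfloor k/2\rfloor$; the constraint $j \le \lfloor (k-2i)/2\rfloor$ with $i+j=m$ is automatically satisfied (it reduces to $2m \le k$), so the coefficient of $\epsilon^m$ in the polynomial sums is exactly $F_m + \sum_{i=0}^{m-1} T_{m-i}(F_i)$, which vanishes by the recursive definition. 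For $m > \lfloor k/2\rfloor$ no polynomial contribution occurs. Hence $G_\epsilon(F) - f$ equals just $\sum_{i=0}^{\lfloor k/2\rfloor} \epsilon^i R_{F_i,\epsilon}$, and each term is controlled by part (i) of Lemma \ref{lemma:22_holder}:
\[
\| \epsilon^i R_{F_i,\epsilon} \|_\infty \le \tilde C_1 \, \| F_i \|_{k-2i,\beta} \, \epsilon^{i + (k-2i+\beta)/2} = \tilde C_1 \, \| F_i \|_{k-2i,\beta} \, \epsilon^{(k+\beta)/2},
\]
yielding \eqref{eq:cond-A2_holder_1} after summing over $i$ and absorbing constants into $\gamma_1(\calM,d,k)$.

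For the RKHS bound \eqref{eq:cond-A2_holder_2}, I would invoke the integral representation of the RKHS norm from Lemma \ref{lemma:computation-Hnorm} with the function $g := F / (2\pi \epsilon)^{d/2}$ and the kernel $h_\epsilon$ on $\calM$, which gives
\[
\| G_\epsilon(F) \|_{\mathbb{H}_\epsilon(\calM)}^2 = \frac{1}{(2\pi\epsilon)^{d}} \int_\calM \int_\calM h_\epsilon(x,y) F(x) F(y) \, dV(x) dV(y) = \frac{1}{(2\pi\epsilon)^{d/2}} \int_\calM F(x) \, G_\epsilon(F)(x) \, dV(x).
\]
Bounding this by $(2\pi\epsilon)^{-d/2} \mathrm{Vol}(\calM) \| F \|_\infty \| G_\epsilon(F) \|_\infty$ and using (a) $\| F \|_\infty \le \sum_i \epsilon^i \| F_i \|_{k-2i,\beta} \lesssim \| f \|_{k,\beta}$ for small $\epsilon$, and (b) $\| G_\epsilon(F) \|_\infty \le \| f \|_\infty + \gamma_1 \| f \|_{k,\beta} \epsilon^{(k+\beta)/2} \lesssim \| f \|_{k,\beta}$, we obtain the stated $\epsilon^{-d/2}$ scaling.

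The main obstacle I anticipate is the combinatorial bookkeeping in Step 2: tracking how the recursion $F_m = -\sum_{i<m} T_{m-i}(F_i)$ propagates H\"older norms through $\lfloor k/2 \rfloor$ levels, while showing the resulting constants depend only on $(\calM, d, k)$ and can be collected into a single $\gamma_2$. The threshold $\epsilon_2$ arises as the smallest $\epsilon$ for which all invocations of Lemma \ref{lemma:22_holder} (applied to $F_0, F_1, \ldots, F_{\lfloor k/2\rfloor}$) remain valid, so $\epsilon_2$ inherits its $\calM$-dependence directly from $\epsilon_1(\calM, d, k)$.
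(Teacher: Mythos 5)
Your proposal is correct and follows essentially the same strategy as the paper's proof: define $F_0=f$, recursively set each $F_m$ to cancel the coefficient of $\epsilon^m$ in the expansion of $G_\epsilon(F)$ produced by Lemma~\ref{lemma:22_holder}, run an induction to control the H\"older norms $\|F_m\|_{k-2m,\beta}$, and observe that the polynomial terms telescope so that $G_\epsilon(F)-f$ reduces to the sum of remainders, each of which is $O(\epsilon^{(k+\beta)/2})$. The only minor point you gloss over is that the constant in Lemma~\ref{lemma:22_holder}(ii) for $F_i$ is $\tilde C_2(\calM,d,k-2i)$, not $\tilde C_2(\calM,d,k)$, so one should take the maximum over $0\le k'\le k$ (the paper introduces $\tilde C_4(k)=\max_{k'\le k}\tilde C_2(k')$ for exactly this purpose); similarly for $\tilde C_1$ and $\epsilon_1$. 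For the RKHS bound you use Lemma~\ref{lemma:computation-Hnorm} in the same way, and your variant of bounding $\int F\, G_\epsilon(F)\,dV$ by $\mathrm{Vol}(\calM)\,\|F\|_\infty\,\|G_\epsilon(F)\|_\infty$ (then invoking the just-proved $L^\infty$ estimate) is a small but valid alternative to the paper's choice of bounding the inner integral $\sup_x G_\epsilon(\mathbf 1)(x)$ directly via Lemma~\ref{lemma:22_holder} with $f\equiv 1$; both give the same $\epsilon^{-d/2}$ scaling with constants of the stated form.
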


The proof of Proposition \ref{prop:appr_s_l_inf} is based on the expansion \eqref{G epsilon expansion with remainder} provided by Lemma \ref{lemma:22_holder},
and it uses a high order correction scheme.
Proposition \ref{prop:appr_s_l_inf} provides the RKHS approximation and Hilbert norm control needed in Assumption \ref{assump:A1-A2-prime}(A2) when data domain $\calX$ is a manifold and $f^*$ belongs to the manifold H\"older class.
Specifically, 
the equations \eqref{eq:cond-A2_holder_1} and \eqref{eq:cond-A2_holder_2} correspond to the condition \eqref{eq:cond-A2-prime} in (A2),
where $s = k+\beta > 0$ and $\varrho = d$. 
We will use Proposition \ref{prop:appr_s_l_inf} in Section \ref{sec:pos_holder} in obtaining the adaptive posterior contraction rate.

Our RKHS approximation result is stronger than needed by the posterior contraction analysis. 
This firstly lies in the requirement on the smallness of $\epsilon$.
In (A2), the needed small bandwidth threshold $\epsilon_0$ is allowed to depend on $\calX = \calM$ and $f^*$,
while in Proposition \ref{prop:appr_s_l_inf} the threshold $\epsilon_2(\calM)$ only depends on $\calM$. 
In other words, we have shown that once $\epsilon$ is less than a threshold that only depends on $\calM$, 
the approximation bounds in Proposition \ref{prop:appr_s_l_inf} hold uniformly for all $f$ in the H\"older class instead of a specific target function to approximate. 
In addition, while it suffices to show the existence of constants $\nu_1$, $\nu_2$ in (A2), 
in Proposition \ref{prop:appr_s_l_inf},
we separate the constants dependence on $f$ and $\calM$ in  the two bounds \eqref{eq:cond-A2_holder_1} and \eqref{eq:cond-A2_holder_2}.
In each bound,
the constant has a factor proportional to the H\"older norm of $f$ multiplied by a factor that only depends on manifold geometric quantities.  

\subsection{Posterior contraction for H\"older functions}
\label{sec:pos_holder}

Combining Proposition \ref{prop:appr_s_l_inf} with  Theorems \ref{thm:contraction_n_norm} and \ref{thm:contraction_2_norm}
allows us to prove the optimal contraction rate for any $f^* \in C^{k,\beta}(\mathcal{M})$.

\begin{assumption}[H\"older regression function]\label{assump:A5}
 The true function $f^* \in C^{k, \beta}(\calM)$ for some $k = 0, 1, \cdots$ and $0< \beta \le 1$.
\end{assumption}

\begin{corollary}
\label{cor:contraction_n_norm_manifold}
     Under Assumptions 
     \ref{assump:A4}-\ref{assump:A5},    
     and suppose that the prior on the kernel bandwidth $\epsilon$ satisfies
     Assumption \ref{assump:A3} with $\varrho = d$ and  $s = k+\beta$.

    (i) Fixed design.
    There exists a positive constant $C$ s.t.
   the posterior contraction rate 
   for $\| f - f^*\|_n$
   is at least 
        $\bar \varepsilon_n =  C  n^{-\frac{s}{2s+ d}} (\log n )^{\frac{ D+1}{2 + d/s} + \frac{D+1}{2}}
 \lesssim n^{- {s}/{ (2s+ d) }}(\log n)^{D+1}$.
 Furthermore, if $f^*$ is bounded on $\calX$, 
 then with probability tending to one, 
 $\| \hat f - f^* \|_n \le  3 \bar \varepsilon_n$.

(ii) Random design.
 Suppose $X_i$ are i.i.d. samples drawn from some distribution $P_X$ on $\calM $, 
 and for some positive constant $M$, $\|f^*\|_{\infty} \le M$.
  Then, there exists an absolute constant $c$ s.t. 
 the posterior contraction rate 
 for $\| f_M - f^*\|_2$
   is at least $c(M+1) \bar \varepsilon_n$, 
   and with probability tending to one, 
    $||\hat{f}_M-f^*||_{2} \le c(M+1) \bar \varepsilon_n$.
\end{corollary}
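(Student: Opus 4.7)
The plan is to reduce Corollary \ref{cor:contraction_n_norm_manifold} to the three general theorems of Section \ref{sec:pos_rate} by verifying that Assumption \ref{assump:A1-A2-prime} holds with $\varrho = d$ and $s = k+\beta$ under the current manifold/H\"older hypotheses. Once (A1) and (A2) are established with these parameters, Assumption \ref{assump:A3} is already imposed with matching parameters in the statement of the corollary, so Theorems \ref{thm:contraction_n_norm}, \ref{thm:fix-design-estimator}, and \ref{thm:contraction_2_norm} apply verbatim and yield parts (i) and (ii), respectively.

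First I would verify (A1). By Assumption \ref{assump:A4}, $\calM \subset [0,1]^D$ is a $d$-dimensional compact embedded Riemannian manifold, and Example \ref{eg:manifold} gives a constant $C_{\calM}$ and $r_0 \in (0,1)$ such that $\calN(r,\calM, \|\cdot\|_\infty) \le C_{\calM} r^{-d}$ for $r \in (0, r_0]$. Hence (A1) holds with $\varrho = d$ and $C_{\calX} = C_{\calM}$.

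Next I would verify (A2) using Proposition \ref{prop:appr_s_l_inf} as the main input. Set $\epsilon_0 := \epsilon_2(\calM, d, k)$. For any $\epsilon < \epsilon_0$, apply Proposition \ref{prop:appr_s_l_inf} to $f^* \in C^{k,\beta}(\calM)$ to produce $F = \sum_{i=0}^{\lfloor k/2\rfloor} \epsilon^i F_i$ with $F_i \in C^{k-2i,\beta}(\calM)$, and define
\[
F^\epsilon(x) := G_\epsilon(F)(x) = \frac{1}{(2\pi\epsilon)^{d/2}} \int_{\calM} h\!\left(\frac{\|\iota(x)-\iota(y)\|^2}{\epsilon}\right) F(y)\, dV(y).
\]
By Lemma \ref{lemma:computation-Hnorm} (the RKHS representation of kernel-integral functions against $h_\epsilon$), $F^\epsilon \in \mathbb{H}_\epsilon(\calM)$, and Proposition \ref{prop:appr_s_l_inf} directly gives
\[
\|F^\epsilon - f^*\|_\infty \le \gamma_1(\calM,d,k)\,\|f^*\|_{k,\beta}\,\epsilon^{(k+\beta)/2}, \qquad
\|F^\epsilon\|^2_{\mathbb{H}_\epsilon(\calM)} \le \gamma_2(\calM,d,k)\,\|f^*\|^2_{k,\beta}\,\epsilon^{-d/2}.
\]
Setting $\nu_1 := \gamma_1 \|f^*\|_{k,\beta}$ and $\nu_2 := \gamma_2 \|f^*\|^2_{k,\beta}$, these are exactly the two bounds in \eqref{eq:cond-A2-prime} with $s = k+\beta$ and $\varrho = d$, so (A2) is verified.

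Finally I would invoke the three theorems. Theorem \ref{thm:contraction_n_norm} applied with $\varrho = d$ and $s = k+\beta$ gives the fixed-design contraction rate $\bar\varepsilon_n$ of part (i). For the estimator bound in (i), note that a H\"older function on a compact manifold is automatically bounded (indeed $\|f^*\|_\infty \le \|f^*\|_{k,\beta}$), so Theorem \ref{thm:fix-design-estimator} gives $\|\hat f - f^*\|_n \le 3\bar\varepsilon_n$ with probability tending to one. For part (ii), the hypothesis $\|f^*\|_\infty \le M$ is assumed, and Theorem \ref{thm:contraction_2_norm} yields both the posterior contraction of $\|f_M - f^*\|_2$ and the convergence of $\hat f_M$ at the same rate $c(M+1)\bar\varepsilon_n$. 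There is no real obstacle here: the substantive analytic content is packed into Proposition \ref{prop:appr_s_l_inf} (which is established separately via Lemma \ref{lemma:22_holder} and a high-order correction scheme), and the corollary itself is essentially a bookkeeping check that the manifold/H\"older setting instantiates the general framework with the right $(\varrho, s)$.
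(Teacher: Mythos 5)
Your proposal is correct and matches the paper's own proof step for step: (A1) via Example \ref{eg:manifold}, (A2) via Proposition \ref{prop:appr_s_l_inf} with $\nu_1 = \gamma_1\|f^*\|_{k,\beta}$, $\nu_2 = \gamma_2\|f^*\|_{k,\beta}^2$, $\epsilon_0 = \epsilon_2$, then invoking Theorems \ref{thm:contraction_n_norm}, \ref{thm:fix-design-estimator}, and \ref{thm:contraction_2_norm}. Your added observations (that $F^\epsilon \in \mathbb{H}_\epsilon(\calM)$ is secured by Lemma \ref{lemma:computation-Hnorm}, and that $\|f^*\|_\infty \le \|f^*\|_{k,\beta}$ makes the boundedness hypothesis in (i) automatic) are both correct and harmless elaborations.
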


\begin{remark}[Extension to stratified space]
    We focus on posterior contraction rates for H\"older functions on a single manifold, 
    while our analysis can be extended to stratified spaces (Example \ref{eg:stratified}).
    In Appendix \ref{app:extension-theory-manifold-mixed-d},
we extend the theory to when $\calX$ is a finite union of disjoint manifolds (of possibly different dimensions) 
and there is a constant separation between the strata.
The extension is by firstly extending Lemma \ref{lemma:22_holder} with a modified definition of $G_\epsilon(f)$,
and then Proposition \ref{prop:appr_s_l_inf} and Corollary \ref{cor:contraction_n_norm_manifold} follow with essentially the same proofs.
In the convergence rates, the intrinsic dimension $\varrho$ equals the maximum dimension of the manifolds.
  Extension to more complex stratified spaces is left for future work.
\end{remark}

\subsection{Empirical Bayes prior on kernel bandwidth}
\label{sec:knn_prior}

In this subsection, 
we propose a bandwidth prior $p(t)$ that satisfies Assumption \ref{assump:A3}(A3) and does not 
require knowledge of the intrinsic dimension $d$ of the manifold. 
In practice, our prior can give more stable performance than the previous approach based on estimating the manifold dimension \cite{yang2016bayesian}, see the experimental comparison in Appendix \ref{app:subsec-additional-EB}.

Our empirical Bayes prior on the bandwidth $t$ takes the form 
\begin{equation}\label{eq:EBprior}
p(t) \propto t^{-a_0}\exp\Big( - \frac{b_0}{\hat{v}_n(t)}\Big) \ \textit{when } \ \gamma_1 T_n^2 < t \le  1  ;
\quad 
p(t) = 0 \ \textit{otherwise},
\end{equation}
where 
    $a_0,b_0 >0$ are arbitrary hyperparameters, 
    $\gamma_1$  is a positive constant (in our experiments set to be $1/4$),
    $T_n$ is computed from  averaged $k$-nearest neighbor ($k$NN) distances,
    and $\hat v_n(t)$ is the averaged kernel affinity to be specified below.
    
    Specifically, 
    for some subset $S$ of $[n]:= \{1, \cdots, n \}$ and $k \le n$,
    we let $k = \lceil \gamma_2 \log^2(n) \rceil$
    where $\gamma_2$ is a positive constant (set to be 1/4 in our experiments), 
    and define
    \begin{equation}\label{eq:def-Tn-knn}
    T_n : = \frac{1}{|S|}\sum_{i \in S}  \hat{R}_k (X_i),
    \quad 
    \hat{R}_k (x) := \inf_{r} \Big\{ r > 0,\, \text{ s.t. } \sum_{j=1}^{n} {\bf 1 }_{\{ \| X_j - x\| < r \}} \ge k \Big\},
    \end{equation}
    namely $\hat{R}_k(X_i)$ is the distance from $X_i$ to its $k$-th nearest neighbor in the $n$ samples $\{X_j\}_{j=1}^n$ ($X_i$ is its own 1st nearest neighbor).
    Our theory permits $S$ to be an arbitrary subset, and we explain the choice in practice at the end of this subsection.
    In our experiments, when $n$ is small (less than 200) we let $k=2$.

    The quantity $\hat v_n(t)$ is an averaged kernel affinity defined as
    \begin{equation}\label{eq:def_vn}
    \hat{v}_n(t) 
    : = \frac{1}{n(n-1)} \sum_{i=1}^n  \sum_{j \ne i} h_{t}(X_i, X_j),
    \end{equation}
    where $h_t(x,x')$ is defined in \eqref{eq:def-kernel}.
    $\hat v_n(t)$ is a function of the bandwidth $t > 0$.
We omit the normalizing constant 
$\hat Z_n: =\int_{ \gamma_1 T_n^2}^1  t^{-a_0}\exp\Big( - \frac{b_0}{\hat{v}_n(t)}\Big) dt $ from \eqref{eq:EBprior}, since it is not needed in computational implementations based on Markov Chain Monte Carlo (MCMC) sampling.
$\hat Z_n$ will be analyzed in our theoretical analysis.

The proposed prior $p(t)$ does not require knowledge of either the intrinsic dimension 
$\varrho=d$ or the regularity order $s$. The empirical statistics $T_n$ and $\hat v_n(t)$
used in constructing $p(t)$ contain information on $d$ implicitly. 
Our analysis will show that under our setting, 
$\hat{v}_n(t) \sim t^{d/2}$ when $t > C n^{-2/d}$ (up to a logarithmic factor),
and $T^2_n \sim n^{-2/d}$ (up to a logarithmic factor).
The validity of the proposed prior $p(t)$ will be proved in Proposition \ref{prop:v_hat}, and we need certain assumptions on the data density.
We denote by $p_X$  the probability density function with respect to the base measure $dV$ on $\calM$, where $dV$ is the intrinsic Riemannian volume. 

\begin{assumption}[Boundedness of data density]
\label{assump:A6}
$p_X$ is uniformly bounded from below and above, that is, for positive constants $p_{\min}, p_{\max}$, 
    $0 < p_{\min} \le p_X(x) \le p_{\max}, 
     \forall x \in \mathcal{M}$.
\end{assumption}

\begin{proposition}[Validity of the empirical prior]
\label{prop:v_hat}
Under Assumption \ref{assump:A4}, suppose $X_i$ are i.i.d. samples drawn from density $p_X$ on $\calM $ where  $p_X \in C^2(\mathcal{M})$ and satisfies Assumption \ref{assump:A6}.
Let $\varrho = d$, 
given any $s > 0$ and $a_0 $, $b_0$, $\gamma_1$, $\gamma_2$  some fixed positive constants,
there exists $n_{0}(\mathcal{M},p_X,s)$ such that  when $n > n_0$, 
let $p(t)$ be as in \eqref{eq:EBprior}\eqref{eq:def-Tn-knn}\eqref{eq:def_vn}
with $S$ being an arbitrary subset of $[n]$ in the definition of $T_n$
and $k = \lceil \gamma_2 \log^2(n) \rceil$,
then, with probability $\ge 1-2n^{-10}$,
the prior $p(t)$ satisfies Assumption \ref{assump:A3} with $\varrho $ and  $s$,
where the constants $c_1$, $K_1$, etc. can be properly specified. 
\end{proposition}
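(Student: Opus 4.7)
\textbf{Proof plan for Proposition \ref{prop:v_hat}.}

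The plan is to reduce verifying Assumption \ref{assump:A3} to establishing two concentration results: (i) $\hat{v}_n(t)\sim v(t)\sim t^{d/2}$ uniformly over the relevant range of $t$, and (ii) $T_n^2\sim n^{-2/d}(\log n)^{4/d}$. With these in hand, the explicit form \eqref{eq:EBprior} of $p(t)$ immediately has the target structure $t^{-a_0}\exp(-K/t^{d/2})$ in the window where it is nonzero.

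First I would analyze the expectation $v(t):=\E[h_t(X_1,X_2)]$. Writing $v(t)=\int_\calM p_X(y)\int_\calM h_t(x,y)p_X(x)\,dV(x)\,dV(y)=(2\pi t)^{d/2}\int_\calM p_X(y)\,G_t(p_X)(y)\,dV(y)$, I apply Lemma \ref{lemma:22_holder} with $k=0,\beta=1$ (valid since $p_X\in C^2(\calM)$) to get $G_t(p_X)=p_X+O(t^{1/2})$ uniformly, which combined with $p_X\ge p_{\min}$ (Assumption \ref{assump:A6}) gives $v(t)=(2\pi t)^{d/2}\|p_X\|_2^2(1+o(1))\sim t^{d/2}$ for $t$ below a manifold-dependent threshold. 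Since $\hat{v}_n(t)$ is a bounded second-order U-statistic in $[0,1]$, Hoeffding's inequality yields $\Pr(|\hat{v}_n(t)-v(t)|>v(t)/2)\le 2\exp(-cnv(t)^2)\le 2\exp(-c'nt^d)$. Discretizing $t$ on a grid of polynomial size in $n$ and using the Lipschitz continuity of $t\mapsto h_t$ (with Lipschitz constant polynomially growing in $1/t$) to interpolate between grid points, a union bound upgrades this to a uniform statement: with probability $\ge 1-n^{-10}$, $\tfrac{1}{2}v(t)\le\hat{v}_n(t)\le\tfrac{3}{2}v(t)$ for all $t$ in a range $[t_*,1]$ where $nt_*^d\gtrsim(\log n)^{2}$. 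Second, I would handle $T_n$. Standard $k$-nearest neighbor concentration on a compact manifold with bounded density below and above shows that for each $X_i$, $\hat{R}_k(X_i)^d\asymp k/(np_X(X_i))$ uniformly, with probability $\ge 1-n^{-10}$ once $k\ge C\log n$ (which holds since $k=\lceil \gamma_2\log^2 n\rceil$). Since the average $T_n=|S|^{-1}\sum_{i\in S}\hat{R}_k(X_i)$ over any $S\subset[n]$ inherits this two-sided bound, I conclude $\gamma_1 T_n^2\asymp n^{-2/d}(\log n)^{4/d}$ on the same good event.

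To finish, combine the two concentrations. For any fixed $s>0$, the critical scale $t_n:=n^{-2/(2s+d)}(\log n)^{2(1+D)/(2s+d)}$ satisfies $t_n/T_n^2\to\infty$ since $2/(2s+d)<2/d$; hence the window $[c_1 t_n,c_2 t_n]$ lies strictly inside the support $(\gamma_1 T_n^2,1]$ of $p(t)$, and in it $b_0/\hat{v}_n(t)\asymp t^{-d/2}$, so the unnormalized density $t^{-a_0}\exp(-b_0/\hat{v}_n(t))$ is pinched between two expressions of the form $t^{-a_0}\exp(-K_\pm/t^{d/2})$. For the upper bound \eqref{eq:prior-upper}, which must hold for \emph{all} small $t$, I split at $\gamma_1 T_n^2$: when $t\le\gamma_1 T_n^2$, $p(t)=0$ trivially satisfies it; when $\gamma_1 T_n^2<t\le c_3 n^{-2/(2s+d)}(\log n)^{-4(1+D)/((2+d/s)d)}$, we still have $nt^d\gtrsim(\log n)^{4}$, so the uniform concentration applies. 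Finally, the normalizing constant $\hat{Z}_n$ is absorbed into the constants $C_1,C_2$: an upper bound follows from $\hat{v}_n\le 1$ and the finiteness of $\int_0^1 t^{-a_0}\,dt$ (choose $a_0<1$), and a lower bound comes from integrating the unnormalized density over any sub-interval of $[c_1 t_n,c_2 t_n]$ where the concentration estimate gives a matching two-sided control.

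The main obstacle, as anticipated, is the uniform control of $\hat{v}_n(t)$ across a range of bandwidths $t$ that reaches down to the order $n^{-2/d}$ scale where only $O(\log^2 n)$ sample pairs contribute nontrivially to the kernel average; here the relative error bound $|\hat{v}_n/v-1|\le 1/2$ is tight and relies on the choice $k\gtrsim\log^2 n$ to ensure $nt^d\gtrsim(\log n)^2$ throughout the support of the prior. The manifold-volume computation converting $v(t)$ to $t^{d/2}$ via $G_t$ is the mechanism by which the (unknown) intrinsic dimension $d$ enters the prior implicitly, which is precisely what makes the empirical Bayes construction dimension-adaptive.
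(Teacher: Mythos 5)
Your high-level architecture matches the paper's: reduce to uniform concentration of $\hat{v}_n(t)$ and of $T_n$, then read off the constants from the explicit form of $p(t)$. But the concentration step for $\hat{v}_n(t)$ uses the wrong inequality and would not close. You invoke Hoeffding for the bounded U-statistic, getting tail $\exp\big(-cnv(t)^2\big)\asymp \exp\big(-c'nt^d\big)$, and assert that $nt^d\gtrsim(\log n)^2$ throughout the support of the prior. That last claim is false. The support is $t>\gamma_1 T_n^2$, and with $T_n\asymp(k/n)^{1/d}$ and $k=\lceil\gamma_2(\log n)^2\rceil$ we have $\gamma_1 T_n^2\asymp\big((\log n)^2/n\big)^{2/d}$; hence $n(\gamma_1 T_n^2)^d\asymp(\log n)^4/n\to 0$, not $\gtrsim(\log n)^2$. (You seem to have conflated $T_n^d$ with $(T_n^2)^d$: the truncation point is the \emph{square} of the averaged $k$NN distance, since the bandwidth $t$ plays the role of a squared distance in $h_t$.) With Hoeffding one only reaches $t\gtrsim(\log n/n)^{1/d}$, which for $d\ge 2$ is of strictly larger order in $n$ than $\gamma_1 T_n^2\asymp\big((\log n)^2/n\big)^{2/d}$, so there is an uncovered interval of bandwidths inside the support of $p(t)$ where the upper bound \eqref{eq:prior-upper} cannot be established.

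The paper closes this gap by conditioning on $X_i$ and applying Bernstein's inequality to the averages $\hat V_i(t)=\tfrac{1}{n-1}\sum_{j\ne i}h_t(X_i,X_j)$ (Lemma \ref{lemma:concentrate_v_hat}): after normalizing by $t^{d/2}$, a single summand has variance $\nu(t)\asymp t^{-d/2}$ while its uniform bound is $L(t)=t^{-d/2}$, so $\nu\ll L^2$, and Bernstein gives concentration down to $t\gtrsim(\log n/n)^{2/d}$. This threshold lies \emph{below} $\gamma_1 T_n^2$, so the prior's support is fully covered; Hoeffding ignores the small variance and misses by a factor of roughly $n^{-1/d}$ in $t$. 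This gap is not cosmetic: the very choice $k\asymp(\log n)^2$ is designed so that $\gamma_1 T_n^2$ just clears the Bernstein threshold. A secondary issue: to bound $\hat Z_n$ from above you drop $\exp(-b_0/\hat v_n)\le 1$ and then need $a_0<1$, but the proposition allows arbitrary $a_0>0$. The paper instead uses $\hat v_n(t)\le\alpha_2 t^{d/2}$ on $[\gamma_1 T_n^2,t_0]$ to keep $\exp(-b_0/(\alpha_2 t^{d/2}))$ in the integrand, which kills the $t\to0$ singularity for any $a_0>0$, so the hyperparameter restriction is not needed. The remaining ingredients of your plan (the expansion $v(t)\asymp t^{d/2}$ via Lemma \ref{lemma:22_holder} with $k=0$, $\beta=1$; the $k$NN concentration; the scale ordering; the grid/union bound) do match the paper's proof.
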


The proposition shows that the proposed $p(t)$ satisfies the needed condition in Corollary \ref{cor:contraction_n_norm_manifold}. 
 As a result, this prior leads to the optimal posterior contraction rate 
 that is automatically adaptive to $s$ and $d$.  
 When $\calX$ is a disjoint union of manifolds, we can extend Proposition \ref{prop:v_hat} after a modification of the definition of $\hat v_n(t)$, 
see Appendix \ref{app:extension-theory-manifold-mixed-d}.

The prior $p(t)$ can be computed without incurring more expensive computation than other steps in the Bayesian regression. 
Specifically, the kernel affinity statistic $\hat v_n(t)$ sums the (off-diagonal) entries of the kernel matrix, which is of lower computational complexity than 
constructing the kernel and computing the posterior mean of $f$. 
The $k$NN statistic $\hat R_k(X_i)$ can be computed by standard subroutines and the computational complexity is less than other kernel operations,
and we  compute 
the $k$NN distance for $|S|$ points as in \eqref{eq:def-Tn-knn}. 
While our theory allows $|S|$ to be arbitrarily small,
in practice, a larger $|S|$ can potentially help reduce variance and improve the algorithm's stability at finite sample sizes. 
The primary limitation of using a larger subset $S$ is increased computational cost.
More algorithmic details of the Bayesian regression are provided in Appendix \ref{ap:algo}.

\section{Numerical experiments}
\label{sec:num}

We numerically implement Bayesian inference on various datasets and compare the proposed method, namely the empirical Bayes prior on the kernel bandwidth in Section \ref{sec:knn_prior}, with other Bayesian and non-Bayesian baselines. 
Code implementation is available at {\small \url{https://github.com/taotangtt/gp-manifold-regression}}.

\subsection{Algorithm summary}

For Bayesian inference, we marginalize out the unknown regression function $f$ using conjugacy of the GP prior. 
This produces a posterior for the bandwidth parameter $t$, which we sample from using Metropolis-Hastings. 
With a GP prior, for a given bandwidth $t$, the conditional posterior of $f$  is available analytically.
We average over the samples from the marginal posterior of $t$ in estimating the posterior mean $\hat{f}$.
Details of the algorithm are given in Appendix \ref{ap:algo}.

\subsection{Swiss Roll data}\label{sec:2d_swiss}

\begin{figure}
\hspace{-15pt}
   \begin{minipage}{0.32\textwidth}
 \includegraphics[height=0.96\linewidth]{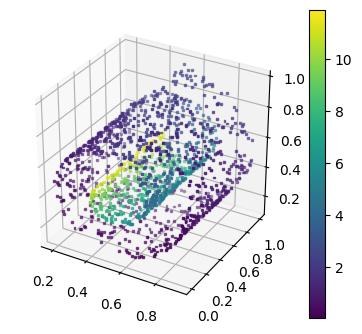}
       \subcaption{}
    \end{minipage}
    \begin{minipage}{0.35\textwidth}
        \includegraphics[height=0.95\linewidth]{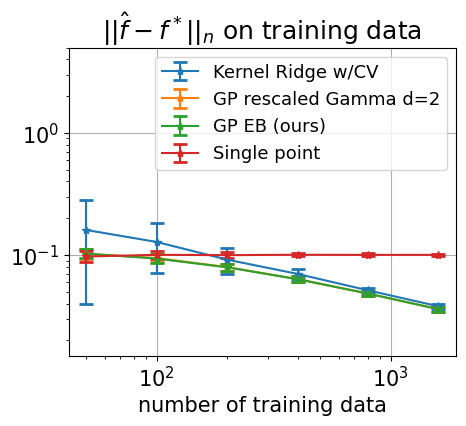}
        \subcaption{}
    \end{minipage}
    \begin{minipage}{0.35\textwidth}
        \includegraphics[height=0.95\linewidth]{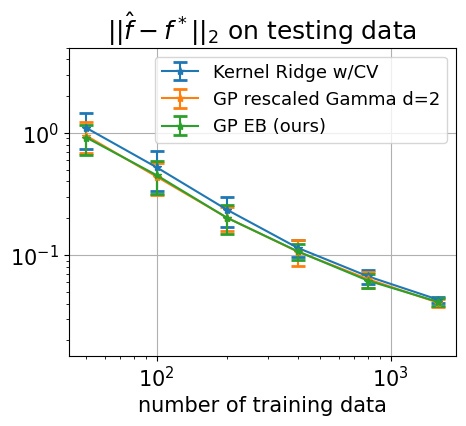}
        \subcaption{}
    \end{minipage}
            \vspace{-5pt}
    \caption{
    Swiss roll data.
    (a) Sample $X_i$ and response $Y_i$ plotted as color field on $X_i$, showing 1600 samples. 
    (b) 
    The empirical error $||\hat{f} - f^*||_n$ on training samples, plotted as the training size increases from 50 to 1600.
    The error bars indicate the standard deviation.
    (c) $||\hat{f} - f^*||_2$ on testing samples, as introduced in Section \ref{sec:2d_swiss}.
In both (b)(c), the GP rescaled Gamma curve (orange) is almost not visible because the values are close to those on the proposed GP EB curve (green).
}
\label{fig:swiss_roll}
\end{figure}

\paragraph*{Data}
The samples $X_i$ lie on a 2D manifold $\calM$ embedded in $\R^3$. 
The response $Y_i$ follows  \eqref{eq:model}
where $f^*$ is a smooth function on $\calM$
and the noise level $\sigma = 0.1$.
The dataset is illustrated in Figure \ref{fig:swiss_roll}(a). 
Details of data simulation can be found in Appendix \ref{app:exp_detail}.

\paragraph*{Method}
We generate $n$ training data samples $\{X_i, Y_i\}_{i=1}^n$, and compute out-of-sample error on a separate test data set.
We use $n = 50,100,200,400,800, 1600$ training samples, and compare performances of the following approaches: 

\vspace{5pt}
(i) Kernel ridge w/CV: Kernel ridge regression, where the kernel bandwidth is selected using a validation set consisting of 10\% training data.
    
(ii) GP rescaled Gamma: 
Bayesian regression with GP prior, 
where the kernel bandwidth $t$ is sampled from the posterior of the rescaled Gamma prior by MCMC.
Here we use the true manifold dimension $d=2$ in the implementation. 

(iii) GP EB (ours):
Bayesian regression with GP prior and the empirical Bayes (EB) prior $p(t)$ in Section \ref{sec:knn_prior}. 
\vspace{5pt}

On training samples (the in-sample case), we also implement another baseline where one uses the observed value $Y_i$ as the estimate of $f(X_i)$.
Because this only uses information on one data sample,
we call this baseline ``single point''.

To further investigate alternative approaches, we also implemented 
(ii') GP estimated $d$: a variant of (ii), where the manifold dimension is estimated from data as proposed in \cite{yang2016bayesian},
(iv) GP max-likelihood: selecting the kernel bandwidth $t$ based on maximizing the marginal likelihood; 
(v) GP median heuristic: setting $t$ to be the median of the distances between samples. 
We apply to the Swiss Roll data at sample size $n=50, 100, 200$, and the results are detailed in Appendix \ref{app:subsec-additional-EB}.
The proposed EB (iii) and (iv) are comparable and perform the best,
with (ii') similar on testing error and worse on training error,
and (v) giving much larger errors. 
Further comparison of the distribution of the errors reveals that (ii') can give long-tail outlier in-sample errors at small sample size,
likely due to the unstable estimation of the manifold dimension;
The proposed EB (iii) shows more stable performance in comparison.

\paragraph*{Evaluation metrics}
We compute the in-sample error $||\hat{f}-f^*||_n$ on the training set, and the out-of-sample error using $n_{\rm te} = 5000$ test samples that were not used at all in data fitting or hyperparameter choice. 
We compute the mean and standard deviation of $||\hat{f}-f^*||_n$ ($||\hat{f}-f^*||_2$) on training (testing) data, respectively, over repeated experiments,
and we repeat 200 runs when $n \le 200$,
and 100 runs when $n > 200$.

\paragraph*{Result}

The training and testing errors are plotted in Figure \ref{fig:swiss_roll}(b)(c).
The performances of the two GP methods (ii)(iii) are very close, such that the curves almost overlap in both plots. 
Recall that in (ii) we inserted the true intrinsic dimensionality (instead of estimating it from data as proposed in \cite{yang2016bayesian}),
while (iii) does not use such information. 
The result suggests that the proposed empirical Bayes prior adapts to the dimension well and obtains comparable performance in estimating $f^*$. 
The single-point method cannot generalize to test samples, and the in-sample performance is surpassed by other baselines when training size increases.

The two GP methods perform better than (i) kernel ridge with cross validation:
(i) is slightly worse on the testing error, and the gap is more visible on the training error, where (i) also gives larger variance at small training size.
Kernel ridge regression shares a form similar to the posterior mean in GP regression, and cross-validation is used to optimize its generalization performance with respect to squared error loss. Thus, (i) may have a potential advantage in terms of test error. We have observed in additional simulations (by adjusting the parameters of Swiss Roll data and $f^*$) 
that (i) can perform better than GP methods on the out-of-sample error, while the in-sample error is still worse (results not reported). The larger variance in in-sample error by (i), particularly at small sample sizes, is likely due to (i) requiring a validation set and hence reduces the training set. An additional advantage of GP methods in practice lies in the potential ability to characterize uncertainty in the estimation of the regression function and prediction (not shown in this work).

\begin{figure}
    \hspace{-15pt}
       \begin{minipage}{0.32\textwidth}
     \includegraphics[height=0.96\linewidth]{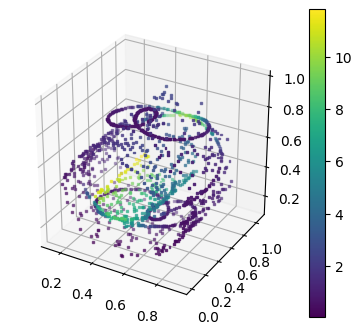}
           \subcaption{}
        \end{minipage}
        \begin{minipage}{0.35\textwidth}
            \includegraphics[height=0.95\linewidth]{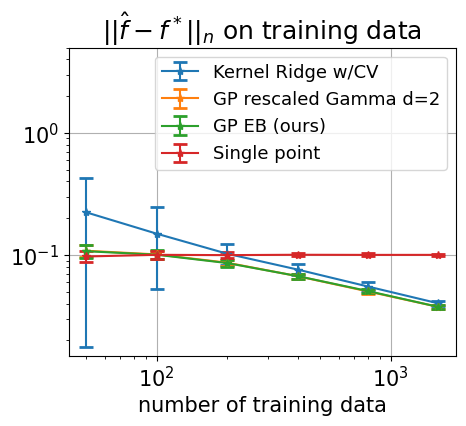}
            \subcaption{}
        \end{minipage}
        \begin{minipage}{0.35\textwidth}
            \includegraphics[height=0.95\linewidth]{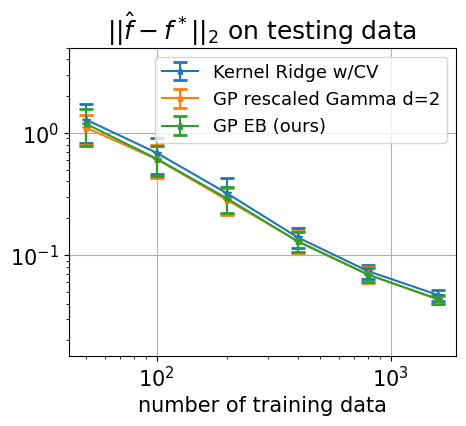}
            \subcaption{}
        \end{minipage}
        \vspace{-5pt}
        \caption{
        Same plots as in Figure \ref{fig:swiss_roll}
        for Swiss roll plus curve data.}
    \label{fig:swiss_roll_curve}
    \end{figure}

\subsection{Low-dimensional set $\calX$ of mixed local dimensions}

\paragraph*{Data}

The samples $X_i$ lie on a union of two manifolds embedded in $\R^3$:
one is the Swiss Roll which is a 2D manifold,
and the other is a curve which is a 1D manifold. 
The response $Y_i$ follows  \eqref{eq:model}
where $f^*$ is a smooth function on the two manifolds (and continuous at the intersection). 
The noise level $\sigma = 0.1$.
The dataset is illustrated in Figure \ref{fig:swiss_roll_curve}(a). 
Details of data simulation can be found in Appendix \ref{app:exp_detail}.

\paragraph*{Method and evaluation}
We follow the same procedure of creating training and testing sets, computing the baselines (i)(ii)(iii), 
and reporting training and testing errors as in Section \ref{sec:2d_swiss}. 
When computing the rescaled Gamma GP (ii), we insert the intrinsic dimensionality $d=2$. 
For (iii), we adopt the modified $\hat v_n$ as in \eqref{eq:def_vn-mixed-d}
in the proposed EB prior \eqref{eq:EBprior}.

\paragraph*{Result}
The mean and standard deviation of 
$||\hat{f}-f^*||_n$ on training data
and $||\hat{f}-f^*||_2$ on testing data
are shown in Figure  \ref{fig:swiss_roll_curve}(b)(c) respectively. 
The relative performances of the different baselines are mostly similar to the case of the Swiss roll data in Figure \ref{fig:swiss_roll}.
 The proposed GP model (iii) performs comparably to the GP baseline (ii);
 the kernel ridge (i) is comparable on the testing error and worse on the in-sample error,
 especially at small training size. 
 We emphasize that though this dataset consists of a union of two manifolds having distinct dimensions, strictly speaking, it goes beyond the theoretical assumption of our extended theory in Appendix \ref{app:extension-theory-manifold-mixed-d} 
 because the two manifolds intersect.
 The experimental result suggests that the proposed method can extend to more complex data of intrinsically low dimensionality.

\subsection{Lucky Cat data}

\paragraph*{Data}
We study a dataset of high dimensional image data with intrinsic low dimensionality induced by a one-dimensional group action, following the setup in \cite{yang2016bayesian}.
The Lucky Cat dataset \cite{nene1996columbia} contains 72 images of size $ 128 \times 128 $,
 resulting in the ambient dimensionality being $D = 16,384$. 
 Each sample (image) $X_i$ is the side view of a 3D object from a rotation angle $\theta_i \in [0, 2 \pi]$, and thus the samples lie on a 1-dimensional manifold embedded in the high dimensional Euclidean space. 
 Figure \ref{fig:lucky_cat}(a) shows two examples of the image data.
Because the image $X$ in this dataset and the rotation angle $\theta$ have a one-to-one correspondence, we set $f^*(X) = \cos(\theta)$, which is a continuous function on the one-dimensional data manifold. 
The response $Y_i$ is as in \eqref{eq:model} where $\sigma = 0.1$.

\paragraph*{Method and evaluation}

For all three baselines (i)(ii)(iii), we randomly partition $n=18$, $36$, and $54$ samples into a training set, leaving the remaining samples as a testing set.
This process is repeated $400$ times for each training size, and $||\hat{f} - f^*||_2$ on the testing set is reported. 
We inserted the true intrinsic dimensionality $d=1$ when computing the rescaled Gamma GP (ii).
Not assuming known $\sigma$, we also implement the Bayes estimation of $\sigma$ jointly with $f$ using our EB prior (iii), 
where we adopt a prior of $\sigma^2$ uniformly on $[10^{-4},1]$.

\paragraph*{Result}
As shown in the table in Figure \ref{fig:lucky_cat}(b), the proposed GP model (iii) performs comparably to the GP model (ii), and both perform better than the (i) kernel ridge in the out-of-sample error. 
Note that (ii) presumes knowledge of the true intrinsic dimensionality, which, when the sample size is small, may be difficult to estimate from data. 
In the table, we also include two additional baselines,
Lasso \cite{tibshirani1996regression}
and Elastic net (EL-net) \cite{zou2005regularization}, for reference.
The mean and standard deviation of the errors 
of EL-net and Lasso
are quoted from \cite{yang2016bayesian} (averaged over 100 repeated runs).
Note that (iii) maintained comparable performance when inferring $\sigma$ jointly with $f$.
Overall, on this dataset where the sample size is very small (only a few tens) compared to the data dimensionality, 
the GP models outperform the other methods across all training sizes.

\begin{figure}
\hspace{-5pt}
       \begin{minipage}{0.33\textwidth}
     \includegraphics[height=0.45\linewidth]{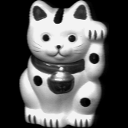}
    \includegraphics[height=0.45\linewidth]{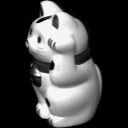}
            \subcaption{}
        \end{minipage}
        \begin{minipage}{0.65\textwidth}
            \hspace{10pt}
\small
        \centering
    \begin{tabular}{lccc}
    \hline
& $n=18$ & $n=36$ & $n=54$ \\
\hline
EL-net$^*$ & 0.416 (0.152) & 0.198 (0.042) & 0.149 (0.031) \\
Lasso$^*$ & 0.431 (0.128) & 0.232 (0.061) & 0.163 (0.038) \\
Kernel ridge w/CV & 0.226 (0.091) & 0.112 (0.038) & 0.080 (0.023) \\
GP rescaled Gamma 
        & 0.194 (0.068) & 0.096 (0.029) & 0.074 (0.019) \\
GP EB (ours) & 0.190 (0.072) & 0.099 (0.030) & 0.077 (0.019) \\
~~ + unknown $\sigma$  &0.185 (0.071)  &0.103 (0.030) &0.082 (0.019)\\
\hline
    \end{tabular}
        \subcaption{}
        \end{minipage}
                        \vspace{-5pt}
        \caption{
        Lucky cat data.
        (a) Two sample images of the object at different rotation angles.
        (b) 
        $||\hat{f} - f^*||_2$ on testing samples 
        at training sizes 18, 36, 54
        for different methods.
        The errors for kernel ridge, 
        rescaled Gamma GP, 
        and our EB GP are averaged from 400 repeated runs, with the standard deviation given in the parentheses. 
        *The EL-net and Lasso baselines are quoted from \cite{yang2016bayesian}.}
    \label{fig:lucky_cat}
    \end{figure}

\section{Discussion}\label{sec:discuss}
The work can be extended in several future directions.
It would be interesting to develop RKHS approximation analysis 
on a more general low-dimensional domain $\calX$.
To do this, one will need to define the notion of smoothness $s$ properly
when intrinsically (non-linear) low-dimensional structures are present in data. 
It would also be helpful to improve the RKHS covering number analysis when restricted to a low-dimensional $\calX$,
particularly, to improve the dependence on the ambient dimension $D$.
Meanwhile, one can try to cover more complicated stratified space beyond the case of finite union of disjoint manifolds,
e.g., an infinite union.
In addition, it would be useful to 
 extend our findings to other types of kernels, 
such as the Mat\'{e}rn kernel, 
and evaluate the theoretical and practical behaviors of these kernels.
Finally, further comparison with more Gaussian process methods, in theory and in practical applications,
would help to advance the understanding on this topic. 

\paragraph*{Empirical Bayes methods.}
It would be interesting to further compare and study the different empirical Bayes (EB) approaches.
In particular, our EB prior based on the averaged kernel affinity $\hat v_n(t)$ 
shows improved performance over the EB method based on estimating the manifold dimension at small sample size, 
yet the current asymptotic theory cannot explain the advantage. 
A fully  non-asymptotic analysis focusing on finite sample size would be helpful. 
Another potentially fruitful direction is to extend our analysis to more EB methods,
e.g., the GP MLE approach, which we numerically studied.
The theoretical analysis of the MLE approach has its own challenge \cite{karvonen2023maximum} and needs to be under a different framework.
We expect that some of our estimates, such as the manifold RKHS approximation results in Section \ref{sec:rate_manifold}, 
will be transferable to the analysis of general Bayesian and non-Bayesian kernel methods.

\paragraph*{Observation variance.}
In this work, we have assumed known $\sigma$ in our posterior contraction rate analysis.
The theory may be extended to infer unknown $\sigma$ (by choosing a prior on $\sigma$) following the arguments in \cite{van2008rates,van2009adaptive} based on the general framework in \cite{ghosal2007convergence,ghosal2000convergence}.
This corresponds to a full Bayesian approach. 
It would also be useful to extend the theory to allow estimating $\sigma$ by certain empirical Bayes methods.
Meanwhile, our analysis suggests that the constant $C$ in our convergence rate scales with the ratio ${\sigma}/{\|f^* \|}$
(Remark \ref{rk:sigma-dependence}), though the expression is only an upper bound.
One can interpret $\|f^*\|^2/\sigma^2$ as a Signal-to-Noise Ratio,
as hinted by the  information theoretical arguments in  \cite{van2011information}.
We think information theoretical techniques, possibly combined with a non-asymptotic analysis, 
can help to further elucidate the influence of $\sigma^2$ on the nonparametric Bayesian approach.

   
\section{Proof of Lemma  \ref{lemma:22_holder}}\label{sec:proof-lemma-4.1}

Below we give the proof to Lemma  \ref{lemma:22_holder} in several steps. The other proofs in Section \ref{sec:rate_manifold} are given in Appendix \ref{app:proof-sec4},
and technical lemmas in Appendix \ref{ap:A}.
Under Assumption \ref{assump:A4}, recall that  $\calM $ is embedded through $\iota:\calM \rightarrow [0,1]^D \subset \R^D$.

\subsection{Proof of equation \eqref{G epsilon expansion with remainder} and Lemma \ref{lemma:22_holder}(i)}

Consider an arbitrary point $x \in \mathcal{M}$,
and let $\delta(\epsilon) := \sqrt{(d+k+1)\epsilon \log(\frac{1}{\epsilon})}$.
Denote by $B_{r}(x)$ be the geodesic ball on $\calM$ of radius $r$  centered at $x$,
and we denote by  $B^{\R^D}_{r}(\iota(x))$ the Euclidean ball in $\R^D$.
Recall that $\xi > 0$ is the injectivity radius of $\calM$, 
and $\tau > 0$ is the reach of $\iota (\calM)$.
By Lemma \ref{lemma:manifold-reach}, $\forall x, y \in \calM$ s.t. $\|\iota(x)-\iota(y)\|_{\mathbb{R}^D} < \tau/2$, we have
\begin{equation}\label{eq:metric-comparison-sqrt2}
d_{\mathcal{M}}(x,y) 
\ge \|\iota(x)-\iota(y)\|_{\mathbb{R}^D} 
\ge  \frac{1}{2} d_{\mathcal{M}}(x,y).
\end{equation}
We let $\epsilon_{1,\calM} >0 $ be a constant
depending on $d$, $k$, $\xi$, and $\tau$ such that 
$\epsilon < \epsilon_{1,\calM}$ would guarantee that $ 2 \delta(\epsilon ) < \min\{ \tau/2 , \xi, 1 \}$.
Suppose $\epsilon <\epsilon_{1,\calM} $, 
then $\delta(\epsilon) < \tau/4 $, and 
we can verify that $B^{\mathbb{R}^D}_{{\delta(\epsilon)}}(\iota(x)) \cap \iota(\mathcal{M}) \subset \iota(B_{2 {\delta(\epsilon)}}(x))$:
For any $y \in \calM$ with $ \|\iota(x)-\iota(y)\|_{\mathbb{R}^D} < \delta(\epsilon) < \tau/2$, we have \eqref{eq:metric-comparison-sqrt2} holds and then $d_\calM(x,y) \le {2}  \|\iota(x)-\iota(y)\|_{\mathbb{R}^D}  < 2 \delta(\epsilon)$.
 Hence, if $y \not \in B_{2 {\delta(\epsilon)}}(x)$, $\|\iota(x)-\iota(y)\|_{\mathbb{R}^D} \geq {\delta(\epsilon)}$. 
Observe that 
\begin{align} 
G_\epsilon (f) (x)
& =  \frac{1}{(2\pi \epsilon)^{d/2}} \int_{B_{2 {\delta(\epsilon)}}(x)} h\Big(\frac{\|\iota(x)-\iota(y)\|^2_{\mathbb{R}^D}}{\epsilon}\Big)f(y)dV(y)  \nonumber \\
& ~~~
+  \frac{1}{(2\pi \epsilon)^{d/2}} \int_{\mathcal{M} \setminus B_{2 {\delta(\epsilon)}}(x)} h\Big(\frac{\|\iota(x)-\iota(y)\|^2_{\mathbb{R}^D}}{\epsilon}\Big)f(y)dV(y) \nonumber \\
& =: G_\epsilon^{(1)} (f) (x) +  {\rm R}^{(2)} (x). 
\label{Lemma: integral in and out}
\end{align}
We first show that $| {\rm R}^{(2)} (x) |$ is uniformly small and can be put to remainder term $R_{f ,\epsilon} $.
Since $\|\iota(x)-\iota(y)\|_{\mathbb{R}^D} \geq {\delta(\epsilon)}$ when $y \in \mathcal{M} \setminus B_{2 {\delta(\epsilon)}}(x)$, 
we have $h\Big(\frac{\|\iota(x)-\iota(y)\|^2_{\mathbb{R}^D}}{\epsilon}\Big) \leq \epsilon^{\frac{d+k+1}{2}}$ by that $h(r)=e^{-r/2}$ and the definition of $\delta(\epsilon)$.
Thus, 
\begin{align}
| {\rm R}^{(2)} (x) |
& \le \frac{1}{(2\pi \epsilon)^{d/2}} \int_{\mathcal{M} \setminus B_{2 {\delta(\epsilon)}}(x)} 
    	h\Big(\frac{\|\iota(x)-\iota(y)\|^2_{\mathbb{R}^D}}{\epsilon}\Big) |f(y)|dV(y)  \nonumber\\
& \leq \frac{Vol(\mathcal{M})}{(2\pi)^{d/2}}  \|f\|_{\infty} \epsilon^{\frac{k+1}{2}}
	\le C_{R,2} \|f\|_{k,\beta} \epsilon^{(k+1)/2},
	\quad C_{R,2}:= \frac{Vol(\mathcal{M})}{(2\pi)^{d/2}}, \label{1st part of the integral}
\end{align}
and this upper bound is uniform for all $x \in \calM$.

For the term $G_\epsilon^{(1)} (f) (x)= \int_{B_{2 {\delta(\epsilon)}}(x)} h\Big(\frac{\|\iota(x)-\iota(y)\|^2_{\mathbb{R}^D}}{\epsilon}\Big)f(y)dV(y)$, 
since $2 {\delta(\epsilon)} < \xi$, 
we can parametrize $B_{2 {\delta(\epsilon)}}(x)$ through normal coordinates at $x$.  
Specifically, we utilize the polar coordinates $(t, \theta)$ on $\R^d \cong T_x \calM$,
where $\theta \in S^{d-1} \subset T_x \calM$, and $0 \le t < \xi$,
and write $y = \exp_x ( t \theta)$.
We then have that
\begin{align}
G_\epsilon^{(1)} (f) (x) 
& =  \frac{1}{(2\pi \epsilon)^{d/2}} \int_{B_{2 {\delta(\epsilon)}}(x)} h\Big(\frac{\|\iota(x)-\iota(y)\|^2_{\mathbb{R}^D}}{\epsilon}\Big)f(y)dV(y) \nonumber \\
& = \frac{1}{(2\pi \epsilon)^{d/2}} \int_{S^{d-1}} \int_{0}^{2{\delta(\epsilon)}} h\Big(\frac{\|\iota \circ \exp_x( t \theta )\|^2_{\mathbb{R}^D}}{\epsilon}\Big)f(\exp_x(t \theta)) V(x, \theta, t) t^{d-1}
	dt d\theta, \label{2nd part of the integral}
\end{align}
where $V(x, \theta, t) t^{d-1}dt d\theta$ is the volume form. We expand each term in the integrand in terms of $t$ in the next a few steps.
Next we focus on the computation of $G_\epsilon^{(1)} (f) (x)$.

\vspace{10pt}
\noindent
$\bullet$ \underline{\textbf{Expansion of $f$, kernel, and volume form in $B_{2{\delta(\epsilon)}}(x)$}}
\vspace{5pt}

Recall that  $t = d_\calM(x,y)$ satisfies that  $0 \leq t \leq 2\delta(\epsilon) < \min\{ \tau/2, \xi, 1\}$,
then  Lemma \ref{expansion-of-volume-of-form-and-Euclidean-distance}(i) and (ii) both apply.
Meanwhile, in this case $ \|\iota(x)-\iota(y)\|_{\mathbb{R}^D}  \le t < \tau/2 $
 and then \eqref{eq:metric-comparison-sqrt2} holds.
We start with the expansion of $h\Big(\frac{\|\iota \circ \exp_x( t \theta )\|^2_{\mathbb{R}^D}}{\epsilon}\Big)$.
By Lemma \ref{expansion-of-volume-of-form-and-Euclidean-distance}(ii), 
we can expand $\|\iota \circ \exp_x( t \theta )\|^2_{\mathbb{R}^D}$ as  
\begin{align}\label{expansion iota expn t proof}
\|\iota \circ \exp_x( t \theta )\|^2_{\mathbb{R}^D}
 =t^2+  \sum_{j =4}^{2k} q_j (x, \theta)t^j + r_k 
 :=  t^2 + \tilde{r},
\end{align}
where the remainder $r_k$ and $\tilde r$ all depend on $t$ (when $k=0$ or 1, $r_k = \tilde r $), and 
\begin{equation}\label{eq:proof-bound-cr-cq}
|\tilde r| \le c_{\tilde r} t^4 \text{ always}, 
\quad 
| r_k | \le c_{r}(k) t^{2k+1} \text{ when $k \ge 2$};
\quad \sup_{x,\theta} | q_j(x,\theta) | \le c_q(k), \, \forall j=4,\cdots, 2k.
\end{equation}
For $j \ge 4$, each $q_j$ is determined by the second fundamental form  $ \Second$  of $\iota(\mathcal{M})$ 
and its covariant derivatives up to ($j-4$)-th order.
The constants $c_{\tilde r}$, $c_{r}(k)$, $c_{q}(k)$ depend on $ \Second$ and its covariant derivatives:
 when $k = 0$ or 1, we only have $c_{\tilde r}$ and it involves the 
  $\| \cdot \|_\infty$ norm of $ \Second$ and its 1st covariant derivative,
 when $k \ge 2$, 
 $c_q(k)$ and $c_r(k) $ involve 
 the $\| \cdot \|_\infty$ norm of $ \Second$'s covariant derivatives up to
  ($2k-4$)-th and 
   ($2k-2$)-th order respectively.
In addition, by Lemma \ref{expansion-of-volume-of-form-and-Euclidean-distance}(ii) b),
$q_j(x,\theta) = \bar q_j(x)( \theta ,\cdots, \theta)$ for a tensor field $\bar q_j$ of order $j$.
Since $ \bar q_j(x)( -\theta ,\cdots, - \theta) = (-1)^j \bar q_j(x)( \theta ,\cdots, \theta)$ by linearity of tensor, 
we have $q_j(x, -\theta)=(-1)^j q_j(x, \theta)$.

Next, by that $h(r) = e^{-r/2}$, 
we can expand $h\Big( \frac{t^2+\tilde{r}}{\epsilon} \Big)$ as
\begin{equation}\label{eq:proof-expansion-h-in-l}
h(  \frac{t^2+\tilde{r}}{\epsilon}) 
= h( \frac{t^2}{\epsilon}) 
	+ \sum_{ \ell =1}^{ \lfloor k/2 \rfloor} \frac{1}{ \ell! } h^{(\ell)}( \frac{t^2}{\epsilon} ) \frac{\tilde r^\ell }{\epsilon^\ell} 
	+ h_r
=:  h( \frac{t^2}{\epsilon})  + {\rm (II)} + h_r,
\end{equation}
where $h_r : = \frac{1}{( \lfloor k/2 \rfloor  +1)! } h^{(\lfloor k/2 \rfloor + 1)}( \frac{t^2 + \tilde r'}{\epsilon} )
(\frac{\tilde r}{\epsilon})^{\lfloor k/2 \rfloor + 1}$,
 and $\tilde r'$ is between 0 and $\tilde r$.
Because  $t = d_\calM(x,y)$, 
$ t^2 + \tilde r = \| \iota (y) - \iota(x)\|_{\R^D}^2 $,
by \eqref{eq:metric-comparison-sqrt2}, 
$t^2 \ge t^2 + \tilde r \ge t^2/4$, 
and thus $t^2 + \tilde r' \ge t^2/4$.
Meanwhile, by that $h^{( \ell )}(r) = \frac{e^{-2 r}}{(-2)^\ell}$, we have $|h^{(\ell)}(r)| \le h(r)$ for any $\ell$ and $r$, 
and then by changing variable to $u := t/\sqrt{\epsilon}$, we have
$
|h_r | \le 
h( \frac{u^2}{4})
(\frac{ | \tilde r |}{\epsilon})^{ \lfloor k/2 \rfloor + 1}
\le c_{\tilde r}^{\lfloor k/2 \rfloor + 1 } h( \frac{u^2}{4}) u^{4 (\lfloor k/2 \rfloor + 1)} \epsilon^{\lfloor k/2 \rfloor + 1}.
$
We will assume $\epsilon < 1/e < 1$,
then  $\epsilon^{\lfloor k/2 \rfloor + 1} \le \epsilon^{(k+1)/2}$, and we have
\[
|h_r | 
= O( \epsilon^{(k+1)/2}) h( {u^2}/{4}) u^{4 (\lfloor k/2 \rfloor + 1)}.
\]
Here, we use the big-O notation for convenience
where the constant dependence can be tracked, and we will summarize constant dependence later. 

The second term in expansion \eqref{eq:proof-expansion-h-in-l} that sums over $\ell$,
denoted as ${(\rm II)}$, 
involves the power of $\tilde r$
and thus is more complicated. 
For $k \ge 2$ (otherwise ${(\rm II)} = 0$) and using the variable $u$ instead of $t$, we have
\begin{align*}
{(\rm II)} 
& = \sum_{ \ell =1}^{ \lfloor k/2 \rfloor} \frac{h^{(\ell)}( u^2) }{ \ell! \epsilon^\ell }  
	\Big( \sum_{j=4}^{2k} q_j (x, \theta) u^j \epsilon^{j/2}+r_k \Big)^\ell \\
& = \sum_{\ell =1}^{ \lfloor k/2 \rfloor} \frac{h^{(\ell)}( u^2) }{\ell ! \epsilon^\ell }  
	\bigg[
	\Big( \sum_{j=4}^{2k} q_j (x, \theta) u^j  \epsilon^{j/2} \Big)^\ell
	+ \sum_{m=1}^\ell  {\ell \choose m}  r_k^m  \Big( \sum_{j=4}^{2k} q_j(x, \theta) u^j \epsilon^{j/2} \Big)^{ \ell-m} 
	\bigg] \\
& =: 	{(\rm II)}_1 + {(\rm II)}_2,
\end{align*}
where, for the second term, by that $|h^{( \ell)}(r)| \le h(r)$ and \eqref{eq:proof-bound-cr-cq},
(we omit dependence on $k$ in $c_r$ and $c_q$)
\begin{align*}
| {(\rm II)}_2| 
& \le  \sum_{\ell =1}^{ \lfloor k/2 \rfloor} 
	\frac{h( u^2) }{ \ell ! \epsilon^\ell }  
		\sum_{m=1}^\ell {\ell \choose m}
	  \left( c_r  u^{2k+1} \epsilon^{k+1/2} \right)^m \Big( c_q \sum_{j=4}^{2k}  u^j \epsilon^{j/2} \Big)^{\ell -m} \\
& \le   \sum_{\ell =1}^{ \lfloor k/2 \rfloor} 
	\frac{h( u^2) }{ \ell! }  
		\sum_{m=1}^\ell {\ell \choose m}
		( c_r u^{ 2k+1 } )^m
	    \Big(  c_q \sum_{j=4}^{2k}  u^j \Big)^{\ell-m} 	 \epsilon^{(k+1/2)m + 2(\ell -m)-\ell}  \\
&\le \epsilon^{(k+1)/2}	  
 	\bigg[   \sum_{ \ell =1}^{ \lfloor k/2 \rfloor} \frac{h( u^2) }{ \ell! }  
		\sum_{m=1}^\ell {\ell \choose m}
		( c_r u^{2k+1} )^m
	    \Big(  c_q \sum_{j=4}^{2k}  u^j \Big)^{\ell -m}  \bigg],
\end{align*}
because one can verify that $ (k+1/2)m + 2(\ell-m)-\ell \ge (k+1)/2 $ using $k\ge 2$
and recall that $\epsilon <1$.

For the first term ${(\rm II)}_1$, we will separate the leading terms and a remainder of $O(\epsilon^{(k+1)/2})$.
\begin{align*}
{(\rm II)}_1 
& =  \sum_{\ell =1}^{ \lfloor k/2 \rfloor} \frac{h^{(\ell)}( u^2) }{ \ell! \epsilon^\ell }  
	\Big( \sum_{j=4}^{2k} q_j(x, \theta) u^j \epsilon^{j/2} \Big)^\ell  
= 	 \sum_{\ell =1}^{ \lfloor k/2 \rfloor} \frac{h^{( \ell)}( u^2) }{\ell !  }  
 	\Big( \sum_{i =4}^{k^2} A^{(k)}_{i, \ell}(x,\theta) u^i \epsilon^{i/2 - \ell} \Big),
\end{align*}
where for $i\ge 4$ we define 
\[
A^{(k)}_{i,\ell} (x, \theta)
: = 
  	 \sum_{\substack{j_1+ \cdots + j_\ell =i\\
 		4\le j_1, \cdots ,j_\ell {\le 2k}}} q_{j_1}(x, \theta) \cdots q_{j_\ell}(x, \theta), 
\]
and set $A^{(k)}_{i,\ell} (x, \theta) = 0$ if the valid combination of $\{ j_1,\cdots, j_\ell \}$ is empty. 
The summation inside $( \cdots )$ over $i$ is from 4 to $k^2$ because the highest power of $u$ is $ 2 k \ell \le k^2 $.
We now separate the summation into two categories where $i \le k+2 \ell $ and $i \ge k+2 \ell +1$ respectively,
and for the latter, in each term the factor $\epsilon^{i/2- \ell} \le \epsilon^{(k+1)/2}$.
As a result, 
using the equivalent expression $( \sum_{j=4}^{2k} q_j(x, \theta) u^j \epsilon^{j/2} )^\ell $ of the summation,
the absolute value of the sum in the second category can be upper bounded by
$O(\epsilon^{(k+1)/2}) ( c_q \sum_{j=4}^{2k}   u^j )^\ell $.
 We then have
\[
{(\rm II)}_1 
=  \sum_{\ell =1}^{ \lfloor k/2 \rfloor} \frac{h^{( \ell)}( u^2) }{ \ell!  }  
  	\sum_{i =4}^{k+2 \ell} A^{(k)}_{i,\ell }(x,\theta) u^i \epsilon^{i/2 - \ell }
	+  O(\epsilon^{(k+1)/2})  \sum_{\ell =1}^{ \lfloor k/2 \rfloor} \frac{h( u^2) }{ \ell!  }  \Big( c_q \sum_{j=4}^{2k}   u^j \Big)^\ell.
\]
Now, in the leading terms in ${(\rm II)}_1$, we have $i \le k+2 \ell$.
When $i \le k+2 \ell$, in a valid combination of $\{j_1,\cdots, j_\ell\}$ in the definition of $ A^{(k)}_{i,\ell}$ we must have $j_m \le i \le k+2 \lfloor k/2\rfloor \le 2k$, for all $m=1,\cdots, \ell$. Thus, in these leading terms, we can drop the requirement that $j_m \le 2k$ in the definition and let
\begin{equation}\label{eq:def-Ail-no-k}
A_{i,\ell} (x, \theta)
: = 
  	 \sum_{\substack{j_1+ \cdots + j_\ell =i\\
 		4\le j_1, \cdots ,j_\ell }} q_{j_1}(x, \theta) \cdots q_{j_\ell}(x, \theta),
\end{equation}
which is {\it independent} from $k$, and  $A_{i,\ell} (x, \theta) = 0$  if no valid combination of $\{ j_1,\cdots, j_\ell \}$ exists.
One can verify that $A_{i,\ell} (x, \theta) $ is only non-zero when $  \ell \le \lfloor i/4 \rfloor $.
As a result, in the leading term the $\epsilon$'s power is always positive, i.e., $i/2-\ell \ge i/2 - \lfloor i/4 \rfloor \ge 1$
by that $i \ge 4$.
Putting together ${(\rm II)}_1$ and ${(\rm II)}_2$, we have
\[
{(\rm II)} =
{(\rm II)}_0 
	+  O(\epsilon^{(k+1)/2})  \sum_{ \ell=1}^{ \lfloor k/2 \rfloor} \frac{h( u^2) }{\ell!  }  
			\Big(c_r u^{ 2k+1} + c_q \sum_{j=4}^{2k}   u^j \Big)^\ell,
\]
where the leading term  
$
{(\rm II)}_{\rm 0} := \sum_{\ell =1}^{ \lfloor k/2 \rfloor} \frac{h^{(\ell)}( u^2) }{ \ell!  }  
  	\sum_{i =4}^{k+2\ell} A_{i,\ell}(x,\theta) u^i \epsilon^{i/2 - \ell}$.

Back to \eqref{eq:proof-expansion-h-in-l}, we have that, when $k \ge 2$, 
\begin{align*}
& h\Big(\frac{\|\iota \circ \exp_x( t \theta )\|^2_{\mathbb{R}^D}}{\epsilon}\Big)
 = h(u^2) + {(\rm II)}_0 \\
&~~~~~~~~~~~~~~~~~~~~~~~~~~~~~~
	+ O( \epsilon^{(k+1)/2}) 
	 \bigg[ 
	h(u^2) \sum_{\ell =1}^{ \lfloor k/2 \rfloor} \frac{1 }{\ell!  }  
			\Big(   \sum_{j=4}^{2k+1}   u^j \Big)^\ell
	+ h(\frac{u^2}{4}) u^{4 (\lfloor k/2 \rfloor + 1)} 
	\bigg],
\end{align*}
and when $k =0,1$, the expression is $h(u^2) +  O( \epsilon^{(k+1)/2})  h(u^2/4 ) u^4$.
Combining both cases, we have
\begin{align}\label{eq:expansion-h-RA-bound}
& h\Big(\frac{\|\iota \circ \exp_x( t \theta )\|^2_{\mathbb{R}^D}}{\epsilon}\Big)
 =   \sum_{ \ell=0}^{ \lfloor k/2 \rfloor} \frac{h^{(\ell)}( u^2) }{ \ell!  }  
  	\sum_{i = 0}^{k+2 \ell} A_{i, \ell}(x,\theta) u^i \epsilon^{i/2 - \ell}
+ R_A(u),  \\
&~~~ 
\max_{ 0 \le \ell \le \lfloor \frac{k}{2} \rfloor, \, 0 \le i \le k +2 \ell}\sup_{x, \theta} |A_{i, \ell}(x,\theta) | \le C_A,  \nonumber \\
&~~~ 
|R_A(u)|  \le C_A \epsilon^{(k+1)/2} h( \frac{u^2}{4}) \sum_{j=4}^{J_k} u^j,
\quad J_k := \max\{ (2k+1)\lfloor \frac{k}{2}\rfloor, \, 4(\lfloor \frac{k}{2} \rfloor + 1) \}, \nonumber
\end{align}
where in deriving the upper bound of $|R_A|$ we used that
and using that $h(u^2) \le h(u^2/4)$;
The constant $C_A$ depends on $k$,
the $\| \cdot \|_\infty$ norm of $ \Second$'s covariant derivatives up to  $\max\{2k-2,1\}$-th order.
In the leading terms of the expansion \eqref{eq:expansion-h-RA-bound},
we take the sum starting from $\ell =0$ and $i=0$ by allowing  
$0 \le i\le 3$ in the definition of $A_{i,\ell}$ in \eqref{eq:def-Ail-no-k} 
and extending the definition of $A_{i,\ell}$ for $\ell=0$ as
\begin{equation}\label{eq:def-Ail-no-k-small-i}
A_{i,\ell}(x, \theta)  :=
\begin{cases}
	1, &  \ell =0, \, i=0,   \\
	0, &  \ell  =0, \, i \ge 1.  \\
\end{cases}
\end{equation}
Note that when $i<4$ and $\ell \ge 1$, $A_{i, \ell} =0$ because $\ell > \lfloor i/4 \rfloor = 0$.
 By the definition of $A_{i,\ell}$ and that $ q_j (x, -\theta)=(-1)^j q_j (x, \theta)$, we have
 $A_{i, \ell}(x, - \theta) = (-1)^i A_{i, \ell} (x, \theta)$,
 and this holds for all $i \ge 0$ and $\ell \ge 0$.

\vspace{5pt}
Next, we expand $f(\exp_x(t \theta))$: 
Because $t \le 2\delta(\epsilon) < \xi$, we apply Lemma \ref{lemma:taylor-f-intrinsic-Holder} to have that
\[
f(\exp_x(t \theta)) = \sum_{i=0}^k \frac{1 }{i!} \nabla^i_\theta f(x) t^i  + r_B(t),
\quad |r_B(t)| \le \frac{1}{k!}L_{k,\beta} (f,x) t^{k+\beta} \le \|f \|_{k, \beta} t^{k+\beta}.
\]
Change the variable to $u=t/\sqrt{\epsilon}$, the upper bound of $r_B$ becomes 
$\|f \|_{k, \beta} \epsilon^{(k+\beta)/2} u^{k+\beta}$,
and since $u \ge 0$, $0 < \beta \le 1$,
$u^{k+\beta} \le \max \{u^k, u^{k+1} \}\le u^k + u^{k+1}$.
Thus we have
\begin{align}\label{Lemma:expansion 2}
& f(\exp_x(t \theta))
 = \sum_{i=0}^k B_i(x, \theta)  u^i \epsilon^{i/2} +R_B(u),
\quad B_i(x, \theta) := \frac{1 }{i!} \nabla^i_\theta f(x),   \\
\|f \|_{\infty}, \, 
& \max_{0 \le i\le k} \sup_{x, \theta} |B_i(x,\theta)| \le  C_B, 
\quad | R_B(u) | \le  C_B \epsilon^{(k+\beta)/2}  ( u^{k} + u^{k+1}),
\quad C_B:= \|f\|_{k, \beta}.
\nonumber
\end{align}
Note that $B_i(x, -\theta)=(-1)^i B_i(x, \theta)$ because $\nabla^i f(x)$ is an order-$i$ tensor.

\vspace{5pt}

Finally, by Lemma \ref{expansion-of-volume-of-form-and-Euclidean-distance}(i) b), when $k\ge 2$,
\begin{align*}
V(x, \theta, t)
=   
 1 + \sum_{i=2}^k V_i (x, \theta) t^i  + r_V (t),
\quad
| r_V(t) | \le c_V(k)  t^{k+1},
\end{align*}
where for each $i \geq 2$, 
$V_i(x, \theta)$  is determined by the curvature tensor  of $\calM$ and its covariant derivatives at $x$ up to ($i-2$)-th order;
The constant $c_V(k)$ depends on $d$ and the uniform bounds of up to ($k+1$)-th intrinsic derivatives of the Riemann metric tensor $g$.
When $k = 0, 1$, 
by Lemma \ref{expansion-of-volume-of-form-and-Euclidean-distance}(i) a),
we have $ V(x, \theta, t) = 1 + O(t^2)$, 
and the constant in big-O is bounded by the uniform bounds of up to the 2nd intrinsic derivative of $g$.
Combining both cases and changing variable to $u=t/\sqrt{\epsilon}$, we have
\begin{align}\label{Lemma:expansion 3}
&V(x, \theta, t)
=  
  \sum_{i= 0}^k V_i (x, \theta) u^i \epsilon^{i/2} + R_V (u), \\
&
\sup_{x,\theta}|V(x,\theta,t)|, \, \max_{ 0 \le i \le k }\sup_{x,\theta} | V_i(x,\theta) | \le C_{V},  
\quad
| R_V(u) | \le C_{V} \epsilon^{(k+1)/2}   u^{\max \{2, k+1 \}}, \nonumber
\end{align}
where for the case $k \le 1$ we used that $\epsilon \le \epsilon^{(k+1)/2}$,
and we define $V_0 (x,\theta) =1$ and $V_1 (x,\theta)= 0$.
The constant $C_V$ depends on $d$, 
the $\| \cdot \|_\infty$ norm of the curvature tensor and its covariant derivatives  up to $\max\{ k-2, 0 \}$-th order,
and the uniform bounds of up to $\max\{ k+1, 2\}$-th intrinsic derivatives of $g$.
Meanwhile, by Lemma \ref{expansion-of-volume-of-form-and-Euclidean-distance}(i) b),
$V_i(x,\theta) = \bar V_i (x) ( \theta ,\cdots, \theta) $ for a tensor field $\bar V_i$ of order $i$.
Then again by  linearity of tensor, we have $V_i(x, -\theta)=(-1)^iV_i(x, \theta)$ for $2 \le i \le k$.
The same relation also holds for $i=0,1$.

\vspace{10pt}
\noindent
$\bullet$ \underline{\textbf{Derivation of equation \eqref{G epsilon expansion with remainder} and the remainder}}
\vspace{5pt}

Substituting \eqref{eq:expansion-h-RA-bound}\eqref{Lemma:expansion 2}\eqref{Lemma:expansion 3} into \eqref{2nd part of the integral}
and changing variable to $u=t/\sqrt{\epsilon}$, 
we get
\begin{align*}
& G_\epsilon^{(1)} (f) (x)
= \frac{1}{(2\pi )^{d/2}}
	\int_{S^{d-1}} \int_{0}^{\frac{ 2 \delta(\epsilon)}{\sqrt{\epsilon}} }  
	\Big(  \sum_{ \ell = 0 }^{ \lfloor k/2 \rfloor} \frac{h^{(\ell)}( u^2) }{\ell !  }  
  		\sum_{i_1 = 0}^{k+2 \ell} A_{i_1, \ell}(x,\theta) u^{i_1} \epsilon^{i_1 /2 - \ell}
		+ R_A(u) \Big)   \nonumber \\
&~~~~~~~~~~~~~~~~~~~
	\Big( \sum_{i_2=0}^k B_{i_2}(x, \theta)  u^{i_2} \epsilon^{i_2/2} +R_B(u)  \Big) 
	\Big( \sum_{i_3=0}^k V_{i_3} (x, \theta) u^{i_3} \epsilon^{i_3/2} + R_V (u)  \Big) 
	 u^{d-1} du d\theta, \nonumber
\end{align*}
and  the three big brackets multiplied in the integrand represent 
$ h\Big(\frac{\|\iota \circ \exp_x( t \theta )\|^2_{\mathbb{R}^D}}{\epsilon}\Big)$,
 $f(\exp_x(t \theta))$
 and $ V(x, \theta, t)$ respectively. 
We are to collect terms up to a remainder of order $O(\epsilon^{(k+\beta)/2})$.

To proceed, we define the leading terms in the three brackets as 
\begin{align*}
L_A(u) 
&:= \sum_{\ell = 0 }^{ \lfloor k/2 \rfloor} \frac{h^{(\ell)}( u^2) }{ \ell!  }  
  		\sum_{i = 0}^{k+2\ell} A_{i,\ell}(x,\theta) u^{i} \epsilon^{i /2 - \ell}, \\
L_B(u) 
& : = 	 \sum_{i=0}^k B_{i}(x, \theta)  u^{i} \epsilon^{i/2}, \quad
L_V(u) := 	\sum_{i=0}^k V_{i} (x, \theta) u^{i} \epsilon^{i/2}.
\end{align*}
Recall the upper bounds of $|A_{i,\ell}|$, $|B_i|$ and $|V_i|$ derived in \eqref{eq:expansion-h-RA-bound}\eqref{Lemma:expansion 2}\eqref{Lemma:expansion 3}, 
and note that the $\epsilon$-factor always has non-negative power and thus is bounded by 1,
then we have, with a constant $c_A(k)$ depending on $k$,
\[
|L_A(u)| \le C_A c_A(k) h(u^2)  \sum_{j=0}^{2k} u^j, \quad 
|L_B(u)| \le C_B \sum_{j=0}^{k} u^j, \quad
|L_V(u)| \le C_V \sum_{j=0}^{k} u^j.
\]
Meanwhile, $R_A$, $R_B$ and $R_V$ at all  $O(\epsilon^{(k+\beta)/2})$  as shown in  \eqref{eq:expansion-h-RA-bound}\eqref{Lemma:expansion 2}\eqref{Lemma:expansion 3}.
As a result, when we multiply the three brackets $(L_A + R_A)(L_B + R_B)(L_V + R_V)$, 
we can bound all the other terms except from $L_AL_BL_V$ to be  $O(\epsilon^{(k+\beta)/2})$.
Specifically, with a positive constant $c_2(k)$ depending on $k$, we have
$
|L_AL_BR_V|, 
|L_AR_BL_V|, 
|L_AR_BR_V|,
 |R_AR_BL_V|, 
|R_AR_BR_V|$
are all upper bounded by 
\[
 C_A C_B C_V  c_2(k)  \epsilon^{(k+\beta)/2}  h( \frac{u^2}{4}) 
	\sum_{j=0}^{J_k + 2k+3} u^j,
\]
where we used that $J_k \ge 2k$
and $h(u^2) \le h(u^2/4)$. 

The term $ L_AL_B L_V$ consists of terms having half-integer powers of $\epsilon$,
i.e. $\epsilon^{i/2}$ where $i=0, \cdots, 3k$.
We separate these terms into two parts
where $i\le k$ terms are kept and and the rest go to the remainder:
\[
L_AL_B L_V 
= \sum_{i=0}^k 
	\epsilon^{{i}/{2}}
	\sum_{\substack{i_1+i_2+i_3-2\ell=i \\ 
		 i_1  i_2, i_3   \ge 0\\ 
		0 \leq \ell \leq \lfloor i_1/4 \rfloor}}  
	\frac{h^{(\ell)}(u^2) }{ \ell!} A_{i_1,\ell}(x,\theta)B_{i_2}(x, \theta)V_{i_3}(x,\theta)
	u^{i+2\ell} 
 + R_1(u),
\]
where, for a positive constant $c_1(k)$ depending on $k$, 
\[
|R_1(u)| \le C_A C_B C_V  c_1(k)  \epsilon^{(k+1)/2}  h(u^2) \sum_{j=0}^{4k} u^j.
\]
In the expression, we rewrite the summation limit of $i_1, i_2, i_3, \ell$, but the expression is equivalent as before
because the summed terms are non-zero only when $ 0 \le i_1-2 \ell \le k$, $0 \le i_2, i_3 \le k$ and $\ell \le \lfloor k/2\rfloor$.
(Since $\ell \le \lfloor i_1/4 \rfloor \le i_1/2$, 
we have $i_1-2 \ell \ge 0$, 
thus $0 \le i_1-2\ell, \, i_2, \, i_3 \le i \le k$. 
Then  $i_1 \le k + 2  \lfloor i_1/4 \rfloor$ gives that $i_1 \le 2k$, and then $\lfloor i_1/4 \rfloor  \le \lfloor k/2 \rfloor $.)

Putting things together, we have 
\begin{align}
 G_\epsilon^{(1)} (f) (x)
& =
	\sum_{i=0}^k 
	\sum_{\substack{i_1+i_2+i_3-2\ell=i \\ i_1, i_2, i_3  \ge 0 \\ 0 \leq \ell \leq \lfloor i_1/4 \rfloor}}
 	 \frac{ \epsilon^{{i}/{2}} }{(2\pi)^{d/2}} \int_{S^{d-1}} \int_{0}^{\frac{2{\delta(\epsilon)}}{\sqrt{\epsilon}}}  
	\frac{h^{(\ell)}(u^2)}{ \ell!} 
	A_{i_1,\ell}(x,\theta)B_{i_2}(x, \theta)
	\nonumber\\
&~~~~~~~~~ 
	V_{i_3}(x,\theta)  u^{i+2\ell+d-1} 
	du d\theta 
	+ \frac{1}{(2\pi)^{d/2}} \int_{S^{d-1}} \int_{0}^{\frac{2{\delta(\epsilon)}}{\sqrt{\epsilon}}}  R(u)  u^{d-1} du d\theta,  
\label{eq:lemma:expansion4-a}
\end{align}
where, again by $J_k \ge 2k$ and $\epsilon <1$, 
we have that for a positive constant $c(k)$ depending on $k$,
$ |R(u)| \le C_A C_B C_V  c(k)  \epsilon^{(k+\beta)/2}  h(u^2/4 ) 
	\sum_{j=0}^{J_k + 2k+3} u^j$.
Recall that $C_B = \|f\|_{k,\beta}$ 
and the constants $C_A$, $C_V$ only depend on the manifold geometry,
we can write the bound as
\begin{equation}\label{eq:proof-bound-expansion-Ru}
    |R(u)| \le C_R \| f\|_{k,\beta} \epsilon^{(k+\beta)/2}  h( \frac{u^2}{4}) 
	\sum_{j=0}^{J_k + 2k+3} u^j,
\end{equation}
where $C_R$  depends  on $k$ and $\calM$,
inheriting the dependence on manifold geometry from the constants $C_A$ and $C_{V}$
as declared beneath \eqref{eq:expansion-h-RA-bound} and \eqref{Lemma:expansion 3} respectively.

We define the last term in \eqref{eq:lemma:expansion4-a} as 
\begin{equation*}
{\rm R}^{(3)} (x)
: = \frac{1}{(2\pi)^{d/2}} \int_{S^{d-1}} \int_{0}^{\frac{2{\delta(\epsilon)}}{\sqrt{\epsilon}}}  R(u) u^{d-1} du d\theta,
\end{equation*}
and will show that it belongs to the remainder. Next, we explore the $i$th term in the summation in  \eqref{eq:lemma:expansion4-a}. Note that 
$$
A_{i_1, \ell }(x, -\theta) B_{i_2}(x, -\theta) V_{i_3}(x,-\theta) 
= (-1)^{i_1+i_2+i_3}
    A_{i_1, \ell}(x,\theta)B_{i_2}(x, \theta)V_{i_3}(x,\theta).
$$ 
Hence, by the symmetry of $S^{d-1}$, $\int_{S^{d-1}} A_{i_1, \ell}(x, \theta)B_{i_2}(x, \theta)V_{i_3}(x,\theta) d\theta \not=0$ if and only if $i_1+i_2+i_3$ is even. 
This means that the $i$th term in the summation is non-zero if and only if $i=i_1+i_2+i_3-2\ell$ is also even. 
As a result,  only terms with integer powers of $\epsilon$ remain in the summation,
and \eqref{eq:lemma:expansion4-a} can be written as
\begin{align}
G_\epsilon^{(1)} (f) (x)
&= \sum_{j=0}^{ \lfloor k/2 \rfloor } \epsilon^{ j } 
\int_0^{\frac{2{\delta(\epsilon)}}{\sqrt{\epsilon}}} 
	\sum_{\substack{i_1+i_2+i_3-2\ell= 2j \\
             i_1, i_2, i_3 \ge 0 \\
             0 \leq \ell \leq \lfloor i_1/4 \rfloor }}  
             \frac{h^{( \ell )}(u^2) }{(2\pi)^{d/2}}
             \Big( \int_{S^{d-1}} \frac{1}{ \ell! i_2!} 
             	A_{i_1,\ell}(x, \theta)
		\nonumber \\
&~~~~~~~~~~~~~~~~~~~~~~~~~~~~~~~~~~~
	V_{i_3}(x,\theta) 
		\nabla^{i_2}_\theta f(x) 
		d\theta 
		\Big) 
 	u^{2j +2\ell+d-1} du 
 	+ {\rm R}^{(3)} (x), 
 \label{Lemma:expansion 4}
\end{align}
where we also insert in the definition of $B_i$ as in \eqref{Lemma:expansion 2} to make explicit the dependence on the function $f$
(since $A_{i, \ell}$ and $V_{i}$ are determined by $\calM$ and do not involve $f$).

Inside the  summation over $j$ in \eqref{Lemma:expansion 4}, 
the $du$-integral limit ${2{\delta(\epsilon)}}/{\sqrt{\epsilon}}$ depends on $\epsilon$.
Note that  ${2{\delta(\epsilon)}}/{\sqrt{\epsilon}} \sim \sqrt{\log (1/\epsilon)}$ is large when $\epsilon$ is small, 
and we will show that the contribution from $\int_{\frac{2{\delta(\epsilon)}}{\sqrt{\epsilon}}}^{\infty} \cdots du$ can be all put to the remainder term. 
As a result, we can use the contribution from $\int_{0}^{\infty} \cdots du$ to construct the functions $f_j$ in the desired expansion \eqref{G epsilon expansion with remainder}, which does not involve $\epsilon$.
Specifically, we define
\begin{align}
\label{DEfinition of f_j}
 f_{j}(x) 
& : = \int_{0}^{ \infty } 
  	\sum_{\substack{i_1+i_2+i_3-2\ell= 2j \\
             i_1, i_2, i_3 \ge 0 \\
             0 \leq \ell \leq \lfloor i_1/4 \rfloor }}  
              \frac{h^{( \ell )}(u^2)}{(2\pi)^{d/2}}  \\
&~~~~~~~~~~~~~~~~~~
  	\bigg( \int_{S^{d-1}}
  	\frac{1}{ \ell! i_2! } A_{i_1,\ell}(x, \theta) 		
	V_{i_3}(x,\theta) 
 	\nabla^{i_2}_\theta f(x)d\theta 
	 \bigg) u^{2j+2\ell+d-1} du.  
	 \nonumber
\end{align}
To calculate the expression, we use $h^{(\ell)}(r) = \frac{e^{-2 r }}{(-2)^\ell}$ and introduce 
 the $i$th moment 
$
 \mathfrak{M}_i : = \int_{0}^{\infty}\frac{h(u^2)}{(2\pi)^{d/2}} u^i du$.
 We also define
\begin{equation}\label{eq:def-Silii-at-x}
S_{i_1, \ell, i_2, i_3}(x)
: = \int_{S^{d-1}}A_{i_1,\ell}(x, \theta)V_{i_3}(x,\theta) \nabla^{i_2}_\theta f(x)d\theta,
\end{equation}
and then we have
\begin{equation}\label{eq:proof-fj-equiv-expression}
f_j(x)
=  \sum_{\substack{i_1+i_2+i_3-2\ell= 2j \\
             i_1, i_2, i_3 \ge 0 \\
             0 \leq \ell \leq \lfloor i_1/4 \rfloor }}   
	 \mathfrak{M}_{2j+2\ell+d-1} 
	\frac{1}{(-2)^{\ell} \ell!i_2!}  S_{i_1, \ell, i_2, i_3}(x). 
\end{equation}
In particular, we can show that $f_0=f$: 
When $j=0$, 
the summation only contains one term where $i_1=i_2=i_3=\ell=0$. 
By definition, $A_{0,0} =1$, $V_0 = 1$, and then 
 $S_{0, 0, 0, 0}(x)=|S^{d-1}|f(x)$. 
Then we have $f_0(x)= \mathfrak{M}_{d-1} |S^{d-1}|f(x) = f(x)$
by that  $\mathfrak{M}_{d-1}= 1/|S^{d-1}| $.

Meanwhile, we define 
\[
{\rm R}^{(4)}_j (x) := 
	\sum_{\substack{i_1+i_2+i_3-2\ell= 2j \\
             i_1, i_2, i_3 \ge 0 \\
             0 \leq \ell \leq \lfloor i_1/4 \rfloor }}   
	\bigg( \int_{\frac{2{\delta(\epsilon)}}{\sqrt{\epsilon}}}^{\infty}  \frac{h(u^2)}{(2\pi)^{d/2}} u^{2j+2\ell+d-1} du \bigg)
	 \frac{1}{(-2)^{\ell} \ell!i_2!}   S_{i_1, \ell, i_2, i_3}(x), 
\]
and then  \eqref{Lemma:expansion 4}  can be written as
\begin{equation}\label{interior expansion f epsilon j}
G_\epsilon^{(1)} (f) (x) 
= \sum_{j=0}^{ \lfloor k/2 \rfloor } \epsilon^{ j } ( f_j(x) - {\rm R}^{(4)}_j (x) ) + {\rm R}^{(3)} (x). 
\end{equation}
Putting together \eqref{Lemma: integral in and out} and \eqref{interior expansion f epsilon j},
we obtain the expansion in the form as \eqref{G epsilon expansion with remainder} where
\begin{align}\label{Lemma: remainder expression}
R_{f,\epsilon}(x)
= {\rm R}^{(2)} (x)
+ {\rm R}^{(3)} (x)
- \sum_{j=0}^{ \lfloor k/2 \rfloor }  \epsilon^j  {\rm R}^{(4)}_j (x).
\end{align}
To prove the lemma, we are to bound the remainder term $\| R_{f,\epsilon} \|_\infty$ and verify the stated properties of $f_j$.

\vspace{10pt}
\noindent
$\bullet$ \underline{\textbf{Bound the remainder $R_{f,\epsilon}$}}
\vspace{5pt}

We first bound $|{\rm R}^{(3)} (x)|$. By definition, we have
\begin{align*}
| {\rm R}^{(3)} (x) |
& \le  
 \frac{|S^{d-1}|}{(2\pi)^{d/2}} 
	 \int_{0}^{\frac{2{\delta(\epsilon)}}{\sqrt{\epsilon}}}  
 	|R(u)| u^{d-1} du  
 \leq  
 \frac{|S^{d-1}|}{(2\pi)^{d/2}} 
	 \int_{0}^{\infty}  
 	|R(u)| u^{d-1} du.
\end{align*}
By the upper bound of $|R(u)|$
as in \eqref{eq:proof-bound-expansion-Ru}, we have
\begin{align*}
| {\rm R}^{(3)} (x) |
& \le  C_R |S^{d-1}| \| f\|_{k,\beta} 
	\epsilon^{(k+\beta)/2}
	\sum_{j=0}^{J_k + 2k+3}
	\int_{0}^{\infty}  \frac{ h(u^2 /4 ) }{(2\pi)^{d/2}}   u^{j+d-1} du \\
& = C_R |S^{d-1}| \| f\|_{k,\beta} 
	\left(\sum_{j=0}^{J_k + 2k+3} \mathfrak{m}_{j+d-1} \right)
	\epsilon^{(k+\beta)/2},
	\quad \mathfrak{m}_i := \int_{0}^{\infty}  \frac{ h(u^2 /4 ) }{(2\pi)^{d/2}}   u^{i} du.
\end{align*}
Recall that $J_k = \max\{ (2k+1)\lfloor \frac{k}{2}\rfloor, \, 4(\lfloor \frac{k}{2} \rfloor + 1) \}$. Hence, $\sum_{j=0}^{J_k + 2k+3}  \mathfrak{m}_{j+d-1}$ can be bounded by a constant depending on $k$ and $d$.
Then  we have
\begin{align}\label{Lemma:remainder bound 1}
|{\rm R}^{(3)} (x)| \leq C_{R,3}  \|f\|_{k, \beta} \epsilon^{{(k+\beta)}/{2}},
\end{align}
where $C_{R,3}$ depends on $d$, $k$, and $\calM$, 
inheriting the $\calM$-dependence from the constants $C_R$ as declared beneath \eqref{eq:proof-bound-expansion-Ru}.

Next, we bound $|\sum_{j=0}^{ \lfloor k/2 \rfloor }  \epsilon^j  {\rm R}^{(4)}_j (x)|$. 
We have derived above before \eqref{eq:lemma:expansion4-a}  that a combination of $\{ i_1,\ell, i_2, i_3 \}$
 that contributes non-zero-ly to the summation in the definition of  $f_j$ (and thus to the definition of ${\rm R}^{(4)}_j (x)$) must satisfy
 $ 0 \le i_1-2 \ell \le k$, $0 \le i_2, i_3 \le k$ and $\ell \le \lfloor k/2\rfloor$,
 and thus we have 
\begin{equation}\label{eq:valid-combi-i1-l-i2-i3}
0 \le i_1 - 2\ell, \, i_2, \, i_3 \le 2j \le k, 
\quad 0 \leq i_1 \leq 2k,
\quad  0 \leq 2\ell \leq k.
\end{equation}
Recall that $\frac{2\delta(\epsilon)}{ \sqrt{\epsilon}}=2\sqrt{(d+k+1)\log(\frac{1}{\epsilon})}$,
and we consider $j, i_1, i_2, i_3$ and $\ell$ that satisfy \eqref{eq:valid-combi-i1-l-i2-i3}.
Recall that $ 0 \le 2j+2\ell \le 2k$,
by Lemma \ref{upper incomplete gamma bound}, when $\epsilon< {1}/{e}$,  
$
\int_{\frac{2{\delta(\epsilon)}}{\sqrt{\epsilon}}}^{\infty}  \frac{h(u^2)}{(2\pi)^{d/2}} u^{2j+2\ell+d-1} du 
\leq \frac{c(k,d)}{(2\pi)^{d/2}}\epsilon^{d+k+1}.
$
By the definition of $S_{i_1, \ell, i_2, i_3}(x)$ in \eqref{eq:def-Silii-at-x} and \eqref{eq:expansion-h-RA-bound}\eqref{Lemma:expansion 2}\eqref{Lemma:expansion 3},
\begin{align*}
|S_{i_1, \ell, i_2, i_3}(x)|  \leq  |S^{d-1}| C_A C_V  \|f\|_{k, \beta}.
\end{align*}
Recall that the range of valid indices $ \{ i_1, \ell, i_2, i_3\}$ as in \eqref{eq:valid-combi-i1-l-i2-i3},
then there are at most $k^3$  terms indexed by $i_1, \ell, i_2, i_3$  in the summation in ${\rm R}^{(4)}_j (x)$.
Therefore, we have
\begin{align}
|{\rm R}^{(4)}_j (x)| \leq & \sum_{\substack{i_1+i_2+i_3-2\ell= 2j \\
             i_1, i_2, i_3 \ge 0 \\
             0 \leq \ell \leq \lfloor i_1/4 \rfloor }}  ( \int_{\frac{2{\delta(\epsilon)}}{\sqrt{\epsilon}}}^{\infty}  \frac{h(u^2)}{(2\pi)^{d/2}} u^{2j+2\ell+d-1} du) |S_{i_1, \ell, i_2, i_3}(x)| \nonumber \\
\leq& 
k^3
\frac{c(k,d)}{(2\pi)^{d/2}}|S^{d-1}| C_A C_V  \|f\|_{k, \beta} \epsilon^{d+k+1}.  \nonumber
\end{align}
Since $\epsilon < {1}/{e} < 1$, we have
\begin{align}\label{Lemma:remainder bound 2}
& \sum_{j=0}^{ \lfloor k/2 \rfloor }  \epsilon^j  |{\rm R}^{(4)}_j (x)| 
\leq  \lfloor k/2 \rfloor   k^3 \frac{c(k,d)}{(2\pi)^{d/2}}|S^{d-1}| C_A C_V  \|f\|_{k, \beta} \epsilon^{d+k+1}
:=C_{R,4}\|f\|_{k, \beta} \epsilon^{d+k+1}.
\end{align}
where $C_{R,4}$ depends on $d$, $k$, 
and $\calM$, inheriting the $\calM$-dependence from the constants $C_A$ and $C_{V}$.

At last, apply triangle inequality to \eqref{Lemma: remainder expression}, we have
$$ 
|R_{f,\epsilon}(x)| \leq
			 |{\rm R}^{(2)} (x)|+|{\rm R}^{(3)} (x)|+
			\sum_{j=0}^{ \lfloor k/2 \rfloor }  \epsilon^j  | {\rm R}^{(4)}_j (x)|.
$$
By substituting the bounds \eqref{1st part of the integral}, \eqref{Lemma:remainder bound 1}, and \eqref{Lemma:remainder bound 2}, the upper bound for $|R_{f ,\epsilon}(x)|$ in the statement (i) of the lemma follows,
where $ \tilde{C}_1(\mathcal{M},d, k) =  C_{R,2}+ C_{R,3}+ C_{R,4} $ satisfies the declared dependence on $k$ and manifold geometric quantities.

Finally, we collect the requirement on the smallness of $\epsilon$: the needed conditions so far are
$
\epsilon < \epsilon_1 :=\min\{ \epsilon_{1,\calM},  {1}/{e}\}.
$
Hence, $\epsilon_1$ is a constant depending on $d$, $k$, $\xi$ and $\tau$.
This finishes the proof of \eqref{G epsilon expansion with remainder} and statement (i) of the lemma.

\subsection{Proof of $f_j \in C^{k-2j, \beta}(\calM)$ and 
Lemma \ref{lemma:22_holder}(ii)}

Recall the definition of $f_j(x)$ in \eqref{DEfinition of f_j}, 
and we have shown that $f_0 = f$.
Thus, the statement
$\|f_j\|_{k-2j,\beta} \leq \tilde{C}_2 \|f\|_{k, \beta}$
 trivially holds when $j=0$ with constant $\tilde C_2 = 1$.

To prove the cases for $j \ge 1$, 
we will need to analyze the differential property of $f_j$ on the manifold. 
Using the symbol $S_{i_1, \ell, i_2, i_3}(x)$ defined in \eqref{eq:def-Silii-at-x}, we have the equivalent expression of $f_j$ as in \eqref{eq:proof-fj-equiv-expression}, so we focus on the differential property of $S_{i_1, \ell, i_2, i_3}$. 
Recall that a combination of indices $\{ i_1, \ell, i_2, i_3\}$ that contribute to the summation in  \eqref{eq:proof-fj-equiv-expression},
which we call a {\it valid} combination, 
must satisfy \eqref{eq:valid-combi-i1-l-i2-i3}.
Since $ \lfloor k/2 \rfloor \ge j \ge 1$, we consider $k \ge 2$.

Strictly speaking, the expression \eqref{eq:def-Silii-at-x} stands for the value of $S_{i_1, \ell, i_2, i_3}$ at a point $x$ only, where $\theta$ is unit vector in  $T_x \calM$.  
When $x$ moves on $\calM$, the tangent plane $T_x \calM$ also changes, and thus the formal definition of $S_{i_1, \ell, i_2, i_3}(x)$ as a function of $x$ should be
\begin{equation}\label{eq:expression-Silii-1}
S_{i_1, \ell, i_2, i_3}(x)=\int_{S^{d-1}_x}A_{i_1,\ell}(x, \theta)V_{i_3}(x,\theta) \nabla^{i_2}_\theta f(x)d\theta,
\end{equation}
where $S^{d-1}_x$ is the unit ($d-1$)-sphere in $T_x \calM$.
To analyze the covariant derivative of  $S_{i_1, \ell, i_2, i_3}(x)$, we will introduce a parallel frame which provides a differentiable mapping $T( y, \theta)$ that maps from every $y$ (in a neighborhood of $x$)
 and $\theta \in {\rm\bf S}^{d-1}$,  the unit ($d-1$)-sphere in  $\R^d$, 
 to a unit vector in $T_y \calM$.
Using this mapping, we will show that 
\begin{equation}\label{eq:expression-Silii-2}
S_{i_1, \ell, i_2, i_3}(y)=\int_{ {\rm \bf S}^{d-1} } A_{i_1,\ell}(y, T(y, \theta) )V_{i_3}(y, T(y, \theta)) \nabla^{i_2}_{T(y,\theta)} f(y) d \theta, 
\quad \forall y\in B_\xi (x),
\end{equation}
where the domain of $d\theta$ is an ``absolute'' ($d-1$)-sphere in $\R^d$ and is independent from $x$.
Then, the covariant derivative can be taken inside the integral of $d\theta$ and considered for each fixed $\theta$.
The construction of the parallel frame will allow convenient evaluation of the covariant derivative when the mapping $T$ is involved.

\vspace{10pt}
\noindent
$\bullet$ \underline{\textbf{Parallel frame and the covariant derivatives of $S_{i_1, \ell, i_2, i_3}(x)$}}
\vspace{5pt}

We introduce the {\it parallel frame} 
$\{\mathcal{E}_i\}_{i=1}^d$ 
defined on $B_\xi(x) \subset \calM$:
For any $y \in B_{\xi}(x)$, 
recall that $P_{x,y}: T_x \calM \to T_y \calM$ denote the parallel transport from $x$ to $y$;
Let $ \{ E_i\}_{i=1}^d$ be an orthonormal basis of $T_x\mathcal{M}$,
and we define $\mathcal{E}_i(y) = P_{x,y} E_i$, $i=1,\cdots, d$.
At $x$, this gives that $\calE_i(x) = E_i$.
As a result,  $\{\mathcal{E}_i(y)\}_{i=1}^d$ form an orthonormal basis of $T_y\mathcal{M}$.
Meanwhile, because $\calE_i$ is parallel along each radial geodesic,  for any $v \in S_x^{d-1} \subset T_x \calM$, we have
\begin{equation}\label{eq:vanishing-nabla-along-geodesic-calEi}
\nabla_{\dot \gamma} \calE_i = 0  \quad \text{along the geodesic $\gamma(t) = \exp_x( tv)$, $| t | < \xi$.}
\end{equation}
With the parallel frame, we define the mapping $T(y,\theta)$ as 
\[
(y,  \theta = (u_1,\cdots, u_d))
\mapsto T(y, \theta) = \sum_{i=1}^d u_i \mathcal{E}_i(y) \in T_y \calM,
\quad  \forall y \in B_\xi(x), \, \theta \in {\rm\bf S}^{d-1}.
\]
 The mapping $T(y, \cdot): {\rm \bf S}^{d-1} \to S^{d-1}_y$ preserves the measure on ${\rm\bf S}^{d-1}$,
because at any $y$, $\{\mathcal{E}_i(y) \}_{i=1}^d$ form an orthonormal basis. 
Evaluating \eqref{eq:expression-Silii-1} at any $y \in B_\xi(x)$,
we then have \eqref{eq:expression-Silii-2} hold by change of variable of $\theta$.

The usage of \eqref{eq:expression-Silii-2} lies in that we now have an expression of $S_{i_1, \ell, i_2, i_3}$ on a neighborhood $B_\xi (x)$ of $x$. The idea to compute and analyze the covariance derivative of $S_{i_1, \ell, i_2, i_3}$ is by leveraging a ``tensor-field view'' of the integrand on the r.h.s. of  \eqref{eq:expression-Silii-2}.
Specifically, for each fixed $\theta \in {\rm\bf S}^{d-1}$, we define $U_\theta(y) := T(y,\theta)$ and then $U_\theta$ is a vector field on $B_\xi(x)$.
We will show that $\forall y \in B_\xi(x)$,
\begin{equation}\label{eq:tensor-field-extension-AVF-goal}
A_{i_1,\ell}(y, T(y, \theta) ) V_{i_3}(y, T(y, \theta)) \nabla^{i_2}_{T(y,\theta)} f(y)
= ( \bar A_{i_1,\ell} \bar V_{i_3} \bar F_{i_2}) ( \underbrace{ U_\theta, \cdots, U_\theta}_{\text{$i_1+i_2+i_3$ many}})|_y, 
\end{equation}
where $\bar A_{i_1,\ell}$,  $\bar V_{i_3}$, and  $\bar F_{i_2}$ are tensor fields on $\calM$ of order $i_1$, $i_3$, and $i_2$ respectively, 
satisfying 
\begin{align}
 A_{i_1,\ell}( y, U_\theta(y) ) & = \bar A_{i_1, \ell} (U_\theta, \cdots, U_\theta)|_y,   \label{eq:tensor-field-extension-A-goal}\\
 V_{i_3}(y, U_\theta(y) ) & = \bar V_{i_3}  (U_\theta, \cdots, U_\theta)|_y,  \label{eq:tensor-field-extension-V-goal} \\
 \nabla^{i_2}_{U_\theta(y)} f(y) & = \bar F_{i_2} (U_\theta, \cdots, U_\theta)|_y. \label{eq:tensor-field-extension-F-goal}
\end{align}
The construction of $\bar F_{i_2}$ is direct by the covariant derivative of $f$: 
we let $\bar F_{i_2} = \nabla^{i_2} f$ which is an order-$i_2$ tensor field.  
For any vector field $U$ on $\calM$, 
we have $\bar F_{i_2}(U, \cdots, U)|_y = \nabla^{i_2}_{U(y)} f(y)$ for any $y \in \calM$ by definition of the covariant derivative. Thus we have \eqref{eq:tensor-field-extension-F-goal} hold for all $0 \le i_2 \le k$, and we also have that 
\begin{equation}\label{eq:nabla-barF-i2-expression}
\nabla^m \bar F_{i_2} = \nabla^{m+ i_2 } f, \quad \forall m \le k - i_2.
\end{equation}

The construction of $\bar A_{i_1, \ell} $ and $\bar V_{i_3} $ are results of Lemma \ref{expansion-of-volume-of-form-and-Euclidean-distance}.
We first consider $\bar V_{i_3} $.
By Lemma \ref{expansion-of-volume-of-form-and-Euclidean-distance}(i) b), when $i_3 \ge 2$, there exists an order-$i_3$ tensor field $\bar V_{i_3} $ on $\calM $ s.t., for any vector field $U$ on $\calM$,
$\bar V_{i_3} (U, \cdots, U)|_y 
= \bar V_{i_3}(y) (U(y), \cdots, U(y))
= V_{i_3 } ( y, U(y))$, $\forall y \in \calM$.
The tensor field $\bar V_{i_3}$ consists of sums of products of the curvature tensor and its covariant derivatives up to ($i_3-2$)-th order,
including a contraction of the tensors.
When $i_3 = 0$ or 1, we set $\bar V_0 = 1$ and $\bar V_1 =0$ which are constant tensor fields.
We then have \eqref{eq:tensor-field-extension-V-goal} hold  for all $0 \le i_3 \le k$, 
and $\nabla^m \bar V_{i_3}$ is a tensor field 
determined by
the curvature tensor and its covariant derivatives up to $\max\{ m+i_3-2, 0\}$-th order.

To construct $\bar A_{i_1, \ell} $, recall the definition of $A_{i_1, \ell}$ in \eqref{eq:def-Ail-no-k} for $i_1 \ge 4$ and $\ell \ge 1$.
For $4 \le i_1 \le 2k$ and $1 \le l \le \lfloor i_1/4 \rfloor$, we define 
$
\bar{A}_{i_1,\ell} : = 
  	 \sum_{\substack{j_1+ \cdots + j_\ell =i_1\\
 		4\le j_1, \cdots ,j_\ell }} \bar{q}_{j_1} \cdots \bar{q}_{j_\ell},
$
where, applying Lemma \ref{expansion-of-volume-of-form-and-Euclidean-distance}(ii) b), 
each $\bar q_{j_i}$ is an order-$j_i$ tensor field on $\calM$ s.t. 
for any vector field $U$ on $\calM$,
$ \bar q_{j_i}(U, \cdots, U)|_y 
= \bar q_{j_i}(y) (U(y), \cdots, U(y))
= q_{j_i } ( y, U(y))$, $\forall y \in \calM$.
In addition, $\bar q_{j_i}$ can be expressed through dot products and sums of the second fundamental form $\Second$ 
and its covariant derivatives up to ($j_i-4$)-th order with coefficients depending on $j_i$.
For $i_1 \le 3$ or $\ell = 0$, we let $\bar A_{i_1, \ell} $ be the constant tensor fields in line with \eqref{eq:def-Ail-no-k-small-i}.
This construction ensures that
$\bar{A}_{i_1,\ell}$ is a tensor field of order $i_1$
satisfying  \eqref{eq:tensor-field-extension-A-goal} for all valid $i_1$ and $\ell$, 
and $\nabla^m \bar{A}_{i_1,\ell}$ is a tensor field consisting of dot products and sums of $\Second$  and and its covariant derivatives up to $\max\{ m+i_1-4, 0\}$-th order. 

By now our construction fulfills \eqref{eq:tensor-field-extension-A-goal}\eqref{eq:tensor-field-extension-V-goal}\eqref{eq:tensor-field-extension-F-goal}, then by definition of $U_\theta$ we have \eqref{eq:tensor-field-extension-AVF-goal} hold. 
Since this holds for any $\theta \in  {\rm \bf S}^{d-1} $,
we can go back to \eqref{eq:expression-Silii-2} and rewrite it as
$
S_{i_1, \ell, i_2, i_3}(y)= \int_{ {\rm \bf S}^{d-1} } 
	(\bar A_{i_1,\ell} \bar V_{i_3} \bar F_{i_2}) (U_\theta, \cdots, U_\theta)|_y  
	d \theta$, 
$\forall y\in B_\xi (x)$.
In view of the tensor field, this gives that
\[
S_{i_1, \ell, i_2, i_3}= \int_{ {\rm \bf S}^{d-1} } 
	(\bar A_{i_1,\ell} \bar V_{i_3} \bar F_{i_2}) (U_\theta, \cdots, U_\theta)
	d \theta,
	\quad \text{on $B_\xi(x)$.}
\]

We want to compute $\nabla^m_v S_{i_1, \ell, i_2, i_3}(x)$ for an arbitrary $v \in S_x^{d-1} \subset T_x \calM$.
For any such $v$, we consider covariant derivative along the radial geodesic $\gamma(t)$ with $\dot \gamma(0) = v$, then we have
\begin{align*}
\nabla_{\dot \gamma}  S_{i_1, \ell, i_2, i_3} 
& = \int_{ {\rm \bf S}^{d-1} } 
	\nabla_{\dot \gamma} [ (\bar A_{i_1,\ell} \bar V_{i_3} \bar F_{i_2}) (U_\theta, \cdots, U_\theta)  ]
	d \theta \\
& = \int_{ {\rm \bf S}^{d-1} } 
	\nabla(\bar A_{i_1,\ell} \bar V_{i_3} \bar F_{i_2}) (\dot \gamma, U_\theta, \cdots, U_\theta)  
	d \theta
	 \quad \text{along $\gamma(t)$, $|t| < \xi$,}
\end{align*}
where in the second equality we used that $\nabla_{\dot \gamma}  U_\theta = 0$ because of \eqref{eq:vanishing-nabla-along-geodesic-calEi} and $U_\theta \in \text{span}\{ \calE_i, i=1,\cdots, d\}$.
Because $\nabla(\bar A_{i_1,\ell} \bar V_{i_3} \bar F_{i_2})$ is also a tensor field on $\calM$, 
and $\nabla_{\dot \gamma} \dot \gamma =0$ along $\gamma(t)$ as well,
we have 
$
\nabla^2_{\dot \gamma}  S_{i_1, \ell, i_2, i_3}  
= \int_{ {\rm \bf S}^{d-1} } 
	\nabla^2(\bar A_{i_1,\ell} \bar V_{i_3} \bar F_{i_2}) ( \dot \gamma, \dot \gamma, U_\theta, \cdots, U_\theta)  
	d \theta$ along $\gamma(t)$, $|t| < \xi$.
Repeating this argument recursively, we have
\begin{equation}\label{eq:nabla-dot-gamma-S-along-gamma}
\nabla^m_{\dot \gamma}  S_{i_1, \ell, i_2, i_3} 
= \int_{ {\rm \bf S}^{d-1} } 
	\nabla^m(\bar A_{i_1,\ell} \bar V_{i_3} \bar F_{i_2}) 
	(\underbrace{\dot \gamma,  \cdots, \dot \gamma}_{\text{$m$ many}},
	U_\theta, \cdots, U_\theta)  
	d \theta
	 \quad \text{along $\gamma(t)$, $|t| < \xi$,}
\end{equation}
and this goes up to a high order of $m$ as long as the covariant derivatives exist. 
Assuming that tensor fields $\bar A_{i_1,\ell}$ and $\bar V_{i_3}$ based on manifold geometric quantities have sufficient regularity, 
the only constraint is that $ m +i_2 \le k $ for $\nabla^{i_2 + m} f$ to exist.
Since $i_2 \le 2j$ by \eqref{eq:valid-combi-i1-l-i2-i3},
we can always take $m$ up to $k-2j$.

Evaluating \eqref{eq:nabla-dot-gamma-S-along-gamma} at $t=0$, recall that $\gamma(0) = x$, $\dot \gamma (0) = v$,  we have
\begin{align*}
\nabla^m_{v}  S_{i_1, \ell, i_2, i_3}(x) 
& = \int_{ {\rm \bf S}^{d-1} } 
	\nabla^m(\bar A_{i_1,\ell} \bar V_{i_3} \bar F_{i_2})(x) 
	(\underbrace{ v,  \cdots, v}_{\text{$m$ many}},
	U_\theta(x), \cdots, U_\theta(x))  
	d \theta \\
& = \int_{ { S}_x^{d-1} } 
	\nabla^m(\bar A_{i_1,\ell} \bar V_{i_3} \bar F_{i_2})(x) 
	(\underbrace{ v,  \cdots, v}_{\text{$m$ many}},
	\theta, \cdots, \theta)  
	d \theta,
\end{align*}
where  the second equality is by that $U_\theta(x) = T(x,\theta)$ and we use change of variable again to put the integral of $d\theta$ on $S_x^{d-1}$.
We now introduce a condense notation:  for order-$r$ tensor $T^{(r)}$, we define
\begin{equation}\label{eq:condense-notation-tensor-Ti}
\nabla^m_v T^{(r)} (x)( \theta)
: = \nabla^m T^{(r)} (x) (  \underbrace{v, \cdots, v}_{\text{$m$ many}}, \underbrace{\theta, \cdots, \theta}_{\text{$r$ many}}).
\end{equation}
Then we obtain the expression
\begin{equation}\label{eq:compute-dir-S-expression-any-x}
\nabla^m_v S_{i_1, \ell, i_2, i_3}(x)
= \int_{S_x^{d-1}} \nabla^m_v (\bar A_{i_1,\ell} \bar V_{i_3} \bar F_{i_2}) (x)( \theta) d\theta, 
\quad  \forall v \in T_x \calM,  \, \forall 0 \le m \le k-2j.
\end{equation}
The argument to compute $\nabla^m_v S_{i_1, \ell, i_2, i_3}(x)$ so far chooses an $x$ to begin with (to construct the parallel frame),
but the argument holds for arbitrary $x$, 
and thus we have  \eqref{eq:compute-dir-S-expression-any-x} hold for any $x \in \calM$. 

By now,
assuming sufficient regularity of the manifold $\calM$, we have shown that $S_{i_1, \ell, i_2, i_3} \in C^{k-2j}(\calM)$ for each valid combination of the indices $\{i_1, \ell, i_2, i_3\}$.
This implies that $f_j \in C^{k-2j}(\calM)$. Next, we will upperbound the $\| \cdot \|_\infty$ norm of $ \nabla^m S_{i_1, \ell, i_2, i_3} $ for $m\le k-2j$
and also $L_{k-2j, \beta} (S_{i_1, \ell, i_2, i_3})$ by a multiple of $\| f\|_{k, \beta}$, 
which then bounds the $\| \cdot \|_{k-2j, \beta}$ of $S_{i_1, \ell, i_2, i_3}$ and subsequently that of $f_j$.

\vspace{10pt}
\noindent
$\bullet$
\underline{\textbf{Bound $\| \nabla^m S_{i_1, \ell, i_2, i_3}\|_{\infty}$ by $\|f\|_{k, \beta}$, for $0 \leq m \leq k-2j$ }}
\vspace{5pt}

Recall that we have $j \ge 1$ and $k \ge 2$.
We compute the integrand in \eqref{eq:compute-dir-S-expression-any-x} by the Product Rule:
For any $x \in \calM$ and any $\theta, \, v \in S_x^{d-1} \subset T_x \calM$,
\begin{equation}\label{eq:prod-rule-AVF-proof-1}
\nabla^m_v ( \bar A_{i_1,\ell} \bar V_{i_3} \bar F_{i_2}) (x)( \theta)
= \sum_{i=0}^m \binom{m}{i}  \nabla_v^{m-i} ( \bar A_{i_1,\ell} \bar V_{i_3} ) (x)(\theta) 
								\nabla_v^{i} (\nabla^{i_2} f)(x)(\theta),
\end{equation}
where we inserted \eqref{eq:nabla-barF-i2-expression} to reveal the covariant derivatives of $f$ in the expression.
We first derive a bound of the $ \nabla_v^{m-i} ( \bar A_{i_1,\ell} \bar V_{i_3} ) (x)(\theta) $ term.

We will consider up to ($k-2j+1$)-th covariant derivative of $\bar A_{i_1,\ell} \bar V_{i_3} $,
and will ensure sufficient manifold regularity later.
For $0 \leq p \leq k-2j+1$,  again by Product Rule and triangle inequality,
\begin{equation}\label{eq:prod-rule-AV-proof-1}
| \nabla_v^{p} ( \bar A_{i_1,\ell} \bar V_{i_3} ) (x)(\theta)  | 
\le 
\sum_{q=0}^p \binom{p}{q}  | \nabla_v^{p-q}   \bar A_{i_1,\ell}(x)(\theta) |
						|\nabla_v^{q} \bar V_{i_3}(x)(\theta)|.
\end{equation}
Recall that $\nabla^i \bar V_{i_3}$ is a tensor field 
determined by
the curvature tensor and its covariant derivatives up to $\max\{ i+i_3-2, 0\}$-th order,
then we have
\[
\sup_{ \substack{ 0\le i + i_3 \le k+ 1 \\ 
			0 \le i ,\, 0 \le i_3 \le k   }}
\sup_{x \in \calM} \sup_{v,  \, \theta \in S_x^{d-1}} 
|\nabla_v^{i} \bar V_{i_3}(x)(\theta)| \le  \bar C_V,
\]
where $\bar C_V$ is a constant depending on $d$, $k$ and the $\| \cdot\|_\infty $ norm of the curvature tensor of $\calM$ and its covariant derivatives up to ($k-1$)-th order.
Because $ 0 \le q \le p \le k-2j+1$,
and $0 \le i_3 \le 2j$ by \eqref{eq:valid-combi-i1-l-i2-i3},
the constant $\bar C_V$ upper bounds the term $|\nabla_v^{q} \bar V_{i_3}(x)(\theta)|$ in \eqref{eq:prod-rule-AV-proof-1}.

Similarly, since $\nabla^i \bar{A}_{i_1,\ell}$ is a tensor field consisting of dot products and sums of $\Second$  and and its covariant derivatives up to $\max\{ i+i_1-4, 0\}$-th order we have 
\[
\sup_{\substack{ 0 \le i + i_1 \le 2k+1\\
			 0 \le i, \, 0 \le i_1 \le 2k   \\
			 0 \le \ell \le \lfloor k/2\rfloor }
}
\sup_{x \in \calM} \sup_{v,  \, \theta \in S_x^{d-1}} 
 | \nabla_v^{i}   \bar A_{i_1,\ell}(x)(\theta) |
 \le \bar C_A,
\]
where  $\bar C_A$ is a constant depending on $k$ and the $\| \cdot\|_\infty $ norm of $\Second$ and its covariant derivatives up to ($2k-3$)-th order.
Again, because $0 \le p-q \le p \le k-2j+1$
 and $0 \le i_1 \le 2j + 2 \ell $ by \eqref{eq:valid-combi-i1-l-i2-i3},
 we have $(p-q)+i_1 \le k+1+2 \ell \le 2k+1$,
and then the constant $\bar C_A$ upper bounds the term $| \nabla_v^{p-q}   \bar A_{i_1,\ell}(x)(\theta) |$ in \eqref{eq:prod-rule-AV-proof-1}.
Putting together, we have that
\begin{equation}\label{eq:prod-rule-AV-proof-2}
\sup_{x \in \calM} \sup_{v,  \, \theta \in S_x^{d-1}}  
| \nabla_v^{p} ( \bar A_{i_1,\ell} \bar V_{i_3} ) (x)(\theta)  | 
\le \sum_{q=0}^p \binom{p}{q}  \bar C_A \bar C_V
= 2^p  \bar C_A \bar C_V,
\quad \forall 0 \le p \le k-2j+1.
\end{equation}

We are ready to go back to \eqref{eq:prod-rule-AVF-proof-1}.
Recall that at any $x \in \calM$,
\begin{equation}\label{eq:bound-nabla-i-i2-f-proof1}
 \sup_{v,  \, \theta \in S_x^{d-1}} 
| \nabla_v^{i} (\nabla^{i_2} f)(x)(\theta)|
\le \sup_{v  \in S_x^{d-1}} 
|\nabla_v^{i+i_2} f(x)|
= \| \nabla^{i+i_2} f(x)\|_{op}
\le \| \nabla^{i+i_2} f \|_\infty,
\end{equation}
where the inequality is by Banach's Theorem (see Section \ref{sec:rem_manifold}).
Because $i \le m \le k - 2j$, and $i_2 \le 2j$ by \eqref{eq:valid-combi-i1-l-i2-i3}, 
we always have $i + i_2 \le k$ 
and then $\| \nabla^{i+i_2} f \|_\infty  \le \|f \|_{k, \beta}$.
Together with \eqref{eq:prod-rule-AV-proof-2}, we have
\begin{align*}
| \nabla^m_v ( \bar A_{i_1,\ell} \bar V_{i_3} \bar F_{i_2}) (x)( \theta) |
& \le 
\sum_{i=0}^m \binom{m}{i}  | \nabla_v^{m-i} ( \bar A_{i_1,\ell} \bar V_{i_3} ) (x)(\theta)| 
								| \nabla_v^{i} (\nabla^{i_2} f)(x)(\theta)| \\
& \le
\Big( \sum_{i=0}^m \binom{m}{i}  2^{m-i} \Big) \bar C_A \bar C_V
								\|f \|_{k, \beta} 
 =
3^m   \bar C_A \bar C_V  \|f \|_{k, \beta},
\end{align*}
and this holds for any $x\in \calM$ and any $v, \, \theta \in S_x^{d-1}$.
Then \eqref{eq:compute-dir-S-expression-any-x} gives that 
$
| \nabla^m_v S_{i_1, \ell, i_2, i_3}(x) | \\
\le \int_{S_x^{d-1}} | \nabla^m_v (\bar A_{i_1,\ell} \bar V_{i_3} \bar F_{i_2}) (x)( \theta) | d\theta 
\le |{\rm \bf S}^{d-1}|3^m   \bar C_A \bar C_V  \|f \|_{k, \beta} 
$
for any $x\in \calM$ and any $v \in S_x^{d-1}$, which means that, for all $0 \le m \le k-2j$,
\begin{equation}\label{eq:bound-nabla-m-S-1}
\| \nabla^m S_{i_1, \ell, i_2, i_3} \|_\infty
 = \sup_{x \in \calM} \sup_{v \in S_x^{d-1}}  | \nabla^m_v S_{i_1, \ell, i_2, i_3}(x) |
 \le |{\rm \bf S}^{d-1}| 3^m   \bar C_A \bar C_V  \|f \|_{k, \beta}.
\end{equation}

\vspace{10pt}
\noindent
$\bullet$
\underline{\textbf{Bound $L_{k-2j, \beta}(S_{i_1, \ell, i_2, i_3})$ by $\|f\|_{k, \beta}$}}
\vspace{5pt}

We consider fixed $i_1, \ell, i_2, i_3$ and omit the subscript in the notations of $S_{i_1, \ell, i_2, i_3}$, 
$\bar A_{i_1, \ell }$, $\bar V_{i_3}$, and $\bar F_{i_2}$
for brevity. Let $m = k-2j$,  and recall that $L_{m, \beta}(S) = \sup_{x \in \calM} L_{m,\beta} ( S, x) $, where for any $x \in \calM$,
$
L_{m, \beta} (S, x) 
= \sup_{y \in B_\xi (x)} \sup_{v \in S_x^{d-1}} 
	{| \nabla_v^{m} S(x) - \nabla_{P_{x,y} v}^{m} S(y)  | }/{ d_\calM(x,y)^\beta}.
$
By \eqref{eq:compute-dir-S-expression-any-x}, we have
\begin{align*}
&  \nabla_v^{m} S(x) - \nabla_{P_{x,y} v}^{m} S(y)
=   \int_{S_x^{d-1}} \nabla^{m}_v (\bar A \bar V \bar F) (x)( \theta) d\theta
	- \int_{S_y^{d-1}} \nabla^{m}_{P_{x,y}v} (\bar A  \bar V \bar F) (x)( \theta) d\theta 	\\
& ~~~
= \int_{S_x^{d-1}} \big(  
			\nabla^{m}_v (\bar A \bar V \bar F ) (x)( \theta) 
 			-  \nabla^{m}_{P_{x,y} v} (\bar A \bar V \bar F) (y)( P_{x,y} \theta) 
			\big) d\theta 
			\\
& ~~~
= \sum_{i=0}^m \binom{m}{i}  \int_{S_x^{d-1}}  
		\big(  \nabla_v^{m-i} ( \bar A \bar V ) (x)(\theta) 
		\nabla_v^{i} (\nabla^{i_2} f)(x)(\theta)	 \\
&~~~~~~~~~~~~~~~~~~~~~~~~~~~~~~~~~~~		
		- \nabla_{P_{x,y} v }^{m-i} ( \bar A \bar V) (y) (P_{x,y} \theta) 
		\nabla_{P_{x,y} v}^{i} (\nabla^{i_2} f)(y)(P_{x,y} \theta) 
		 \big) d\theta,
\end{align*}
where the second equality is by change of variable, and the last equality is by Product Rule.
By triangle inequality,
\begin{align} 
&   | \nabla_v^{m} S(x) - \nabla_{P_{x,y} v}^{m} S(y)|   \nonumber  \\
 \le  & \sum_{i=0}^m 
 	\binom{m}{i}  \int_{S_x^{d-1}} 
 	\Big(  
		|\nabla_v^{m-i} ( \bar A \bar V ) (x)(\theta)| 
		\big| \nabla_v^{i} (\nabla^{i_2} f)(x)(\theta)
		- \nabla_{P_{x,y} v}^{i} (\nabla^{i_2} f)(y)( P_{x,y} \theta) \big|
			\nonumber \\
&~~~~~~	
		+  | \nabla_{P_{x,y} v}^{i} (\nabla^{i_2} f)(y)( P_{x,y} \theta) |
		\big|   \nabla_v^{m-i} ( \bar A \bar V ) (x)(\theta) - \nabla_{P_{x,y} v }^{m-i} ( \bar A \bar V ) (y)(P_{x,y} \theta) \big|
		\Big) d\theta, 		\label{eq:bound-Sx-Sy-proof-1} 
\end{align}
and recall that $i + i_2 \le k$.
For each $0 \le i \le m$ fixed,
by the same argument as in \eqref{eq:bound-nabla-i-i2-f-proof1},
we have $| \nabla_{P_{x,y} v}^{i} (\nabla^{i_2} f)(y)( P_{x,y} \theta) | \le \| \nabla^{i+i_2} f \|_\infty \le \| f\|_{k, \beta}$. 
Meanwhile, 
though the indices of $\bar A \bar V$ is omitted in \eqref{eq:bound-Sx-Sy-proof-1}, we have \eqref{eq:prod-rule-AV-proof-2} applicable to bound $|\nabla_v^{m-i} ( \bar A \bar V ) (x)(\theta)| $ because $m-i \le m =k-2j$.
This gives 
\begin{align}
& | \nabla_v^{m} S(x) - \nabla_{P_{x,y} v}^{m} S(y)| \nonumber  \\
 & \le   \sum_{i=0}^m \binom{m}{i}  \int_{S_x^{d-1}}  
		\Big(  
		 2^{m-i}  \bar C_A \bar C_V 
		\big| \nabla_v^{i} (\nabla^{i_2} f)(x)(\theta)
		- \nabla_{P_{x,y} v}^{i} (\nabla^{i_2} f)(y)(P_{x,y} \theta) \big|
			\nonumber \\
&~~~~~~ 			
		+  \| f\|_{k, \beta} 
		\big|   \nabla_v^{m-i} ( \bar A \bar V ) (x)(\theta) - \nabla_{P_{x,y} v }^{m-i} ( \bar A \bar V ) (y)(P_{x,y} \theta) \big|
		\Big) 
		d\theta. 		\label{eq:bound-Sx-Sy-proof-2}
\end{align}
We claim that 
\begin{equation}\label{eq:bound-Lip-derivative-AV-goal}
\big|   \nabla_v^{m-i} ( \bar A \bar V ) (x)(\theta) - \nabla_{P_{x,y} v }^{m-i} ( \bar A \bar V ) (y)(P_{x,y} \theta) \big|
\le 2^{m-i+1} \bar C_A \bar C_V d_{\calM}(x,y),
\end{equation}
\begin{equation}\label{eq:bound-Lip-derivative-f-goal}
\big| \nabla_v^{i} (\nabla^{i_2} f)(x)(\theta)
	- \nabla_{P_{x,y} v}^{i} (\nabla^{i_2} f)(y)(P_{x,y} \theta) \big|
	\le \begin{cases}
	\| \nabla^{i+i_2+1} f\|_\infty d_{\calM}(x,y) , &  i+i_2 \le k-1, \\
	L_{k,\beta}(f, x) d_{\calM}(x,y)^\beta, 	& i + i_2 = k.
	\end{cases}
\end{equation}
Note that $d_{\calM}(x,y)  \le d_{\calM}(x,y)^\beta \max\{ \texttt{diam}(\calM), 1 \} $:
since $ 0 < \beta \le 1$,
 if $\texttt{diam}(\calM) \le 1$ then $d_{\calM}(x,y) \le 1$ and $d_{\calM}(x,y) \le d_{\calM}(x,y)^\beta$;
if $\texttt{diam}(\calM) > 1$ then $d_{\calM}(x,y)^{1-\beta} \le\texttt{diam}(\calM)^{1-\beta} \le  \texttt{diam}(\calM)$.
Then, inserting both \eqref{eq:bound-Lip-derivative-f-goal} and \eqref{eq:bound-Lip-derivative-AV-goal} to \eqref{eq:bound-Sx-Sy-proof-2}, 
and recalling that  $\| \nabla^{i+i_2+1} f\|_\infty, \, L_{k,\beta}(f, x)  \le \| f\|_{k,\beta}$,
we have
\begin{align}
&  | \nabla_v^{m} S(x) - \nabla_{P_{x,y} v}^{m} S(y)|  \nonumber  \\
 & \le  \sum_{i=0}^m \binom{m}{i}  \int_{S_x^{d-1}}  
		\left(  
		 3 \cdot 2^{m-i}  
		 \bar C_A \bar C_V 
		\max\{ \texttt{diam}(\calM), 1 \}  d_{\calM}(x,y)^\beta \| f\|_{k, \beta} 
			\right) d\theta  \nonumber \\
& = 3^{m+1} 
	|{\rm \bf S}^{d-1}|  \bar C_A \bar C_V 
		\max\{ \texttt{diam}(\calM), 1 \}  d_{\calM}(x,y)^\beta \| f\|_{k, \beta}.  \nonumber
\end{align}
This proves that $L_{m,\beta}(S, x) \le 3^{m+1} |{\rm \bf S}^{d-1}|  \bar C_A \bar C_V \max\{ \texttt{diam}(\calM), 1 \} \| f\|_{k, \beta} $,
and this holds for any $x \in \calM$. 
As a result, we have (recall $m=k-2j$)
\begin{equation}\label{eq:bound-L-S-proof-3}
L_{k-2j, \beta}( S_{i_1, \ell, i_2, i_3}) \le 3^{k-2j+1} |{\rm \bf S}^{d-1}|  \bar C_A \bar C_V \max\{ \texttt{diam}(\calM), 1 \} \| f\|_{k, \beta}. 
\end{equation}
This also shows that $S_{i_1, \ell, i_2, i_3}$, and subsequently $f_j$, is in $C^{k-2j, \beta}(\calM)$.

It remains to establish  \eqref{eq:bound-Lip-derivative-AV-goal}\eqref{eq:bound-Lip-derivative-f-goal} to finish the proof of \eqref{eq:bound-L-S-proof-3}.
To do so, we utilize Lemma \ref{lemma:bound-Lip-tensor-field-derivative} proved in Appendix. 

\vspace{5pt}
\noindent
\underline{Proof of \eqref{eq:bound-Lip-derivative-AV-goal}}: 
Apply Lemma \ref{lemma:bound-Lip-tensor-field-derivative} to $\bar A \bar V$ which is a tensor field of order $r= i_1 + i_3$, 
and $p= m-i $.  
We have shown above that $\bar A \bar V$ has up to ($k-2j+1$)-th continuous covariant derivatives
with the bound \eqref{eq:prod-rule-AV-proof-2}.
Then Lemma \ref{lemma:bound-Lip-tensor-field-derivative} gives that 
the l.h.s. of \eqref{eq:bound-Lip-derivative-AV-goal} is upper bounded by
\[
\sup_{x \in \calM} \sup_{ v, \, \theta \in S_x^{d-1}} | \nabla^{m-i+1}_v 
	(\bar A \bar V) (x) (\theta) | d_\calM(x,y),
\]  
and combined with \eqref{eq:prod-rule-AV-proof-2} (recall that $m-i+1 \le m+1 = k-2j+1$)
this gives  \eqref{eq:bound-Lip-derivative-AV-goal}.

\vspace{5pt}
\noindent
\underline{Proof of \eqref{eq:bound-Lip-derivative-f-goal}}:
We first consider the case of $i + i_2 \le k-1$. 
Apply Lemma \ref{lemma:bound-Lip-tensor-field-derivative} to the order-$i_2$ tensor field $\nabla^{i_2} f$ with $p=i$, 
where $\nabla^{i_2} f$ is $C^{i+1}$ because $i+i_2 + 1 \le k$ and $f \in C^k(\calM)$,
the lemma gives that 
the l.h.s. of \eqref{eq:bound-Lip-derivative-f-goal} is upper bounded by 
\[
\sup_{x \in \calM} \sup_{ v, \, \theta \in S_x^{d-1}} | \nabla^{i+1}_v 
	(\nabla^{i_2} f) (x)(\theta)  | 
	d_\calM(x,y),
\]
which implies the claim due to 
that $\sup_{x \in \calM} \sup_{ v, \, \theta \in S_x^{d-1}} | \nabla^{i+1}_v 
	(\nabla^{i_2} f) (x)(\theta)  | \le \| \nabla^{i + i_2 + 1} f \|_\infty$ following the same argument as in \eqref{eq:bound-nabla-i-i2-f-proof1}.

The case of $i + i_2 = k$ needs to be handled by the definition of $L_{k,\beta} (f,x)$.
We consider the difference
\[
\nabla_v^{i} (\nabla^{i_2} f)(x)(\theta)
	- \nabla_{P_{x,y} v}^{i} (\nabla^{i_2} f)(y)( P_{x,y} \theta)
= \nabla^{k} f(x)( v, \theta) - \nabla^{k} f(y)( P_{x,y}v, P_{x,y} \theta),
\]
where $v$ and $\theta$ are arbitrary members in $S_x^{d-1}$,
and
 we abuse the notation to denote $( \underbrace{v, \cdots, v}_{\text{$i$ many}}, \underbrace{\theta, \cdots, \theta}_{\text{$i_2$ many}})$ by $(v,\theta)$,
assuming the meaning is clear in this context.
We consider the representation of $\nabla^{k} f(x)$ and $\nabla^{k} f(y)$ as order-$k$ tensors in $\R^d$, for which we will need to specify the bases of $T_x \calM$ and $T_y \calM$.
Here, we adopt the parallel frame again: let $\{ E_j \}_{j=1}^d$ be an orthonormal basis of $T_x \calM$, and $\calE_j(y) = P_{x, y} E_j$, 
then $ \{ \calE_j (y) \}_{j=1}^d$ form an orthonormal basis of $T_y \calM$.
Under $\{ E_j \}_{j=1}^d$ at $x$ and $\{ \calE_j(y) \}_{j=1}^d$ at $y$, 
$\nabla^{k} f(x)$ and $\nabla^{k} f(y)$
are  represented as order-$k$ real symmetric tensors $A_x$  and $A_y$ in $\R^d$ respectively. 
Now let $v = \sum_j v_j E_j$, and $\theta = \sum_j \theta_j E_j$, and we denote by ${\bf v}=(v_1,\cdots, v_d)$ and $\boldsymbol{\theta }=(\theta_1, \cdots, \theta_d)$ the vectors in $\R^d$. 
By that $\{ \calE_j \}_{j=1}^d$ is parallel, $P_{x, y }v = \sum_j v_j \calE_j(y)$, 
and $P_{x, y } \theta = \sum_j \theta_j \calE_j(y)$,
that is, under $\{ \calE_j(y) \}_{j=1}^d$, the vector representation of  $P_{x, y }v$ and  $P_{x, y }\theta$ remain to be $ \mathbf{v}$ and $\boldsymbol{\theta }$ respectively. 
Then we have 
\[
\nabla_v^{i} (\nabla^{i_2} f)(x)(\theta) - \nabla_{P_{x,y}v} ^{i} (\nabla^{i_2} f)(y)(P_{x, y } \theta) 
= A_x ( \mathbf{v}, \boldsymbol{\theta}) - A_y ( \mathbf{v}, \boldsymbol{\theta}) 
= (A_x - A_y) ( \mathbf{v}, \boldsymbol{\theta}).
\]
Because $A_x - A_y$ is again a real symmetric tensor, Banach's Theorem gives that 
\[
\sup_{ \mathbf{v}, \boldsymbol{\theta} \in  {\rm \bf S}^{d-1} } | (A_x - A_y) ( \mathbf{v}, \boldsymbol{\theta})|
\le \sup_{ \mathbf{w} \in  {\rm \bf S}^{d-1} } | (A_x - A_y) (\underbrace{ \mathbf{w},\cdots, \mathbf{w}}_{\text{$k$ many}})|.
\]
This implies that 
\[
| \nabla_v^{i} (\nabla^{i_2} f)(x)(\theta) - \nabla_{P_{x,y}v} ^{i} (\nabla^{i_2} f)(y)(P_{x, y } \theta)  |
\le \sup_{w \in S_x^{d-1}} | \nabla_w^k f(x) -  \nabla_{P_{x,y }w}^k f(y) |,
\]
which is upper bounded by $L_{k,\beta}(f,x) d_\calM(x,y)^\beta$ by the definition of $L_{k,\beta}(f,x)$.
This finishes the proof of \eqref{eq:bound-Lip-derivative-f-goal} under both cases.

\vspace{10pt}
\noindent
$\bullet$
\underline{\textbf{Combine the previous steps}}
\vspace{5pt}

By \eqref{eq:bound-nabla-m-S-1} and \eqref{eq:bound-L-S-proof-3}, we have that for all $ 1 \le j \le \lfloor k/2 \rfloor$,
\begin{align*}
\|S_{i_1, \ell, i_2, i_3}\|_{k-2j,\beta} 
& = \sum_{m=0}^{k-2j} \| \nabla^m S_{i_1, \ell, i_2, i_3}\|_\infty + L_{ k-2j, \beta} (   S_{i_1, \ell, i_2, i_3}) \\
& \le  \left( \sum_{m=0}^{k-2j}   3^m  + 3^{k-2j+1} \max\{ \texttt{diam}(\calM), 1 \}  \right) 
		  |{\rm \bf S}^{d-1}| \bar C_A \bar C_V  \|f \|_{k, \beta} \\
& \le  
	3^k
	 \max\{ \texttt{diam}(\calM), 1 \} 
	 |{\rm \bf S}^{d-1}| \bar C_A \bar C_V  \|f \|_{k, \beta}.
\end{align*}
where we used that $j \ge 1$ in the last inequality.
Then, by the expression of $f_j$ as in \eqref{eq:proof-fj-equiv-expression} and triangle inequality,
\begin{align*}
\| f_j \|_{k-2j,\beta}
& \le  \sum_{\substack{ i_1+i_2+i_3-2\ell= 2j \\
             i_1, i_2, i_3 \ge 0 \\
             0 \leq \ell \leq \lfloor i_1/4 \rfloor }}    
	\frac{\mathfrak{M}_{2j+2\ell+d-1} }{2^{\ell} \ell!i_2!}  \| S_{i_1, \ell, i_2, i_3}\|_{k-2j, \beta}  \\
& \le  \Big(	
	 \sum_{\substack{i_1+i_2+i_3-2\ell= 2j \\
             i_1, i_2, i_3 \ge 0 \\
             0 \leq \ell \leq \lfloor i_1/4 \rfloor }}    
	 \frac{\mathfrak{M}_{2j+2\ell+d-1} }{2^{\ell} \ell!i_2!} 
	     \Big)
	     3^k
	  \max\{ \texttt{diam}(\calM), 1 \} 
	 |{\rm \bf S}^{d-1}| \bar C_A \bar C_V  \|f \|_{k, \beta}.
\end{align*}
Recall that the range of valid indices $ \{ i_1, \ell, i_2, i_3\}$ as in \eqref{eq:valid-combi-i1-l-i2-i3},
we have $d-1 \le 2j + 2 \ell + d-1 \le 2k + d-1$,
and $\mathfrak{M}_{2j+2\ell+d-1}$ can be bounded by a constant $ \tilde{c}(k,d)$ depending on $d$ and $k$.
Meanwhile, there is at most $k^3$
terms in the summation inside $( \cdots )$ in the above expression.
We thus have 
\begin{align*}
\| f_j \|_{k-2j,\beta}
\le   \tilde{c}(k,d) k^3
	3^k  \max\{ \texttt{diam}(\calM), 1 \} |{\rm \bf S}^{d-1}| \bar C_A \bar C_V  
	\|f \|_{k, \beta},
	\quad \forall 1 \le j \le \lfloor k/2 \rfloor.
\end{align*}
Define $\tilde{C}_2(\mathcal{M}, d, k) := \max\{ 1,  \tilde{c}(k,d) k^3
	3^k  \max\{ \texttt{diam}(\calM), 1 \} |{\rm \bf S}^{d-1}| \bar C_A \bar C_V  \}$ finishes the proof of statement (ii).
The constants $\bar C_A$ and  $\bar C_V$ depend on the manifold curvature tensor and $\Second$ and their covariant derivatives, as described above \eqref{eq:prod-rule-AV-proof-2}. Thus the constant dependence of $\tilde{C}_2(\mathcal{M}, d, k)$ is as declared in the lemma.

\vspace{5pt}
Finally, we collect the needed regularity of the manifold.
In the proof of statement (i),
when $k\ge 2$,
we need up to ($k-2$)-th (continuous) covariant derivatives of curvature tensor   
and up to ($k+1$)-th intrinsic derivatives of the Riemann metric tensor $g$,               
as well as up to ($2k-2$)-th (continuous) covariant derivatives of $\Second $;		
when $k=0,1$ we need 2nd intrinsic derivatives of $g$						
and continuous 1st covariant derivative of $\Second $.						
In the proof of statement (ii), we only need regularity of $\calM$ when $k\ge 2$: 
up to ($k-1$)-th continuous covariant derivatives of curvature tensor  			
and again up to ($2k-3$)-th continuous covariant derivatives of $\Second $.               
Overall, it suffices to have $ \max\{  2k, 3 \} $ regularity of $\calM$,
that is, $\calM$ is $C^{ \max\{  2k, 3 \}}$.

\begin{remark}[More general $h$]
\label{rk:diff-decay-h-manifold-integral}
The proof only uses the differentiability and decay property of the function $h(r)$, and thus can be extended beyond when $h(r) = e^{-r/2}$ 
-- specifically, Assumption \ref{assump:general-h}(i) suffices.
To extend the proof, first truncate the $2\delta(\epsilon)$ geodesic ball by letting $\delta(\epsilon) = \sqrt{\frac{1}{2 a}(d+k+1)\epsilon \log(\frac{1}{\epsilon})}$, which will ensure that when $y \in \mathcal{M} \setminus B_{2 {\delta(\epsilon)}}(x)$, $h\Big( {\|\iota(x)-\iota(y)\|^2_{\mathbb{R}^D}}/{\epsilon}\Big) \leq a_0 \epsilon^{ {(d+k+1)}/{2}}$ by that $|h(r) | \le a_0 e^{-ar}$.
We define $h_a(r):= e^{-ar}$, then $|h^{(l)}(r)| \le a_l h_a(r) $, and $h_a$ is a monotonically decay function. 
In all the bounding of the remainder terms, we replace $h(u^2)$ to be $ h_a(u^2)$, 
and $h(u^2/4)$ to be $h_a(u^2/4)$, multiplied by a constant depending on $k$ which is $\max_{ 0 \le \ell \le \max\{ k, 1\}} a_\ell$, 
and we use $h_a( u^2) \le h_a(u^2/4)$ by monotonicity of $h_a$. 
All the additional $k$-dependent constant (including the dependence on $\{ a_\ell, \ell \le  k\}$)
multiplied to upper bounds
can be absorbed into the constants in front, 
and we have additional dependence on $a$.
Meanwhile, we keep ``$h^{(\ell)}(u^2)$'' in the definition of $f_j$ 
(and subsequently in ${\rm R}^{(4)}_j (x)$) 
and in the definition of the moments 
$\mathfrak{M}^{(\ell)}_i : = \int_{0}^{\infty}\frac{h^{(\ell)}(u^2)}{(2\pi)^{d/2}} u^i du$, removing the $(-2)^{\ell }$ factor in the denominator.
In bounding $| {\rm R}^{(4)}_j (x) |$,  
we apply Lemma \ref{upper incomplete gamma bound} with a change of variable $u \mapsto \sqrt{2a} u$, 
resulting in a factor of $\max\{1, (2a)^{- k - d/2} \}$ multiplied to the constant $c(k,d)$.
Thus the constant $C_{R,4}$ also depends on $a$. 
%
All this will give the same bounds of the remainder terms as in Lemma \ref{lemma:22_holder}(i) 
where the constants $C_{R,2}$,  $C_{R,3}$,  $C_{R,4}$ are modified to absorb the additional factors depending on $a$,
$d$, $k$. 
In the proof of Lemma \ref{lemma:22_holder}(ii), the analysis of $S_{i_1, \ell, i_2, i_3}$ is not affected; 
In the last step when we bound  $\| f_j \|_{k-2j,\beta}$ from $\| S_{i_1, \ell, i_2, i_3}\|_{k-2j, \beta} $,
 we use $| \mathfrak{M}^{(\ell)}_i | \le \int_{0}^{\infty}\frac{ a_\ell h_a(u^2)}{(2\pi)^{d/2}} u^i du$ in the upper bound, 
 the summation of which can be bounded by a constant depending on $k$, $d$, and $a$.
 This again will give the same bounds as before, where the constants are modified in its dependence on $a$, $d$, and $k$.
\end{remark}

\section*{Acknowledgement}

The authors thank Hau-tieng Wu for helpful discussions.
TT and XC were partially supported by Simons Foundation
(grant ID: MPS-MODL-00814643).
XC was also partially supported by 
NSF DMS-2237842, DMS-2007040. 
DD was partially supported by the United States National Institutes of Health Project R01ES035625 and by the European Research Council under the European Union’s Horizon 2020 research and innovation program (grant agreement No 856506). NW was partially supported by the Simons Foundation (grant ID: MPS-TSM-00002707).

\bibliographystyle{alpha} 
\bibliography{sample.bib}

\begin{appendix}

\setcounter{figure}{0} \renewcommand{\thefigure}{A.\arabic{figure}}
\setcounter{table}{0} \renewcommand{\thetable}{A.\arabic{table}}

\setcounter{remark}{0} \renewcommand{\theremark}{A.\arabic{remark}}
\setcounter{theorem}{0} \renewcommand{\thetheorem}{\Alph{section}.\arabic{theorem}}

\setcounter{assumption}{0} \renewcommand{\theassumption}{A.\arabic{assumption}}

\section{Proofs in Section \ref{sec:pos_rate} and extension}\label{sec:proof-sec3}

The proofs in Section \ref{sec:pos_rate}
follow the framework in \cite{van2009adaptive} and \cite{yang2016bayesian}.
For convergence rate of posterior mean estimator under fixed design (Theorem \ref{thm:fix-design-estimator}), we avoid the need of function truncation and improve from the result in \cite{yang2016bayesian}.

\subsection{Fixed design results}\label{subsec:proof-sec3-fixed-design}

The posterior contraction rate 
under hierarchical GP priors \eqref{eq:prior}
was studied in \cite{van2009adaptive} 
and in \cite{yang2016bayesian} for manifold data,
under the general framework of \cite{ghosal2000convergence,ghosal2007convergence}.
Following these previous works, our strategy to prove the posterior contraction rate is to verify a set of conditions on our hierarchical GP prior \eqref{eq:prior}.
Specifically, suppose we can specify two sequences $ \varepsilon_n$ and $ \bar{\varepsilon}_n$, 
which are asymptotically $o(1)$ as $n \to \infty$,
and we can show that for  some Borel measurable subsets $B_n$ of $C(\mathcal{X})$ and $n$ sufficiently large,
the following three inequalities hold
\begin{align}
        \P [ ||f^{t} - f^*||_{\infty} \le \varepsilon_n ] 
         \ge e^{-n \varepsilon_n^2}, 
            \label{eq:pf-A1A1prime-goal1} \\
        \P [ f^{t} \notin B_n ]
            \le e^{-4n \varepsilon^2_n}, 
            \label{eq:pf-A1A1prime-goal2} \\
        \log \calN(\bar{\varepsilon}_n,B_n,||\cdot||_{\infty}) 
        \le n \bar{\varepsilon}^2_n, \label{eq:pf-A1A1prime-goal3} 
\end{align}   
then 
the posterior contraction rate with respect to $||\cdot||_n$ would be at least $\varepsilon_n \vee \bar{\varepsilon}_n $, up to multiplying an absolute constant.
In \eqref{eq:pf-A1A1prime-goal1}\eqref{eq:pf-A1A1prime-goal2} and below, the notation $\P$ stands for the prior $\Pi$ and we adopt this convention in our proof following the literature.

\begin{proof}[Proof of Theorem \ref{thm:contraction_n_norm}]
We specify the two sequences $ \varepsilon_n$ and $ \bar{\varepsilon}_n$ as 
\begin{equation}\label{eq:def_ep_ep_bar-thm3.1}
\varepsilon_n = \bar{C}_1 n^{-\frac{s}{2s+\varrho}}(\log n)^{k_1},
\quad 
\bar{\varepsilon}_n = \bar{C}_2 \varepsilon_n(\log n)^{k_2},
\quad 
k_1 : = \frac{1+D}{2+\varrho/s},
\quad k_2 := \frac{1+D}{2}, 
\end{equation}
where the positive constants $\bar{C}_1$, $\bar{C}_2$ will be specified later.
Below, we prove that the three conditions \eqref{eq:pf-A1A1prime-goal1}\eqref{eq:pf-A1A1prime-goal2}\eqref{eq:pf-A1A1prime-goal3} respectively.
We will show in our proof that $\bar{\varepsilon}_n > 3 {\varepsilon}_n$ for large enough $n$.
Thus, this will imply a posterior contraction rate under $\| \cdot \|_n$ that is at least 
\begin{equation}\label{eq:order-bar-epsn}
 \bar{\varepsilon}_n  
=
 \bar{C}_1 \bar{C}_2 
 n^{-\frac{s}{2s+\varrho}} (\log n )^{\frac{ D+1}{2 + \varrho/s} + \frac{D+1}{2}}
 \lesssim n^{-\frac{s}{2s+\varrho}}(\log n)^{D+1},
\end{equation}
and prove the theorem with $C = \bar C_1 \bar C_2$.
    
\vspace{10pt}
\noindent
$\bullet$ \underline{Part I: To prove \eqref{eq:pf-A1A1prime-goal1}}.
We denote $ {\mathbb{H}}_{t}(\calX )$ as $\tilde{\mathbb{H}}_{t}$.
    To proceed, we define the centered and decentered concentration function of the Gaussian process $f^{t}$ 
    conditioning on a fixed bandwidth $t$.
    The centered concentration function is defined as 
\[
\phi_0^{t}(\varepsilon') 
       : = -\log 
       \P [ ||f^{t}||_{\infty} \le \varepsilon' | \, t]. 
\]
For any $f \in C( \calX)$, the  decentered concentration function is defined as
    \[
       \phi_{f}^{t}(\varepsilon') : = \inf_{q \in \tilde{\mathbb{H}}_{t}:||q-f||_{\infty} \le \varepsilon'} ||q||^2_{\tilde{\mathbb{H}}_{t}} 
       -\log \P [ ||f^{t}||_{\infty} \le \varepsilon' | \, t ].
\]
In both definitions,  the sup norm $||\cdot||_{\infty}$ is on $\mathcal{X}$. 
   
    By definition, we know $\P [ ||f^{t}||_{\infty} \le \varepsilon' | \, t ] = \exp(-\phi_0^{t}(\varepsilon'))$.
    Meanwhile, following \cite{kuelbs1994gaussian}, 
    the definition of $\phi_{f}^{t}$ will guarantee that, when $f = f^*$,  
    \begin{equation}\label{eq:e-phi-f*-t-upper-bound-proof-1}
    \P [ ||f^{t} -f^*||_{\infty} 
    \le 2\varepsilon' | \, t ]
    \ge e^{-\phi_{f^*}^{t}(\varepsilon')}.
    \end{equation}

We recall a few positive constants:
$r_0$ as in (A1),
and 
$\epsilon_0$, $\nu_1$, $\nu_2$ as in (A2).
Under (A1), $\calX$ satisfies the needed assumption in Lemma \ref{lemma:small_ball}.
Meanwhile, we consider $t$  and $\varepsilon'$ satisfying    

\begin{equation}\label{eq:range-t-eps'-proof-1}
    t < \min \{ \epsilon_0,1, r_0^2\},
\quad 
\nu_1 t^{s/2} < \varepsilon'  <  1/e < 1/2.
\end{equation}
With such $t$ and $\varepsilon'$, Lemma \ref{lemma:small_ball} applies to give that for some positive constant $C_4$,
\begin{equation*}
-\log
    \P [ ||f^{t}||_{\infty} \le \varepsilon' | \, t] 
    \le C_4 t^{- \varrho /2} (\log \frac{1}{\sqrt{t}\varepsilon'} )^{D+1}.
\end{equation*}
Under (A2), for each fixed $t < \epsilon_0 $ which is satisfied for $t$ in \eqref{eq:range-t-eps'-proof-1}, there exists $q_t \in \tilde{\mathbb{H}}_{t}$ s.t. $\| q_t - f^* \|_\infty \le \nu_1 t^{s/2}$
and $\| q_t \|^2_{\tilde{\mathbb{H}}_{t}} \le \nu_2 t^{- \varrho/2}$. 
Since $\nu_1 t^{s/2} < \varepsilon'$, we can insert this $q_t$ into the r.h.s. of the definition of $\phi_{f^*}^{t}$, and then we have
\begin{align}
\phi_{f^*}^{t}(\varepsilon')
& \le ||q_t||^2_{\tilde{\mathbb{H}}_{t}} 
       -\log \P [ ||f^{t}||_{\infty} \le \varepsilon' | \, t ] \nonumber \\
& \le  \nu_2 t^{- \varrho/2}
    + C_4 t^{- \varrho /2} (\log(\frac{1}{\sqrt{t}\varepsilon'}))^{D+1} \nonumber \\
& \le K_3 t^{-\varrho/2} (\log(\frac{1}{\sqrt{t}\varepsilon'}))^{D+1},
\quad K_3 : =  \nu_2 + C_4,    
\label{eq:bound-phi-f*-t-proof-2}
\end{align}
where the last inequality is by that $t < 1$ and $\log (1/\varepsilon') > 1$.

By Assumption \ref{assump:A3}(A3), 
there exist $c_1$, $c_2$, $c_3$, $a_1$, $a_2$, $K_1$, $K_2$, $C_1$, $C_2 > 0$, such that 
\begin{align}
      &\forall t \in [c_1 n^{\frac{-2}{2s+\varrho}} \log^{\frac{ 2 (1+D)}{2s +\varrho}}(n), 
      			c_2 n^{\frac{-2}{2s+\varrho}}\log^{\frac{ 2(1+D)}{2s +\varrho}}(n) ], 
      \quad 
      p(t) \ge C_1t^{-a_1} \exp(-\frac{K_1}{t^{
    \varrho/2}}),   \label{eq:(A3)-row-1-proof-2} \\
      &\forall t \in (0, c_3 n^{\frac{-2}{2s+\varrho}} ],
      \quad p(t) \le C_2t^{-a_2} \exp(-\frac{K_2}{t^{\varrho/2}}),
\end{align}
where $p(t)$ is the prior of $t$.
We define $C := 1/\nu_1$.
By (A3), $0 < c1 < c2$, then we can have a constant $c_4 >0$ s.t. 
\[
{c_1}/{c_2}< c_4 < 1.
\]
In the calculation below, we want to take an integral of $t$ on the interval 
\[
t \in [c_4(C\varepsilon')^{2/s}, (C\varepsilon')^{2/s}],
\]
on which we want to  use the lower bound of $p(t)$ in \eqref{eq:(A3)-row-1-proof-2}
and the upper bound of $\phi_{f^*}^{t}(\varepsilon')$ in \eqref{eq:bound-phi-f*-t-proof-2}. 
This requires $t \in [c_4(C\varepsilon')^{2/s}, (C\varepsilon')^{2/s}]$ to satisfy the range in \eqref{eq:range-t-eps'-proof-1} plus that in \eqref{eq:(A3)-row-1-proof-2}. 
Such requirement will be satisfied as long as 
\begin{equation}\label{eq:order-eps-prime}
[c_4(C\varepsilon')^{2/s}, (C\varepsilon')^{2/s}] 
\subset [c_1 n^{\frac{-2}{2s+\varrho}} (\log n)^{\frac{2( 1+D )}{2s +\varrho}}, 
    c_2 n^{\frac{-2}{2s+\varrho}} (\log n)^{\frac{2( 1+D)}{2s +\varrho}} ]
\end{equation}
and when $n$ is large enough s.t. 
$c_2 n^{\frac{-2}{2s+\varrho}} (\log n)^{\frac{2( 1+D)}{2s +\varrho}} < \min \{ \epsilon_0 ,1, r_0^2\} $.  
The condition \eqref{eq:order-eps-prime} poses a constraint on $\varepsilon'$, which we will choose $\varepsilon'$ below to satisfy. 
For now, for any $\varepsilon'$ that satisfies \eqref{eq:order-eps-prime}, we then have 
\begin{align} 
    \P [ ||f^{t} -f^*||_{\infty} &\le 2\varepsilon'] 
    \ge 
    \P \left[ ||f^{t} -f^*||_{\infty} \le 2\varepsilon', \, t \in [c_4(C\varepsilon')^{2/s}, (C\varepsilon')^{2/s}] \right]  \nonumber \\
    & \ge \int_{c_4(C\varepsilon')^{2/s}}^{(C\varepsilon')^{2/s}} e^{-\phi_{f^*}^{t}(\varepsilon')} p(t) d t  
    \quad \text{(by \eqref{eq:e-phi-f*-t-upper-bound-proof-1})}
    \nonumber \\
    &\ge  e^{- K_3c_4^{-\varrho/2}(C\varepsilon')^{-\varrho/s} 
        \left(\log( \frac{1}{   {c_4}^{1/2}(C\varepsilon')^{1/s}   \varepsilon'})\right)^{
    D+1}}   \nonumber  \\
   &~~~  C_1 e^{-K_1 c_4^{-\varrho/2}(C\varepsilon')^{-\varrho/s} } (C\varepsilon')^{-2a_1/s}  \nonumber \\
   &~~~  (1- c_4)(C\varepsilon')^{2/s},  \label{eq:estimation_around_true_func}
\end{align}
where the third inequality is by \eqref{eq:bound-phi-f*-t-proof-2}\eqref{eq:(A3)-row-1-proof-2}. 
Under \eqref{eq:order-eps-prime}, $(\varepsilon')^{2/s} \lesssim n^{-2/(2s + \varrho)} (\log n)^{2(1+D)/(2s +\varrho)} $, and thus $\varepsilon' = o(1)$.
Then, with large enough $n$ and consequently small enough $\varepsilon'$, we have the r.h.s. of \eqref{eq:estimation_around_true_func}  lower bounded by 
$e^{-K_4(\varepsilon')^{ -  \varrho/s}(\log(1/\varepsilon'))^{
    1+D}}$ where 
\begin{equation}\label{eq:def-K4-proof-thm3.1}
K_4 := K_3 c_4^{-\varrho/2 } C^{-\varrho/s}   \big(  (1+2/s)^{1+D}+ 1 \big)  > 0.
\end{equation}
To see this, we observe that the desired inequality holds if 
the following two inequalities are satisfied:
\begin{align*}
& K_3c_4^{-\varrho/2}(C\varepsilon')^{-\varrho/s} 
        \big(
        (1+ \frac{1}{s}) \log(\frac{1}{\varepsilon'})+\log \frac{1}{{c_4}^{1/2}C^{1/s}}
        \big)^{D+1}  \\
&~~~~~~~~~ 
 	\le  K_3 c_4^{-\varrho/2 } C^{-\varrho/s}  (1+2/s)^{1+D}   
     (\varepsilon')^{ -  \varrho/s}
     (\log \frac{1}{\varepsilon'})^{
    1+D}, \\
& - \log  C_1 
 + K_1 c_4^{-\varrho/2}(C\varepsilon')^{-\varrho/s}
 - \log (1- c_4)
 +  \frac{2(1-a_1)}{s} \log\frac{1} {C\varepsilon'}    \\
&~~~~~~~~~ 
 \le K_3 c_4^{-\varrho/2 } C^{-\varrho/s}
 (\varepsilon')^{ -  \varrho/s}
     (\log \frac{1}{\varepsilon'})^{
    1+D}.
\end{align*}
Thus, it suffices to show that the two inequalities hold  with small enough $\varepsilon'$.
For both of them, this can be done by collecting the dominating terms
and using that $\varepsilon' \to 0$.
As a result, 
\begin{equation}\label{eq:estimation_around_true_func-2}
  \P [ ||f^{t} -f^*||_{\infty} \le 2\varepsilon'] 
  \ge e^{-K_4(\varepsilon')^{ -  \varrho/s}(\log(1/\varepsilon'))^{
    1+D}}.
\end{equation}

We are ready to prove  \eqref{eq:pf-A1A1prime-goal1}.
Here, consider $\varepsilon' = \varepsilon_n'$
which is defined to satisfy that 
\begin{equation}\label{eq:def-varepsilonn-prime}
(C\varepsilon_n')^{2/s} = c_2 n^{\frac{-2}{2s+\varrho}} (\log n)^{\frac{2( 1+D)}{2s +\varrho}}.
\end{equation}
This $\varepsilon_n'$ satisfies \eqref{eq:order-eps-prime} because the right ends of the two intervals are the same and $c_4> c_1/c_2$.
Consequently, \eqref{eq:estimation_around_true_func} holds at $\varepsilon' = \varepsilon_n'$. 
We now specify
\begin{equation}\label{eq:choice-epsn-all-three-goals}
    \varepsilon_n
     = \max\{ \bar{C}_1' ( c_2^{s/2}/C), 
      {c_3^{-\varrho/4} K_2^{1/2}} \}
    n^{\frac{-s}{2s+\varrho}}
    (\log n)^{\frac{ 1+D}{2 +\varrho/s}},
\end{equation}
where $\bar{C}_1'$ is to be determined here,
and the constant factor ${c_3^{-\varrho/4} K_2^{1/2}}$ is to fulfill the proof in Part II. 
Recall our declared definition of $\varepsilon_n$ at the beginning of this proof, we see that $\bar{C}_1 = \max\{ (\bar{C}_1'/C) c_2^{s/2}, {c_3^{-\varrho/4} K_2^{1/2}} \}$, 
and thus the choice of the constant $\bar{C}_1'$ will equivalently determine $\bar{C}_1$. 
We will choose $\bar{C}_1' \ge 2$ s.t. when $n$ is large enough,
the r.h.s. of \eqref{eq:estimation_around_true_func-2} evaluated at $\varepsilon' = \varepsilon_n'$ can be lower bounded by $e^{-n \varepsilon_n^2} $.
This will imply  $\P [ ||f^{t} -f^*||_{\infty} \le 2\varepsilon'_n] \ge e^{-n \varepsilon_n^2} $.
Meanwhile, comparing \eqref{eq:choice-epsn-all-three-goals} with the definition of $\varepsilon_n'$ in \eqref{eq:def-varepsilonn-prime}, 
we also see that  $\varepsilon_n \ge \bar{C}_1' \varepsilon_n' $.
Then \eqref{eq:pf-A1A1prime-goal1} follows by that $2 \varepsilon_n' \le   \bar{C}_1' \varepsilon_n' \le \varepsilon_n$.

We claim that such $\bar{C}_1' $ can be chosen to be 
\[
\bar{C}_1' =  2 \vee K_4^{1/2} (c_2^{s/2}/C)^{-(2 + \varrho/s)/2}.
\]
To prove  \eqref{eq:pf-A1A1prime-goal1},
it remains to show that
$K_4(\varepsilon_n' )^{ -  \varrho/s}(\log \frac{1}{\varepsilon_n '} )^{1+D}
\le n \varepsilon_n^2$ with large $n$.
Inserting in the definitions of $\varepsilon_n'$ and $\varepsilon_n$,
and using that $\log \frac{1}{\varepsilon_n' }  \le \log n$ with large $n$ (to verify below),
it suffices to have
\[
K_4 
( \frac{c_2^{s/2}}{C})^{-\varrho/s}
 n^{\frac{\varrho}{2s+\varrho}} 
 (\log n)^{ \frac{-\varrho( 1+D)}{2s +\varrho}}
 (  \log n )^{1+D}
 \le 
(\bar{C}_1' \frac{ c_2^{s/2}}{C})^2
    n^{\frac{\varrho}{2s+\varrho}}
    (\log n)^{\frac{2( 1+D)}{2 +\varrho/s}}.
\]
This inequality is reduced to 
$
K_4 ( \frac{c_2^{s/2}}{C})^{-\varrho/s}
\le(\bar{C}_1' \frac{ c_2^{s/2}}{C})^2$
 and is guaranteed by our choice of $\bar C_1'$.
To see that $\log \frac{1}{\varepsilon_n' }  \le \log n$ with large $n$,
note that because 
$\varepsilon_n' = \frac{c_2^{s/2}}{C} n^{\frac{-s}{2s+\varrho}} (\log n)^{ \frac{s( 1+D)}{2s +\varrho}}$,
we have
\[
\log \frac{1}{\varepsilon_n' } 
= \frac{s}{2s + \varrho } \log n  + \log \frac{C}{c_2^{s/2}} - \frac{s( 1+D)}{2s +\varrho} \log \log n
= \left( \frac{s}{2s + \varrho } + o(1) \right) \log n, 
\]
and use that $ \frac{s}{2s + \varrho } < 1/2$.

\vspace{10pt}
\noindent
$\bullet$ \underline{Part II:  To prove  \eqref {eq:pf-A1A1prime-goal2}}.
%
Let $\mathbb{B}_1$ be the unit ball of $C(\mathcal{X})$.
Following the same construction as in the proof of  \cite[Theorem 3.1]{van2009adaptive} and  \cite[Theorem 2.1]{yang2016bayesian}, we introduce the set $B_{N,r,\delta,\varepsilon'}$ defined as

\begin{equation}\label{eq:def-B-N-r-delta-eps}
B_{N,r,\delta,\varepsilon'} 
: = \left( N 
(\frac{r}{\delta})^{D/2}
\tilde{\mathbb{H}}^1_{r^{-2}}+ \varepsilon' \mathbb{B}_1 \right) 
\cup \left( \bigcup_{t > \delta^{-2}}(N \tilde{\mathbb{H}}^1_{t})+\varepsilon' \mathbb{B}_1 \right),
\end{equation}
for positive numbers $r, \delta, N, \varepsilon'$ to be determined, where $r>\delta$.

By Lemma \ref{lemma:compare_rkhs}, one can verify that when $ t \in [ r^{-2},\delta^{-2} ]$, 
$ \tilde{\mathbb{H}}^1_{t} \subset ( r/\delta)^{D/2} \tilde{\mathbb{H}}^1_{r^{-2}} $.
As a result, 
\begin{equation*}
N \tilde{\mathbb{H}}^1_{t}+\varepsilon' \mathbb{B}_1
\subset B_{N,r,\delta,\varepsilon'},
\quad 
\forall t \in [ r^{-2},\delta^{-2} ].
\end{equation*} 
Meanwhile, when $ t > \delta^{-2}$, 
\[
N\tilde{\mathbb{H}}^1_{t}+\varepsilon' \mathbb{B}_1 
\subset \bigcup_{t > \delta^{-2}}(N \tilde{\mathbb{H}}^1_{t})+\varepsilon' \mathbb{B}_1  
\subset B_{N,r,\delta,\varepsilon'}.
\]
Putting together, we have 
\begin{equation}\label{eq:NHt1+epsB1-subset-B-proof1}
N \tilde{\mathbb{H}}^1_{t}+\varepsilon' \mathbb{B}_1
\subset B_{N,r,\delta,\varepsilon'},
\quad 
\forall t \ge r^{-2}. 
\end{equation} 
Next, we claim that if 
\begin{equation}\label{eq:cond-claim}
    r > \delta, 
    \quad 
    r^{-2} < c_3 n^{\frac{-2}{2s+\varrho}} (\log n )^{\frac{-4(1+D)}{(2+\varrho/s)\varrho}}, 
    \quad e^{-\phi^{r^{-2}}_0(\varepsilon')} <1/4,
    \quad N \ge 4 \sqrt{\phi^{r^{-2}}_0(\varepsilon')}, 
\end{equation}
then, recalling the constants $K_2$, $C_2$ from (A3), we have
\begin{equation}\label{eq:claim-Pft-1}
\P [ f^{t} \notin B_{N,r,\delta,\varepsilon'} ] 
\le \frac{2 C_2 r^{2(a_2-\varrho+1)}e^{-K_2 r^{\varrho}}}{K_2\varrho} + e^{-N^2/8}.
\end{equation}
We postpone the verification of this claim \eqref{eq:cond-claim}$\Rightarrow$\eqref{eq:claim-Pft-1}
till the end of the proof of the theorem.

Assuming \eqref{eq:cond-claim}$\Rightarrow$\eqref{eq:claim-Pft-1}
holds, we now provide sufficient conditions for \eqref{eq:cond-claim} to hold.
We are to apply Lemma \ref{lemma:small_ball} with $t = r^{-2}$,
and the $C$ in the lemma has been called $C_4$ in Part I of this proof.
Since $\calX \subset [0,1]^D$ satisfies (A1),
if $ r^{-2} < \min\{ r_0^2, 1\}$ and 
 $ \varepsilon' <  1/2$, 
then Lemma \ref{lemma:small_ball} applies to give that 
 $$ 
    \phi_0^{r^{-2}}(\varepsilon') 
    \le C_4 r^{ \varrho } (\log({r}/{\varepsilon'}))^{D+1}.
$$
Meanwhile, there exists positive constant $\varepsilon'_1$ s.t. $\varepsilon' < \varepsilon'_1$ implies that $e^{-\phi_0^1(\varepsilon')} < {1}/{4}$. 
Thus, when $\varepsilon' < \varepsilon'_1$ and $r^{-2} < 1$, 
by monotonicity of the function $\phi^{t }_0(\varepsilon')$,
 $e^{-\phi^{r^{-2}}_0(\varepsilon')} \le e^{-\phi_0^1(\varepsilon')} < {1}/{4}$. In summary, we have that 
\begin{equation}\label{eq:cond-claim-sufficient}
\begin{split}
& \varepsilon' < \min\{ 1/2 , \varepsilon'_1\},
\quad 
N^2 \ge 16 C_4 r^\varrho (\log(r/\varepsilon'))^{1+D}, \\
&~~~
r > \delta, 
\quad
r > \max \left\{
1,  \frac{1}{r_0}, \frac{1}{\sqrt{c_3}}n^{\frac{1}{2s+ \varrho}}(\log n )^{\frac{2(1+D)}{(2+\varrho/s)\varrho}} \right\},
\end{split}
\end{equation}
will imply \eqref{eq:cond-claim}. 
Now we have that under the condition \eqref{eq:cond-claim-sufficient}, \eqref{eq:claim-Pft-1} holds.

Recall the definition of $\varepsilon_n = \bar{C}_1 n^{\frac{-s}{2s+\varrho}}
		(\log n)^{\frac{ 1+D}{2 +\varrho/s}}$ 
as in \eqref{eq:choice-epsn-all-three-goals}.
Define $r_n$ and $N_n$ by 
\begin{equation}
\label{eq:cond-claim-sufficient-final}
r_n^{\varrho} = \frac{8}{K_2}n \varepsilon^2_n, 
\ \ 
N_n^2 = \max\{32, \frac{128 C_4}{K_2}\} n \varepsilon^2_n(\log(r_n/\varepsilon_n))^{1+D}.
\end{equation}
By \eqref{eq:choice-epsn-all-three-goals},  
$\bar{C}_1^2 \ge c_3^{-\varrho/2} K_2 > c_3^{-\varrho/2} K_2/8$,
which gives that 
$r_n > \frac{1}{\sqrt{c_3}}n^{\frac{1}{2s+ \varrho}}(\log  n )^{\frac{2(1+D)}{(2+\varrho/s)\varrho}} $.
Then, for any sequence of $\delta_n$ s.t. $\delta_n < r_n$ for large $n$ ($\delta_n$ to determined below), 
one can verify that for large enough $n$,
the quadruple $(N, r, \delta, \varepsilon' ) = (N_n, r_n, \delta_n, \varepsilon_n )$ satisfy \eqref{eq:cond-claim-sufficient}.

This gives that, assuming  $\delta_n < r_n$ for large $n$, 
then with large enough $n$, 
\eqref{eq:claim-Pft-1} holds at
 $(N, r, \delta, \varepsilon' ) = (N_n, r_n, \delta_n, \varepsilon_n )$, namely
\[
\P [ f^{t} \notin B_{N_n,r_n, \delta_n, \varepsilon_n} ] 
\le \frac{2 C_2 }{K_2\varrho} r_n^{2(a_2-\varrho+1)}e^{-K_2 r_n^{\varrho}} + e^{-N_n^2/8}.
\]
By our construction \eqref{eq:cond-claim-sufficient-final}, 
the r.h.s. can be bounded by $ \exp(- 4 n \varepsilon^2_n)$ when $n$ is sufficiently large. 
Thus, to prove \eqref {eq:pf-A1A1prime-goal2} with $B_n$ defined to be $B_{N_n,r_n, \delta_n, \varepsilon_n}$,
it suffices to choose $\delta_n$ s.t. $\delta_n < r_n$ for large $n$. 
We will show this is necessarily the case in our proof in Part III,
where we will choose $\delta_n$ to prove  \eqref{eq:pf-A1A1prime-goal3} which also involves $B_n = B_{N_n,r_n, \delta_n, \varepsilon_n}$.

\vspace{10pt}
\noindent
$\bullet$ \underline{Part III:  To prove  \eqref{eq:pf-A1A1prime-goal3}}.
We first derive two useful facts.
First,  let the constant $\tau_h$  be as in Lemma \ref{lemma:rkhs_lip},
and  $\tau_h$ is a fixed positive constant determined by the spectral measure $\mu$. 
 For any  $t > \delta^{-2}$,  by Lemma \ref{lemma:rkhs_lip}, 
every element of $N\tilde{\mathbb{H}}^1_{t}$ is uniformly at most $\delta  \sqrt{D} \tau_h N$ distant from a constant function for a constant in the interval $[-N, N]$. 
Therefore, we have

\vspace{5pt}
(Fact 1): For $\varepsilon' > \delta \sqrt{D}\tau_h N$ and $N > \varepsilon'$,
 \begin{equation}\label{eq:rkhs_cov_bound_1}
     \calN(3 \varepsilon', \bigcup_{t > \delta^{-2}}(N \tilde{\mathbb{H}}^1_{t})+\varepsilon' \mathbb{B}_1,||\cdot||_{\infty}) 
 \le \calN(\varepsilon',  [-N,N], |\cdot|) 
 \le \frac{2N}{\varepsilon'}.
 \end{equation}

Meanwhile, observe that we always have 
\begin{align*}
 \log \calN( 2 \varepsilon', N  {(\frac{r}{\delta})^{D/2}} \tilde{\mathbb{H}}^1_{r^{-2}}+\varepsilon' \mathbb{B}_1,||\cdot||_{\infty}) 
   & \le \log \calN(\varepsilon', N  {(\frac{r}{\delta})^{D/2}} \tilde{\mathbb{H}}^1_{r^{-2}}, ||\cdot||_{\infty}) \\
   & = \log \calN( \frac{\varepsilon'}{N}  { (\frac{\delta}{r})^{D/2} },  \tilde{\mathbb{H}}^1_{r^{-2}}, ||\cdot||_{\infty}). 
\end{align*}
To bound the r.h.s., we use Lemma \ref{lemma:unit_ball_covering_general},
and let the constant $K$ be as therein.
By Lemma \ref{lemma:unit_ball_covering_general},  
if $r^{-1} < r_0$ and $ \frac{\varepsilon'}{N} (\frac{\delta}{r})^{D/2} < 1/2$,
 then 
 \begin{align*}
   \log \calN( \frac{\varepsilon'}{N} {(\frac{\delta}{r})^{D/2}}, \tilde{\mathbb{H}}^1_{r^{-2}}, ||\cdot||_{\infty}) 
    \le K r^{\varrho} (\log(\frac{N {(r/\delta)^{D/2}}}{\varepsilon'}))^{D+1}.
   \end{align*}
This gives the following fact

\vspace{5pt}
(Fact 2):
As long as 
$  r > \delta$, $  r> \max\{ 1, \frac{1}{r_0} \}$ and $\varepsilon'/ N  < 1/2$,   
\begin{align}
    \log \calN( 2 \varepsilon', N  {(\frac{r}{\delta})^{D/2}} \tilde{\mathbb{H}}^1_{r^{-2}}+\varepsilon' \mathbb{B}_1,||\cdot||_{\infty}) 
   \le K r^\varrho (\log(\frac{N {(r/\delta)^{D/2}}}{\varepsilon'}))^{1+D}.
    \label{eq:controlling_rkhs_covering}
\end{align}

Having these two facts in hand, recall that $\varepsilon_n$, $N_n$, $r_n$ have been specified, we now set 
 \begin{equation}\label{eq:def-deltan-proof-3.1-goal3}
 \delta_n = \varepsilon_n/(2\sqrt{D}\tau_h N_n),
 \end{equation}
 and thus $\varepsilon_n > \delta_n \sqrt{D}\tau_h N_n$ for all $n$.
By definition, as $n$ increases, 
$\varepsilon_n = o(1)$, 
$N_n \to \infty$,
$r_n \to +\infty$, 
$\delta_n \sim \varepsilon_n/N_n = o(1)$.
Using our construction of $(N_n, r_n, \delta_n, \varepsilon_n)$, one can verify that for large enough $n$,
  \begin{equation}\label{eq:N-r-eqn}
\frac{1}{2}   N_n > \varepsilon_n >  \delta_n \sqrt{D}\tau_h N_n, 
     \quad r_n > \delta_n,
     \quad r_n > 
     \max\{ 1, \frac{1}{r_0} \}.
 \end{equation}
 By now, \eqref{eq:cond-claim-sufficient} fully holds with $(N, r, \delta, \varepsilon' ) = (N_n, r_n, \delta_n, \varepsilon_n )$.
\eqref{eq:N-r-eqn} ensures that $(\varepsilon', \delta, N) = ( \varepsilon_n, \delta_n, N_n)$ satisfies the requirement of (Fact 1), which implies that for large $n$, \eqref{eq:rkhs_cov_bound_1} holds with $( \varepsilon_n, \delta_n, N_n)$.
Meanwhile,  \eqref{eq:N-r-eqn}  and \eqref{eq:cond-claim-sufficient}
also ensure that $(N,r,\delta,\varepsilon')= (N_n, r_n, \delta_n, \varepsilon_n)$ satisfies the requirement of (Fact 2), and then we have \eqref{eq:controlling_rkhs_covering} hold with $(N_n, r_n, \delta_n, \varepsilon_n)$. 

Let $B_n = B_{N_n, r_n, \delta_n, \varepsilon_n}$,
by definition \eqref{eq:def-B-N-r-delta-eps}, 
\begin{align*}
 \calN( 3\varepsilon_n, B_n, ||\cdot||_{\infty}) 
 & \le  \calN( 3\varepsilon_n,  N {(\frac{r}{\delta})^{D/2}} \tilde{\mathbb{H}}^1_{r^{-2}}+ \varepsilon' \mathbb{B}_1 , ||\cdot||_{\infty})  \\
 &~~~
 +  \calN( 3\varepsilon_n, \bigcup_{t > \delta^{-2}}(N \tilde{\mathbb{H}}^1_{t})+\varepsilon' \mathbb{B}_1, ||\cdot||_{\infty}).
\end{align*}
By \eqref{eq:controlling_rkhs_covering}, the first term can be bounded by  
\[
\calN( 2\varepsilon_n,  N {(\frac{r}{\delta})^{D/2}} \tilde{\mathbb{H}}^1_{r^{-2}}+ \varepsilon' \mathbb{B}_1 , ||\cdot||_{\infty})
\le \exp \left\{ K r_n^\varrho (\log(\frac{N_n {(r_n/\delta_n)^{D/2}} }{\varepsilon_n}))^{1+D} \right\}.
\]
The second term can be bounded by $2N_n /\varepsilon_n$ due to \eqref{eq:rkhs_cov_bound_1}. 
Putting together, we have  
\begin{align} \label{eq:B_n_cover_bound_1}
 \calN( 3\varepsilon_n, B_n, ||\cdot||_{\infty}) 
 \le \exp  \left\{ K r_n^\varrho (\log(\frac{N_n {(r_n/\delta_n)^{D/2}} }{\varepsilon_n}))^{1+D} \right\}
 + \frac{2N_n}{\varepsilon_n}.
\end{align}
Note that $N_n > \varepsilon_n$, $r_n \to +\infty$, $r_n/\delta_n >1$ and $N_n/\varepsilon_n \to +\infty$, 
we have both terms in the r.h.s. of \eqref{eq:B_n_cover_bound_1} greater than 2 with large $n$. 
Then, by an elementary inequality that  for any $x \ge 2$, $y \ge 2$,
$\log(x+y) \le \log(x) + \log(y)$,
\eqref{eq:B_n_cover_bound_1} gives that, with large $n$,   
\begin{align}
     \log \calN( 3\varepsilon_n, B_n, ||\cdot||_{\infty}) 
 \le 
 K r_n^\varrho (\log(\frac{N_n {(r_n/\delta_n)^{D/2}} }{\varepsilon_n}))^{1+D}+ \log(\frac{2N_n}{\varepsilon_n}). 
 \label{eq:log-cover-proof-3}
 \end{align}

Recall our definition of $(N_n, r_n, \delta_n, \varepsilon_n)$ and also $\varepsilon_n$ and $\bar \varepsilon_n$, 
where we have chosen $\bar C_1$ and $\bar C_2$ is to be determined. 
Inserting all these into \eqref{eq:log-cover-proof-3}, 
we choose a large enough constant $\bar C_2$
to ensure that, at large enough $n$,
 the r.h.s. of \eqref{eq:log-cover-proof-3} is upper bounded by $ n \bar{\varepsilon}^2_n$ 
and $\bar{\varepsilon}_n > 3\varepsilon_n$.
Then we have 
 $$
 \log \calN(\bar{\varepsilon}_n,B_n,||\cdot||_{\infty}) 
 \le \log \calN(3 {\varepsilon}_n,B_n,||\cdot||_{\infty}) 
 \le n \bar{\varepsilon}^2_n,
 $$
and this proves \eqref{eq:pf-A1A1prime-goal3}.
Such $\bar C_2$ can be chosen as
\[
\bar C_2 = 4 \vee \sqrt{\frac{8K}{K_2}} \Big( \frac{D}{ (2s + \varrho) \wedge 2}+ 1 \Big)^{(1+D)/2}+1.
\]
First, $\bar C_2 \ge 4$ implies that $\bar{\varepsilon}_n > 3\varepsilon_n$ when $\log n > 1$.
It remains to show that 
$$
 K r_n^\varrho (\log(\frac{N_n {(r_n/\delta_n)^{D/2}} }{\varepsilon_n}))^{1+D}
 + \log(\frac{2N_n}{\varepsilon_n})
 \le n \bar{\varepsilon}^2_n
 = \bar{C}_2^2 n  \varepsilon_n^2 (\log n)^{1+D}.$$
By that
 $\bar C_2^2 \ge \frac{8K}{K_2} \Big( \frac{D}{ (2s + \varrho) \wedge 2}+ 1 \Big)^{1+D} +1$, it suffices to have 
\begin{align*}
K r_n^\varrho (\log \frac{N_n {(r_n/\delta_n)^{D/2}} }{\varepsilon_n} )^{1+D} 
& \le \frac{8K}{K_2} n  \varepsilon_n^2 \left( \frac{D}{ (2s + \varrho) \wedge 2}+ 1 \right)^{1+D}(\log n)^{1+D}, \\
 \log(\frac{2N_n}{\varepsilon_n})
& \le n  \varepsilon_n^2 (\log n)^{1+D}.
\end{align*}
Both can be verified at large $n$ by inserting the definitions of  $N_n$, $r_n$, $\delta_n$, $\varepsilon_n$, which gives that 
$\frac{r_n}{\delta_n} = C \frac{N_n}{\varepsilon_n} (n \varepsilon_n^2)^{1/\varrho}$ for a constant $C$, and that
$
\log \frac{N_n}{\varepsilon_n} = ( \frac{1}{2} + o(1)) \log n$,
and 
$
 \log ( n \varepsilon_n^2) = ( \frac{\varrho}{2s + \varrho} + o(1)) \log n.$

Finally, since our construction of $(N_n, r_n, \delta_n, \varepsilon_n)$ ensures that
$r_n > \delta_n$ for large $n$, as has been shown in \eqref{eq:N-r-eqn}, 
by the argument at the end of the proof of Part II,
we have also finished the proof of \eqref {eq:pf-A1A1prime-goal2}.

\vspace{5pt}
\noindent
\underline{Proof of \eqref{eq:claim-Pft-1} under \eqref{eq:cond-claim}}:
Note that 
\[
\P [ f^{t} \notin B_{N,r,\delta,\varepsilon'} ] 
\le 
\P [t < r^{-2} ]
+ \int_{r^{-2}}^{\infty} 
\P [ f^{t}\notin B_{N,r,\delta,\varepsilon'}|t] p(t) dt.
\]

For the first term on the r.h.s.,
since  $r^{-2} 
< c_3 n^{\frac{-2}{2s+\varrho}} (\log n )^{\frac{-4(1+D)}{(2+\varrho/s)\varrho}}$, i.e., the first condition in \eqref{eq:cond-claim},
together with our assumption on the prior in Assumption \ref{assump:A3}(A3), we have
\begin{align}\label{eq:proof-claim-1st-term}
\P (t < r^{-2}) 
\le \int_0^{r^{-2}} C_2 t^{-a_2} \exp(-K_2 t^{-\varrho/2}) dt 
\le \frac{2C_2 r^{2(a_2-\varrho+1)}e^{-K_2 r^\varrho}}{K_2\varrho}.
\end{align}

To bound the second term, for any fixed $t > r^{-2}$, we have 
\begin{align}
\P [ f^{t} \notin B_{N,r,\delta,\varepsilon'}|t ] 
& \le \P (f^{t} \notin N \tilde{\mathbb{H}}^1_{t}+\varepsilon' \mathbb{B}_1|t) 
\quad \text{(by \eqref{eq:NHt1+epsB1-subset-B-proof1})}\nonumber \\
&\le 1- \Phi(\Phi^{-1}(e^{-\phi^{t}_0(\varepsilon')})+N) 
\nonumber \\ 
&\le 1- \Phi(\Phi^{-1}(e^{-\phi^{r^{-2}}_0(\varepsilon')})+N), 
\label{eq:proof-rhs-second-term}
\end{align}
where the second inequality is by  Borell's inequality \cite{borell1975brunn}, see also \cite[Theorem 5.1]{van2008reproducing};
the third inequality is by that 
$\exp \{ -\phi^{t}_0(\varepsilon') \}= \P [ \| f^{t} \|_\infty \le \varepsilon' | t ]$ is increasing with $t$. 
To proceed, under the last two conditions in \eqref{eq:cond-claim}, 
by the estimate of the quantile of normal density in \cite[Lemma 4.10]{van2009adaptive}, 
we have $N \ge -2 \Phi^{-1}( \exp \{-\phi^{r^{-2}}_0(\varepsilon')\})$.
As a result, the right hand side of \eqref{eq:proof-rhs-second-term}
is bounded by $1 - \Phi(N/2) \le e^{- N^2/8}$. 
This gives that 
\[
\P [ f^{t} \notin B_{N,r,\delta,\varepsilon'}|t ]  \le 
e^{- N^2/8}, \quad \forall t > r^{-2}.
\]
Then, 
\begin{equation}\label{eq:proof-claim-2nd-term}
\int_{r^{-2}}^{\infty} 
\P [ f^{t}\notin B_{N,r,\delta,\varepsilon'}|t] p(t) dt
\le e^{- N^2/8}
\int_{r^{-2}}^{\infty}  p(t) dt
\le e^{- N^2/8}.
\end{equation}
Combining \eqref{eq:proof-claim-1st-term} and \eqref{eq:proof-claim-2nd-term}
proves \eqref{eq:claim-Pft-1}.
 \end{proof}

To prove Theorems \ref{thm:fix-design-estimator} and \ref{thm:contraction_2_norm},
we  let $\varepsilon_n $ and $\bar \varepsilon_n $ be defined as in \eqref{eq:def_ep_ep_bar-thm3.1} 
 with the constants $\bar{C}_1$, $\bar{C}_2$ chosen as in the proof of Theorem \ref{thm:contraction_n_norm}.
 We also need the following lemma adapted from \cite[Lemma 6.1]{yang2016bayesian}, and we include a proof for completeness.

\begin{lemma}\label{lemma:scale_prob_k}
Under the condition of Theorem \ref{thm:contraction_n_norm},

 (i) Fixed design. 
 Let $\Pr^{(n)}_{Y | X}$ and $ \E^{(n)}_{Y|X}$ be for the joint distribution of $\{Y_i \}_{i=1}^{n}$ conditioning on fixed $\{ X_i \}_{i=1}^n$. 
  There exist  $c_{5,Y} >0$
  and a sequence of measurable sets $A_{n,Y}$ under $\Pr^{(n)}_{Y | X}$ satisfying  
  that $\Pr^{(n)}_{Y|X } (A_{n,Y}^c) \to 0$
  and, when $n$ is sufficiently large, 
     $$
     \E^{(n)}_{Y|X} \left( 
     {\bf 1}_{A_{n,Y}} \Pi(||f^t - f^* ||_n \ge \bar{\varepsilon}_n|\{ X_i,Y_i \}_{i=1}^{n}) \right)
     \le  \exp\{-c_{5,Y} n\varepsilon_n^2 \}.
     $$

 (ii) Random design. 
 Let $\Pr^{(n)}$ and $ \E^{(n)}$ be for the joint distribution of $\{X_i, Y_i \}_{i=1}^{n}$.
  There exist  $c_5 >0$
  and a sequence of measurable sets $A_{n}$ under $\Pr^{(n)}$ satisfying  
  that $\Pr^{(n)} (A_{n}^c) \to 0$
  and, when $n$ is sufficiently large, 
     $$
     \E^{(n)} \left( 
     {\bf 1}_{A_{n}} \Pi(||f^t - f^*  ||_n \ge \bar{\varepsilon}_n|\{ X_i,Y_i \}_{i=1}^{n}) \right)
     \le  \exp\{-c_5n\varepsilon_n^2 \}.
     $$     
 \end{lemma}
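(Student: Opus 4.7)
The plan is to apply the standard testing-and-sieve framework of \cite{ghosal2000convergence,ghosal2007convergence}, recycling the sieve $B_n$ and sequences $\varepsilon_n, \bar{\varepsilon}_n$ constructed in the proof of Theorem \ref{thm:contraction_n_norm} together with the three conditions \eqref{eq:pf-A1A1prime-goal1}--\eqref{eq:pf-A1A1prime-goal3} established there. Let $L_i(f) = p_f(Y_i \mid X_i)/p_{f^*}(Y_i \mid X_i)$, so the posterior probability of interest equals $N_n / D_n$ with denominator $D_n = \int \prod_{i=1}^n L_i(f) d\Pi(f)$ and numerator $N_n = \int_{\|f - f^*\|_n > \bar{\varepsilon}_n} \prod_{i=1}^n L_i(f) d\Pi(f)$. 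The key driver will be the separation $\bar{\varepsilon}_n/\varepsilon_n = \bar{C}_2 (\log n)^{(D+1)/2} \to \infty$, which will force every error term below $e^{-c_5 n \varepsilon_n^2}$ once the constant $\bar{C}_2$ is chosen large as in the proof of Theorem \ref{thm:contraction_n_norm}.

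First I would define the good event $A_n$ (or $A_{n,Y}$ in part (i)) as $\{D_n \ge e^{-(2+c_0) n \varepsilon_n^2/\sigma^2}\}$ for a small fixed $c_0 > 0$. Under Gaussian noise, the KL-divergence and the variance of $\sum_i \log L_i(f)$ are both of order $n \|f-f^*\|_n^2/\sigma^2$ (fixed design) or $n \|f-f^*\|_2^2/\sigma^2$ (random design), each dominated by $n \|f-f^*\|_\infty^2/\sigma^2$. Hence condition \eqref{eq:pf-A1A1prime-goal1} places at least $e^{-n \varepsilon_n^2}$ mass on the requisite KL-neighborhood of $f^*$, and a standard evidence-lower-bound argument (e.g., \cite[Lemma 8.1]{ghosal2007convergence}) yields $\Pr(A_n^c) \le (c_0^2 n \varepsilon_n^2)^{-1} \to 0$. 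Next, to control $N_n$ on $A_n$, I would cover $B_n$ by an $\bar{\varepsilon}_n$-net in $\|\cdot\|_\infty$ of size at most $e^{n \bar{\varepsilon}_n^2}$ via \eqref{eq:pf-A1A1prime-goal3}; since $\|\cdot\|_n \le \|\cdot\|_\infty$, the same centers automatically form an $\bar{\varepsilon}_n$-net in $\|\cdot\|_n$. For each center $f_j$ with $\|f_j - f^*\|_n > \bar{\varepsilon}_n/2$, the Gaussian likelihood ratio furnishes a Neyman-Pearson test with both error types bounded by $\exp(-c_1 n \bar{\varepsilon}_n^2)$ for some $c_1 > 0$ depending only on $\sigma$. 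A union bound over the net then produces a single test $\phi_n$ with $\E \phi_n \le e^{(1-c_1) n \bar{\varepsilon}_n^2}$ and $\sup_{f \in B_n,\, \|f - f^*\|_n > \bar{\varepsilon}_n} \E_f [1 - \phi_n] \le e^{-c_1 n \bar{\varepsilon}_n^2}$.

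Assembling the pieces via $\Pi(\|f^t - f^*\|_n > \bar{\varepsilon}_n \mid \{X_i, Y_i\}_{i=1}^n) \le \phi_n + (1-\phi_n) N_n/D_n$, taking expectation on $A_n$, applying Fubini, and splitting the numerator into $\{f \in B_n^c\}$ (bounded via \eqref{eq:pf-A1A1prime-goal2}) and $\{f \in B_n,\, \|f - f^*\|_n > \bar{\varepsilon}_n\}$ (bounded via the test Type II error) yields the upper bound
\begin{equation*}
    e^{(1-c_1) n \bar{\varepsilon}_n^2} + e^{(2+c_0) n \varepsilon_n^2/\sigma^2} \left( e^{-4 n \varepsilon_n^2} + e^{-c_1 n \bar{\varepsilon}_n^2} \right),
\end{equation*}
each term of which is eventually at most $e^{-c_5 n \varepsilon_n^2}$ for a sufficiently small $c_5 > 0$. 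Parts (i) and (ii) differ only in whether $\{X_i\}$ is integrated over: since conditions \eqref{eq:pf-A1A1prime-goal1}--\eqref{eq:pf-A1A1prime-goal2} are stated in $\|\cdot\|_\infty$ and are therefore free of the $X_i$'s, the argument transfers from the fixed to the random design by first conditioning on $\{X_i\}$ and then averaging. The principal obstacle is the constant bookkeeping --- arranging that the testing rate, the sieve-complement rate, and the denominator deficit simultaneously dominate $n \varepsilon_n^2$ --- which is automatic because $\bar{C}_2$ was already fixed large enough in the proof of Theorem \ref{thm:contraction_n_norm} to ensure $n \bar{\varepsilon}_n^2 \gg n \varepsilon_n^2$.
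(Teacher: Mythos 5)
The proposal follows the same sieve-and-test framework that the paper routes through via \cite{ghosal2000convergence,ghosal2007convergence}: decompose the posterior mass as $N_n/D_n$, lower-bound the evidence $D_n$ on a good event $A_n$ via the small-ball condition \eqref{eq:pf-A1A1prime-goal1}, and control $N_n$ by the sieve-tail bound \eqref{eq:pf-A1A1prime-goal2} together with a covering-based test supplied by \eqref{eq:pf-A1A1prime-goal3}. This is precisely what the paper's one-paragraph proof defers to, so at the level of strategy you and the paper agree.

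The closing step, however, does not close. You claim that the first term $e^{(1-c_1)n\bar{\varepsilon}_n^2}$ is eventually $\le e^{-c_5 n\varepsilon_n^2}$, but that requires $c_1 > 1$, which you never establish and which is false for the test you construct: a Gaussian Neyman--Pearson test at separation $\bar{\varepsilon}_n/2$ gives both error exponents of order $n\bar{\varepsilon}_n^2/(32\sigma^2)$, i.e.\ $c_1 \asymp 1/(32\sigma^2)$, typically well below $1$. Your remark that the constant bookkeeping is ``automatic because $\bar{C}_2$ was already fixed large enough to ensure $n\bar{\varepsilon}_n^2 \gg n\varepsilon_n^2$'' does not help here: if $1-c_1 > 0$, making $\bar{\varepsilon}_n$ larger makes the type-I term bigger, not smaller. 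The missing ingredient is to test against the alternative $\{\|f-f^*\|_n > M\bar{\varepsilon}_n\}$ for a large, $\sigma$-dependent constant $M$ chosen so the Gaussian testing exponent $c_1 M^2$ exceeds $1$; the posterior then contracts at $M\bar{\varepsilon}_n$ and the constant $M$ is absorbed into the unspecified leading constant $C$ in the rate $\bar{\varepsilon}_n$. This device is built into the cited proofs but absent from yours. A companion issue appears in the sieve term $e^{(2+c_0)n\varepsilon_n^2/\sigma^2 - 4n\varepsilon_n^2}$, which is $\le e^{-c_5 n\varepsilon_n^2}$ only if $\sigma^2 > (2+c_0)/4$; this one disappears after the standard rescaling $Y_i \mapsto Y_i/\sigma$, as hinted in Remark~\ref{rk:sigma-dependence}, but that rescaling is not part of your argument either. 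In short, the approach is the right one and matches the paper, but the final assertion that ``each term is eventually at most $e^{-c_5 n\varepsilon_n^2}$'' is not justified as written, and repairing it requires restructuring the test at a constant multiple of $\bar{\varepsilon}_n$ rather than at $\bar{\varepsilon}_n/2$.
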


  \begin{proof}[Proof for Lemma \ref{lemma:scale_prob_k}] 
  First, we prove (ii) in the random design case.
  We recall the definition of 
  $ \varepsilon_n,  r_n,  N_n, \delta_n$,
  and $B_n = B_{N_{n},r_{n},\delta_{n},\varepsilon_{n}}$
from the proof of  Theorem \ref{thm:contraction_n_norm}.
We have already shown in  \eqref{eq:pf-A1A1prime-goal2}\eqref{eq:pf-A1A1prime-goal3} that
\begin{align}\label{eq:3_3_prob_covering}
& \P ( f^{t} \notin B_n ) \le \exp(-4 n \varepsilon^2_{n}), \quad 
 \log \calN(\bar{\varepsilon}_{n},B_{n},||\cdot||_{\infty}) 
  \le n \bar{\varepsilon}^2_n .    
\end{align}
  As a consequence, 
  by inserting our definition of $B_n$ in the proof of Theorem 2.1
in \cite{ghosal2000convergence} and following their steps, we
obtain a sequence of measurable sets $A_n$ such that $\Pr^{(n)}(A_n^c) \to 0$,
and for constant $c_5 > 0$, 
$\E^{(n)}( {\bf 1}_{A_n} \Pi(|| f^t - f^*  || \ge  \bar{\varepsilon}_n|\{ X_i,Y_i \}_{i=1}^{n})) \le  e^{-c_5 n \varepsilon_n^2}$.
This proves the case (ii). 

The proof for (i) in the fixed design case is by adapting the above argument 
from \cite{ghosal2000convergence} to the fixed design case.
Specifically, though that theorem focused on i.i.d. observations, 
its proof can be
adapted to independent but not identically distributed observations.
The extension to  regression with fixed-design follows by applying the techniques in \cite{ghosal2007convergence}. 
  \end{proof}

\begin{proof}[Proof of Theorem \ref{thm:fix-design-estimator}] 
We denote by $X = \{X_i\}_{i=1}^n$ and $Y = \{Y_i\}_{i=1}^n$,
and write $\Pi ( \cdot | \{ X_i,Y_i \}_{i=1}^{n})$ as $ \Pi( \cdot | X, Y) $ for notation brevity. 
Under the assumption of the theorem, Lemma \ref{lemma:scale_prob_k}(i) applies.
Let the event $A_{n,Y}$ be as therein, for large $n$, we have that 
    \begin{equation}\label{eq:apply-lemma7.1(i)-2}
    {\Pr}^{(n)}_{Y|X} 
    \left[ {\bf 1}_{A_{n,Y}}  \Pi(||f^t-f^*||_n \ge \bar{\varepsilon}_n|  X, Y )   >  e^{-c_{5,Y} n\varepsilon_n^2/2}  \right]
     \le e^{-c_{5,Y} n\varepsilon_n^2/2} \to 0.
    \end{equation}    
As a result, there exists a sequence of events $C_{n,Y} \subset A_{n,Y}$ s.t. $\Pr^{(n)}_{Y|X}(C_{n,Y}^c) \to 0$ and with large $n$,
    \begin{equation}\label{eq:them_3_2_eq_1-i}
    \text{under the event $C_{n,Y}$}, 
    \quad
      \Pi( ||f^t-f^*||_n \ge \bar{\varepsilon}_n | X, Y) 
      \le  e^{-c_{5,Y}n\varepsilon_n^2/2}.
     \end{equation}
By definition of $\hat f$, we have
\begin{align}
\hat f - f^*
&=
\int (f^t  -f ^*) {\bf 1}_{  \{ ||f^t-f^*||_n < \bar{\varepsilon}_n \} } d\Pi(f^t | X, Y)
+ \int f^t  {\bf 1}_{  \{ ||f^t-f^*||_n \ge \bar{\varepsilon}_n \} } d\Pi(f^t | X, Y)  \nonumber \\
&~~~ 
-  \int f ^* {\bf 1}_{  \{ ||f^t-f^*||_n \ge \bar{\varepsilon}_n \} } d\Pi(f^t | X, Y)
=: \textcircled{1}+ \textcircled{2} - \textcircled{3}, \label{eq:proof-fix-design-estimator-123}
 \end{align}
 and below we bound $\|  \textcircled{1} \|_n$, $\|  \textcircled{2} \|_n$ and $\|  \textcircled{3} \|_n$ respectively.
 
Because $f^*$ is bounded, let $\|f^*\|_\infty \le M$ for some positive constant $M$.
 The bounds for  $\|  \textcircled{1} \|_n$ and  $\|  \textcircled{3} \|_n$ are straightforward:
 \begin{equation}\label{eq:proof-fix-design-estimator-1}
 \| \textcircled{1}  \|_n \le 
 \int \| f^t  -f ^*\|_n {\bf 1}_{  \{ ||f^t-f^*||_n < \bar{\varepsilon}_n \} } d\Pi(f^t | X, Y)
 \le \bar{\varepsilon}_n.
 \end{equation}
For $\textcircled{3}$, because $ \| f^* \|_n \le \| f^* \|_\infty \le M$, we have
$$
 \| \textcircled{3}  \|_n 
 \le   \int \| f ^* \|_n {\bf 1}_{  \{ ||f^t-f^*||_n \ge \bar{\varepsilon}_n \} } d \Pi(f^t | X, Y ) 
  \le M \Pi ( ||f^t-f^*||_n \ge \bar{\varepsilon}_n  | X, Y ),$$
and thus, by \eqref{eq:them_3_2_eq_1-i}, we have that with large $n$,
  \begin{align}
  \text{under the event $C_{n,Y}$,} \quad
   \| \textcircled{3}  \|_n  \le M e^{-c_{5,Y}n\varepsilon_n^2/2}.
  \label{eq:proof-fix-design-estimator-3}
  \end{align}
  To bound  $\|  \textcircled{2} \|_n$, first observe that 
  \begin{align}
\|  \textcircled{2} \|_n
& \le   \int \| f^t \|_n {\bf 1}_{  \{ ||f^t-f^*||_n \ge \bar{\varepsilon}_n \} } d\Pi(f^t | X, Y)  \nonumber \\
& \le \left( \int \| f^t \|_n^2  d\Pi(f^t | X, Y) \right)^{1/2}
   \Pi(  ||f^t-f^*||_n \ge \bar{\varepsilon}_n   | X, Y)^{1/2}, \label{eq:proof-fix-design-estimator-2-a}
  \end{align}
  where the second inequality is by Cauchy-Schwarz.
  The second factor in \eqref{eq:proof-fix-design-estimator-2-a} can be bounded as $e^{-c_{5,Y}n\varepsilon_n^2/4}$ by restricting to the event $C_{n,Y}$;
to control the first factor we utilize more property of GP. Specifically, denote by $p(t | X, Y)$ the marginal posterior of $t$, we have
  \begin{equation}\label{eq:proof-bound-circle2-2nd-moment-rhof-1}
\int \| f^t \|_n^2  d\Pi(f^t | X, Y) 
= \int \int  \frac{1}{n}\| \rho_X (f^t) \|_2^2  dP(  \rho_X (f^t) | X, Y, t)  p(t | X, Y) dt,
 \end{equation}
 where $\rho_X (f) : = (f(X_1), \cdots, f(X_n)) \in \R^n$ for function $f $ on $\calX$.
 For each $t > 0$, the vector $\rho_X (f)$ has the conditional posterior distribution as (see, e.g. \cite{williams2006gaussian})
 \[
 \rho_X (f) | X, Y, t  \sim \calN( \hat{\mu}_t, \hat{\Sigma}_t ),
 \quad
 \hat{\mu}_t := K_t( K_t + \sigma^2 I)^{-1}Y, 
 \quad 
\hat{\Sigma}_t := K_t - K_t( K_t + \sigma^2 I)^{-1} K_t,
 \]
 where $K_t := [ h(X_i, X_j) ]_{i,j=1}^n$ is the $n$-by-$n$ PSD kernel matrix built from $X$.
 As a result, 
 \[
 \int \| \rho_X (f^t) \|_2^2  dP(  \rho_X (f^t) | X, Y, t)
 = \| \hat{\mu}_t \|_2^2 + {\rm Tr}( \hat{\Sigma}_t  ).
 \]
 Using the spectral representation of the matrix $K_t = \sum_{k=1}^n \lambda_k u_k u_k^T$, 
 where $u_k$ are eigenvectors, 
 $[ u_1 | \cdots | u_k]$ forms an orthogonal matrix,
 and $\lambda_k$ are the associated eigenvalues of $K_t$, 
 one can verify  that the operator norm 
 \[ 
 \| K_t( K_t + \sigma^2 I)^{-1} \|_{op} 
 = \max_{k=1,\cdots, n} \frac{\lambda_k}{ \lambda_k + \sigma^2} \le 1, 
 \] 
 and that
 \[
{\rm Tr}  ( \hat{\Sigma}_t  )
 = \sigma^2 \sum_{k=1}^n \frac{\lambda_k}{ \lambda_k + \sigma^2} \le n \sigma^2.
 \]
Then, we have $\| \hat \mu_t  \|_2 \le \| Y\|_2$, and that 
 \[
 \int  \| \rho_X (f^t) \|_2^2  dP(  \rho_X (f^t) | X, Y, t)
 \le  \| Y \|_2^2 + n \sigma^2,
 \]
  which holds for any $t$. Inserting back to \eqref{eq:proof-bound-circle2-2nd-moment-rhof-1}, we have
  \begin{equation}\label{eq:proof-bound-circle2-2nd-moment-rhof-2}
  \int \| f^t \|_n^2  d\Pi(f^t | X, Y) 
\le  \frac{1}{n} \| Y \|_2^2 +  \sigma^2
\le \| Y\|_\infty^2 + \sigma^2.
  \end{equation}
  We can bound $\| Y\|_\infty   \lesssim  \sqrt{\log n}$ with high probability using a union bound (which only utilizes the marginal distribution of each $Y_i | X$):
  given fixed $X$, 
for each $i$,  $\Pr [ |w_i|  > \sigma \alpha] \le 2 e^{-\alpha^2/2}$, thus 
$ \Pr [ \max_{i =1,\cdots,n }|w_i| > \sigma \sqrt{4 \log n} ] \le 2/n$.
This means that, under a sequence of events $E_{n,Y}$ s.t. $\Pr^{(n)}_{Y|X} ( E_{n,Y}^c) \le 2/n \to 0$, we have
\[
|Y_i |\le  |f^*(X_i)| + | w_i| \le M + \sigma \sqrt{4 \log n}, \quad \forall i=1,\cdots,n.
\]
Putting back to \eqref{eq:proof-bound-circle2-2nd-moment-rhof-2} gives that 
\[
  \text{under the event $E_{n,Y}$,} \quad
  \int \| f^t \|_n^2  d\Pi(f^t | X, Y)  \le (  M + \sigma \sqrt{4 \log n} )^2 + \sigma^2.
\]

We are ready to continue \eqref{eq:proof-fix-design-estimator-2-a} as follows:  with large $n$,  
  \[
  \text{ under $E_{n,Y} \cap  C_{n,Y}$,}  \quad
  \|  \textcircled{2} \|_n \le
  3  (M + \sigma\sqrt{ \log n} )
 e^{-c_{5,Y}n\varepsilon_n^2/4}.
\]
Combined with \eqref{eq:proof-fix-design-estimator-1}\eqref{eq:proof-fix-design-estimator-3}, 
this allows us to apply triangle inequality to \eqref{eq:proof-fix-design-estimator-123} and have that, under $E_{n,Y} \cap  C_{n,Y}$,
\begin{align*}
\| \hat f - f^*\|_n
& \le \|  \textcircled{1} \|_n + \|  \textcircled{2} \|_n + \|  \textcircled{3} \|_n \\
& \le  \bar{\varepsilon}_n
+ M e^{-c_{5,Y}n\varepsilon_n^2/2}
+   3  (M + \sigma\sqrt{ \log n} )
e^{-c_{5,Y}n\varepsilon_n^2/4}.
\end{align*}
 When $n$ is large enough such that 
 both $M e^{-c_{5,Y}n\varepsilon_n^2/2}$ and 
 $  3  (M + \sigma\sqrt{ \log n} )
 e^{-c_{5,Y}n\varepsilon_n^2/4}$ are less than $ \bar{\varepsilon}_n$, 
we have 
     $||\hat{f} -f^*||_n \le 3 \bar \varepsilon_n $
     under the event $E_{n,Y} \cap  C_{n,Y}$.
Since $\Pr^{(n)}_{Y|X}( E_{n,Y} \cap  C_{n,Y}) \to 1$, this proves the theorem. 
 \end{proof}

\subsection{Random design results}

To prove the random design results, we need the following lemma for comparing $||\cdot||_n$ and $||\cdot||_2$ based on empirical process theory \cite{geer2000empirical}.
Let $H_{B}(\varepsilon, \mathcal{F}, ||\cdot||)$ denote the $\varepsilon$-bracketing entropy of a function space $\mathcal{F}$ with respect to a norm $||\cdot||$.

\begin{lemma}[Lemma 5.16 in \cite{geer2000empirical}]
\label{lemma:ep_2_n_compare}
    Suppose $X_i$ are i.i.d drawn from a distribution,
    and denote by $\Pr^{(n)}_{X}$ the joint law of $\{X_i\}_{i=1}^n$.
If for some $\bar{M} > 0$,
the function class $\mathcal{F}$ satisfies that 
$\sup_{f \in \mathcal{F}} ||f||_{\infty}\le \bar M$, 
$\omega > 0 $ satisfies that 
$n {\bar M}^{-2}\omega^2 \ge H_B( {\bar M}^{-1}\omega, \mathcal{F}, ||\cdot||_2)$, 
and  $\eta \in (0,1)$,
then there exists $C_5 > 0$ such that
$$
{\Pr}_{X}^{(n)}
\left[ \sup_{f \in \mathcal{F},||f||_2 \ge 32 \omega/\eta} \left| \frac{||f||_n}{||f||_2} -1 \right| \ge \eta \right] 
\le 8 \exp\{ -C_5 n \bar M^{-2} \omega^2 \eta^2 \}.
$$
 \end{lemma}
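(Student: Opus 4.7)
The plan is to prove this by combining a Bernstein-type concentration inequality for $\|f\|_n^2 - \|f\|_2^2$ with a bracketing chain and a peeling argument over shells of $\|f\|_2$. Note first that the statement concerns a ratio: if $|\|f\|_n^2 - \|f\|_2^2| \le \eta \|f\|_2^2/3$, then $|\|f\|_n/\|f\|_2 - 1| \le \eta$, so it suffices to control the squared empirical norm with \emph{relative} error of order $\eta$.

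I would start pointwise. Fix $f \in \mathcal{F}$ with $\|f\|_\infty \le \bar M$. Writing $\|f\|_n^2 - \|f\|_2^2 = n^{-1}\sum_i (f(X_i)^2 - \E f^2)$, the summands are centered, bounded by $\bar M^2$, and have variance at most $\bar M^2 \|f\|_2^2$. Bernstein's inequality gives
\[
\Pr\!\bigl[\,|\|f\|_n^2 - \|f\|_2^2| \ge s\,\bigr]
\le 2\exp\!\Bigl(-\tfrac{n s^2/2}{\bar M^2\|f\|_2^2 + \bar M^2 s/3}\Bigr).
\]
Taking $s = \eta \|f\|_2^2/3$ produces a tail of order $\exp(-c\, n \bar M^{-2} \eta^2 \|f\|_2^2)$, which is strongest when $\|f\|_2$ is large. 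This is what motivates the lower cutoff $\|f\|_2 \ge 32\omega/\eta$.

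Next, to make this uniform in $f$, I would invoke the bracketing entropy hypothesis: at scale $\varepsilon_0 := \bar M^{-1}\omega$, there are $N \le \exp(H_B(\varepsilon_0,\mathcal{F},\|\cdot\|_2))$ brackets $[\ell_j,u_j]$ with $\|u_j-\ell_j\|_2 \le \varepsilon_0$ covering $\mathcal{F}$. For any $f$ in bracket $j$ one has $\|f\|_n^2$ sandwiched between quantities comparable to $\|\ell_j\|_n^2$ and $\|u_j\|_n^2$, and likewise for $\|f\|_2^2$, with all discrepancies governed by $\|u_j-\ell_j\|_2 \le \varepsilon_0$ and $\bar M$. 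Applying Bernstein to each of the finitely many bracket endpoints and taking a union bound converts the supremum into an $N\cdot(\text{Bernstein tail})$ estimate. The hypothesis $n\bar M^{-2}\omega^2 \ge H_B(\bar M^{-1}\omega,\mathcal{F},\|\cdot\|_2)$ is exactly what is needed so that the entropy factor $N$ is absorbed into the exponent.

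Finally, to obtain the stated form with the constraint $\|f\|_2 \ge 32\omega/\eta$, I would use a standard peeling decomposition: partition the region into shells $S_k = \{f : 2^k\cdot 32\omega/\eta \le \|f\|_2 < 2^{k+1}\cdot 32\omega/\eta\}$ for $k=0,1,\dots$ On shell $S_k$ the Bernstein exponent scales like $n\bar M^{-2}\eta^2 \cdot 4^k (32\omega/\eta)^2 = C\, n\bar M^{-2}\omega^2\cdot 4^k$, and the bracket width $\varepsilon_0$ is small relative to the shell scale (this is where the constant $32$ is calibrated so that the bracketing error cannot by itself violate the $\eta$-deviation). Summing the geometric series over $k$ and absorbing the entropy factor via the stated hypothesis yields the bound $8\exp(-C_5 n\bar M^{-2}\omega^2\eta^2)$.

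The main obstacle I anticipate is the bookkeeping in the peeling step: one must balance three scales (the bracket width $\bar M^{-1}\omega$, the shell radius $2^k\cdot 32\omega/\eta$, and the deviation level $\eta$) so that on every shell both the Bernstein exponent dominates the shell's entropy contribution and the bracket-approximation error stays strictly below $\eta\|f\|_2^2/3$. This is the source of the specific constants $32$ and the $\bar M^{-1}\omega$ entropy scale appearing in the statement, and it is the only nontrivial calibration; the probabilistic ingredients (Bernstein plus union bound plus geometric sum) are otherwise routine.
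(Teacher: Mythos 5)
The paper does not supply a proof of this lemma; it is cited directly from van de Geer (2000), so there is no in-paper argument to compare against. Your task is effectively to reconstruct the standard empirical-process proof, and the architecture you describe---Bernstein at a fixed $f$, bracketing at scale $\bar M^{-1}\omega$ with a union bound, then peeling over shells of $\|f\|_2$---is the right one and matches van de Geer's methodology.

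There is, however, a real gap in your bracketing step. You assert that when $\ell_j \le f \le u_j$, the quantity $\|f\|_n^2$ is sandwiched between $\|\ell_j\|_n^2$ and $\|u_j\|_n^2$. This fails whenever the bracket endpoints straddle zero: if $\ell_j < 0 < u_j$ at some design point then $f$ can vanish there while both endpoints are bounded away from zero, and the claimed sandwich collapses. The correct device is to fix a representative $g_j$ in each bracket and write $|f^2(X_i) - g_j^2(X_i)| = |f - g_j|\,|f + g_j| \le 2\bar M\,(u_j - \ell_j)(X_i)$, so the bracket-induced error in $\|f\|_n^2$ is controlled by the \emph{random} quantity $2\bar M\,\|u_j - \ell_j\|_{1,n}$, which itself requires a second application of Bernstein to concentrate around $2\bar M\,\|u_j - \ell_j\|_1 \le 2\bar M\,\varepsilon_0 = 2\omega$. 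This two-layer scheme---one Bernstein for the representatives $g_j$, another for the bracket widths $u_j - \ell_j$---is where the constant $32$ in the shell lower bound is actually earned; your sketch treats the bracketing error as if it were deterministic and so leaves this layer out. Once that is repaired the rest of your plan goes through, and in fact your shell computation produces an exponent of order $n\bar M^{-2}\omega^2$ (the factors of $\eta$ cancel on the innermost shell), which is stronger than, and therefore implies, the stated $n\bar M^{-2}\omega^2\eta^2$ bound.
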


We are ready to prove 
Theorem \ref{thm:contraction_2_norm}
using Lemma \ref{lemma:scale_prob_k}(ii)
and Lemma \ref{lemma:ep_2_n_compare}.

 \begin{proof}[Proof of Theorem \ref{thm:contraction_2_norm}] 
We first prove the posterior contraction rate in $||\cdot||_2$ under the random design.

For $ B_{n} = B_{N_n, r_n.\delta_n, \varepsilon_n}$ as constructed in the proof of Theorem \ref{thm:contraction_n_norm}, 
we have that \eqref{eq:3_3_prob_covering} holds. 
Under the assumption of the theorem, Lemma \ref{lemma:scale_prob_k}(ii) applies. Let $A_n$ be as therein.
By the first inequality in \eqref{eq:3_3_prob_covering} 
and the argument in the proof of \cite[Lemma 1]{ghosal2007convergence}, 
 we have that, for some $c_6 > 0$,
\begin{equation}\label{eq:3_3_condition_ball_prob}
\text{under the event $A_n$}, \quad
\Pi(f^t \notin B_n|\{X_i,Y_i \}_{i}^n) \le e^{-c_6 n \varepsilon_n^2}.
\end{equation}
Meanwhile, 
following the same argument as in the proof of Theorem \ref{thm:fix-design-estimator} to derive \eqref{eq:them_3_2_eq_1-i}
but now applied under the random design by Lemma \ref{lemma:scale_prob_k}(ii),
we have that there exists a sequence of events $C_n \subset A_n$ s.t. 
$\Pr^{(n)}(C_{n}) \to 1$ and 
   \begin{equation}\label{eq:them_3_2_eq_2}
  \text{under $C_n$}, \quad 
     \Pi(||f^t-f^*||_n \ge \bar{\varepsilon}_n|\{ X_i,Y_i \}_{i=1}^{n}) \le  e^{-c_{5} n\varepsilon_n^2/2}.
     \end{equation}
Combining \eqref{eq:3_3_condition_ball_prob} and \eqref{eq:them_3_2_eq_2}, 
with $c_7 := \min \{ c_5/2, c_6 \}$,
we have that, under $A_n \bigcap C_n = C_n$, 
 \begin{equation}\label{eq:3_3_prob_tail_2}
     \Pi(||f^t-f^*||_n \le \bar \varepsilon_n, f^t \in B_n|\{X_i,Y_i\}_{i=1}^n) \ge 1 - 2e^{-c_7n \varepsilon_n^2}.
 \end{equation}
Let $B_n^{(M)} = \{f_{M}: f \in B_n \}$.
By definition, 
$f^t \in B_n$ implies that $f^t_M \in B_n^{(M)}$.
Then, \eqref{eq:3_3_prob_tail_2} gives that
\begin{align}\label{eq:tail_under_An}
\text{under $C_n$,} \quad
    \Pi( ||f^t_M-f^*||_n \le \bar{\varepsilon}_n, f^t_M \in B^{(M)}_n|\{X_i,Y_i\}_{i=1}^n) 
    \ge 1 - 2e^{-c_7n \varepsilon^2_n}. 
\end{align}

We are to apply Lemma \ref{lemma:ep_2_n_compare} with
$\mathcal{F} = B^{(M)}_n - f^*$, 
$\bar M = 2 M$,
$\omega = \bar M \bar{\varepsilon}_n$, 
and $\eta= 1/2$.
To verify that the needed conditions by Lemma \ref{lemma:ep_2_n_compare} are satisfied:
the boundedness of members in $\mathcal{F} $ in $\| \cdot \|_\infty$ by $\bar M$ is by construction,
and it remains to verify that 
$H_B( \bar \varepsilon_n, \mathcal{F}, ||\cdot||_2) 
\le n \bar \varepsilon_n^2 $.
Note that an $\varepsilon$-bracketing entropy is always upper bounded by an $\varepsilon$-covering entropy with respect to $||\cdot||_\infty$,
and the covering entropy of $B^{(M)}_n$ is upper bounded by that $B_n$,
then 
$
H_B ( \bar \varepsilon_n, \mathcal{F}, \|\cdot \|_2)
\le 
\log \calN  ( \bar \varepsilon_n, B^{(M)}_n, \| \cdot \|_\infty)
\le 
\log \calN  ( \bar \varepsilon_n, B_n, \| \cdot \|_\infty)
\le n \bar \varepsilon_n^2$,
where is last inequality is by the 2nd inequality in \eqref{eq:3_3_prob_covering}.

By Lemma \ref{lemma:ep_2_n_compare}, there exist a sequence of events $E_n$ with 
$\Pr^{(n)}(E_n) \to 1$ as $ n \to \infty$
($E_n$ is originally event with respect to  $\Pr_X^{(n)}$, and can be viewed as event with respect to $\Pr^{(n)}$),
such that
\begin{equation}\label{eq:ep-bound-proof-3}
\text{under $E_n$,} \quad
    \frac{1}{2} \le  \sup_{f^t_M \in B^{(M)}_n, \, 
    ||f_M^t - f^*||_2 \ge 128 M \bar{\varepsilon}_n } \frac{||f_M^t - f^*||_n}{||f_M^t - f^*||_2} \le \frac{3}{2}.
\end{equation}
We restrict to when $\{X_i,Y_i\}_{i=1}^n$ are 
under the event $C_n \bigcap E_n$,
and then both \eqref{eq:tail_under_An} and \eqref{eq:ep-bound-proof-3} hold. 
We consider the set 
$\{ ||f^t_M-f^*||_n \le \bar{\varepsilon}_n, f^t_M \in B^{(M)}_n \}$
on the l.h.s. of \eqref{eq:tail_under_An}. 
Restricted to this set, either $ ||f_M^t - f^*||_2 < 128M \bar{\varepsilon}_n$,
or, together with \eqref{eq:ep-bound-proof-3}, 
$||f_M^t - f^*||_2 \le 2 ||f_M^t - f^*||_n \le 2 \bar{\varepsilon}_n$.
Thus, 
$ \{ ||f^t_M-f^*||_n \le \bar{\varepsilon}_n, f^t_M \in B^{(M)}_n \}
\subset 
\{ ||f^t_M-f^*||_2 \le \max\{ 128M,2 \} \bar{\varepsilon}_n, f^t_M \in B^{(M)}_n  \}$.
As a result, defining 
\[ C_7 : =  128M \vee 2, 
\]
we have that,
under $C_n \bigcap E_n$ whose probability goes to 1,
  \begin{align}
 \Pi( ||f^t_M-f^*||_2  & \le C_7 \bar{\varepsilon}_n| \{X_i,Y_i\}_{i=1}^n) 
  \ge 
  \Pi ( ||f^t_M-f^*||_2 \le C_7 \bar{\varepsilon}_n, f^t_M \in B^{(M)}_n 
  |\{X_i,Y_i\}_{i=1}^n  ) \nonumber \\
 &~~~~~~
  \ge   \Pi (  ||f^t_M-f^*||_n \le \bar{\varepsilon}_n, f^t_M \in B^{(M)}_n  |\{X_i,Y_i\}_{i=1}^n )
  \ge 1 - 2e^{-c_7n \varepsilon^2_n},
 \label{eq:pos_contr_2_norm_1}
  \end{align}
  where the last inequality is by \eqref{eq:tail_under_An}.
This shows that the posterior contraction rate in $|| \cdot||_2$ is at least $C_7 \bar{\varepsilon}_n$, and proves the claimed  posterior contraction rate
with $c= 128$ by that $C_7 \le 128M+2$.

\vspace{5pt}
Next, we prove the convergence rate of the posterior mean estimator. 
Observe that 
\[
\int ||f_M^t - f^*||^2_2 d\Pi(f|\{X_i,Y_i\}_{i=1}^n)  
\le  C_7^2 \bar{\varepsilon}^2_n + 4M^2  \Pi(||f^t_M-f^*||_2 > C_7 \bar{\varepsilon}_n|\{X_i,Y_i\}_{i=1}^n). 
\]
Under the event $C_n \bigcap E_n$, 
by  \eqref{eq:pos_contr_2_norm_1}, the r.h.s. is upper bounded by
$ C_7^2 \bar{\varepsilon}^2_n + 4M^2 \cdot  2e^{-c_7n \varepsilon^2_n}$.
When $n$ is large enough such that $ 8M^2  e^{-c_7n \varepsilon^2_n} <  \bar{\varepsilon}^2_n$, we have  that, 
\[
\int ||f_M^t - f^*||^2_2 d\Pi(f|\{X_i,Y_i\}_{i=1}^n)  
\le (C^2_7  +1)\bar{\varepsilon}^2_n.
\]
Note that 
$
\int ||f^t_M - f^*||^2_{2}d\Pi(f^t|\{ X_i,Y_i \}_{i=1}^{n}) =   \int ||f^t_M - \hat{f}_M||^2_{2}d\Pi(f^t|\{ X_i,Y_i \}_{i=1}^{n})  + ||\hat{f}_M -f^*||_2^2,
$
and then
$$
||\hat{f}_M -f^*||_2^2 \le \int ||f_M^t - f^*||^2_2 d\Pi(f|\{X_i,Y_i\}_{i=1}^n) \le (C_7^2  +1) \bar{\varepsilon}^2_n, 
$$
which holds under $C_n \bigcap E_n$, where $\Pr^{(n)}(C_n \bigcap E_n) \to 1$.
Because $ \sqrt{ C_7^2+1 }\le C_7 +1 \le  128M+3$, 
this proves the convergence rate of the (truncated) posterior mean estimator
with $c= 128$.
 \end{proof}

\subsection{Adaptive rate with mis-specified prior}
\label{sec:sub_rate}

In this section, we show that when the intrinsic dimension is not known, under some conditions, we can still obtain posterior contraction but potentially with a sub-optimal rate. We first modify Assumption \ref{assump:A3}(A3) on the prior to (A3'). 

\begin{assumption}\label{assump:A3-mis}
(A3') Given positive constants $\varrho$ and $s$, 
there exist positive constants
$c_1$, $c_2$, $c_3$, $a_1$, $a_2$, $K_1$, $K_2$, $C_1$, $C_2$, and $\varrho_{+} \ge \varrho_{-} > 0$, 
such that 
$\varrho_{+} \ge \varrho $ and
 \begin{align}
      &p(t) \ge C_1t^{-a_1} \exp\Big(-\frac{K_1}{t^{
    \varrho_{+}/2}}\Big)\ \mbox{for}\ t \in [c_1 n^{\frac{-2}{2s+\varrho_{+}}} (\log n )^{\frac{2(1+D)}{2s +\varrho_{+}}}, 
    							c_2 n^{\frac{-2}{2s+\varrho_{+}}}(\log n)^{\frac{2(1+D)}{2s +\varrho_{+}}} ], \nonumber \\
      &p(t) \le C_2t^{-a_2} \exp\Big(-\frac{K_2}{t^{\varrho_{-}/2}}\Big)\ \mbox{for}\ t \in (0, c_3 n^{\frac{-2\varrho_{+}}{(2s+\varrho_{+})\varrho_{-}}}		 		
      						(\log n )^{\frac{-4(1+D)}{(2+\varrho_{+}/s)\varrho_{-}}} ]. \nonumber
\end{align}
\end{assumption}

The condition $\varrho_{+} \ge \varrho_{-}$ ensures that, 
when $n$ is large enough,
the two intervals 
for the lower and upper bounds of $p(t)$ to hold
will not overlap,
and thus the required lower and upper bounds can be satisfied at the same time.

\begin{theorem}
\label{thm:contraction-mis}
     Suppose Assumption \ref{assump:A1-A2-prime}(A1)(A2) and 
     Assumption \ref{assump:A3-mis}
     (A3') are satisfied with the same positive factors $\varrho$ and $s$.
     If 
     \[
     \varrho_{-} > \frac{\varrho_{+}}{2s+\varrho_{+}} \varrho,
     \]
     then the posterior contraction rate 
     with respect to $||\cdot||_n$ 
     is at least a multiple of $n^{-r(\varrho, \varrho_\pm,s)} (\log n )^{k}$ where 
       \begin{equation} 
     r(\varrho, \varrho_\pm,s) = 
     \frac{1}{2} 
     \left( 1 - \frac{\varrho_+}{(\rho_- \wedge \varrho)} \frac{\varrho}{ (2s  + \varrho_+)} 
     \right),
     \end{equation}
     with $k= (1+D)/(2 + \varrho_{+}/s)$ if  $\varrho < \varrho_{-}$,
     and 
     $k = \frac{\varrho(1+D)}{2\varrho_{-}+\varrho_{+}\varrho_{-}/s}+\frac{1+D}{2}$ if $\varrho\ge\varrho_{-}$. 
     
     If additionally $\| f^*\|_\infty \le M$ for some constant $M$, 
     then there exists $C > 0$ s.t.
\[
    \Pi( \| f_M - f^*\|_2 > 
    C n^{-r(\varrho, \varrho_\pm,s)} (\log n)^{k}
    | \{X_i,Y_i\}_{i=1}^{n} ) 
    \to 0 \ \ \textit{in probability as} \ n \to \infty,  
\]
with $k$ as above for the two cases respectively.
\end{theorem}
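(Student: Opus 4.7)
The plan is to mirror the three-condition strategy from the proof of Theorem~\ref{thm:contraction_n_norm}, namely verifying \eqref{eq:pf-A1A1prime-goal1}--\eqref{eq:pf-A1A1prime-goal3} for a sieve $B_n = B_{N_n,r_n,\delta_n,\varepsilon_n}$, but carefully tracking the two distinct roles played by $\varrho$ (which governs the covering of $\calX$ and the RKHS approximation via (A1)--(A2)) versus $\varrho_\pm$ (which govern the prior mass via (A3$'$)).

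First, I would redo Part~I (the decentered small-ball estimate). The key inequality $\phi_{f^*}^t(\varepsilon') \le K_3 t^{-\varrho/2}\bigl(\log(1/(\sqrt{t}\varepsilon'))\bigr)^{D+1}$ still comes from Lemma~\ref{lemma:small_ball} and (A2) and involves $\varrho$, not $\varrho_\pm$. The prior lower bound in (A3$'$) is active for $t$ of order $n^{-2/(2s+\varrho_+)}$ and contributes a factor $e^{-K_1 t^{-\varrho_+/2}}$. Since $\varrho_+\ge\varrho$, on this range $t^{-\varrho_+/2}$ dominates $t^{-\varrho/2}$, and choosing $\varepsilon' \asymp t^{s/2}$ as in the original proof forces $n\varepsilon_n^2 \gtrsim n^{\varrho_+/(2s+\varrho_+)}$, so that taking $\varepsilon_n \asymp n^{-s/(2s+\varrho_+)}(\log n)^{(1+D)/(2+\varrho_+/s)}$ suffices for \eqref{eq:pf-A1A1prime-goal1}. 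Part~II is analogous: the prior upper bound in (A3$'$) with exponent $\varrho_-$ leads to $\P[t<r_n^{-2}] \lesssim e^{-K_2 r_n^{\varrho_-}}$, so that setting $r_n^{\varrho_-} \asymp n\varepsilon_n^2$ yields \eqref{eq:pf-A1A1prime-goal2}; the admissibility range for the upper bound in (A3$'$) (with the precise exponent $\varrho_+/((2s+\varrho_+)\varrho_-)$ in $n$) is designed exactly so that $r_n^{-2}$ lies in that range. The choices of $N_n$ and $\delta_n$ are as before.

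The main work is Part~III. The RKHS entropy estimate (Lemma~\ref{lemma:unit_ball_covering_general}) produces $\log\calN(\bar\varepsilon_n,B_n,\|\cdot\|_\infty)\lesssim K r_n^\varrho(\log(\cdots))^{1+D}$, where the exponent $\varrho$ again comes from (A1). Substituting $r_n = (n\varepsilon_n^2/K_2)^{1/\varrho_-}$ up to constants and logs, \eqref{eq:pf-A1A1prime-goal3} becomes
\[
n\bar\varepsilon_n^2 \gtrsim (n\varepsilon_n^2)^{\varrho/\varrho_-}(\log n)^{1+D}.
\]
Two regimes arise. If $\varrho<\varrho_-$, then $\varrho/\varrho_-<1$ and $\bar\varepsilon_n$ may be chosen of the same order as $\varepsilon_n$ (up to an extra $(\log n)^{(1+D)/2}$), so the overall rate inherits the $n^{-s/(2s+\varrho_+)}$ scaling from Part~I, matching $r(\varrho,\varrho_\pm,s)$ with $\varrho_-\wedge\varrho = \varrho$. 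If instead $\varrho\ge\varrho_-$, Part~III becomes the binding constraint and
\[
\bar\varepsilon_n^2 \asymp (n\varepsilon_n^2)^{\varrho/\varrho_-}(\log n)^{1+D}/n
= n^{-\bigl(1-\frac{\varrho\varrho_+}{\varrho_-(2s+\varrho_+)}\bigr)}(\log n)^{\frac{2\varrho(1+D)}{2\varrho_-+\varrho_+\varrho_-/s}+1+D},
\]
giving the claimed $n^{-r(\varrho,\varrho_\pm,s)}$ with the second log exponent. The hypothesis $\varrho_->\varrho_+\varrho/(2s+\varrho_+)$ is precisely what makes this exponent positive, so that $\bar\varepsilon_n\to 0$ and the posterior contracts.

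The hard part will be the bookkeeping in Part~III: tracking the log powers through the two cases, verifying $\bar\varepsilon_n>3\varepsilon_n$ so that the general framework of \cite{ghosal2000convergence,ghosal2007convergence} still applies, and checking that all admissibility conditions on $(N_n,r_n,\delta_n,\varepsilon_n)$ analogous to \eqref{eq:cond-claim-sufficient}--\eqref{eq:N-r-eqn} are preserved under the shifted exponents. Once the fixed-design posterior contraction rate is established, the $\|f_M-f^*\|_2$ bound under random design follows by the same truncation-plus-empirical-process argument as in Theorem~\ref{thm:contraction_2_norm}, reusing Lemmas~\ref{lemma:scale_prob_k}(ii) and \ref{lemma:ep_2_n_compare} with $\bar\varepsilon_n$ replaced by its new value, so no new ideas are needed there.
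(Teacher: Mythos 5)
Your proposal is correct and follows essentially the same route as the paper: verify \eqref{eq:pf-A1A1prime-goal1}--\eqref{eq:pf-A1A1prime-goal3} with $\varepsilon_n \asymp n^{-s/(2s+\varrho_+)}(\log n)^{(1+D)/(2+\varrho_+/s)}$ and $r_n^{\varrho_-} \asymp n\varepsilon_n^2$, split Part~III into the two regimes according to whether $\varrho/\varrho_- < 1$ or $\ge 1$, and obtain $\bar\varepsilon_n$ from the constraint $n\bar\varepsilon_n^2 \gtrsim (n\varepsilon_n^2)^{\varrho/\varrho_-}(\log n)^{1+D}$ — exactly as in the paper's proof. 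One small imprecision: in the case $\varrho<\varrho_-$ you do not need the extra $(\log n)^{(1+D)/2}$ factor in $\bar\varepsilon_n$, since $(n\varepsilon_n^2)^{1-\varrho/\varrho_-}$ already grows polynomially and absorbs the $(\log n)^{1+D}$ term, which is why the paper can take $\bar\varepsilon_n$ of the same order as $\varepsilon_n$ (so the $k$ exponent in that regime is just $(1+D)/(2+\varrho_+/s)$); relatedly, $\bar\varepsilon_n > 3\varepsilon_n$ need not hold in that regime and the framework permits it since the rate is $\varepsilon_n\vee\bar\varepsilon_n$ up to an absolute constant.
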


\begin{figure} 
\centering
\includegraphics[width= 1\textwidth]{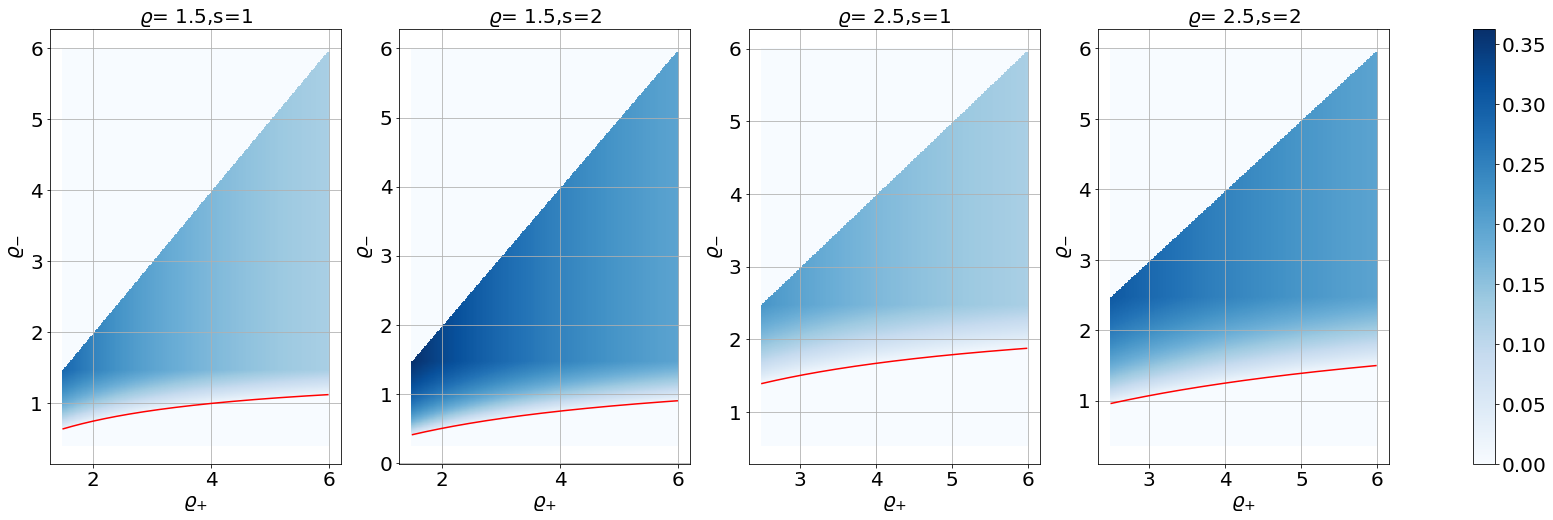}
\caption{
The exponential factor $r(\varrho, \varrho_\pm,s)$ 
plotted as a color field 
over the varying values of $\varrho_\pm$, for fixed $\varrho$ and $s$.
The red lines denote the lower bound $\varrho_{-} = \frac{\varrho_{+}}{2s+\varrho_{+}}$,
Given $\varrho$ and $s$,
$r(\varrho, \varrho_\pm,s)$ changes with $\varrho_{\pm}$. $r(\varrho, \varrho_\pm,s)$ gets larger when $\varrho_{+}$ and $\varrho_{-}$ get closer and finally converges to $\frac{1}{2}(1-\frac{\varrho}{2s+\varrho}) = \frac{s}{ 2s + \varrho }$, the minus logarithm of rate with known dimension, when $\varrho_\pm = \varrho$. }
\label{fig1}
\end{figure}

This theorem suggests that the posterior 
can adapt to the low dimensional structure of $\mathcal{X}$ even when we put a prior on the bandwidth not dependent on the intrinsic dimension.
The rate $n^{-r(\varrho, \varrho_\pm,s)}$ is slower than the rate with known dimension $\varrho$,
which is $n^{-{s}/{(2s+\varrho)}} = n^{-\frac{1}{2}( 1-\frac{\varrho}{2s+\varrho})}$. The rate $n^{-r(\varrho, \varrho_\pm,s)}$ gets better when $\varrho_{+}$ and $\varrho_{-}$ get closer, and  when $\varrho_{-} = \varrho_{+} = \varrho$,
we recover the rate
$n^{-{s}/{(2s+\varrho)}}$
as has been proved in Section \ref{subsec:adapt-rates-general}.
Larger $s$ and smaller $\varrho_{+}$ impose less constraints on the feasible $\varrho_{-}$ to prevent the rate  $n^{-r(\varrho, \varrho_\pm,s)}$ from degenerating.
Illustration of the feasible region and the change of $r(\varrho, \varrho_\pm,s)$  with $\varrho_{\pm}$ are shown in Figure \ref{fig1}.

 \begin{proof}[Proof of Theorem \ref{thm:contraction-mis}]
 We first prove the fixed design case, i.e., the first half of the theorem.
 Following the framework in Section \ref{subsec:proof-sec3-fixed-design}, it suffices to prove that, for some 
$\varepsilon_n, \bar{\varepsilon}_n$ to be determined later,
the three inequalities \eqref{eq:pf-A1A1prime-goal1}\eqref{eq:pf-A1A1prime-goal2}\eqref{eq:pf-A1A1prime-goal3}  are satisfied for some Borel measurable subsets $B_n$ of $C(\mathcal{X})$
and $n$ sufficiently large.

Recall that  in the proof of Theorem \ref{thm:contraction_n_norm},
\eqref{eq:pf-A1A1prime-goal1}\eqref{eq:pf-A1A1prime-goal2}\eqref{eq:pf-A1A1prime-goal3} are proved by the equations  \eqref{eq:estimation_around_true_func-2}\eqref{eq:claim-Pft-1}\eqref{eq:log-cover-proof-3} respectively.
Here, we will prove the following counterparts of 
\eqref{eq:estimation_around_true_func-2}\eqref{eq:claim-Pft-1}\eqref{eq:log-cover-proof-3}, under the condition (A3') instead of (A3).

We recall some constants:  
$r_0$ is from (A1),
$\epsilon_0$ is from (A2), 
$a_1$, $a_2$, $c_1$, $c_2$, $\varrho_{+}$, $\varrho_{-}$, $K_1$, $K_2$ are from (A3'),
$K$ is form Lemma \ref{lemma:unit_ball_covering_general}, 
$\tau_h$ is from Lemma \ref{lemma:rkhs_lip}; 
$K_3$ as in \eqref{eq:bound-phi-f*-t-proof-2},
$B_{N,r,\delta,\varepsilon'}$ as in \eqref{eq:def-B-N-r-delta-eps}
and $C = 1/\nu_1$ are defined in the same way
as in the proof of Theorem \ref{thm:contraction_n_norm},
where $\nu_1$ is from (A2).

\vspace{5pt}
\noindent
$\bullet$ Counterpart of \eqref{eq:estimation_around_true_func-2}: 
We claim that there exist $c_4 > 0$ satisfying $c_1/c_2<c_4 < 1$,
and constant $K_4 > 0 $,
such that as long as  $\varepsilon' $  satisfies the condition 
\begin{equation*}
    [c_4(C\varepsilon')^{2/s},  (C\varepsilon')^{2/s}] \subset [c_1 n^{\frac{-2}{2s+\varrho_{+}}} (\log n )^{\frac{2(1+D)}{2s +\varrho_{+}}}, 
    											c_2 n^{\frac{-2}{2s+\varrho_{+}}}(\log  n )^{\frac{2(1+D)}{2s +\varrho_{+}}} ],
\end{equation*}
then for large enough $n$ (and subsequently small enough $\varepsilon'$, because $\varepsilon'=o(1)$), we have
\begin{equation}
    \P (||f^{t} -f^*||_{\infty} \le 2\varepsilon')  
    \ge  e^{-K_4 (\varepsilon')^{-  \varrho_{+}/s}(\log(1/\varepsilon'))^{
    1+D}}.
    \label{eq:A1_counter_1}
\end{equation}

\vspace{5pt}
\noindent
$\bullet$ Counterpart of  \eqref{eq:claim-Pft-1}:
We claim that, 
if $(N,r, \delta, \varepsilon') $ satisfy that
\begin{align*}
& N^2 \ge 16 C_4 r^{\varrho} (\log(r/\varepsilon'))^{1+D},
\quad  
r > \max(1, \delta,\frac{1}{r_0}, 
 	\sqrt{c_3}n^{\frac{\varrho_+}{(2s+ \varrho_{+})\varrho_{-}}}(\log  n )^{\frac{2(1+D)}{(2+\varrho_{-}/s)\varrho_{-}}}),\\
&~~~~~~
 \varepsilon' < \max\{ 1/2, \varepsilon'_1\},
\end{align*}
then 
 \begin{equation}
     \P (f^{t} \notin B_{N,r,\delta,\varepsilon'}) \le \frac{2C_2 r^{2(a_2-\varrho_{-}+1)}e^{-K_2 r^{\varrho_{-}}}}{K_2\varrho_{-}} + e^{-N^2/8}.
    \label{eq:A1_counter_2}
 \end{equation}

\vspace{5pt}
\noindent
$\bullet$ Counterpart of  \eqref{eq:log-cover-proof-3}:
We claim that, if $(N,r,\delta, \varepsilon')$ satisfy that
\[
N^2 \ge 16 C_4 r^{\varrho} (\log(r/\varepsilon'))^{1+D},
\quad 
r > \max(1, \delta,\frac{1}{ r_0}),
\quad 
\varepsilon' < 1/2 ,
\] 
then 
\begin{equation}
    \log \calN(3\varepsilon',B_{N,r,\delta,\varepsilon'},||\cdot||_{\infty}) \le K r^{\varrho}(\log
    (\frac{N (r/\delta)^{D/2}}{\varepsilon'} )
    )^{1+D}+ \log(\frac{2N}{\varepsilon'}).
    \label{eq:A1_counter_3}
\end{equation}

We prove the above three claims respectively.

\vspace{5pt}
\noindent
- Proof of \eqref{eq:A1_counter_1} as counterpart of \eqref{eq:estimation_around_true_func-2}:
Similar as in the proof of \eqref{eq:estimation_around_true_func}\eqref{eq:estimation_around_true_func-2}, 
under the condition stated for  \eqref{eq:A1_counter_1}, 
\begin{align*}
    \P (||f^{t} -f^*||_{\infty} &\le 2\varepsilon') 
    \ge 
    \P (||f^{t} -f^*||_{\infty} \le 2\varepsilon', t \in [c_4(C\varepsilon')^{2/s},  (C\varepsilon')^{2/s}] ) \\
    & \ge \int_{c_4(C\varepsilon')^{2/s}}^{(C\varepsilon')^{2/s}} 
    e^{-\phi_{f^*}^{t}(\varepsilon')} p(t) d t \\
    &\ge  e^{- K_3c_4^{-\varrho/2}(C\varepsilon')^{-\varrho/s} 
        \left(\log( \frac{1}{  {c_4}^{1/2}(C\varepsilon')^{1/s}   \varepsilon'})\right)^{
    D+1}}   \nonumber  \\ 
   &~~~  C_1 e^{-K_1 c_4^{-\varrho_{+}/2}(C\varepsilon')^{-\varrho_{+}/s} } (C\varepsilon')^{-2a_1/s}  \nonumber \\
   &~~~  (1- c_4)(C\varepsilon')^{2/s} \\
    &\ge  e^{-K_4 (\varepsilon')^{  - \varrho_{+}/s}(\log(1/\varepsilon'))^{
    1+D}},
\end{align*}
where  $   K_4 := K_3 c_4^{-\varrho/2 } C^{-\varrho/s}  (1+2/s)^{1+D} + K_1 c_4^{-\varrho^+/2 } C^{-\varrho^+/s}   + 1 > 0$,
and the above inequality holds for large enough $n$ and subsequently small enough $\varepsilon'$.
The only difference between the above proof to 
that of \eqref{eq:estimation_around_true_func}\eqref{eq:estimation_around_true_func-2} lies in that, in the third inequality,
we used the lower bound of $p(t)$ in (A3') instead of that in (A3).

\vspace{5pt}
\noindent
- Proof of \eqref{eq:A1_counter_2} as counterpart of \eqref{eq:claim-Pft-1}:
The proof uses the same strategy.
Under the conditions stated before \eqref{eq:A1_counter_2}, one can verify that
\begin{equation}\label{eq:App_cond-claim}
    r > \delta, 
    \quad 
    r^{-2} < c_3n^{\frac{-2\varrho_+}{(2s+ \varrho_{+})\varrho_{-}}}(\log(n))^{-\frac{4(1+D)}{(2+\varrho_{-}/s)\varrho_{-}}}, 
    \quad e^{-\phi^{r^{-2}}_0(\varepsilon')} <1/4,
    \quad N \ge 4 \sqrt{\phi^{r^{-2}}_0(\varepsilon')}, 
\end{equation}
in the same way as how we derive \eqref{eq:cond-claim} from \eqref{eq:cond-claim-sufficient}.
We are to prove \eqref{eq:A1_counter_2} under \eqref{eq:App_cond-claim},
following the same way to prove \eqref{eq:claim-Pft-1} under \eqref{eq:cond-claim}. 
Specifically, all the previous proofs remain unchanged 
except for that \eqref{eq:proof-claim-1st-term} now becomes 
\begin{align*}
\P (t < r^{-2}) 
\le \int_0^{r^{-2}} C_2 t^{-a_2} \exp(-K_2 t^{-\varrho_{-}/2}) dt 
\le \frac{2C_2 r^{2(a_2-\varrho_{-}+1)}e^{-K_2 r^\varrho_{-}}}{K_2\varrho_{-}},
\end{align*}
which follows by (A3') and $r^{-2} < c_3n^{\frac{-2\varrho_+}{(2s+ \varrho_{+})\varrho_{-}}}(\log(n))^{-\frac{4(1+D)}{(2+\varrho_{-}/s)\varrho_{-}}}$.

\vspace{5pt}
\noindent
- Proof of  \eqref{eq:A1_counter_3} as counterpart of \eqref{eq:log-cover-proof-3}:
the proof is exactly the same as before, because the previous proof does not use any assumption on the prior $p(t)$.
\vspace{5pt}

Now we have proved the three claims, we use them to prove the \eqref{eq:pf-A1A1prime-goal1}\eqref{eq:pf-A1A1prime-goal2}\eqref{eq:pf-A1A1prime-goal3}.

To prove \eqref{eq:pf-A1A1prime-goal1}, 
we take $\varepsilon' = \varepsilon'_n$ where $(C\varepsilon'_n)^{2/s} = c_2 n^{\frac{-2}{2s+\varrho_{+}}}(\log n )^{\frac{2(1+D)}{2s +\varrho_{+}}}$,
and this $\varepsilon' = \varepsilon'_n$ satisfies the conditions of \eqref{eq:A1_counter_1}. 
Next, we let
\[
\varepsilon_n
     = \max\{ (\bar{C}_1'/C) c_2^{s/2}, 
      {c_3^{-\varrho_{-}/4} K_2^{1/2}} \}
    n^{\frac{-s}{2s+\varrho_{+}}}
     (\log n)^{\frac{ 1+D}{2 +\varrho_{+}/s}}
\]
 with $k_1 = (1+D)/(2 + \varrho_+/s)$,
 and we choose $\bar{C}_1'>0$ to be a large enough constant
to make the r.h.s. of \eqref{eq:A1_counter_1} lower bounded by 
$e^{- n \varepsilon_n^2}$.
The choices of  $ \varepsilon'_n$ and $ \varepsilon_n$ here are similar to part I of the proof for Theorem \ref{thm:contraction_n_norm}, where in the exponent of $n$ and $\log(n)$ the $\varrho$ is replaced with $\varrho_+$. The rest of the argument is same with the proof of part I of Theorem \ref{thm:contraction_n_norm} and we have \eqref{eq:pf-A1A1prime-goal1} hold.

To prove \eqref{eq:pf-A1A1prime-goal2} and \eqref{eq:pf-A1A1prime-goal3}, 
we let  $(N, r, \delta, \varepsilon') = (N_n,r_n,\delta_n, \varepsilon_n) $ in \eqref{eq:A1_counter_2}\eqref{eq:A1_counter_3}, 
and we are to choose the $N_n,r_n,\delta_n$ to satisfy the conditions stated before the claims \eqref{eq:A1_counter_2}\eqref{eq:A1_counter_3}.
In addition, we are to choose suitable $\bar \varepsilon_n$ such that $n\bar \varepsilon_n^2$ is larger than the r.h.s. of \eqref{eq:A1_counter_3}. 
As long as such $N_n$, $r_n$, $\delta_n$, $\bar \varepsilon_n$ can be specified, using the same proof in Part II and III of Theorem \ref{thm:contraction_n_norm}, we can prove \eqref{eq:pf-A1A1prime-goal2} by  \eqref{eq:A1_counter_2} and \eqref{eq:pf-A1A1prime-goal3} by \eqref{eq:A1_counter_3} respectively. 

To choose such $N_n$, $r_n$, $\delta_n$, $\bar \varepsilon_n$, we consider two cases separately:
\begin{enumerate}
    \item When $\varrho < \varrho_{-}$, we choose $N_n$, $r_n$, $\delta_n$ such that
$$
r_n^{\varrho_{-}} = \frac{8}{K_2}n \varepsilon^2_n, \ \ N_n^2 = \max(32, \frac{128 C_4}{K_2})n \varepsilon^2_n(\log(r_n/\varepsilon_n))^{1+D}, \ \ \delta_n = \varepsilon_n/(2\sqrt{D}\tau_h N_n),
$$
and take $\bar{\varepsilon}_n  = {\varepsilon}_n $.
The specification of $N_n$, $r_n$, $\delta_n$ are the same as in the proof of Theorem \ref{thm:contraction_n_norm} except for that the $r_n^{\varrho}$ is changed to $r_n^{\varrho_-} \ge r_n^{\varrho} $.
One can verify that $(N, r, \delta, \varepsilon') = (N_n,r_n,\delta_n, \varepsilon_n) $ satisfy the conditions  stated before \eqref{eq:A1_counter_2}\eqref{eq:A1_counter_3} for large enough $n$. 
Meanwhile,  $\bar{\varepsilon}_n$ 
makes 
$n\bar \varepsilon_n^2$ larger than the r.h.s. of \eqref{eq:A1_counter_3} with large enough $n$.

This proves \eqref{eq:pf-A1A1prime-goal2}\eqref{eq:pf-A1A1prime-goal3}, and  the overall rate ${\varepsilon}_n \sim n^{-s/(2s+\varrho_{+})}(\log(n))^{k_1}$, where the exponent $s/(2s+\varrho_{+}) = r(\varrho, \varrho_\pm,s)$ in this case.

\item When $\varrho \ge \varrho_{-}$, we take 
\begin{align*}
& r_n^{\varrho_{-}} = \frac{8}{K_2}n \varepsilon^2_n, \ \ 
N_n^2 = \max(32,  (\frac{128 C_4}{K_2})^{^{\varrho / \varrho_-}})(n \varepsilon^2_n)^{\varrho / \varrho_-}(\log(r_n/\varepsilon_n))^{1+D},
 \\ 
 & \delta_n = \varepsilon_n/(2\sqrt{D}\tau_h N_n).
\end{align*}
One can verify that $(N, r, \delta, \varepsilon') = (N_n,r_n,\delta_n, \varepsilon_n) $ satisfy the conditions  stated before \eqref{eq:A1_counter_2}\eqref{eq:A1_counter_3} for large enough $n$. 
We also take
\[
\bar{\varepsilon}_n = C_2' n^{-\frac{1}{2} + \frac{ \varrho_{+}\varrho}{2(2s+\varrho_+ )\varrho_{-} }}(\log  n )^{k_2},
\] 
with $k_2 := \frac{\varrho(1+D)}{(2+\varrho_+ /s )\varrho_{-}}+\frac{1+D}{2}$
and positive constant $C_2'$ to be determined.
The theorem assumes that $\varrho_{-} > \frac{\varrho_+ \varrho}{2s+\varrho_+}$, and this ensures that $\bar{\varepsilon}_n  = o(1)$.
One can choose $C_2'$ to a large enough constant s.t. 
$n\bar \varepsilon_n^2$ is larger than the r.h.s. of \eqref{eq:A1_counter_3} with large enough $n$. 

This proves \eqref{eq:pf-A1A1prime-goal2}\eqref{eq:pf-A1A1prime-goal3}, and the overall rate is $\bar{\varepsilon}_n \vee \varepsilon_n 
\sim n^{-\frac{\varrho_{-}(2s+\varrho_+) - \varrho_+\varrho}{2(2s+\varrho_+ )\varrho_{-} }}(\log n )^{k_2}$, where the exponent $\frac{1}{2} - \frac{\varrho_+ }{\rho_-} \frac{\varrho}{ 2(2s  + \varrho_+)} = r(\varrho, \varrho_\pm,s)$.
\end{enumerate}

\vspace{5pt}
 The random design case, i.e., the second half of the theorem,
 follows the same strategy of the proof of Theorem \ref{thm:contraction_2_norm}.
To be specific, respectively under the two cases above,
one can plug in the new definitions of ${\varepsilon}_n$ and $\bar{\varepsilon}_n$ in Lemma \ref{lemma:scale_prob_k}
and prove the same statement of the lemma under the condition of the current theorem. 
With this new version of  Lemma \ref{lemma:scale_prob_k},
the rest of the proof is the same as in the proof of Theorem \ref{thm:contraction_2_norm}, with the new definitions of ${\varepsilon}_n$ and $\bar{\varepsilon}_n$. We then proved the random design case with the claimed rate. 
 \end{proof}

\section{More proofs in Section \ref{sec:rate_manifold} and extension}\label{app:proof-sec4}

\subsection{Proofs of Proposition \ref{prop:appr_s_l_inf} and Corollary \ref{cor:contraction_n_norm_manifold}}

\begin{proof}[Proof of Proposition \ref{prop:appr_s_l_inf} ]
Let the constants $\epsilon_1(\calM,d,k)$, $\tilde{C}_1(\mathcal{M}, d, k)$ and $\tilde{C}_2(\mathcal{M},d, k)$ be as in Lemma \ref{lemma:22_holder},
and for notation brevity, below we omit the dependence on $(\calM, d)$ in the constant notation, and write as $\epsilon_1(k)$, $\tilde{C}_1(k)$, $\tilde{C}_2(k)$, and so on. The dependence on manifold geometry is inherited. 
Define 
\[
\tilde{C}_3( k)=\max \{\tilde{C}_1(0), \tilde{C}_1( 1), \cdots, \tilde{C}_1( k)\},
\]
\[
\tilde{C}_4( k)=\max \{\tilde{C}_2( 0), \tilde{C}_2( 1), \cdots, \tilde{C}_2(k), 1\},
\]
\[
\epsilon_2(k)=\min \{\epsilon_1( 0), \epsilon_1(1), \cdots, \epsilon_1(k) \}.
\]
Note that $\epsilon_1(\ell) \le 1/e $ for all $\ell$, then $\epsilon_2(k) \le 1/e < 1/2$.

We construct $F_i$ inductively and show that for 
all $i=0, \cdots, \lfloor k/2 \rfloor$,
\begin{align}\label{Proposition: induction condition}
F_i \in C^{k-2i, \beta}(\calM), \quad 
\|F_i\|_{k- 2i ,\beta} \leq (k+1)^i \tilde{C}_4( k)^i \|f\|_{k, \beta}. 
\end{align}
The claim \eqref{Proposition: induction condition} will lead to the proof of \eqref{eq:cond-A2_holder_1} and \eqref{eq:cond-A2_holder_2}.
\vspace{5pt}

\noindent 
\underline{Proof of claim \eqref{Proposition: induction condition}:}
First, we let $F_0(x)=f(x)$. Then, \eqref{Proposition: induction condition} holds when $i=0$. 

Next, suppose for some integer $ 0 \le \ell < \lfloor k/2 \rfloor$, $F_i$  has been constructed and \eqref{Proposition: induction condition} holds for all $0 \le i \le \ell$, 
we want to construct $F_{\ell+1}$ and show that \eqref{Proposition: induction condition} also holds for $i=\ell+1$.

For each $i=1,\cdots,\ell$, we apply Lemma \ref{lemma:22_holder} to $F_i  \in C^{k-2i, \beta}(\calM)$ (the ``$k$'' in the lemma is $k-2i$) to obtain the expansion of $G_\epsilon (F_i)$, and denote the resulting sequence of functions as $F_{i,j}$ with $F_{i,0} = F_i$.
We have that, when $\epsilon < \epsilon_2(k) \le \epsilon_1(k-2i)$,
\begin{equation}\label{eq:apply-expansion-lemma-to-Fi}
G_\epsilon(F_i)(x)=\sum_{j=0}^{\lfloor k/2-i \rfloor} \epsilon^j F_{i,j}(x)  + R_{F_i ,\epsilon}(x),
\end{equation}
where, by Lemma \ref{lemma:22_holder}(i),
\begin{equation}\label{R F i epsilon proof main proposition}
\|R_{F_i, \epsilon}\|_{\infty} \le \tilde{C}_1( k -2i) \|F_i\|_{k-2i, \beta} \epsilon^{(k+\beta)/2-i} \leq  \tilde{C}_3( k) \|F_i\|_{k-2i, \beta} \epsilon^{(k+\beta)/2-i},
\end{equation}
and the second inequality is by our definition of $\tilde{C}_3$;
By Lemma \ref{lemma:22_holder}(ii),  
$\forall 0 \leq j \leq \lfloor k/2-i \rfloor$,
\[
\|F_{i,j}\|_{k-2i-2j ,\beta} 
\leq \tilde{C}_2( k-2i) \| F_i\|_{k-2i,\beta}
\leq \tilde{C}_4( k) \| F_i\|_{k-2i,\beta},
\]
and the second inequality is by our definition of $\tilde{C}_4$. 
Inserting the induction hypothesis \eqref{Proposition: induction condition}
into the r.h.s. of the above display,
we have that, for  $0 \le i \le \ell$, 
\begin{align}\label{Proposition: Fij bound 1}
 \|F_{i,j}\|_{k-2i-2j ,\beta} 
 & \leq    (k+1)^i \tilde{C}_4( k)^{i+1} \|f\|_{k, \beta},
 \quad \forall 0 \leq j \leq \lfloor k/2-i \rfloor.
\end{align}
We now construct $F_{\ell+1}$ as 
\begin{align}\label{Proposition: Fl formula}
F_{\ell+1} = -\sum_{i=0}^\ell F_{i, \ell+1-i},
\end{align}
and verify that  $F_{\ell+1}$ also satisfies \eqref{Proposition: induction condition}.

First, because $F_{i, \ell+1-i} \in C^{ k-2(l+1)+2i ,\beta}(\calM) \subset C^{ k-2(l+1) ,\beta}(\calM) $ for all $0 \le i \le l$,
we have $F_{\ell +1 } \in C^{k - 2\ell-2,\beta} (\calM)$. 
In addition,  taking $j= \ell+1-i \le \lfloor k/2-i \rfloor$ in \eqref{Proposition: Fij bound 1}, we have
\[
\|F_{i, \ell+1-i}\|_{k-2\ell -2 ,\beta} \leq (k+1)^i \tilde{C}_4( k)^{i+1} \|f\|_{k, \beta}.
\]
Putting to \eqref{Proposition: Fl formula} and by triangle inequality, we have 
\begin{align}\|F_{\ell+1}\|_{k - 2\ell-2 ,\beta}
&\leq 
 \sum_{i=0}^{\ell}\|F_{i, \ell+1-i}\|_{k - 2\ell-2 ,\beta} \nonumber \\
& \le 
 \sum_{i=0}^\ell (k+1)^i   \tilde{C}_4( k)^{i+1} \|f\|_{k, \beta} \nonumber\\
& \le 
  (l+1)(k+1)^\ell \tilde{C}_4( k)^{\ell+1} \|f\|_{k, \beta} 
  \quad \text{(by that $i\le l$ and  $\tilde C_4(k) \ge 1$)}\nonumber\\
& \leq 
  (k+1)^{\ell+1} \tilde{C}_4( k)^{\ell+1} \|f\|_{k, \beta}
  \quad \text{(by that $\ell < \lfloor k/2 \rfloor \le k$)}. \label{main: F l+1 k-2l-2 beta}
\end{align}
This finishes the verification of \eqref{Proposition: induction condition}.
\vspace{5pt}

We have constructed all $F_i$, 
and now we let $F = \sum_{i=0}^{\lfloor k/2 \rfloor} \epsilon^i  F_i  $. We  are ready to prove \eqref{eq:cond-A2_holder_1} and \eqref{eq:cond-A2_holder_2}.  
Recall that $F_{i,0} = F_i$, $F_{0,0}=F_{0}=f$,
and $\lfloor k/2-i \rfloor=\lfloor k/2\rfloor-i$, we have
\begin{align*}
G_\epsilon (F)
& =  \sum_{i=0}^{\lfloor k/2 \rfloor} \epsilon^i G_\epsilon(F_i) \\
& =
\sum_{i=0}^{\lfloor k/2 \rfloor}  \epsilon^i F_{i} 
+\sum_{i=0}^{\lfloor k/2 \rfloor}  \sum_{j=1}^{\lfloor k/2\rfloor -i }  \epsilon^{i+j} F_{i,j} 
+\sum_{i=0}^{\lfloor k/2 \rfloor} \epsilon^i  R_{F_i ,\epsilon}
\quad \text{(by \eqref{eq:apply-expansion-lemma-to-Fi})}\\
& = F_{0,0}+\sum_{l=1}^{\lfloor k/2 \rfloor}  \epsilon^l F_{l}
+\sum_{l=1}^{\lfloor k/2 \rfloor}  \sum_{i=0}^{l-1}  \epsilon^{l} F_{i,l-i} 
+\sum_{i=0}^{\lfloor k/2 \rfloor} \epsilon^i  R_{F_i ,\epsilon}\nonumber \\
& = F_{0,0}+ \sum_{l=1}^{\lfloor k/2 \rfloor} \epsilon^{l} \Big(F_{l} + \sum_{i=0}^{l-1} F_{i,l-i} \Big) 
 +\sum_{i=0}^{\lfloor k/2 \rfloor}R_{F_i ,\epsilon}\epsilon^i \nonumber \\
& = f+\sum_{i=0}^{\lfloor k/2 \rfloor}R_{F_i ,\epsilon}\epsilon^i, \nonumber 
\end{align*}
where the last step used that $F_{l}+ \sum_{i=0}^{l-1} F_{i,l-i}=0$ following our construction \eqref{Proposition: Fl formula}.
Therefore, for any $x \in \calM$, 
\begin{align}\label{G epsilon F - f}
|G_\epsilon (F)(x) -f(x)| 
& \leq \sum_{i=0}^{\lfloor k/2 \rfloor}| R_{F_i ,\epsilon}(x)| \epsilon^i \nonumber \\
& \leq  \tilde{C}_3( k) \epsilon^{(k+\beta)/2} \sum_{i=0}^{\lfloor k/2 \rfloor} \|F_i\|_{k-2i, \beta} 
\quad \text{(by \eqref{R F i epsilon proof main proposition})}
\nonumber\\
& \leq  
  \tilde{C}_3( k)  \epsilon^{(k+\beta)/2}  \sum_{i=0}^{\lfloor k/2 \rfloor} (k+1) ^i\tilde{C}_4( k)^i \|f\|_{k, \beta}
  \quad \text{(by \eqref{Proposition: induction condition})}
  \nonumber \\
& \leq 
  \tilde{C}_3( k) \epsilon^{(k+\beta)/2}
  (k+1) ^{k+1}\tilde{C}_4( k)^k \|f\|_{k, \beta},
\end{align}
where we use $i\le \lfloor k/2 \rfloor \le k$ and $\tilde{C}_4 \ge 1$ in the last step. This proves \eqref{eq:cond-A2_holder_1} with the constant 
\[
\gamma_1:= (k+1)^{k+1} \tilde{C}_3( k)\tilde{C}_4( k)^k,
\]
and this constant $\gamma_1(\calM, d, k)$ satisfies the declared manifold dependence as in the proposition.

Finally, denote ${\mathbb{H}}_{\epsilon}(\calM)$ by $\tilde{\mathbb{H}}_{\epsilon}$,
and we are to bound $\|G_\epsilon (F) \|^2_{\tilde{\mathbb{H}}_{\epsilon}}$. 
Because $F \in C^{0, \beta}(\calM) \subset C(\calM)$, 
we have that $F \in L^2(\calM, dV)$;
the kernel $h_\epsilon(x,y)$ satisfies the needed condition in Lemma \ref{lemma:computation-Hnorm} on $(\calM, dV)$
by continuity of $h$ and compactness of $\calM$, 
and then the lemma applies to give that 
\begin{align}
        \|G_\epsilon(F)\|^2_{\tilde{\mathbb{H}}_{\epsilon}} &= \frac{1}{(2\pi \epsilon)^d}\int_{\mathcal{M}}\int_{\mathcal{M}} h\Big(\frac{\|\iota(x)-\iota(y)\|^2_{\mathbb{R}^D}}{\epsilon}\Big) F(x)F(y)dV(x)dV(y) \nonumber \\
        &\le \|F\|^2_{\infty} \frac{1}{(2\pi \epsilon)^{d/2}} \int_{\mathcal{M}} dV(x) \frac{1}{(2\pi \epsilon)^{d/2}} \int_{\mathcal{M}}h\Big(\frac{\|\iota(x)-\iota(y)\|^2_{\mathbb{R}^D}}{\epsilon}\Big)dV(y).
        \label{eq:proof-rkhs-var-bound-1}
\end{align}
Since $\epsilon < \epsilon_2(k) \le \epsilon_1(0)$,
we can apply  Lemma \ref{lemma:22_holder} with $f = 1$,  $k=0$, $\beta = 1$ to give 
\begin{align}\label{eq:proof-bound-rkhs-var-integral-vol}
\left| 
\frac{1}{(2\pi \epsilon)^{d/2}} \int_{\mathcal{M}} h\Big(\frac{\|\iota(x)-\iota(y)\|^2_{\mathbb{R}^D}}{\epsilon}\Big)dV(y) - 1 \right| 
\le \tilde{C}_1(0) \epsilon^{1/2} \le \tilde{C}_1(0),
\end{align}
where in the second inequality we used that $\epsilon < \epsilon_2(k) \le 1$;
To bound $\|F\|_{\infty}$, since $\epsilon<\epsilon_2(k) \le {1}/{2}$, we have 
\begin{align}
\|F\|_{\infty}
& \leq   \sum_{i=0}^{\lfloor k/2 \rfloor} \|F_i\|_{\infty} \epsilon^i 
  \leq  \sum_{i=0}^{\lfloor k/2 \rfloor} \|F_i\|_{k- 2i ,\beta}  \epsilon^i \nonumber \\ 
&  \leq \sum_{i=0}^{\lfloor k/2 \rfloor}  
   (k+1)^i \tilde{C}_4(k)^i \|f\|_{k, \beta}  \epsilon^i 
   \quad \text{(by \eqref{Proposition: induction condition})}\nonumber \\
& \leq   (k+1)^k \tilde{C}_4(k)^k \|f\|_{k, \beta} \sum_{i=0}^{\lfloor k/2 \rfloor} \epsilon^i  
   \quad \text{($i\le \lfloor k/2 \rfloor \le k$ and $\tilde{C}_4 \ge 1$)}
\nonumber \\
& \leq 2 (k+1)^k \tilde{C}_4( k)^k \|f\|_{k, \beta}.
\quad \text{(by $\epsilon < 1/2$)}
\label{F infty f k b}
\end{align}
Inserting \eqref{eq:proof-bound-rkhs-var-integral-vol}\eqref{F infty f k b} back to \eqref{eq:proof-rkhs-var-bound-1},  we have 
\begin{align*}
\|G_\epsilon(F)\|^2_{\tilde{\mathbb{H}}_{\epsilon}}  \leq 
\frac{(1+\tilde{C}_1(0)) Vol(\mathcal{M})}{(2\pi)^{d/2}} \Big(2 (k+1)^k\tilde{C}_4( k)^k \|f\|_{k, \beta}\Big)^2  \epsilon^{-d/2},
\end{align*}
and this proves \eqref{eq:cond-A2_holder_2} with the constant 
\[
\gamma_2 := \frac{4(1+\tilde{C}_1(0))Vol(\mathcal{M})}{(2\pi)^{d/2}} (k+1)^{2k}\tilde{C}_4( k)^{2k},
\]
and this constant $\gamma_2(\calM, d, k)$ satisfies the declared manifold dependence as in the proposition.

Finally, the small $\epsilon$ threshold needed is $\epsilon < \epsilon_2(k)$ whose dependence is described in the proposition.
The following fact is not used in the proof but can help explain how $\epsilon_2(k)$ is like: 
In the definition of $\epsilon_2(k)$, 
as shown in the proof of Lemma \ref{lemma:22_holder}, 
the dependence on $k$ in $\epsilon_1(k)$ is only via the requirement that $\delta(\epsilon)= \sqrt{(d+k+1)\epsilon \log(\frac{1}{\epsilon})} < \frac{1}{2} \min\{ \tau/2 ,\xi, 1\}$. 
Thus if we always set $\epsilon_1(k)$ to be the largest possible value, it would be a descending sequence as $k$ increases.
In this case, $\epsilon_2(k) = \min\{ \epsilon_1(k), 1/2\}$.
\end{proof}

\begin{proof}[Proof of Corollary \ref{cor:contraction_n_norm_manifold}]
We want to apply Theorems \ref{thm:contraction_n_norm}, \ref{thm:fix-design-estimator} and \ref{thm:contraction_2_norm} to prove the corollary.
Since Assumption \ref{assump:A3}(A3) is already satisfied with $ s = k+\beta$ and $\varrho = d$, it suffices to verify that Assumption \ref{assump:A1-A2-prime} is satisfied with the same $s$ and $\varrho$. 

First, (A1) holds with $\varrho = d$ because $\calX = \calM$ is a $d$-dimensional manifold, see Example \ref{eg:manifold}.
Meanwhile, (A2) is satisfied as a result of Proposition \ref{prop:appr_s_l_inf}.
Specifically,  let constants $\epsilon_2(\calM)$, $\gamma_1(\calM, d,k)$, $\gamma_2(\calM, d,k)$ be as in Proposition \ref{prop:appr_s_l_inf}.
We set
\[
\nu_1 = \gamma_1(\calM, d,k) \|f^* \|_{k, \beta},
\quad
\nu_2 = \gamma_2(\calM, d,k) \|f^* \|_{k, \beta}^2.
\]
When $\epsilon <   \epsilon_2(\calM)$ set to be $\epsilon_0$,
applying Proposition \ref{prop:appr_s_l_inf} with $f=f^*$,
we have the two bounds \eqref{eq:cond-A2_holder_1} and \eqref{eq:cond-A2_holder_2} hold with the function $G_\epsilon( F)$.
This allows to use  $G_\epsilon( F)$ as the needed $F^\epsilon \in {\mathbb{H}}_{\epsilon}(\calM)$ in (A2) to approximate $f^*$, 
and the two bounds imply  \eqref{eq:cond-A2-prime}.
Thus, we have shown that (A2) holds with $ s = k+\beta$  and $\varrho = d$.

Consequently, the fixed-design result follows from Theorems  \ref{thm:contraction_n_norm} and \ref{thm:fix-design-estimator},
and the random-design result follows from Theorem \ref{thm:contraction_2_norm}. 
\end{proof}

 \begin{remark}[Dependence on $\sigma$]\label{rk:sigma-dependence}
 To reveal how the convergence rate depends on the noise level $\sigma$,
 observe that (assuming $\sigma$ is known and fixed)
 one can rescale $Y_i$ by dividing by $\sigma$,
 thereby considering the $\sigma = 1$ case and replacing $f^*$ with  $f^*/\sigma$ in the analysis. 
Substituting into our definitions of the constants $\bar C_1$ and $\bar C_2$ in the proof of Theorem \ref{thm:contraction_n_norm},
we note that  only $\nu_1 = \gamma_1 \|f^* \|_{k, \beta} $ and $\nu_2 = \gamma_2 \|f^* \|_{k, \beta}^2$ 
depend on $f^*$ (which is $f^*/\sigma$).
In particular, $\bar C_2$ does not involve $f^*$.
This will lead to $\|  f - f^*\|_n \le C n^{-\frac{s}{2s+\varrho}}(\log n)^{k_1+k_2} $ in the posterior contract rate
where the dependence of $C$ on $f^*$ and $\sigma$ can be explicitly tracked.
Specifically, one can show that
$
  C \le    \|f^* \|  \max \{ 
  	C_1',  C_2' \frac{\sigma}{\|f^* \|}, 
	C_2' ( 1 +  \frac{\sigma^2}{\|f^* \|^2}  
	)^{1/2} 
	\}
	\le C' \|f^* \|  
	( 1 +  \frac{\sigma^2}{\|f^* \|^2}  
	)^{1/2},
$
where $\|f^* \| = \|f^* \|_{k, \beta}$ 
and the constants $C_1'$, $C_2'$, $C_3'$ and $C'$ do not involve $f^*$ or $\sigma$.
 \end{remark}

\subsection{Proofs in Section \ref{sec:knn_prior}}

\begin{lemma}[Concentration of $\hat{v}_n(t)$ uniform over $t$]
    \label{lemma:concentrate_v_hat}
    Under Assumption \ref{assump:A6},
    $\hat{v}_n(t) $ defined as in \eqref{eq:def_vn},
    Then, 
    there exists  $n_1(\calM, p_X)$
    s.t. when $n > n_1$,
    with probability $\ge 1 - n^{-10}$, \[
    \frac{1}{4} (2\pi)^{d/2} p_{\min} t^{d/2}
    \le 
    \hat{v}_n(t) 
    \le \frac{7}{4} (2\pi)^{d/2} p_{\max} t^{d/2},
    \quad 
    \forall t \in  [n^{{-2}/{d}}(\log n)^{3/d},  t_0].
    \]
    The constant 
    $t_0 := \min\{ 1, {\epsilon_1}/2, {1}/{(2c_{\mathcal{M}})}\}
    $ only depends on $\calM$,
    where $\epsilon_1= \epsilon_1(\calM, d, 1)$
    and
    $c_{\mathcal{M}} = \tilde C_1 (\mathcal{M}, d, 1)$ 
    are as defined in Lemma \ref{lemma:22_holder}.
\end{lemma}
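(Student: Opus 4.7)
The plan is to prove the lemma in three steps: (i) compute $\E[\hat v_n(t)]$ and show it is of order $t^{d/2}$ with the correct density-dependent constants; (ii) establish a pointwise concentration bound via a Bernstein-type inequality for U-statistics, using a variance estimate that exploits the Gaussian-squared identity; (iii) promote the pointwise bound to a uniform one over the given $t$-range by exploiting the strict monotonicity of $t \mapsto \hat v_n(t)$.

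For the mean, i.i.d.\ symmetry gives $\E[\hat v_n(t)] = \E[h_t(X_1,X_2)]$, which rewrites via the on-manifold integral operator $G_t$ of \eqref{eq:def-Geps-integral-operator} as
\begin{equation*}
\E[\hat v_n(t)] = (2\pi t)^{d/2}\int_{\calM} p_X(x)\, G_t(p_X)(x)\, dV(x).
\end{equation*}
Since $p_X \in C^2(\calM) \subset C^{1,1}(\calM)$, I would apply Lemma~\ref{lemma:22_holder} with $k=1,\beta=1$ to obtain $G_t(p_X) = p_X + R_{p_X,t}$ with $\|R_{p_X,t}\|_\infty \le c_\calM \|p_X\|_{1,1}\, t$ for $t \le \epsilon_1$. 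Combined with $p_{\min} \le \int p_X^2\, dV \le p_{\max}$ (which follows from $\int p_X\, dV = 1$ and $p_{\min} \le p_X \le p_{\max}$), this gives $\E[\hat v_n(t)] \in (2\pi)^{d/2} t^{d/2}\cdot[p_{\min} - C_{p_X} t,\, p_{\max} + C_{p_X} t]$; for $t \le t_0$ the interval fits in $[\tfrac{1}{2}(2\pi)^{d/2} p_{\min} t^{d/2},\, \tfrac{3}{2}(2\pi)^{d/2} p_{\max} t^{d/2}]$ (the residual $p_X$-dependence of this threshold being absorbed into $n_1(\calM,p_X)$).

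For pointwise concentration, $\hat v_n(t)$ is a U-statistic of order two with bounded kernel $h_t \in (0,1]$. The decisive observation is the identity $h_t(x,y)^2 = h_{t/2}(x,y)$, which yields
\begin{equation*}
\E[h_t(X_1,X_2)^2] = \E[h_{t/2}(X_1,X_2)] \le C p_{\max} t^{d/2}
\end{equation*}
by the same calculation as for the mean. A Bernstein-type bound for U-statistics (via the Hoeffding decomposition) then gives
\begin{equation*}
\Pr\big[|\hat v_n(t) - \E\hat v_n(t)| > s\big] \le 2\exp\!\Big(-c\,\frac{n s^2}{t^{d/2} + s}\Big).
\end{equation*}
Setting $s = \tfrac{1}{8}\E\hat v_n(t)$, which is of order $t^{d/2}$, gives failure probability $\le 2\exp(-c' n t^{d/2})$. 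For $t \ge t_{\min} := n^{-2/d}(\log n)^{3/d}$ one has $n t^{d/2} \ge (\log n)^{3/2}$, so this is well below $n^{-11}$ for $n$ large.

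For uniform control, the key observation is that $h_t(x,y) = \exp(-\|x-y\|^2/(2t))$ is strictly increasing in $t$ for each fixed pair, so $\hat v_n(t)$ is monotone in $t$. I would introduce a geometric grid $t_k := t_{\min}(1+\delta)^k$ covering $[t_{\min}, t_0]$, of cardinality $N = O(\log n/\delta)$, and union-bound the pointwise Bernstein estimate over the grid so that with probability $\ge 1 - n^{-10}$ the $\pm 1/8$ relative bound holds at every $t_k$. For arbitrary $t \in [t_k, t_{k+1}]$, monotonicity yields the sandwich
\begin{equation*}
\frac{\hat v_n(t_k)}{t_{k+1}^{d/2}} \;\le\; \frac{\hat v_n(t)}{t^{d/2}} \;\le\; \frac{\hat v_n(t_{k+1})}{t_k^{d/2}},
\end{equation*}
distorting the ratio $\hat v_n(t)/t^{d/2}$ from the grid values by a factor at most $(1+\delta)^{d/2}$. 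Choosing $\delta$ small enough (depending only on $d$) so that this distortion compounded with the $\pm 1/8$ concentration tolerance and the $[\tfrac{1}{2},\tfrac{3}{2}]$ mean window fits inside the prescribed $[\tfrac{1}{4},\tfrac{7}{4}]$ window closes the argument. The main technical point is the variance estimate $\E[h_t^2] = O(t^{d/2})$ rather than $O(1)$: with only Hoeffding's inequality the usable range would shrink no further than $t \gtrsim (\log n/n)^{1/d}$, losing the target $n^{-2/d}(\log n)^{3/d}$; everything else is careful bookkeeping of constants.
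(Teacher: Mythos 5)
Your proposal is correct and takes a genuinely different route at two key places, both worth noting.

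\textbf{Concentration step.} You treat $\hat v_n(t)$ directly as a degree-two $U$-statistic and invoke a Bernstein bound via Hoeffding decoupling. The paper instead conditions on each $X_i$, applies scalar Bernstein to the inner sum $\hat V_i(t) = \frac{1}{n-1}\sum_{j\neq i}h_t(X_i,X_j)$, and then averages. Both routes work, and both rest on the same crucial variance estimate: the paper derives it implicitly by integrating $e^{-\|X_i-y\|^2/t}$, while you name the identity $h_t^2 = h_{t/2}$, which makes the mechanism transparent. Your route is slightly more economical: the paper's approach controls $\max_i \hat V_i(t)$ uniformly over $i$, which is more than is needed and forces an $n^5$-fold union bound (requiring the parameter $\alpha = 64$ in the proof), whereas the $U$-statistic bound controls $\hat v_n(t)$ directly.

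\textbf{Uniformization step.} This is the most substantial divergence. The paper bounds the $t$-derivative $|\frac{d}{dt}(\hat V_i(t)/t^{d/2})| \le d\, t^{-d/2-1}$ and union-bounds over an arithmetic grid of $M = n^4$ points plus $n$ indices $i$. You exploit instead the monotonicity of $t \mapsto h_t(x,y)$, which makes $\hat v_n(t)$ monotone, sandwich the off-grid ratio between grid endpoints distorted by a factor $(1+\delta)^{\pm d/2}$, and get away with a geometric grid of $O(\log n/\delta)$ points. This is cleaner, avoids the derivative computation, and shrinks the union bound by several orders of magnitude. It is a genuinely better argument for this step.

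\textbf{One small discrepancy with the statement.} You apply Lemma~\ref{lemma:22_holder} with $f = p_X$ to obtain $G_t(p_X) = p_X + O(\|p_X\|_{1,1}\, t)$, so the error term and therefore your effective upper threshold on $t$ inherit a dependence on $p_X$. The lemma's threshold $t_0 = \min\{1, \epsilon_1/2, 1/(2c_\calM)\}$ depends \emph{only} on $\calM$; the paper achieves this by first bounding $p_X$ by $p_{\min}, p_{\max}$ and then applying Lemma~\ref{lemma:22_holder} to the constant function $f \equiv 1$, whose H\"older norm is one. Absorbing a $p_X$-dependent threshold into $n_1(\calM, p_X)$ does not fix this, since $n_1$ governs the lower end of the $t$-interval while the problematic regime is $t$ near $t_0$; you would end up proving the lemma with a smaller, $p_X$-dependent $t_0'$. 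This is a cosmetic weakening that causes no trouble downstream (Proposition~\ref{prop:v_hat} only uses shrinking $t$), but it is a mismatch with the exact statement and is avoided by pulling the density bound out before invoking the expansion.
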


\begin{proof}[Proof of Lemma \ref{lemma:concentrate_v_hat}]
Before we prove the lemma,  we first introduce some notations and estimates. 
By definition \eqref{eq:def_vn},
\begin{equation}
\begin{aligned}
\hat{v}_n(t) = \frac{1}{n} \sum_{i=1}^n  \hat{V}_i(t),
\quad
\hat{V}_i(t) := \frac{1}{n-1}\sum_{j \ne i} h_{t}(X_i, X_j).
\end{aligned} 
\end{equation}
For $ i = 1, \cdots, n$,
\[
|\hat{V}_i(t)| 
= \big| \frac{1}{n-1} \sum_{j \ne i} h_t(X_i,X_j) \big| 
\le 1, 
\quad \forall t \in \R^+,
\] 
by the fact that
$h_t(X_i,X_j) = \exp(-\frac{||X_i-X_j||^2}{2t}) 
\le 1$.
Moreover, we know that 
\begin{equation}
\label{eq:def_L}
    0\le 
\frac{h_t(X_i,X_j)}{t^{d/2}} 
\le {t^{-d/2}}
=: L(t), 
\quad \forall t \in \R^+.
\end{equation}

Meanwhile, 
for each $i$, condition on $X_i$, 
we can bound the conditional variance of $h_t(X_i, X_j)$ over the randomness of $X_j$, $j\neq i$, as  
\begin{align}
{\rm Var} \left( \left. \frac{h_t(X_i,X_j)}{t^{d/2}} \right| X_i 
    \right) 
&\le \E \left( \left. (\frac{h_t(X_i,X_j)}{t^{d/2}})^2 \right|X_i \right) \nonumber \\
&=  t^{-d/2} \int_{\mathcal{M}} t^{-d/2} 
 e^{ - \| X_i -y\|^2/t} 
p_X(y)dV(y) \nonumber  \\
      &\le p_{\max} t^{-d/2}\int_{\mathcal{M}} 
      t^{-d/2}  e^{ - \| X_i -y\|^2/t}  dV(y).
      \label{eq:var_exp_1}
\end{align}
Under the assumption of the current lemma,
$t/2\le t_0/2 < \epsilon_1$,
and then we can apply  Lemma \ref{lemma:22_holder} with $f=1$, $k=1$, $\beta =1$
and $\epsilon = t/2$
to obtain that 
\[
t^{-d/2}  
\int_{\mathcal{M}} 
    e^{-\| X_i -y\|^2/t}
    dV(y)
    = \pi^{d/2}( 1 + r_{t}(X_i) ),
    \quad |r_t(X_i)| \le c_\calM t/2,
\]
where $c_\calM = \tilde C_1(\calM, d, 1)$.
Putting back to \eqref{eq:var_exp_1}, we have
\begin{align} \label{eq:def_nu}
 {\rm Var} \left( \left. \frac{h_t(X_i,X_j)}{t^{d/2}} \right|X_i 
    \right)    
    &\le p_{\max} t^{-d/2}\pi^{d/2}(1 + c_\calM t/2) \nonumber \\
    &\le \frac{3}{2}p_{\max} \pi^{d/2} t^{-d/2} 
    := \nu(t),
\end{align}
and the second inequality due to 
$c_\calM t  \le  1$,
which is guaranteed by that $t \le t_0 < 1/c_\calM$.

Similarly, we can compute and bound 
\begin{align}\label{eq:expectation_bound_0}
\E \left( \left. \frac{h_t(X_i,X_j)}{t^{d/2}} \right| X_i \right) 
= \int_{\calM}t^{-d/2} e^{-||X_i-y||^2/(2t)}p_X(y)dV(y)
\end{align}
by applying Lemma \ref{lemma:22_holder} again 
(with $f=1$ and $\epsilon = t \le t_0 < \epsilon_1$)
and we then have 
\begin{equation}\label{eq:expectation_bound}
    \frac{1}{2}p_{\min}(2\pi)^{d/2} \le \int_{\calM}t^{-d/2} h_{t}(X_i,y)p_X(y)dV(y) \le \frac{3}{2}p_{\max}(2\pi)^{d/2},
\end{equation}
where we used that $c_\calM t \le 1/2$,
which holds by that $t\le t_0$.

Next, we prove the concentration of $\hat{V}_i(t)$ at its expectation for a fixed $t$.
Specifically, the claim is that
$\forall \alpha > 0$, 
if $n>\max\{ n_2(\mathcal{M}),n_3(\alpha, p_X), 2\}$
(where $n_2$, $n_3$ defined below are independent of $t$),
then, for any fixed $i$
and any fixed $t \in [n^{{-2}/{d}}(\log n)^{3/d}, t_0]$,  
with probability $1-2n^{-{\alpha}/{4}}$,
\begin{equation}\label{eq:berstein_1}
    -\sqrt{\nu(t) \frac{2 \alpha \log(n)}{n}}
    \le \frac{\hat{V}_i(t)}{t^{d/2}} 
    - \E \Big(  \frac{\hat{V}_i(t) }{t^{d/2}} \Big| X_i \Big) 
    \le \sqrt{\nu(t) \frac{2 \alpha \log(n)}{n}}.
\end{equation}
Specifically, $n_2(\calM)$ is to ensure that when $n> n_2$,
\begin{equation} \label{eq:n_2_def}
    n^{{-2}/{d}}(\log n)^{3/d}< t_0,
\end{equation}
and then the interval of $t$ is nonempty.
The requirement $n > n_{3}$ is needed when we apply the Berstein inequality (Lemma \ref{lemma:berstein}) to prove the claim \eqref{eq:berstein_1}:
condition on $X_i$, let 
\[
\xi_j := \frac{h_t(X_i,X_j)}{t^{d/2}} - \E \Big( \frac{h_t(X_i,X_j)}{t^{d/2}} \Big| X_i \Big),
\]
which are  $n-1$ many i.i.d. mean-zero random variables.
By definition,
\[
 \frac{\hat{V}_i(t)}{t^{d/2}} 
 - \E \Big( \frac{\hat{V}_i(t)}{t^{d/2}} \Big| X_i \Big)
 = \frac{1}{n-1} \sum_{j\ne i} \xi_j.
\]
By \eqref{eq:def_L}, we have $|\xi_j| \le L (t)$.
By \eqref{eq:def_nu}, we have $\E \xi_j^2 \le \nu(t)$.
We apply Lemma \ref{lemma:berstein} with $\tau (t) = \sqrt{\nu(t) \frac{\alpha \log n}{n-1}}$.
Here, to simplify notation, we omit the dependence on $t$ 
in the notation of $L$, $\nu$, $\tau$ 
in the rest of proof of \eqref{eq:berstein_1}. 
Inserting the definitions of $L$ and $\nu$, 
one can verify that $\tau L < 3 \nu$ holds if 
\begin{equation}\label{eq:condition_n_3}
     \frac{\alpha \log n}{n-1} < \frac{27}{2}p_{\max}\pi^{d/2}t^{d/2}.
\end{equation}
This will require a largeness of $n$,
where, to ensure that the threshold is uniform for all $t$, we employ the lower bound that $t \ge n^{-2/d} (\log n)^{3/d}$.
Then \eqref{eq:condition_n_3} can be ensured if 
\begin{equation}\label{eq:n_3_condition}
    \alpha \frac{n}{n-1}\frac{1 }{ (\log n)^{1/2}} \le \frac{27}{2}p_{\max} \pi^{d/2}.
\end{equation}
There exists $n_3$ depending on constants $\alpha$ (to be determined below) and $g_{\rm max}$,
 and independent of $t$,
such that \eqref{eq:n_3_condition} holds  when $n > n_3$. 
The choice  of $\tau$ ensures that 
$\exp\{-\frac{1}{4}\frac{N\tau^2}{\nu}\} = n^{-\alpha /4}$
where $N = n-1$.
The Bernstein gives that  the deviation is bounded by 
$ \sqrt{\nu \frac{\alpha \log n}{ n-1}}$,
which is further upper bounded by 
$ \sqrt{\nu \frac{2 \alpha \log n}{ n}}$ as long as $n>2$.
Thus, when $n > n_3$ (and $n> \max\{ n_2, 2\}$), 
the claim \eqref{eq:berstein_1} holds.

By \eqref{eq:expectation_bound} and \eqref{eq:expectation_bound_0}, we have 
\[
    \frac{1}{2}p_{\min}(2\pi)^{d/2} 
    \le \E \Big( \frac{h_t(X_i,X_j)}{t^{d/2}} \Big| X_i \Big)
    \le \frac{3}{2}p_{\max}(2\pi)^{d/2}.
\]
Together with  \eqref{eq:berstein_1}, we have 
\begin{align}\label{eq:berstein_single}
    \frac{1}{2}p_{\min}(2\pi)^{d/2} -\sqrt{\nu (t) \frac{2\alpha \log(n)}{n}}\le 
    \frac{\hat{V}_i(t)}{t^{d/2}} 
    \le  \frac{3}{2}p_{\max}(2\pi)^{d/2} + \sqrt{\nu (t) \frac{2\alpha \log(n)}{n}}.
\end{align}

To prove the lemma, we will need to bound the concentration uniformly over $t$. 
We do this by leveraging the Lipschitz continuity of $\hat{v}_n(t)$ as a function of $t$.
Specifically, we first bound the derivative of $\hat{V}_i(t)$ for each $i$ as 
\begin{align*}
    \Big| \frac{d\hat{V}_i(t)}{dt} \Big| 
    &= \Big| \frac{1}{n-1}  \sum_{j \ne i} \frac{d}{dt}\exp\{ -\frac{||X_i-X_j||^2}{2t}\}  \Big| \\
    &= \Big| \frac{1}{n-1}  \sum_{j \ne i} \exp \{ -\frac{||X_i-X_j||^2}{2t} \}\frac{||X_i-X_j||^2}{2t^2} \Big| \\
    &\le \frac{1}{n-1}  \sum_{j \ne i} \frac{1}{e}  \frac{1}{t} 
    \le \frac{1}{et},
\end{align*}
where 
in the first inequality  we use the fact that
$x e^{-x} \le {1}/{e}$ for all $x \ge 0$. 
As a result, 
\begin{align}
    \Big| \frac{d}{dt}(\frac{ \hat{V}_i(t)}{t^{d/2}}) \Big| 
    &= \Big| \frac{d}{dt}(\hat{V}_i(t))\frac{1}{t^{d/2}} + \hat{V}_i(t)\frac{d}{dt}(\frac{1}{t^{d/2}}) \Big| 
    \le \frac{1}{et^{d/2 + 1}} 
        + \hat{V}_i(t) \frac{d/2}{t^{d/2 + 1}}  \nonumber \\
    &\le \frac{1}{et^{d/2 + 1}} + \frac{d/2}{t^{d/2 + 1}} 
    \le \frac{d}{t^{d/2 + 1}}. \label{eq:lip-bound-2}
\end{align}

We derive a covering of the interval 
$$
I_{(n)}: = [n^{-{2}/{d}}(\log n)^{3/d},  t_0]
$$
and then apply a union-bound argument:
We divide the interval  $I_{(n)}$ 
into $M$ even length adjacent close sub-intervals $\{ I_j \}_{j=1}^{M}$,
then $I_{(n)} \subset \cup_{j=1}^M I_j$
and the midpoint of each $I_j$ is inside $I_{(n)}$. 
Let $M = n^4$,  because $I_{(n)}$ is contained in $(0,1]$,
the length of each $I_j$ is at most $n^{-4}$.
Let the midpoint of each $I_j $ be denoted as $t_j$.
For each $\hat V_i(t)$,
 we apply the lower and upper bounds in \eqref{eq:berstein_single} at each $t_j$, which holds under a good event $E_{i,j}$ that happens w.p. $\ge 1 - 2n^{-\alpha /4}$.
Then, under the intersection of all the $M n$ events $\{ E_{i,j}, \, i=1,\cdots, n, \, j=1,\cdots, M \}$,
we have that for all $i$ and $j$,
\begin{align}\label{eq:berstein_single_2}
    \frac{1}{2}p_{\min} (2\pi)^{d/2}   -\sqrt{\nu(t_j) \frac{2\alpha \log(n)}{n}}\le \frac{\hat{V}_i(t_j)}{t_j^{d/2}} \le \frac{3}{2}p_{\max} (2\pi)^{d/2} + \sqrt{\nu(t_j) \frac{2\alpha \log(n)}{n}}. 
\end{align}
The intersection of all  $M n = n^5$ good events happens w.p. 
$\ge 1-2n^{-{\alpha}/{4}}n^{5}$.
We set $\alpha = 64$, then this probability is at least  $1-n^{-10}$ when $n > 2$.

For any $t \in [n^{ - {2}/{d}}(\log n)^{3/d},  t_0]$, we can
find $j$ such that $t \in I_j$.
Because $t_j$ is the midpoint of $I_j$, 
$
|t-t_j| \le |I_j|/2 \le n^{-4}/2.
$
Then, for each $i$, by the Lipschitz bound \eqref{eq:lip-bound-2}, 
\begin{align*}
\Big|
\frac{\hat{V}_i(t)}{t^{d/2}} -  \frac{\hat{V}_i(t_j)}{t_j^{d/2}} \Big|
& \le |t-t_j| \frac{d}{ ( t' )^{d/2 + 1}}, 
\quad \text{for some $t'$ between $t$ and $t_j$}, \\
&\le \frac{n^{-4}}{2} \frac{d}{ (n^{-{2}/{d}}(\log n)^{3/d} )^{d/2 + 1}}  \\
& \le dn^{-1}  \quad \text{(by that $\log n > 1$ when $n >2$)}
\end{align*}
where in the 2nd inequality,
we used that $t' \ge n^{-{2}/{d}}(\log n)^{3/d}$,
the left end of $I_{(n)}$,
since both $t$ and $t_j$ are inside $I_{(n)}$.

Combined with \eqref{eq:berstein_single_2} where $\alpha = 64$
and, by that $t_j \ge n^{-{2}/{d}} (\log n)^{3/d}$,
\[
\nu(t_j) = \frac{3}{2} { p_{\rm max }} \pi^{d/2} t_j^{-d/2} 
\le { \frac{3}{2}  p_{\rm max } \pi^{d/2} n (\log n)^{-3/2}  },
\] 
we have that, for any $i$ and any $t \in I_{(n)}$,
$
    \frac{1}{2}p_{\min}(2\pi)^{d/2}   -\sqrt{\frac{3 p_{\rm max } \cdot 64 \pi^{d/2}}{ (\log n)^{1/2}}} - dn^{-1} 
    \le \frac{\hat{V}_i(t)}{t^{d/2}} 
    \le \frac{3}{2}p_{\max}(2\pi)^{d/2} + \sqrt{\frac{3 p_{\rm max } \cdot 64 \pi^{d/2}}{ (\log n)^{1/2}}} +  dn^{-1}.
$
There is $n_4(d)$ s.t. when $n > n_4$,
\begin{equation}\label{eq:def_n_4}
   \sqrt{\frac{3 p_{\rm max } \cdot 64 \pi^{d/2}}{ (\log n )^{1/2}}} + dn^{-1} \le \frac{1}{4}p_{\min}(2\pi)^{d/2},
\end{equation}
and then we have
\begin{align}
    \frac{1}{4} (2\pi)^{d/2} p_{\min}
    \le 
    \frac{\hat{V}_i(t)}{t^{d/2}} 
    \le \frac{7}{4}(2\pi)^{d/2} p_{\max},
    \quad 
    \forall t \in  I_{(n)}, \,  i=1,\dots, n.
    \label{eq:bound-Ri/t-bound-2}
 \end{align}
This holds under the intersection of all $E_{ij}$ which happens w.p.  $\ge 1 - n^{-10}$,
and when
\begin{equation}
    n > n_1 = \max \{ n_2(\mathcal{M}),n_3, 2, n_4(d) \},
\end{equation}
where $n_2$,
$n_3$,
and $n_4$
are introduced to ensure 
\eqref{eq:n_2_def},
  \eqref{eq:n_3_condition},
and  \eqref{eq:def_n_4} respectively.

The lemma directly follows by \eqref{eq:bound-Ri/t-bound-2} and that 
$\hat{v}_n(t) = \frac{1}{n} \sum_{i=1}^n \hat{V}_i(t)$. 
\end{proof}

\begin{lemma}[Concentration of $k$NN distance]
    \label{lemma:concentrateT}
    Under Assumption \ref{assump:A6}, suppose $p_X \in C^2(\mathcal{M})$, 
    let $\hat R_k$ be as defined in \eqref{eq:def-Tn-knn} 
    and  $k = \lceil \gamma_2 ( \log n)^2 \rceil$, where $\gamma_2 > 0$ is a fixed constant.
    Then, there exists $n_5(\calM, p_X)$ s.t. when $n > n_5$,
    with probability larger than $1 - n^{-10}$,
     for all  $i=1,..., n$, 
    \begin{equation}\label{eq:lemma-bound-hatRk-conc}
    0.9 
    \big( \frac{\gamma_2}{p_{\max} \nu_d} \big)^{1/d} 
    \big(\frac{ (\log n)^{2}}{n} \big)^{1/d} 
    \le \hat R_k(X_i) \le 1.2
    \big( \frac{\gamma_2}{p_{\min} \nu_d} \big)^{1/d} 
    \big(\frac{(\log n)^{2}}{n} \big)^{1/d}.
    \end{equation}
\end{lemma}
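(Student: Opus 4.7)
The plan is to exploit the standard volume-growth interpretation of $k$-nearest neighbor distances: the $k$-NN radius $\hat R_k(X_i)$ is determined by when the count of sample points in the Euclidean ball $B^{\R^D}_r(X_i)$ reaches $k$, and this count concentrates around its expectation, which is (approximately) $n \, p_X(X_i) \, \nu_d r^d$ by a small-ball mass expansion on $\calM$. Since $k = \lceil \gamma_2 (\log n)^2 \rceil$ grows polylogarithmically, Bernstein-type fluctuations of order $\sqrt{k \log n} = O((\log n)^{3/2})$ are of smaller order than $k$ itself, so the $k$-NN radius is pinned to within a $(1+o(1))$ factor of the deterministic target $(k / (n \, p_X(X_i) \, \nu_d))^{1/d}$. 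Using only $p_{\min} \le p_X \le p_{\max}$ then gives the constants $0.9$ and $1.2$ in \eqref{eq:lemma-bound-hatRk-conc} for all large $n$.

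First, I will establish a uniform deterministic expansion of the ball-mass $F(x,r) := \Pr_{X \sim p_X}(\|X - \iota(x)\|_{\R^D} \le r)$. For $r$ smaller than a threshold depending on the reach $\tau$ and injectivity radius $\xi$ of $\calM$, the metric comparison of Lemma \ref{lemma:manifold-reach} (used already in the proof of Lemma \ref{lemma:22_holder}) sandwiches $B^{\R^D}_r(\iota(x)) \cap \iota(\calM)$ between two geodesic balls, and by parametrizing the corresponding preimage in normal coordinates at $x$ the volume-form expansion of Lemma \ref{expansion-of-volume-of-form-and-Euclidean-distance} plus a Taylor expansion of $p_X \in C^2(\calM)$ yield
\[
F(x, r) = p_X(x) \, \nu_d \, r^d \bigl(1 + O(r^2)\bigr),
\]
with the $O(r^2)$ constant depending only on $\calM$ and $\|p_X\|_{C^2}$. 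This is essentially the same computation as in the proof of Lemma \ref{lemma:22_holder}, but with the smooth kernel $h$ replaced by the indicator of an interval; the boundary contribution is absorbed in the $O(r^2)$ term.

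Second, I will apply Bernstein's inequality conditionally on $X_i$. Given $X_i$, the variables $\{\mathbf{1}_{\|X_j - X_i\| \le r}\}_{j \ne i}$ are i.i.d. Bernoulli of mean $F(X_i, r)$, and the event $\hat R_k(X_i) \le r$ is equivalent to $N_i(r) := 1 + \sum_{j \ne i} \mathbf{1}_{\|X_j - X_i\| \le r} \ge k$. Setting $r_- = 0.9 (\gamma_2 / (p_{\max} \nu_d))^{1/d} (\log^2 n / n)^{1/d}$ and $r_+ = 1.2 (\gamma_2 / (p_{\min} \nu_d))^{1/d} (\log^2 n / n)^{1/d}$, the step-one expansion gives
\[
\E N_i(r_+) \ge (1.2^d - o(1)) \gamma_2 \log^2 n > k,
\quad
\E N_i(r_-) \le (0.9^d + o(1)) \gamma_2 \log^2 n < k
\]
uniformly in $X_i$, for $n$ larger than some $n_5(\calM, p_X)$. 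Bernstein's inequality (Lemma \ref{lemma:berstein}) then bounds the probability that $N_i(r_\pm)$ deviates from $\E N_i(r_\pm)$ by a constant fraction of $k = \Theta(\log^2 n)$ by $\exp(-c \log^2 n) \le n^{-12}$ for $n$ large. A union bound over $i = 1, \ldots, n$ yields the claim with probability at least $1 - n^{-10}$.

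The main obstacle is the uniform (in $x \in \calM$) ball-mass expansion of step one, since the Euclidean ball intersected with $\iota(\calM)$ is not a geodesic ball and its boundary must be tracked in normal coordinates. This is the analogue, in the indicator-kernel regime, of the computation already carried out in Section \ref{sec:proof-lemma-4.1}; the small-$r$ threshold and the $O(r^2)$ constant inherit their $\calM$-dependence from the same geometric quantities (reach, injectivity radius, curvature, and second fundamental form). Once this is in hand, the probabilistic step is routine because $k$ is large enough ($\gg \log n$) to give sub-polynomial failure probability per point, and small enough that $r_\pm \to 0$, so the deterministic expansion applies.
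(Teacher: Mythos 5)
Your proposal is correct in its main ideas but follows a genuinely different route from the paper's. The paper simply cites Theorem~2.3 of \cite{cheng2022convergence} (reproduced as Lemma~\ref{lemma:concentrate_rho_hat}), which already gives a \emph{uniform-in-$x$} multiplicative concentration of the $k$NN density estimator $\hat\rho(x) = \hat R_k(x)(\frac{k}{\nu_d n})^{-1/d}$ around $p_X(x)^{-1/d}$ with relative error $\delta_n = C_{1,X}(k/n)^{2/d} + \frac{3\sqrt{13}}{d}\sqrt{\log n/k}$; with $k=\lceil\gamma_2\log^2 n\rceil$ one checks $k=\Omega(\log n)$, $k=o(n)$, drives $\delta_n<0.1$ for $n$ large, and unwinds definitions (bumping $1.1$ to $1.2$ to absorb the ceiling). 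You instead reconstruct this from scratch: a uniform small-ball mass expansion $F(x,r)=p_X(x)\nu_d r^d(1+O(r^2))$, followed by Bernstein at the two fixed thresholds $r_\pm$ expressing $\{\hat R_k(X_i)\le r\}=\{N_i(r)\ge k\}$, then a union bound over $i$. This is sound and self-contained; your computation of the exponent $\Theta(\log^2 n)\gg 12\log n$ and the parity argument killing the $O(r)$ density correction both check out, and the $O(r^2)$ correction correctly collects the volume-form, density-Hessian, and Euclidean-vs-geodesic radius terms, all uniform by compactness. What the paper's approach buys is brevity by delegating the hard step; what yours buys is independence from the cited result and a proof in which only the pointwise (rather than $\sup_{x\in\calM}$) bound is needed, since a union bound over the $n$ sample points suffices. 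The one step you have sketched but not written out is the uniform $O(r^2)$ remainder in the ball-mass expansion; the paper deliberately avoids this by citation, and if you were to flesh out your proof you would essentially be re-deriving a piece of \cite{cheng2022convergence}.
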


\begin{proof}
    Recall that $\hat R_k(X_i)$ is the distance from $X_i$ to its $k$NN in $\{ X_1, \dots, X_n\}$
    where $ k = \lceil \gamma_2 (\log n)^{2} \rceil$.
    This choice of $k$ satisfies the requirement of Lemma \ref{lemma:concentrate_rho_hat}.
    By the lemma and the definition of $\hat{\rho}$ and $\bar{\rho}$, we have that 
    when $n > n_5'$ for some $n_5'(\calM, p_X)$,
    with probability $\ge 1-n^{-10}$, 
$$
\bar{R}_k (X_i)(1-\delta_n) 
\le \hat R_k(X_i) \le \bar{R}_k(X_i) (1+\delta_n),
\quad \forall i=1,\cdots, n,
$$
where, with $k = \lceil \gamma_2 (\log n)^{2} \rceil$, 
\[
\bar{R}_k (x) = p_X^{-1/d}(x) \left( \frac{1}{\nu_d}\frac{ {k} }{n} \right)^{1/d},
\quad 
\delta_n= C_{1,X}    (\frac{k}{n})^{2/d} + \frac{3\sqrt{13}}{d} \sqrt{\frac{ \log n }{k}},
\]
assuming $\delta_n<1$.
Since $\delta_n = o(1)$ as $n$ increases, 
there exists $n_{5,1} > n_5'$ s.t. then $n >n_{5,1} $, $\delta_n < 0.1$. 
Then we have
\[
0.9 \bar{R}_k (X_i)   
\le \hat R_k(X_i) 
\le 1.1\bar{R}_k(X_i).
\]
Meanwhile,  by the expression of $\bar{R}_k$,  for any $X_i$,
\[
g_{\rm max}^{-1/d} \nu_d^{-1/d} 
 (\frac{\lceil \gamma_2 (\log n)^{2} \rceil}{n})^{1/d}
\le \bar{R}_k (X_i)  
\le g_{\rm min}^{-1/d} \nu_d^{-1/d} 
 (\frac{\lceil \gamma_2 (\log n)^{2} \rceil}{n})^{1/d}.
\]
Putting together, we have $\hat R_k(X_i)$ satisfies the lower bound in \eqref{eq:lemma-bound-hatRk-conc} and the upper bound 
\[
\hat R_k(X_i) 
\le 1.1  g_{\rm min}^{-1/d} \nu_d^{-1/d} 
 (\frac{\lceil \gamma_2 (\log n)^{2} \rceil}{n})^{1/d}
 \le 1.2  g_{\rm min}^{-1/d} \nu_d^{-1/d} 
 (\frac{ \gamma_2 (\log n)^{2} }{n})^{1/d},
\]
whenever 
$
1.1  ({\lceil \gamma_2 (\log n)^{2} \rceil}/{n})^{1/d}
 \le 1.2  
 ({ \gamma_2 (\log n)^{2} }/{n})^{1/d}
$
which holds if $n > n_{5,2}$ for some $n_{5,2}$. 
Thus, when $n > \max \{ n_{5,1}, n_{5,2}\}= : n_5$,  
and under the good event of Lemma \ref{lemma:concentrate_rho_hat} which happens with probability $\ge 1-n^{-10}$, we have \eqref{eq:lemma-bound-hatRk-conc} hold for all $i=1,\cdots, n$. 
\end{proof}

\begin{lemma}[Theorem 2.3 in \cite{cheng2022convergence}]
    \label{lemma:concentrate_rho_hat}
    Assume Assumption \ref{assump:A6}, and $p_X \in C^2(\mathcal{M})$. 
    Let $\bar{\rho}(x) = p_X(x)^{-1/d}$ and 
    $
    \hat{\rho}(x) = \hat{R}_k (x)
    \big( \frac{1}{ \nu_d }\frac{k}{n} \big)^{-1/d},
    $
    where $\nu_d$ is the volume of the unit $d$-ball. 
    If as $n \to \infty$,
    $k=o(n)$ and $k=\Omega(\log n)$, then when $n$ is sufficiently large, with probability higher than $1-n^{-10}$,
    $$
    \sup_{x \in \calM } 
    \frac{|\hat{\rho}(x)-\bar{\rho}(x)|}{\bar{\rho}(x)} \le C_{1,X}
    \big(\frac{k}{n} \big)^{2/d} + \frac{3\sqrt{13}}{d} \sqrt{\frac{\log n}{k}},
    $$
    where the constant $C_{1,X}$
    and the large-$N$ threshold depend on $p_X$ and $\calM$. 
\end{lemma}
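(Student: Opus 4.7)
The statement is a uniform concentration bound for a $k$NN-based density estimator on a manifold, and I would prove it by the standard ``bias + variance + covering'' strategy, which is also the structure used in \cite{cheng2022convergence}. The key reduction is that $\hat R_k(x)\le r$ if and only if the count $N_n(x,r):=|\{i:\|X_i-x\|\le r\}|$ satisfies $N_n(x,r)\ge k$, so controlling $\hat R_k(x)$ is equivalent to controlling a family of Binomials $N_n(x,r)\sim \mathrm{Bin}(n,q(x,r))$, where $q(x,r):=\Pr[X\in B^{\R^D}(x,r)\cap \calM]$.

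\textbf{Step 1 (bias / local volume expansion).} Using the comparison \eqref{eq:metric-comparison-sqrt2} between Euclidean and geodesic distances together with the same normal-coordinate Taylor machinery as in Lemma \ref{expansion-of-volume-of-form-and-Euclidean-distance} and Lemma \ref{lemma:22_holder}, and using $p_X\in C^2(\calM)$, I would prove that for all $x\in\calM$ and $r$ below a $\calM$-dependent threshold,
\[
q(x,r) = p_X(x)\,\nu_d\, r^d \bigl(1 + b(x)\,r^2 + O(r^3)\bigr),
\]
with $\|b\|_\infty$ bounded by a constant depending on $\|p_X\|_{C^2}$, the curvature tensor of $\calM$, and the second fundamental form of $\iota(\calM)$. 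This is the source of the $(k/n)^{2/d}$ term after $r$ is calibrated to $\bar\rho(x)(k/(n\nu_d))^{1/d}$.

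\textbf{Step 2 (pointwise Bernstein).} For each fixed $x$ set $r_\pm(x)=\bar\rho(x)(k/(n\nu_d))^{1/d}(1\pm\eta_n)$ with
\[
\eta_n \;=\; C_{1,X}(k/n)^{2/d} + \tfrac{3\sqrt{13}}{d}\sqrt{\log n/k}.
\]
By Step 1, $n q(x,r_\pm) = k\bigl(1\pm d\eta_n+O(\eta_n^2)+O((k/n)^{2/d})\bigr)$. Applying Bernstein's inequality (Lemma \ref{lemma:berstein}) to the i.i.d.\ indicators $\mathbf 1_{\{X_i\in B^{\R^D}(x,r_\pm)\}}$, whose variance is $O(k/n)$, yields
\[
\Pr\bigl[\,|N_n(x,r_\pm)-nq(x,r_\pm)|> c\sqrt{k\log n}\,\bigr]\le 2n^{-\alpha}
\]
for any prescribed $\alpha$, by choosing $c=c(\alpha)$. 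The event $\{N_n(x,r_-)<k\le N_n(x,r_+)\}$ is exactly $\{\hat R_k(x)\in[r_-(x),r_+(x)]\}$, which is $\{(1-\eta_n)\bar\rho(x)(k/(n\nu_d))^{1/d}\le \hat R_k(x)\le (1+\eta_n)\bar\rho(x)(k/(n\nu_d))^{1/d}\}$; this is the pointwise version of the claim. The factor $1/d$ in the variance contribution arises here from inverting $r\mapsto r^d$ (a relative error $\delta$ in $q$ becomes $\delta/d$ in $r$), and matching the constant $3\sqrt{13}$ amounts to optimizing $\alpha$ against the size of the net in Step 3.

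\textbf{Step 3 (uniform control and main obstacle).} To upgrade to $\sup_{x\in\calM}$, cover $\calM$ by a Euclidean $\delta$-net $\{y_j\}_{j=1}^M$ with $M\lesssim\delta^{-d}$ and $\delta=n^{-a}$ for a sufficiently large $a$. Because $r\mapsto N_n(x,r)$ is monotone and changes only at the finitely many distances $\{\|X_i-x\|\}$, one has $N_n(y_j,r-\delta)\le N_n(x,r)\le N_n(y_j,r+\delta)$ for the nearest net point $y_j$, and the deterministic $\delta$-perturbation in $r$ contributes at most an $O(n^{-a+1/d})$ shift in the bound, which is negligible compared with $\eta_n$. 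A union bound over the $M$ net points costs $\log M\lesssim \log n$, which can be absorbed into $\alpha$ while keeping the constant $3\sqrt{13}/d$ in the variance term and strengthening the pointwise probability $n^{-\alpha}$ to the stated $n^{-10}$. The delicate part of the proof is the joint calibration in this step: one must simultaneously (i) track the $O(r^2)$ bias constant from Step 1 so that it contributes only to $C_{1,X}(k/n)^{2/d}$, (ii) invert the cumulative distribution $r\mapsto q(x,r)$ monotonically to get a two-sided bound on $\hat R_k$, and (iii) tune the Bernstein constant against the net cardinality so that the final variance factor is exactly $3\sqrt{13}/d$. Given the standard nature of each ingredient, and since the result is quoted directly from \cite[Theorem~2.3]{cheng2022convergence}, the most economical presentation is to cite that theorem; the above is the skeleton of a self-contained derivation that matches the stated constants.
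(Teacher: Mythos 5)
The paper does not prove this lemma at all: it is quoted verbatim from \cite[Theorem 2.3]{cheng2022convergence}, with the only added content being the remark that the $C^\infty$ assumption on $p_X$ there can be weakened to $C^2$ because only $C^2$ regularity is used in that proof. Your proposal ends up doing exactly what the paper does (cite the theorem), and your bias--Bernstein--covering skeleton is a sound outline of the standard argument behind the cited result, so there is nothing to correct.
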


Strictly speaking, Theorem 2.3 in \cite{cheng2022convergence} assumed $p_X \in C^\infty (\calM)$. However, only $C^2$ regularity of $p_X$ is used in the proof therein.

   \begin{figure}[t] 
    \centering
    \includegraphics[width= 1\textwidth]{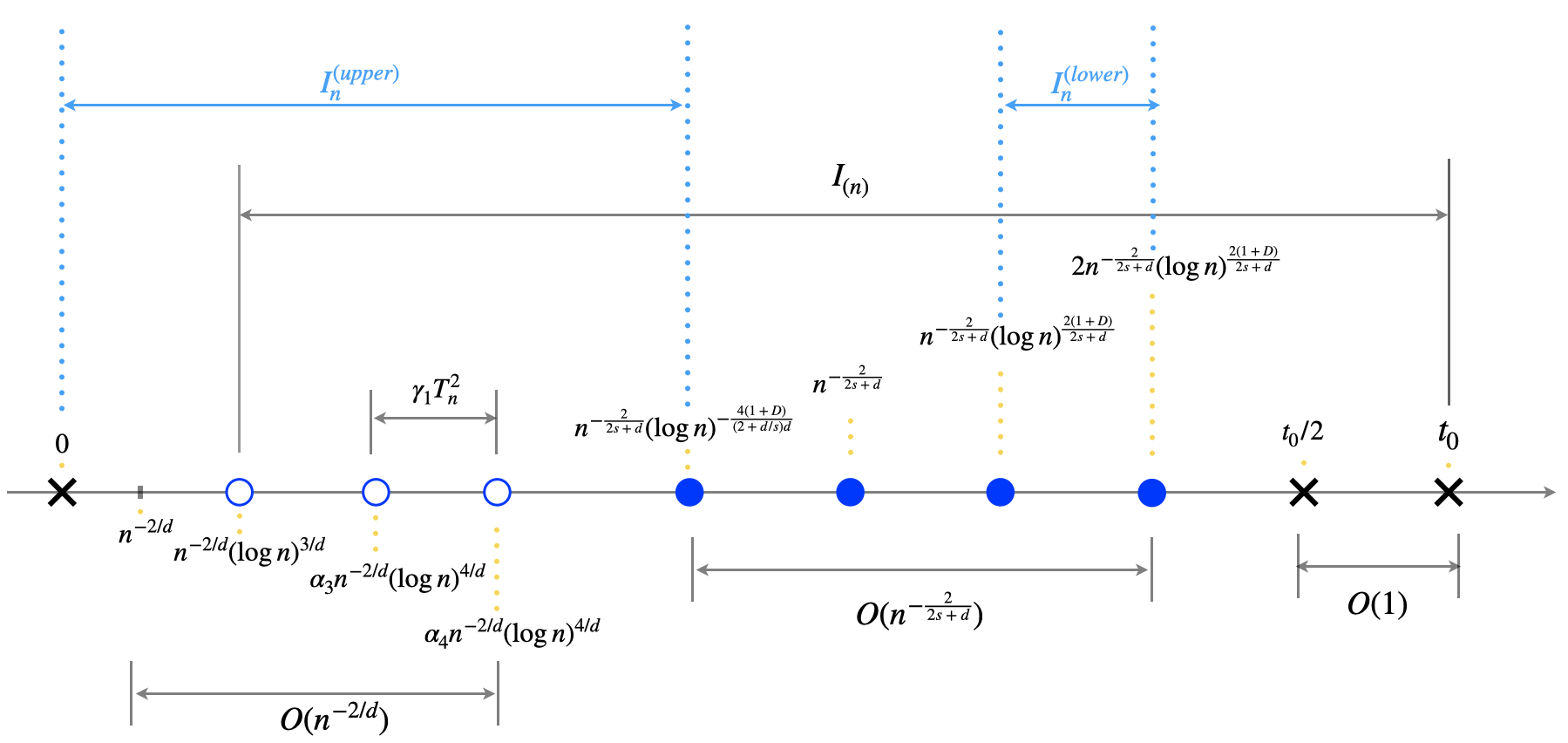}
    \caption{
    Illustration of \eqref{eq:n_func_relation}.
    The intervals $I_n^{(upper)}$
    and $I_n^{(lower)}$ are as in 
    \eqref{eq:prior-upper}
    and 
    \eqref{eq:prior-lower} respectively, 
    in Assumption \ref{assump:A3}.}
    \label{fig0}
    \end{figure}

\begin{proof}[Proof of Proposition \ref{prop:v_hat}]
Let the prior $p(t)$ be as in  \eqref{eq:EBprior}, we are to verify that it satisfies Assumption \ref{assump:A3}
with  $\varrho = d$ and the given $s > 0$. 
The positive constants $c_1$, $c_2$, $c_3$, $a_1$, $a_2$, $K_1$, $K_2$, $C_1$, $C_2$ are to be specified below. 

Under the assumption of the proposition, 
Lemmas \ref{lemma:concentrate_v_hat} and \ref{lemma:concentrateT} apply.
Because the two-sided bound \eqref{eq:lemma-bound-hatRk-conc} in Lemma \ref{lemma:concentrateT} holds for all $i$ (under the good event therein), 
for any subset $S \subset [n]$, the averaged $k$NN distance $T_n$ defined in \eqref{eq:def-Tn-knn} also satisfies the same bound, namely, 
\[
 0.9  ( \frac{\gamma_2}{p_{\max} \nu_d} )^{1/d} 
    (\frac{(\log n)^{2}}{n})^{1/d} 
 \le T_n \le 1.2
    ( \frac{\gamma_2}{p_{\min} \nu_d} )^{1/d} 
    (\frac{(\log n)^{2}}{n})^{1/d}.
\]
We now consider the intersection of the two good events in the two lemmas respectively, 
which happens with probability $\ge 1-2 n^{-10}$
as long as $n > \max\{ n_1, n_5 \}$,
where the two thresholds $n_1$ and $n_5$ defined in the two lemmas  depend on $(\mathcal{M}, p_X)$. 
Then, the following two claims hold simultaneously:
\begin{itemize}
    \item (Claim 1)
    $\forall t \in [n^{-{2}/{d}} (\log n )^{3/d},  t_0]$, 
    $ \alpha_1 t^{d/2}\le \hat{v}_n(t) \le \alpha_2 t^{d/2}$, 
    where
    $\alpha_1 :=\frac{1}{4}(2\pi)^{d/2}p_{\min}$, 
    $\alpha_2 :=\frac{7}{4}(2\pi)^{d/2}p_{\max}$,
    and 
    $ 0 < t_0 \le 1$ is a constant defined in Lemma \ref{lemma:concentrate_v_hat}.  
    
    \item (Claim 2)
    $ \alpha_3 n^{-{2}/{d}} (\log n )^{4/d} \le \gamma_1 T^2_n \le  \alpha_4 n^{-2/d} (\log n )^{4/d}$, 
    where \\
    $\alpha_3 : = 0.9^2  \gamma_1 \gamma_2^{2/d} ( {p_{\max} \nu_d} )^{-2/d}$  and  $\alpha_4 : = 1.2^2 \gamma_1  \gamma_2^{2/d}
    ( p_{\min} \nu_d )^{-2/d} $.
\end{itemize}
It remains to verify that the two claims jointly will imply the needed conditions in 
Assumption \ref{assump:A3}, namely \eqref{eq:prior-lower} and 
\eqref{eq:prior-upper}, 
with proper constants.

To proceed, we assume large enough $n$ such that the scaling of $n$ dominates the ordering of the following quantities:
there exits $n_7(s, \mathcal{M},p_X)$, such that whenever $n > n_7(s, \mathcal{M},p_X)$, we have 
\begin{equation}\label{eq:n_func_relation}
    \begin{split}
    0 
    & < n^{-2/d} < n^{-2/d} (\log n)^{ 3/d } 
     < \alpha_3 n^{-2/d} ( \log n)^{ 4/d } < \alpha_4 n^{-2/d} (\log n)^{ 4/d } 
     \\
    & < n^{-\frac{2}{2s+d}} (\log n)^{-\frac{4 {(1+D)}}{(2+d/s)d}} 
     <  n^{-\frac{2}{2s+d}} 
     < n^{-\frac{2}{2s+d}} (\log n)^{\frac{2 {(1+D)}}{2s+d}} 
     \\
    & < 2 n^{-\frac{2}{2s+d}} (\log n )^{\frac{2 {(1+D)}}{2s+d}}
     < t_0/2 < t_0 \le 1.
 \end{split}
\end{equation}
This ordering is illustrated in Figure \ref{fig0}.

We now specify the needed constants in Assumption \ref{assump:A3}.
Suppose the two parameters $a_0, b_0 > 0$ in  \eqref{eq:EBprior} have been chosen and fixed. 
Let the needed positive constants be as follows,
\begin{align*}
c_1 & =1,  \quad c_2 =2 , \quad c_3 = 1, \quad  a_1  = a_2 = a_0, \\
K_1 & =   b_0/\alpha_1 , \quad K_2=   b_0 / \alpha_2 ,  \\
C_1 &=
\left( \int_{0}^{t_0} t^{-a_0}\exp(-\frac{b_0}{\alpha_1 t^{d/2}}) dt + \frac{1-t_0}{t^{a_0}_0} \right)^{-1}, \\
C_2 & = 2 t_0^{a_0-1} \exp(\frac{b_0}{\alpha_2 (t_0/2)^{d/2}}), 
\end{align*}
where $C_1, C_2 > 0$ because  $0 < t_0 \le 1$.

The desired lower and upper bounds \eqref{eq:prior-lower} and 
\eqref{eq:prior-upper} of $p(t)$ 
call to bounds the normalizing constants in the expression of $p(t)$. 
Specifically, by the definition of $p(t)$ in \eqref{eq:EBprior},
\begin{equation}\label{eq:def_z_n}
p(t) 
=  
\frac{1}{\hat Z_n}
 t^{-a_0}\exp\Big( - \frac{b_0}{\hat{v}_n(t)}\Big)
 {\bf 1}_{\{ \gamma_1T_n^2 < t  \le 1  \} },
 \quad 
 \hat Z_n : = 
 \int_{\gamma_1T_n^2}^1 t^{-a_0}\exp\Big( - \frac{b_0}{\hat{v}_n(t)}\Big) dt. 
\end{equation}
We now make another claim that 
\begin{equation}\label{eq:1/Zn-lower-upper-to-prove}
 C_1 \le {\hat Z_n}^{-1} \le C_2,
\end{equation}
which we will verify later
based on (Claim 1)(Claim 2) and \eqref{eq:n_func_relation}.
Assuming \eqref{eq:1/Zn-lower-upper-to-prove} holds, we finish the rest of the proof as follows.

\vspace{5pt}
\noindent
\underline{To prove  \eqref{eq:prior-lower}}:
\eqref{eq:n_func_relation} implies that 
   $$ 
   n^{- {2}/{d}} (\log n )^{3/d} <  n^{-{2}/{(2s+d)}} (\log n)^{\frac{2 {(1+D)}}{2s +d}} <  2n^{-{2}/{(2s+d)}} (\log n)^{\frac{2 {(1+D)}}{2s + d}} < t_0, 
   $$
   and thus 
   \begin{equation} \label{eq:contain_3}
   I_n^{\rm (lower)}
   : = [n^{-{2}/{(2s+d)}} (\log n)^{\frac{2 {(1+D)}}{2s +d}}, 
       2n^{-{2}/{(2s+d)}} (\log n)^{\frac{2 {(1+D)}}{2s + d}}] 
        \subset [n^{- {2}/{d}} (\log n )^{3/d}, t_0].
   \end{equation}
   As a result, 
   the lower bound of $\hat v_n(t)$ in (Claim 1) 
   and 
   that $\hat Z_n^{-1} \ge C_1$, i.e., the lower bound in \eqref{eq:1/Zn-lower-upper-to-prove},
   together guarantee that
   \[
p(t) \ge C_1 t^{-a_1} \exp\Big(-\frac{K_1}{t^{d/2}}\Big),
\quad \forall t \in    I_n^{\rm (lower)}, 
   \]
and observe that $I_n^{\rm (lower)}$ is the interval of $t$ in  \eqref{eq:prior-lower}.
This  implies \eqref{eq:prior-lower} 
with the constants $c_1$, $c_2$, $K_1$, $a_1$ and $C_1$ as above.

\vspace{5pt}
\noindent
\underline{To prove \eqref{eq:prior-upper}}:
By (Claim 2), $\gamma_1T_n^2 \in [\alpha_3 n^{-2/d} (\log n)^{4/d}, \alpha_4 n^{-2/d} (\log n)^{4/d}]$, 
and this interval lies inside the interval 
$( n^{-{2}/{d}} (\log n)^{3/d},  
   n^{-{2}/{(2s+d)}} (\log n )^{\frac{-4{(1+D)}}{(2+ d/s) d}} )$
by \eqref{eq:n_func_relation}.
As a result, we have 
\begin{equation}\label{eq:T_n_interval_cont}
    0 < n^{-{2}/{d}} (\log n )^{3/d} 
     < \gamma_1T_n^2  
     < n^{-{2}/{(2s+d)}} (\log n )^{\frac{-4{(1+D)}}{(2+d/s)d}} < t_0,
\end{equation}
and thus 
  \begin{equation} \label{eq:contain_3}
   I_n^{\rm (upper)}
    := [0, n^{-{2}/{(2s+d)}} (\log n )^{\frac{-4{(1+D)}}{(2+d/s)d}}] 
       \subset 
        [0, \gamma_1T_n^2] \bigcup [n^{-{2}/{d}} (\log n )^{3/d}, t_0]. 
  \end{equation}
We now derive the upper bound of $p(t)$ on $I_n^{\rm (upper)}$. 
First, $\forall t \in  [0, \gamma_1T_n^2] $,
$p(t)=0  \le C_2t^{-a_0} \exp\Big(-\frac{K_2}{t^{d/2}}\Big)$. 
When $t$ is in the interval 
$ [n^{- {2}/{d}} (\log n)^{3/d}, t_0]$,
 (Claim 1) holds. 
 The upper bound of $\hat v_n(t)$ in (Claim 1)
 together with that $\hat Z_n^{-1} \le C_2$, i.e., the upper bound in \eqref{eq:1/Zn-lower-upper-to-prove}, 
imply that 
  \begin{equation} \label{eq:p_t_bound-2}
p(t) \le C_2t^{-a_0} \exp\Big(-\frac{K_2}{t^{d/2}}\Big),
          \quad 
          \forall t \in  [n^{- {2}/{d}} (\log n)^{3/d}, t_0].
\end{equation}
Putting together, we have that 
    \begin{align}
        p(t)  \le C_2t^{-a_2} \exp\Big(-\frac{K_2}{t^{d/2}}\Big),
              \quad \forall t \in  I_n^{\rm (upper)},
        \end{align}
and recall that $I_n^{\rm (upper)}$ is the interval of $t$ in \eqref{eq:prior-upper}.
Thus, this implies \eqref{eq:prior-upper} with
the constants $c_3$, $K_2$, $a_2$ and $C_2$ as above.

The largeness of $n$ needs $n > n_{0} = \max\{ n_1,n_5, n_7\}$,
and the three thresholds are required for 
(Claim 1)(Claim 2) and \eqref{eq:n_func_relation} to hold.
$n_0$ depends on $(\mathcal{M}, p_X)$.
In addition, \eqref{eq:prior-lower} and \eqref{eq:prior-upper} hold under the same good event as (Claim 1)(Claim 2),
which happens  with probability  $\ge 1-2n^{-10}$.

\vspace{5pt}
It remains to verify \eqref{eq:1/Zn-lower-upper-to-prove} to finish the proof of the proposition. 
We do this under (Claim 1)(Claim 2) and \eqref{eq:n_func_relation}, 
which we have established with large enough $n$ and under the needed good events.

\vspace{5pt}
$\bullet$ Proof of $\hat Z_n^{-1} \le C_2$:
By (Claim 2) and \eqref{eq:n_func_relation}, we also have
$$
n^{-{2}/{d}} (\log n )^{3/d} < \gamma_1T_n^2  < t_0/2 < t_0 
\le 1,
$$
and then
\begin{equation}\label{eq:contain_1}
    [t_0/2, t_0] \subset [\gamma_1T_n^2,1] \ \bigcap \  [n^{-{2}/{d}} (\log n )^{3/d},  t_0].
\end{equation}
Applying the lower bound of $\hat v_n(t)$ in (Claim 1) on 
$[ n^{-{2}/{d}} (\log n )^{3/d},  t_0]$, 
we have
\begin{equation}\label{eq:exp_rel_1}
    \exp(-\frac{b_0}{\hat{v}_n(t)}) \ge \exp(-\frac{b_0}{\alpha_1 t^{d/2}}),
    \quad \forall t \in [t_0/2, t_0].
\end{equation}
Recall the definition of $\hat Z_n$ in \eqref{eq:def_z_n}, we have
\begin{align*}
    \hat Z_n &\ge
\int_{t_0/2}^{t_0} t^{-a_0}\exp\Big( - \frac{b_0}{\hat{v}_n(t)}\Big) dt \quad \text{(by  \eqref{eq:contain_1})}
\\ & \ge \int_{t_0/2}^{t_0} t^{-a_0}\exp\Big( - \frac{b_0}{\alpha_1 t^{d/2}}\Big) dt \quad  \text{(by  \eqref{eq:exp_rel_1})} \\
&\ge \frac{t_0}{2}\frac{1}{t^{a_0}_0} \exp(- \frac{b_0}{\alpha_2 (t_0/2)^{d/2}}) = 
C_2^{-1}, 
\end{align*}
where in the last inequality we use the fact that $ t_0/2 \le t \le t_0$.
This proves that 
${\hat Z_n}^{-1} \le C_2$.

\vspace{5pt}
$\bullet$ Proof of $\hat Z_n^{-1} \ge C_1$:
Under (Claim 2) and \eqref{eq:n_func_relation}, we have \eqref{eq:T_n_interval_cont},
which implies that 
\begin{equation}\label{eq:contain_2}
 [\gamma_1T_n^2,t_0] \subset [n^{-{2}/{d}} (\log n )^{3/d},  t_0].
\end{equation}
By definition, 
\[
  \hat Z_n 
  = \left( \int_{\gamma_1T_n^2}^{t_0} +
   \int_{t_0}^{1} \right)
   t^{-a_0}\exp\Big( - \frac{b_0}{\hat{v}_n(t)}\Big) dt.  
\]
On $[\gamma_1T_n^2, t_0]$, by \eqref{eq:contain_2}, 
the upper bound $\hat v_n(t) \le \alpha_2 t^{d/2}$ holds by (Claim 1), and then we have 
\begin{equation*}
    \exp(-\frac{b_0}{\hat{v}_n(t)}) \le \exp(-\frac{b_0}{\alpha_2 t^{d/2}}), 
    \quad \forall t \in [\gamma_1T_n^2, t_0].
\end{equation*}
On $[t_0, 1]$, we have that
$t^{-a_0}\exp\Big( \frac{-b_0}{\hat{v}_n(t)}\Big) \le t_0^{-a_0}$.
Putting together, we have
\begin{align*}
    \hat Z_n 
    & \le \int_{\gamma_1T_n^2}^{t_0} t^{-a_0}\exp(-\frac{b_0}{\alpha_2t^{d/2}}) dt 
    + \int_{t_0}^{1} t_0^{-a_0}  dt \\
    &\le \int_{0}^{t_0} t^{-a_0}\exp(-\frac{b_0}{\alpha_2t^{d/2}}) dt + \frac{1-t_0}{t^{a_0}_0} = C_1^{-1},
    \quad \text{(by $\gamma_1T_n^2 \ge 0$).} 
\end{align*}
This proves that 
$ {\hat Z_n}^{-1} \ge C_1$.
\end{proof}

\subsection{Extension to finite union of disjoint manifolds}\label{app:extension-theory-manifold-mixed-d}

\begin{assumption}[Finite union of manifolds]\label{A4:disconnected}
The data domain $\calX =\cup_{i=1}^m \calM_i$, 
where each $\calM_i$ is a $d_i$-dimensional smooth connected closed Riemannian manifolds isometrically embedded in $[0,1]^D \subset \R^D$, and the $m$ manifolds are mutually disjoint. Let $ \bar d = \max_{ 1 \le i \le m} d_i$.
\end{assumption}

We denote by $\iota: \cup_{i=1}^m \calM_i \rightarrow \mathbb{R}^D$ be  the isometric embedding of $\calX = \cup_{i=1}^m \calM_i$ in $[0,1]^D$.
Because the $m$ manifolds are disjoint, we have 
\begin{equation}\label{eq:def-m0-finite-union}
\mathfrak{m}_0 =\min_{i \not= j}\min_{x\in \calM_i, y \in \calM_j}\| \iota(x)-\iota(y)\|_{\mathbb{R}^D}
> 0.
\end{equation}
If we allow $\calX$ to change when the sample size $n$ increases,
our analysis directly extends when $\mathfrak{m}_0$ is $O(1)$, meaning uniformly bounded away from zero, see the proof of Lemma \ref{lemma:22_holder_finite-union} below.
This assumption on the separation between $\calM_i$ can be further relaxed by considering small $\epsilon$ depending on $n$, and we postpone such extension for exposition simplicity.

Our definition of H\"older class on manifold (Definition \ref{def:Holder-manifold}) also naturally extends to the case of finite union of manifolds. 
Specifically, under Assumption \ref{A4:disconnected},
for $k = 0,1,\cdots$,   $ 0 <\beta \le 1$,
we say  $ f \in C^{k, \beta}(\calX)$
if $f|_{\mathcal{M}_i} \in C^{k, \beta}(\mathcal{M}_i)$ for each $i$,
and we define $\|f\|_{k,\beta}=\max_{1 \le i \le m} \|f|_{\mathcal{M}_i}\|_{k,\beta}$.

The integral operator $G_\epsilon$ previously defined in \eqref{eq:def-Geps-integral-operator} is now taking the integral over the union of the $m$ manifolds. 
Specifically, suppose that $f \in L^1(\calX)$, i.e. $f|_{\calM_i} \in L^1(\calM_i)$ for all $i$, and let $dV_i$ be the volume form of $\calM_i$. 
For any $x \in \calX$, we define
\begin{align}\label{eq:G-eps-f-mixed-d}
G_\epsilon (f)(x)
 =\sum_{i=1}^m \int_{\mathcal{M}_i}  \frac{1}{(2\pi \epsilon)^{d_i/2}} h\Big(\frac{\|\iota(x)-\iota(y)\|^2_{\mathbb{R}^D}}{\epsilon}\Big)f(y)dV_i(y).
\end{align}

We first prove the following lemma as a counterpart to Lemma \ref{lemma:22_holder}. 

\begin{lemma}\label{lemma:22_holder_finite-union}
Under Assumption \ref{A4:disconnected},  given nonnegative integer $k$ and $0<\beta \leq 1$, 
there exists a constant $\epsilon_1(\calX, \bar d, k)$
such that when $\epsilon < \epsilon_1$, 
for any $f \in  C^{k,\beta}(\calX)$,  there exist $f_j \in C^{k-2j, \beta}(\calX)$, $j=1,\cdots, \lfloor k/2 \rfloor $,
and $R_{f ,\epsilon} \in C(\calX)$ s.t.
\begin{align}\label{G epsilon expansion with remainder disconnected}
G_\epsilon (f)(x)=f(x)+\sum_{j=1}^{\lfloor k/2 \rfloor} f_j(x) \epsilon^j + R_{f,\epsilon}(x),
\end{align}

(i) The remainder $R_{f ,\epsilon}(x) $ satisfies that 
$\|R_{f ,\epsilon}\|_{\infty} \le \tilde{C}_1(\calX, \bar d , k) \|f\|_{k, \beta} \epsilon^{(k+\beta)/2}$, 

(ii) For all $0 \leq j \leq \lfloor k/2 \rfloor$, $\|f_j\|_{k-2j,\beta} \leq \tilde{C}_2(\calX, \bar d , k) \|f\|_{k, \beta}$ 
(when $j=0$, $f_0 = f$), 

\noindent 
where 
$\tilde{C}_1(\calX, \bar d , k)= \max_{1 \le i \le m} \tilde{C}_{1,i} + \sum_{i=1}^m Vol(\mathcal{M}_i)$ 
and 
$\tilde{C}_2(\calX,  \bar d , k)= \max_{ 1 \le i \le m} \tilde{C}_{2,i}$,
where  for each $i$,
$\tilde{C}_{1,i} = \tilde{C}_{1,i}(\calM_i, d_i, k)$
and $\tilde{C}_{2,i} = \tilde{C}_{2,i} (\calM_i, d_i, k)$ 
are the constants in Lemma \ref{lemma:22_holder}(i) and (ii) respectively for manifold $\calM_i$,
which inherit the dependence on the geometric quantities of $\calM_i$.
 The threshold $\epsilon_1(\calX, \bar d, k)$ depends on $\bar d$, $k$,
$\mathfrak{m}_0$,  the reach of $\iota(\calM_i)$, and  the injectivity radius of $\calM_i$ for all $i$. 
\end{lemma}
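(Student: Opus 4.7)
The plan is to leverage Lemma \ref{lemma:22_holder} directly on each component $\calM_i$ and to show that the cross-manifold contributions in $G_\epsilon(f)(x)$ are negligible (exponentially small in $\epsilon$) because of the uniform separation $\mathfrak{m}_0 > 0$. Concretely, fix an arbitrary $x \in \calM_i$ for some $i$. Split the sum in \eqref{eq:G-eps-f-mixed-d} into the diagonal term (integration over $\calM_i$) and the off-diagonal terms (integration over $\calM_j$ with $j \neq i$). The diagonal term is exactly the expression $G_\epsilon^{(\calM_i)}(f|_{\calM_i})(x)$ analyzed in Lemma \ref{lemma:22_holder}; applying that lemma to $\calM_i$ and the $C^{k,\beta}(\calM_i)$ function $f|_{\calM_i}$ yields
\[
G_\epsilon^{(\calM_i)}(f|_{\calM_i})(x) = f|_{\calM_i}(x) + \sum_{j=1}^{\lfloor k/2 \rfloor} \epsilon^j (f|_{\calM_i})_j(x) + R^{(i)}_{f,\epsilon}(x),
\]
with $\|R^{(i)}_{f,\epsilon}\|_\infty \le \tilde C_{1,i}\|f|_{\calM_i}\|_{k,\beta}\epsilon^{(k+\beta)/2}$ and $\|(f|_{\calM_i})_j\|_{k-2j,\beta} \le \tilde C_{2,i}\|f|_{\calM_i}\|_{k,\beta}$, provided $\epsilon < \epsilon_1(\calM_i, d_i, k)$.

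The off-diagonal terms are controlled by the separation. For $y \in \calM_j$ with $j \neq i$, $\|\iota(x)-\iota(y)\|_{\R^D} \ge \mathfrak{m}_0$ by \eqref{eq:def-m0-finite-union}, so $h(\|\iota(x)-\iota(y)\|^2/\epsilon) \le \exp(-\mathfrak{m}_0^2/(2\epsilon))$. Hence
\[
\Big| \frac{1}{(2\pi\epsilon)^{d_j/2}}\int_{\calM_j} h\Big(\tfrac{\|\iota(x)-\iota(y)\|^2}{\epsilon}\Big) f(y)\, dV_j(y)\Big|
\le \frac{Vol(\calM_j)}{(2\pi\epsilon)^{d_j/2}} e^{-\mathfrak{m}_0^2/(2\epsilon)} \|f\|_\infty.
\]
Choose $\epsilon$ small enough (depending on $\mathfrak{m}_0$, $\bar d$, $k$) so that $(2\pi\epsilon)^{-\bar d/2} e^{-\mathfrak{m}_0^2/(2\epsilon)} \le \epsilon^{(k+\beta)/2}$; since exponential decay beats any polynomial, this fixes a threshold depending only on $\mathfrak{m}_0$, $\bar d$, $k$. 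Under this, each cross term is bounded by $Vol(\calM_j)\|f\|_\infty \epsilon^{(k+\beta)/2}$, and using $\|f\|_\infty \le \|f\|_{k,\beta}$ the total off-diagonal contribution is bounded by $(\sum_{j \neq i} Vol(\calM_j))\|f\|_{k,\beta}\epsilon^{(k+\beta)/2}$.

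Now define the global functions $f_j \in C^{k-2j,\beta}(\calX)$ by $f_j|_{\calM_i} := (f|_{\calM_i})_j$ for each $i$; these are well-defined since the manifolds are disjoint, and $f_0 = f$. Define the remainder $R_{f,\epsilon}$ on each $\calM_i$ as $R^{(i)}_{f,\epsilon}(x)$ plus the off-diagonal sum above. Then \eqref{G epsilon expansion with remainder disconnected} holds by construction on each $\calM_i$, hence on $\calX$. For part (i), combining the diagonal and cross estimates,
\[
\|R_{f,\epsilon}\|_\infty \le \Big(\max_{1\le i\le m}\tilde C_{1,i} + \sum_{i=1}^m Vol(\calM_i)\Big) \|f\|_{k,\beta}\epsilon^{(k+\beta)/2},
\]
matching the stated constant $\tilde C_1(\calX,\bar d,k)$. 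For part (ii), by the definition $\|f_j\|_{k-2j,\beta} = \max_i \|(f|_{\calM_i})_j\|_{k-2j,\beta} \le (\max_i \tilde C_{2,i}) \max_i \|f|_{\calM_i}\|_{k,\beta} = \tilde C_2(\calX,\bar d,k) \|f\|_{k,\beta}$. Finally, choose $\epsilon_1(\calX,\bar d,k) := \min_{1\le i\le m} \epsilon_1(\calM_i,d_i,k)$ intersected with the separation-based threshold described above, which depends only on $\bar d$, $k$, $\mathfrak{m}_0$, and the per-component injectivity radii and reaches.

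There is no real obstacle here beyond bookkeeping; the expected subtlety is merely verifying that the exponential tail $e^{-\mathfrak{m}_0^2/(2\epsilon)}$ dominates the $\epsilon^{-\bar d/2}$ blow-up for $\epsilon$ below an explicit threshold, so that the cross-manifold terms can legitimately be absorbed into the $\epsilon^{(k+\beta)/2}$-order remainder. All other dependencies (on Riemannian curvature, second fundamental form, volume, diameter of each component) are inherited transparently from the single-manifold Lemma \ref{lemma:22_holder} applied componentwise.
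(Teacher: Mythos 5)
Your proposal is correct and follows essentially the same route as the paper: split $G_\epsilon(f)(x)$ for $x \in \calM_i$ into the on-manifold integral (handled by Lemma \ref{lemma:22_holder} componentwise) and the cross-manifold contributions (negligible by the uniform separation $\mathfrak{m}_0$). The only cosmetic difference is that you absorb the $\epsilon^{-d_j/2}$ normalization directly against $e^{-\mathfrak{m}_0^2/(2\epsilon)}$ via the exponential-beats-polynomial argument, whereas the paper first truncates to $h \le \epsilon^{(\bar d + k +1)/2}$ (which also gives a $\beta$-free threshold, matching the lemma's claim that $\epsilon_1$ depends only on $\bar d, k, \mathfrak{m}_0$, etc.); your threshold as written picks up a harmless nominal dependence on $\beta$ that you could remove by targeting $\epsilon^{(k+1)/2}$ instead.
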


\begin{proof}[Proof of Lemma \ref{lemma:22_holder_finite-union}]

Let $\epsilon_3 > 0 $ be a constant depending on $\bar d$, $k$ and $\mathfrak{m}_0$ s.t.
\begin{equation}\label{eq:epsilon1-separation-m0}
\text{
$\epsilon < \epsilon_3 $ would guarantee that 
$ \sqrt{(\bar d +k+1)\epsilon \log(\frac{1}{\epsilon})} <  \mathfrak{m}_0$}.
\end{equation}
For each $\calM_\ell$, $\ell = 1, \cdots, m$,
we let denote the $ \epsilon_1( \calM_\ell, d_\ell, k) \le 1/e$ in Lemma \ref{lemma:22_holder} as $\epsilon_{1, \ell}$.
We define 
\[
\epsilon_1(\calX, \bar d, k) :=  \min\{ \epsilon_{1, \ell}, \, \ell =1, \cdots, m \} \wedge \epsilon_3,
\]
and we have $\epsilon_1(\calX, \bar d, k) \le 1/e <1/2$. Below we assume $\epsilon < \epsilon_1(\calX, \bar d, k)$.

Recall the definition of  $G_\epsilon(f)(x)$ in \eqref{eq:G-eps-f-mixed-d}, and suppose $ x \in \calM_\ell$ for some $\ell$.
Consider the two cases that $i  = \ell $ and $i \neq \ell$ in the summation, we have
\begin{align}
G_\epsilon (f)(x)
= & \frac{1}{(2\pi \epsilon)^{d_\ell/2}} \int_{\calM_\ell} h\Big(\frac{\|\iota(x)-\iota(y)\|^2_{\mathbb{R}^D}}{\epsilon}\Big)f(y)dV_\ell(y)  \nonumber \\
&+\sum_{i=1, i\not=\ell}^m \int_{\mathcal{M}_i} \frac{1}{(2\pi \epsilon)^{d_i/2}} h\Big(\frac{\|\iota(x)-\iota(y)\|^2_{\mathbb{R}^D}}{\epsilon}\Big)f(y)dV_i(y)
=: G_\epsilon^{(\ell)}(x) + {\rm R}^{c}_{\ell}(x).
\nonumber
\end{align}
We observe that ${\rm R}^{c}_{\ell}(x)$ contributes to the remainder:
because $x \in \calM_\ell$ and $y \in \calM_i$, and $ i \neq \ell$, 
we have $\|\iota(x)-\iota(y)\|_{\mathbb{R}^D} \geq \mathfrak{m}_0 \geq \sqrt{(\bar d +k+1)\epsilon \log(\frac{1}{\epsilon})}$,
where the second inequality is by that $\epsilon < \epsilon_3$.
Consequently, $h\Big(\frac{\|\iota(x)-\iota(y)\|^2_{\mathbb{R}^D}}{\epsilon} \Big) \leq \epsilon^{{( \bar d +k+1)}/{2}}$. 
Thus, for any $x \in \calM_\ell$,
\begin{align*}
|{\rm R}^{c}_{\ell}(x)| 
& \leq
 \sum_{i=1, i\not=\ell}^m \frac{1}{(2\pi)^{d_i/2}}Vol(\mathcal{M}_i)\|f\|_{\infty}\epsilon^{ {(\bar d -d_i+k+1)}/{2}}  
 \leq 
\big(\sum_{i=1, i\not=\ell}^m Vol(\mathcal{M}_i) \big )\|f\|_{\infty}\epsilon^{{(k+\beta)}/{2}}. \nonumber
\end{align*}

The term $G_\epsilon^{(\ell)}(x)$ consists of the kernel integration of $f$ on manifold $\calM_\ell$ only, which has been analyzed in Lemma \ref{lemma:22_holder},
and here we have assumed that $\epsilon < \epsilon_{1, \ell}$ so the lemma applies. 
We then have 
\begin{align}\label{Appendix:interior expansion f epsilon j}
G_\epsilon^{(\ell)}(x)=f(x)+\sum_{j=1}^{\lfloor k/2 \rfloor} f_j(x) \epsilon^j + R_{\ell}(x),
\end{align}
where $f_j$ is defined in \eqref{DEfinition of f_j} with $d=d_\ell$ and all the geometric quantities are with respect to $\calM_\ell$,
and in particular $f_0 = f$;
By Lemma \ref{lemma:22_holder}(i), $R_{\ell}(x)$ satisfies that
\[
| R_{\ell}(x)| 
\le \tilde{C}_{1, \ell} \|f|_{\calM_\ell} \|_{k, \beta} \epsilon^{(k+\beta)/2},
\quad \forall x\in \calM_\ell.
\]
Putting together, this gives \eqref{G epsilon expansion with remainder disconnected}
and for $x \in \calM_\ell$,
$ R_{f,\epsilon}(x) = {\rm R}^{c}_{\ell}(x) + R_{\ell}(x)$.
Thus, putting together the bounds of $| {\rm R}^{c}_{\ell}(x)|$ and $|R_{\ell}(x)|$, we have that for any $x \in \calM_\ell$,
\begin{align*}
| R_{f,\epsilon}(x)|
& \le |{\rm R}^{c}_{\ell}(x)| + |R_{\ell}(x)|
\le \big( \sum_{i=1, i\not=\ell}^m Vol(\mathcal{M}_i) 
	+ \tilde{C}_{1, \ell} \big)
	\| f \|_{k, \beta} \epsilon^{(k+\beta)/2} \\
& \le \tilde{C}_1(\calX, \bar d , k)\| f \|_{k, \beta} \epsilon^{(k+\beta)/2},
\end{align*}
where in the second inequality we used that $\|f\|_{k,\beta} =\max_{1 \le i \le m} \|f|_{\mathcal{M}_i}\|_{k,\beta}$
and $ \| f\|_\infty \le \|f\|_{k,\beta}$,
and the last inequality is by our definition of $\tilde{C}_1(\calX, \bar d , k)$.
The above bound holds for $x\in \calM_\ell$ for all $\ell$, and this proves Lemma \ref{lemma:22_holder_finite-union}(i).

To prove Lemma \ref{lemma:22_holder_finite-union}(ii),
recall that our $f_j$  has been defined on $\calM_\ell$ for each $\ell$,
and Lemma \ref{lemma:22_holder}(ii) gives that 
$$ 
\| f_j|_{\calM_\ell} \|_{k-2j,\beta} 
\le  \tilde{C}_{2,\ell}  \|f|_{\calM_\ell}\|_{k, \beta}, 
\quad \forall 0 \le j \le \lfloor k/2 \rfloor.
$$
Again, by that $\|f\|_{k,\beta}=\max_{1 \le i \le m} \|f|_{\mathcal{M}_i}\|_{k,\beta}$
and our definition of $\tilde{C}_2(\calX,  \bar d , k)$,
we have
$
\| f_j|_{\calM_\ell} \|_{k-2j,\beta}  
\le \tilde{C}_2(\calX,  \bar d , k) \|f \|_{k, \beta}$.
This bounds holds for  $f_j|_{\calM_\ell}$ for all $\ell$, 
and then Lemma \ref{lemma:22_holder_finite-union}(ii) follows. 
\end{proof}

\begin{proposition}
\label{prop:appr_s_l_inf disconnected}
Under Assumption \ref{A4:disconnected}, 
there exists $\epsilon_2(\mathcal{X}, \bar d, k)$ 
such that when $\epsilon < \epsilon_2$,
for any $f \in  C^{k ,\beta}(\calX)$, 
we can find 
$F = \sum_{j=0}^{\lfloor k/2 \rfloor} \epsilon^j F_j $ 
with $F_j \in C^{k - 2j ,\beta}(\calX)$ and
\begin{equation}   
\|G_\epsilon (F)- f\|_{\infty} 
	\le \gamma_1(\calX, \bar d , k) \|f\|_{k, \beta}  \epsilon^{(k+\beta)/2}, \label{G epsilon (F)- f_{infty} disjoint 1}
\end{equation}
\begin{equation}\label{G epsilon (F) H disjoint 2}
\|G_\epsilon (F) \|^2_{{\mathbb{H}}_{\epsilon}(\calX) } 
	\le \gamma_2(\calX, \bar d , k) \|f\|^2_{k, \beta} \epsilon^{-\bar d /2}, 
\end{equation}
 where the constants  $\gamma_1$ and $\gamma_2$ inherit the dependence of $\calX$ from the constants $\tilde{C}_1$ and $\tilde{C}_2$ as in Lemma \ref{lemma:22_holder_finite-union}. 
 The threshold $\epsilon_2$ inherit the dependence of $\calX$ from the threshold $\epsilon_1$ as in Lemma \ref{lemma:22_holder_finite-union}.
 \end{proposition}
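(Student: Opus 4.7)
The plan is to mirror the proof of Proposition \ref{prop:appr_s_l_inf}, replacing Lemma \ref{lemma:22_holder} with its union-of-manifolds counterpart Lemma \ref{lemma:22_holder_finite-union}, and then handle the RKHS norm bound by writing $G_\epsilon(F)$ as an integral against a single measure on $\calX$ and controlling the cross-manifold contributions via the separation constant $\mathfrak{m}_0$.

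First, I would set
\[
\epsilon_2(\calX,\bar d, k) = \min\{\epsilon_1(\calX,\bar d,k), 1/2\},
\]
with $\epsilon_1(\calX,\bar d,k)$ as in Lemma \ref{lemma:22_holder_finite-union}, and construct $F_i \in C^{k-2i,\beta}(\calX)$ inductively by setting $F_0 = f$ and, once $F_0,\ldots,F_\ell$ are built, applying Lemma \ref{lemma:22_holder_finite-union} to each $F_i$ (using that $C^{k-2i,\beta}(\calX)$ is stable in the lemma because $\bar d$ depends only on $\calX$) to obtain $F_{i,j}$ with $\|F_{i,j}\|_{k-2i-2j,\beta} \le \tilde C_2(\calX,\bar d,k) \|F_i\|_{k-2i,\beta}$, then defining $F_{\ell+1} := -\sum_{i=0}^\ell F_{i,\ell+1-i}$. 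The induction verifies $\|F_i\|_{k-2i,\beta} \le (k+1)^i \tilde C_4^i \|f\|_{k,\beta}$ where $\tilde C_4 = \max(\tilde C_2,1)$, exactly as in the single-manifold case. Substituting this into $G_\epsilon(F) = \sum_i \epsilon^i G_\epsilon(F_i)$ and using the expansion from Lemma \ref{lemma:22_holder_finite-union}, cancellation of the intermediate terms yields $G_\epsilon(F) - f = \sum_{i=0}^{\lfloor k/2 \rfloor} \epsilon^i R_{F_i,\epsilon}$, whose $\|\cdot\|_\infty$ is bounded using Lemma \ref{lemma:22_holder_finite-union}(i) to give \eqref{G epsilon (F)- f_{infty} disjoint 1} with a constant $\gamma_1(\calX,\bar d,k)$ of the form $(k+1)^{k+1}\tilde C_1(\calX,\bar d,k)\tilde C_4^k$.

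For \eqref{G epsilon (F) H disjoint 2} the key observation is that the mixed-dimensional operator \eqref{eq:G-eps-f-mixed-d} can be written as $G_\epsilon(F)(x) = \int_\calX h_\epsilon(x,y) g(y)\, d\mu(y)$ where $d\mu = \sum_i dV_i$ is the union volume form and $g|_{\calM_i}(y) = (2\pi\epsilon)^{-d_i/2} F(y)$. Applying Lemma \ref{lemma:computation-Hnorm} on the compact measure space $(\calX,d\mu)$ with the kernel $h_\epsilon$ restricted to $\calX\times\calX$ gives
\[
\|G_\epsilon(F)\|_{\mathbb{H}_\epsilon(\calX)}^2
= \sum_{i,j=1}^m \frac{1}{(2\pi\epsilon)^{(d_i+d_j)/2}} \int_{\calM_i}\int_{\calM_j} h\Bigl(\tfrac{\|\iota(x)-\iota(y)\|^2}{\epsilon}\Bigr) F(x)F(y)\,dV_i(x)dV_j(y).
\]
The diagonal terms ($i=j$) are handled exactly as in the proof of Proposition \ref{prop:appr_s_l_inf}: bound $|F(x)F(y)| \le \|F\|_\infty^2$, apply Lemma \ref{lemma:22_holder_finite-union} (or the single-manifold Lemma \ref{lemma:22_holder}) with $f\equiv 1$, $k=0$ to see that the inner $(2\pi\epsilon)^{-d_i/2}$-integral is $O(1)$, and use $\epsilon^{-d_i/2} \le \epsilon^{-\bar d/2}$. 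For the off-diagonal terms ($i\ne j$), the separation \eqref{eq:def-m0-finite-union} gives $h(\|\iota(x)-\iota(y)\|^2/\epsilon) \le \exp(-\mathfrak{m}_0^2/(2\epsilon))$ which is super-polynomially small in $\epsilon$ and absorbs any $(2\pi\epsilon)^{-(d_i+d_j)/2}$ factor; these terms contribute an arbitrarily high power of $\epsilon$ and are dominated by the diagonal. Finally, the induction bound on $\|F_i\|_{k-2i,\beta}$ together with $\epsilon<1/2$ controls $\|F\|_\infty \lesssim \|f\|_{k,\beta}$ as in \eqref{F infty f k b}, yielding \eqref{G epsilon (F) H disjoint 2} with $\gamma_2(\calX,\bar d,k)$ inheriting the $\calX$-dependence from $\tilde C_1,\tilde C_2$ in Lemma \ref{lemma:22_holder_finite-union}.

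The only mildly technical point, which I expect to be the ``main obstacle,'' is the bookkeeping for the off-diagonal RKHS contribution: one must verify that the exponential decay from $\mathfrak{m}_0$ indeed dominates the $(2\pi\epsilon)^{-(d_i+d_j)/2} \le (2\pi\epsilon)^{-\bar d}$ prefactor uniformly in $\epsilon < \epsilon_2$. This is immediate once $\epsilon_2$ is chosen small enough that $\exp(-\mathfrak{m}_0^2/(2\epsilon)) \le \epsilon^{\bar d + 1}$, which is automatic for $\epsilon$ below a threshold depending only on $\mathfrak{m}_0$ and $\bar d$, and thus can be absorbed into $\epsilon_2(\calX,\bar d,k)$. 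Everything else is a line-by-line transposition of the single-manifold proof.
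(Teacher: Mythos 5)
Your proposal is correct and the overall architecture (inductive construction of the $F_i$, cancellation of intermediate terms, bounding $\|F\|_\infty$, invoking Lemma \ref{lemma:computation-Hnorm}) matches the paper exactly. The one genuine difference is in how the RKHS norm \eqref{G epsilon (F) H disjoint 2} is bounded. You expand the double integral into $m^2$ blocks and argue separately: diagonal blocks $i=j$ via the single-manifold estimate, off-diagonal blocks $i\ne j$ via the separation $\mathfrak{m}_0$ in \eqref{eq:def-m0-finite-union} giving $h(\cdot)\le e^{-\mathfrak{m}_0^2/(2\epsilon)}$, which annihilates the $(2\pi\epsilon)^{-(d_i+d_j)/2}$ prefactor once $\epsilon$ is small. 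The paper instead never splits: after bounding $|F(x)F(y)|\le\|F\|_\infty^2$, it observes that the inner $dV(y)$-integral is \emph{exactly} $G_\epsilon(\mathbf{1})(x)$ as defined in \eqref{eq:G-eps-f-mixed-d}, and so $\|G_\epsilon(\mathbf{1})\|_\infty \le 1+\tilde C_1(0)$ follows immediately from Lemma \ref{lemma:22_holder_finite-union} applied to $f\equiv 1$, $k=0$, $\beta=1$ — a lemma whose proof already absorbed the cross-manifold tail into the remainder $R_{f,\epsilon}$. The remaining $dV(x)$-integral is then just $\sum_i Vol(\calM_i)(2\pi\epsilon)^{-d_i/2}\le(\sum_i Vol(\calM_i))\epsilon^{-\bar d/2}$. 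The net effect is the same bound, but the paper's route avoids re-deriving the exponential off-diagonal decay you flag as "the main obstacle" — that work has already been done once in Lemma \ref{lemma:22_holder_finite-union} and is simply re-used. Your route is sound but does the cross-manifold estimate twice; the paper's is the tighter line-by-line transposition you were aiming for. One small bookkeeping note: in your route the threshold $\epsilon_2$ picks up an explicit dependence on $\mathfrak{m}_0$ and $\bar d$ through the requirement $e^{-\mathfrak{m}_0^2/(2\epsilon)}\le\epsilon^{\bar d}$, whereas in the paper's route that dependence is already inside $\epsilon_1(\calX,\bar d,k)$ via \eqref{eq:epsilon1-separation-m0} and need not be reintroduced.
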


\begin{proof}[Proof of Proposition \ref{prop:appr_s_l_inf disconnected}]
The proof follows the same strategy as that of Proposition \ref{prop:appr_s_l_inf}.
Let $\tilde C_1(k)$, $\tilde C_2(k)$, $\epsilon_1(k)$ be as in Lemma \ref{lemma:22_holder_finite-union}, 
where we omit the dependence on $(\calX, \bar d)$ in the constant notation.
We define $\tilde{C}_3(k)$, $\tilde{C}_4(k)$ and $\epsilon_2(k)$ same as before.
We consider  $\epsilon < \epsilon_2(k)$ below.

Again, let $F_0 = f$, and we recursively define $F_i$ by \eqref{Proposition: Fl formula},
where $F_{i,j}$ is provided by  the expansion \eqref{eq:apply-expansion-lemma-to-Fi} 
 with the remainder $\|R_{F_i, \epsilon}\|_{\infty} $ bounded 
 	as in \eqref{R F i epsilon proof main proposition} by Lemma \ref{lemma:22_holder_finite-union}(i).
 In addition, we have \eqref{Proposition: Fij bound 1} hold for  $0 \le i \le \lfloor k/2 \rfloor$ by  Lemma \ref{lemma:22_holder_finite-union}(ii).
 Then, by the same argument as before, for all $ 0 \le i \le \lfloor k/2 \rfloor$,
\begin{align}\label{Appendix:Proposition: induction condition}
F_i \in C^{k-2i, \beta}(\calX), \quad 
\|F_i\|_{k- 2i ,\beta} \leq (k+1)^i \tilde{C}_4( k)^i \|f\|_{k, \beta}. 
\end{align}
Let $F = \sum_{j=0}^{\lfloor k/2 \rfloor} \epsilon^j F_j $,
the same argument as before proves \eqref{G epsilon (F)- f_{infty} disjoint 1} 
with  $\gamma_1(\calX, \bar d, k) = (k+1) ^{k+1}  \tilde{C}_3(k) \tilde{C}_4^k(k)$.

To prove \eqref{G epsilon (F) H disjoint 2}, 
note that, by  $dV(x) :=\sum_{i=1}^m dV_i(x) {\bf 1}_{\calM_i}(x)$,
\begin{align*}
G_\epsilon (F)(x)
 =&\sum_{i=1}^m \int_{\mathcal{M}_i}  \frac{1}{(2\pi \epsilon)^{d_i/2}} h
	 \Big( \frac{\|\iota(x)-\iota(y)\|^2_{\mathbb{R}^D}}{\epsilon} \Big)
 	F(y)dV_i(y) \\
 =& \int_{\calX} h\Big(\frac{\|\iota(x)-\iota(y)\|^2_{\mathbb{R}^D}}{\epsilon}\Big) 
 	\Big( \sum_{i=1}^m \frac{ {\bf 1}_{\calM_i}(y)}{(2\pi \epsilon)^{d_i/2}} F(y) \Big) 
	dV(y).
\end{align*}
Again, $F \in L^2(\calX, dV)$ because $F \in C^{0, \beta}(\calX) \subset C(\calX)$,
and the kernel $h_\epsilon(x,y)$ satisfies the needed condition by Lemma \ref{lemma:computation-Hnorm} 
by the continuity of $h$ and compactness of $\calX$.
Hence, by Lemma \ref{lemma:computation-Hnorm}, 
\begin{align}\label{eq:proof-prop-manifold-union-2}
&  \|G_\epsilon(F)\|^2_{{\mathbb{H}}_{\epsilon}(\calX)} 
 = \int_{\calX}\int_{\calX} h\Big(\frac{\|\iota(x)-\iota(y)\|^2_{\mathbb{R}^D}}{\epsilon}\Big) 
 	\Big( \sum_{i=1}^m \frac{ {\bf 1}_{\calM_i}(x)}{(2\pi \epsilon)^{d_i/2}} F(x) \Big) \\
&~~~~~~~~~~~~~~~~~~~~~~~~~~~~~~~~~~~~~~~~~	
	\Big( \sum_{j =1}^m \frac{ {\bf 1}_{\calM_j}(y)}{(2\pi \epsilon)^{d_j /2}} F(y) \Big) 
	 dV(x) dV(y) \nonumber \\
&~~~  \le \|F\|^2_{\infty} \int_{\calX}\int_{\calX} h\Big(\frac{\|\iota(x)-\iota(y)\|^2_{\mathbb{R}^D}}{\epsilon}\Big) 
   	 \sum_{i=1}^m \frac{ {\bf 1}_{\calM_i}(x)}{(2\pi \epsilon)^{d_i/2}}   
	 \sum_{j =1}^m \frac{ {\bf 1}_{\calM_j}(y)}{(2\pi \epsilon)^{d_j /2}}  
	dV(x) dV(y)  \nonumber \\
&~~~ =  \|F\|^2_{\infty} 
 	\int_{\calX} G_\epsilon( {\bf 1} ) (x)
	\sum_{i=1}^m \frac{ {\bf 1}_{\calM_i}(x)}{(2\pi \epsilon)^{d_i/2}}   
	dV(x)   \nonumber \\
&~~~ \leq \|F\|^2_{\infty}\|G_\epsilon({\bf 1}  )\|_\infty \Big(\sum_{i=1}^m Vol(\mathcal{M}_i)\Big) \epsilon^{-\bar d /2}, \nonumber
\end{align}
where ${\bf 1}$ is the one-constant function on $\calX$,
and we used $2 \pi >1$ and  $\epsilon<1/2$ in the last step. 
Since $\epsilon <\epsilon_2( k)\le \epsilon_1(0)$, we have
$
\|G_{\epsilon}( {\bf 1}) - 1\|_\infty 
\le \tilde{C}_1( 0) \epsilon^{1/2} \le \tilde{C}_1( 0)$
by applying  Lemma \ref{lemma:22_holder_finite-union} with $f = 1$,  $k=0$, $\beta = 1$.
Since  $\epsilon< {1}/{2}$,  
applying the same method as in \eqref{F infty f k b},  
we have $\|F\|_{\infty} \leq 2 (k+1)^k \tilde{C}_4( k)^k \|f\|_{k, \beta}$. 
Putting both bounds back to \eqref{eq:proof-prop-manifold-union-2}, 
we prove \eqref{G epsilon (F) H disjoint 2} with 
$\gamma_2 := 4(1+\tilde{C}_1(0)) \big(\sum_{i=1}^m Vol(\mathcal{M}_i) \big) (k+1)^{2k} \tilde{C}_4( k)^{2k} $.
\end{proof}

Proposition \ref{prop:appr_s_l_inf disconnected} serves as the counterpart of Proposition \ref{prop:appr_s_l_inf}.
Using the proposition, we are ready to prove the convergence rates. 

\begin{corollary}\label{cor:manifold-mixed-d-rate}
  Under Assumption  \ref{A4:disconnected},
     given $f^* \in C^{k,\beta}(\calX)$ for some $k = 0, 1, \cdots$ and $0< \beta \le 1$,
     suppose the prior on the kernel bandwidth $\epsilon$ satisfies Assumption \ref{assump:A3}
     with  $s = k+\beta$ and $\varrho = \bar d$.
     Then, under both fixed and random designs,
     the same posterior contraction rate 
     and posterior mean convergence rate 
     as in Corollary \ref{cor:contraction_n_norm_manifold} hold by replacing $d$ to be $\bar d $.
\end{corollary}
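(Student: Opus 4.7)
The plan is to mirror the proof of Corollary \ref{cor:contraction_n_norm_manifold} and reduce the statement to Theorems \ref{thm:contraction_n_norm}, \ref{thm:fix-design-estimator}, and \ref{thm:contraction_2_norm}. Since Assumption \ref{assump:A3}(A3) is already assumed (with $\varrho = \bar d$ and $s = k+\beta$), it suffices to verify Assumption \ref{assump:A1-A2-prime}(A1) and (A2) with the same $\varrho$ and $s$; once this is done, the fixed-design contraction rate follows from Theorem \ref{thm:contraction_n_norm}, the posterior-mean convergence under fixed design follows from Theorem \ref{thm:fix-design-estimator}, and the random-design conclusions follow from Theorem \ref{thm:contraction_2_norm}.

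For (A1), I would invoke Example \ref{eg:stratified} together with Lemma \ref{Lemma: dimension of the union}. Each $\calM_i$ is a $d_i$-dimensional compact smooth manifold embedded in $[0,1]^D$, so by Example \ref{eg:manifold} it admits an $r$-cover under $\|\cdot\|_\infty$ of cardinality at most $C_{\calM_i} r^{-d_i} \le C_{\calM_i} r^{-\bar d}$ for all small enough $r$ (using that $r \le 1$). Taking the union of these covers over $i=1,\ldots,m$ produces an $r$-cover of $\calX$ of size at most $(\sum_i C_{\calM_i}) r^{-\bar d}$, which establishes (A1) with $\varrho = \bar d$.

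For (A2), I would use the newly proved Proposition \ref{prop:appr_s_l_inf disconnected}. Setting $\nu_1 = \gamma_1(\calX,\bar d, k)\|f^*\|_{k,\beta}$ and $\nu_2 = \gamma_2(\calX,\bar d, k)\|f^*\|_{k,\beta}^2$, the proposition gives, for all $\epsilon < \epsilon_2(\calX,\bar d,k) =: \epsilon_0$, an approximant $F^{\epsilon} := G_\epsilon(F) \in {\mathbb H}_\epsilon(\calX)$ satisfying $\|F^{\epsilon} - f^*\|_\infty \le \nu_1 \epsilon^{s/2}$ and $\|F^{\epsilon}\|^2_{{\mathbb H}_\epsilon(\calX)} \le \nu_2 \epsilon^{-\bar d/2}$, which is exactly \eqref{eq:cond-A2-prime} with $\varrho = \bar d$ and $s = k+\beta$. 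Note that $F^{\epsilon}$ is manifestly an element of the RKHS on $\calX$ because $G_\epsilon$ integrates against $h_\epsilon(x,\cdot)$ over $\calX$ with a bounded density against $dV$; the precise RKHS membership can be read off from Lemma \ref{lemma:computation-Hnorm}, which was already used in the proof of Proposition \ref{prop:appr_s_l_inf disconnected}.

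With (A1) and (A2) in place and (A3) holding by hypothesis with $\varrho = \bar d$, $s = k+\beta$, the three theorems apply verbatim and yield the rates $\bar\varepsilon_n = C n^{-s/(2s+\bar d)}(\log n)^{(D+1)/(2+\bar d/s) + (D+1)/2}$ in all four claims, with the constants $c$ and $M$ in the random-design part playing the same role as in Corollary \ref{cor:contraction_n_norm_manifold}. No new analytic obstacle appears at this stage: the essential new ingredients, namely the expansion of $G_\epsilon$ across disjoint components (Lemma \ref{lemma:22_holder_finite-union}) and the corresponding RKHS approximation (Proposition \ref{prop:appr_s_l_inf disconnected}), have already been carried out, and the only subtlety to double-check is that the constants in (A2) are allowed to depend on $\calX$ and $f^*$, so the dependence of $\gamma_1,\gamma_2$ on the separation constant $\mathfrak{m}_0$ and on the geometries of the individual $\calM_i$ is harmless.
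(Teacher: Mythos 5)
Your proposal is correct and follows essentially the same route as the paper: reduce to verifying Assumption \ref{assump:A1-A2-prime}(A1) and (A2) with $\varrho = \bar d$ and $s = k+\beta$, establish (A1) via a union-cover argument (equivalently, by iterating Lemma \ref{Lemma: dimension of the union} as the paper does), establish (A2) from Proposition \ref{prop:appr_s_l_inf disconnected}, and then invoke Theorems \ref{thm:contraction_n_norm}, \ref{thm:fix-design-estimator}, and \ref{thm:contraction_2_norm}. Your extra remarks on RKHS membership via Lemma \ref{lemma:computation-Hnorm} and on why the constants' dependence on $\mathfrak{m}_0$ and the individual $\calM_i$ is harmless are accurate and simply make explicit what the paper leaves implicit.
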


\begin{proof}[Proof of Corollary \ref{cor:manifold-mixed-d-rate}]
By the same argument as the proof of Corollary \ref{cor:contraction_n_norm_manifold}, it suffices to check that Assumption \ref{assump:A1-A2-prime} is satisfied with $s = k + \beta$ and $\varrho = \bar d$,
and then the convergence rates follow by Theorems \ref{thm:contraction_n_norm}, \ref{thm:fix-design-estimator}, and \ref{thm:contraction_2_norm},

To verify Assumption \ref{assump:A1-A2-prime}(A1), 
we say that a set $S$ satisfies (A1)  with dimension $d$
if there exists $C_S$ and $r_0$ such that  \eqref{eq:cond-A1-prime} holds with $\varrho=d$.
Here, ``dimension'' of $S$ is not unique, since for any $d' > d$,
$S$ also satisfies (A1) with dimension $d'$. 
Applying Lemma \ref{Lemma: dimension of the union} $m$ times, we have that $\calX$ satisfies  (A1)  with dimension $\bar d$.
Assumption \ref{assump:A1-A2-prime}(A2) can be verified similarly as in the proof of Corollary \ref{cor:contraction_n_norm_manifold} based on Proposition \ref{prop:appr_s_l_inf disconnected}.
\end{proof}

\begin{lemma}\label{Lemma: dimension of the union}
Suppose $\calX_1$, $\calX_2 \subset [0,1]^D$
 satisfy Assumption \ref{assump:A1-A2-prime}(A1) 
with dimensions $\varrho_1$ and $\varrho_2$ respectively,
then $\calX=\calX_1 \cup \calX_2$ satisfies (A1) with dimension $\varrho=\max \{ \varrho_1, \varrho_2 \}$.
\end{lemma}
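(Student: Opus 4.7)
The plan is to obtain an $r$-net of $\calX = \calX_1 \cup \calX_2$ by simply taking the union of $r$-nets for $\calX_1$ and $\calX_2$, and then to control the resulting cardinality by the larger of the two exponents using the fact that $r \le 1$. This is the natural finite-stability property of covering-number-based notions of dimension.

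Concretely, by hypothesis there exist constants $C_{\calX_1}, C_{\calX_2} > 0$ and $r_{0,1}, r_{0,2} \in (0,1)$ such that $\calN(r, \calX_i, \|\cdot\|_\infty) \le C_{\calX_i} r^{-\varrho_i}$ for all $r \in (0, r_{0,i}]$ and $i = 1,2$. I would set
\[
 r_0 := \min\{r_{0,1}, r_{0,2}\} \in (0,1], \qquad \varrho := \max\{\varrho_1, \varrho_2\}, \qquad C_{\calX} := C_{\calX_1} + C_{\calX_2}.
\]
Then, for any fixed $r \in (0, r_0]$, pick $\|\cdot\|_\infty$ $r$-nets $N_1 \subset \R^D$ of $\calX_1$ and $N_2 \subset \R^D$ of $\calX_2$ realizing the covering numbers. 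The union $N_1 \cup N_2$ is, by construction, an $r$-net of $\calX_1 \cup \calX_2 = \calX$ with
\[
 |N_1 \cup N_2| \le |N_1| + |N_2| \le C_{\calX_1} r^{-\varrho_1} + C_{\calX_2} r^{-\varrho_2}.
\]
Since $r \le r_0 \le 1$ and $\varrho_i \le \varrho$, we have $r^{-\varrho_i} \le r^{-\varrho}$ for $i=1,2$, so the right-hand side is at most $C_{\calX} r^{-\varrho}$. Hence $\calN(r, \calX, \|\cdot\|_\infty) \le C_{\calX} r^{-\varrho}$ for all $r \in (0, r_0]$, which is exactly condition \eqref{eq:cond-A1-prime} with dimension $\varrho = \max\{\varrho_1, \varrho_2\}$.

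There is no real obstacle here: the only thing to watch is that $r_0$ must be chosen $\le 1$ so that the bound $r^{-\varrho_i} \le r^{-\varrho}$ is valid (both hypotheses already supply $r_{0,i} \in (0,1)$, so this is automatic). The argument extends verbatim to any finite union by induction, which is exactly what is used in the proof of Corollary \ref{cor:manifold-mixed-d-rate}.
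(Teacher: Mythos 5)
Your proof is correct and follows essentially the same route as the paper: take the union of the two covers, bound $\calN(r,\calX_1\cup\calX_2)$ by the sum $\calN(r,\calX_1)+\calN(r,\calX_2)$, and use $r\le 1$ to absorb both exponents into $r^{-\max\{\varrho_1,\varrho_2\}}$ with constant $C_{\calX_1}+C_{\calX_2}$ and threshold $r_0=\min\{r_{0,1},r_{0,2}\}$. No gaps.
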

\begin{proof}
By definition, there exist  $0<r_1<1$, $0<r_2<1$, and $C_{\calX_1}>0$ and $C_{\calX_2}>0$ such that 
$\calN( r, \calX_1, \| \cdot \|_\infty) \leq C_{\calX_1} r^{-\varrho_1}$ for all $0<r\leq r_1$ and $\calN( r, \calX_2, \| \cdot \|_\infty)  \leq C_{\calX_2} r^{-\varrho_2}$ for all $0<r\leq r_2$.  
Since the union of any covers of $ \calX_1$ and $\calX_2$ is a cover of $\calX= \calX_1 \cup \calX_2$, 
we have that $ \calN( r, \calX, \| \cdot \|_\infty) \leq  \calN( r, \calX_1, \| \cdot \|_\infty) + \calN( r, \calX_2, \| \cdot \|_\infty) $ always holds.
 Hence, for all $0< r \leq r_0 := \min\{ r_1, r_2 \} \in (0, 1]$,  we have
\begin{align*}
\calN( r, \calX, \| \cdot \|_\infty) 
& \leq  \calN( r, \calX_1, \| \cdot \|_\infty) + \calN( r, \calX_2, \| \cdot \|_\infty) \leq C_{\calX_1} r^{-\varrho_1} +C_{\calX_2} r^{-\varrho_2} \\
& \leq ( C_{\calX_1}+ C_{\calX_2}) r^{-\max \{ \varrho_1, \varrho_2 \}},
\end{align*}
and this proves the claim with $\varrho=\max \{ \varrho_1, \varrho_2 \}$.
\end{proof}

At last, to ensure the validity of the EB prior \eqref{eq:EBprior} and extend Proposition \ref{prop:v_hat}, 
we modify the definition of $\hat v_n$ as 
\begin{equation}\label{eq:def_vn-mixed-d}
\hat{v}_n(t) = \left(  \frac{1}{n} \sum_{i=1}^n  \hat{V}_i(t)^{-1} \right)^{-1},
\quad
\hat{V}_i(t) := \frac{1}{n-1}\sum_{j \ne i} h_{t}(X_i, X_j);
\end{equation}
The $k$NN-based $T_n$  is as in \eqref{eq:def-Tn-knn} with $S = [n]$. 
In practical computation of $\hat V_i(t)^{-1}$,
 the numerical singularity due to using very small $t$ compared to the distances $\|X_i - X_j\|^2$
can be avoided 
by restricting  $t > \gamma_1 T_n^2$, i.e., a multiple of the (squared averaged) $k$NN distance, as proposed in \eqref{eq:EBprior}.

To extend the theory, we assume that the data distribution is a mixture on the $m$ manifolds, that is, 
$p_X = \sum_{\ell = 1}^m \alpha_\ell p_{X, \ell}$,
where $\alpha_\ell > 0$, $\sum_{\ell=1}^m \alpha_\ell = 1$,
and each $p_{X, \ell}$ is $C^2$ and uniformly bounded from below and above on $\calM_\ell$.

For $\hat v_n(t)$, we  can extend the proof of Lemma \ref{lemma:concentrate_v_hat} to show that 
$ \hat v_n(t) \sim t^{\bar d /2}$ when $t > C n^{-2/ \bar d}$ up to a log factor.
To see this, for each $i$, we condition on $X_i$ and consider $h_t(X_i, X_j)$ over the randomness of $X_j$, $j \neq i$.
Suppose $X_i \in \calM_\ell$, 
there exists $\bar t_0 \le 1$ that depends on $\calX$, $\bar d$ and also on $\mathfrak{m}_0$ s.t. when $t \le \bar t_0$,
the mean $\E [ h_t(X_i, X_j) | X_i ]$
and variance ${\rm Var} (  h_t(X_i, X_j) | X_i )$
are both dominated by the contribution from the integration on $\calM_\ell$.
Then, similar to \eqref{eq:bound-Ri/t-bound-2}, we can show that $\hat V_i(t) \sim t^{d_\ell/2}$ uniformly for $ t\in  \bar I_{(n)} := [n^{-{2}/{\bar d}}(\log n)^{3/ \bar d},  \bar t_0]$ 
and all $i$,
at large $n$ and with high probability. 
We introduce another good event under which $\#\{i, X_i \in \calM_\ell \}/n $ concentrates around $\alpha_\ell$, for all $\ell$.
There is at least one $\bar \ell$ where $d_{\bar \ell} = \bar d$,
and then $\frac{1}{n} \sum_{i=1}^n  \hat{V}_i(t)^{-1} \ge \frac{1}{n} \sum_{i, \, X_i \in \calM_{\bar \ell}}  \hat{V}_i(t)^{-1} \sim t^{- \bar d/2} $;
Meanwhile $t^{- d_\ell /2} \le t^{- \bar d /2} $ for all $ t \in \bar I_{(n)}$ because $t \le 1$.
Then, taking the union bound over all the good events, we can show that 
$ c_1  t^{- \bar d/2}
\le \frac{1}{n} \sum_{i=1}^n  \hat{V}_i(t)^{-1} 
 \le c_2  t^{- \bar d/2}$ uniformly for $ t\in  \bar I_{(n)}$
 for positive constants $c_1$ and $c_2$.
 This proves that 
 $ c_1^{-1}  t^{ \bar d/2}
\le \hat v_n(t)
 \le c_2^{-1}  t^{ \bar d/2}$ uniformly for $ t\in  \bar I_{(n)}$ at large $n$ with high probability. 

 For $T_n$, we can extend Lemma \ref{lemma:concentrateT} to show that when  $n$ is large, 
with high probability and for all $i$, $\hat R_k(X_i) \sim (k/n)^{1/ d_\ell}$ when $X_i \in \calM_\ell$.
Because $S = [n]$, 
we have  $T_n = \frac{1}{n}\sum_{i=1}^n  \hat{R}_k (X_i)$ and can be shown to satisfy that 
$ c_3 n^{-1 / \bar d}  (\log n)^{2/\bar d}  \le T_n \le  c_4 n^{-1 / \bar d}  (\log n)^{2/\bar d} $ for positive constants $c_3$ and $c_4$.

The rest of the proof of Proposition \ref{prop:v_hat} applies with $d$ replaced to be $\bar d$.
This shows that the prior $p(t)$ with modified $\hat v_n(t)$
satisfies Assumption \ref{assump:A3} with $\varrho = \bar d$,  $s = k +\beta$
at large $n$ with high probability.

\section{Technical lemmas and proofs}
\label{ap:A}

\subsection{RKHS lemmas}\label{appsubsec:proofs-sec-4}

\subsubsection{Lemmas about RKHS on a subset of $\R^D$}

We consider a general class of $h$ to be specified in Assumption \ref{assump:general-h} below.
Because $h_\epsilon( x,x')$ is the covariance function of the Gaussian process $F^\epsilon_x$,
when $\epsilon = 1$, 
there exists a finite measure $d\mu$ on $\R^D$, namely the {\it spectral measure}  of $F^1$, 
such that 
\begin{equation}\label{eq:def-spectral-measure-h1}
h_1(x,x') 
= h( \|  x- x'\|^2)
= \int_{\mathbb{R}^D} e^{-i  \lambda^T (x-x')} d\mu(\lambda).
\end{equation}
In particular, we have
\[
h(0) = h_1( x,x) = \int_{\R^D} d\mu(\lambda) > 0 \text{ and is finite.}
\]
For other values of $\epsilon > 0$, the spectral measure is $d \mu_\epsilon$, and $\mu = \mu_1$. We have
\[
h_\epsilon( x, x') 
= \int_{\mathbb{R}^D} e^{-i  \lambda^T (x-x')} d\mu_\epsilon(\lambda)
= \int_{\mathbb{R}^D} e^{-i  \lambda^T (x-x')/\sqrt{\epsilon} } d\mu(\lambda).
\]
In our setting, the spectral measure will have a density, denoted as $f_\epsilon$, namely, $d \mu_\epsilon(\lambda) = f_\epsilon(\lambda) d\lambda$. By change of variable, for any $\epsilon > 0$,
\begin{equation}\label{eq:expression-ft-from-f1}
f_\epsilon( \lambda) = \epsilon^{D/2} f_1(\sqrt{\epsilon} \lambda). 
\end{equation}
In addition, because $h_1( x, x') = h( \| x -x'\|^2)$ is radial symmetric, then so is $f_1(\lambda)$ as the Fourier transform of the function $h( \|x\|^2)$, 
that is,
$f_1 (\lambda) = f_{1, {\rm r}} ( \| \lambda \|)$ for some positive function $f_{1, {\rm r}}$ on $[0,\infty)$.

\begin{assumption}[General kernel function]\label{assump:general-h}
The kernel $h_\epsilon( x,x') =  h ({\|x-x'\|^2}/{\epsilon} ) $ 
where $h: [ 0, \infty) \to \R$ is associated with the spectral measure $d\mu$
and satisfies that 
\begin{itemize}
\item[(i)] 
Differentiability and decay of $h$.
$h \in C[0,\infty) \cap C^\infty( 0, \infty)$ and there exist $a, a_l >0 $ s.t. 
\[
|h^{(l)}(r)| \le a_l e^{- a r}, \quad \forall r \ge 0, \, \forall l = 0, 1, \cdots.
\]
$h(0) > 0 $ and without loss of generality we assume $h(0) =1$. 
\item[(ii)] 
Spectral measure has subexponential decay. 
There exist  $\delta_h >0 $ and $c_h > 1$ s.t. 
\[
\int_{\R^D} e^{\delta_h \| \lambda \|} d\mu(\lambda ) \le c_h^{2D}, \quad \forall D = 1, 2, \cdots,
\]
namely the constants $\delta_h $ and $c_h $ are uniform for all $D$.
\item[(iii)]  The spectral measure $d \mu$ has density $f_1$,
and $f_1$ has monotonic radial decay, i.e., for any $\lambda \in \R^D$ and any $a \ge 1$,
$f_1(  a \lambda) \le f_1(\lambda)$.
\end{itemize}
\end{assumption}
In Assumption \ref{assump:general-h}, 
(i) is to ensure the extension of Lemma \ref{lemma:22_holder} 
used in the RKHS approximation analysis,
see Remark \ref{rk:diff-decay-h-manifold-integral}.
The proof only uses up to ($\lfloor k/2 \rfloor+1$)-th derivative of $h(r)$,
and thus the $C^\infty$ of $h$ can be relaxed to be $C^{\lfloor k/2 \rfloor+1}$, that is, to ``match'' that of  $f^*$.
Assumption \ref{assump:general-h}
(ii)(iii) are by following the setup in \cite{van2009adaptive} to enable a series technical estimates of the RKHS. 
The assumption covers the squared exponential kernel  in \eqref{eq:def-kernel},
corresponding to $h(r) = e^{-r/2}$, as a special case:
(i) is satisfied by letting $a_l =1$, $a = 1/2$;
With $h(r) = e^{-r/2}$, $h_1(x,x')$ is a Gaussian kernel in $\R^D$, 
and its spectral measure also has a density that is Gaussian on $\R^D$.
Then (iii) holds,
 and (ii) holds with $\delta_h = 1/2$ and
$c_h = \sqrt{2}$.

The next three lemmas largely follow the arguments in \cite{yang2016bayesian, van2009adaptive},
and we prove under our Assumption \ref{assump:general-h}.
With such general $h_\epsilon$, 
let $\mathbb{H}_{\epsilon}(\mathcal{X})$  and $ \mathbb{H}_{\epsilon}([0,1]^D)$
be the RKHS associated with the kernel $h_\epsilon$ on $\calX$ and $[0,1]^D$ respectively.
The first lemma characterizes the relationship between the two.
    
\begin{lemma}
    \label{lemma:two_rkhs}
    Suppose $h_\epsilon$ satisfies Assumption \ref{assump:general-h}.
    Given $\calX \subset [0,1]^D$,
    
  (i)  For any $f \in \mathbb{H}_{\epsilon}(\mathcal{X})$, there exists a unique $\bar{f} \in \mathbb{H}_{\epsilon}([0,1]^D)$ such that 
    $\bar{f}|_\mathcal{X} = f$ and $||\bar{f}||_{\mathbb{H}_{\epsilon}([0,1]^D)} = ||f||_{\mathbb{H}_{\epsilon}(\mathcal{X})}$. 
    Moreover, for any $g \in  \mathbb{H}_{\epsilon}([0,1]^D)$ with $g|_\mathcal{X}= f$, it holds that $||g||_{\mathbb{H}_{\epsilon}([0,1]^D)} \ge ||f||_{\mathbb{H}_{\epsilon}(\mathcal{X})}$,
    and ``$=$'' is achieved only when $g = \bar f$ in $ \mathbb{H}_{\epsilon}([0,1]^D)$.
    
   (ii) For any $g \in   \mathbb{H}_{\epsilon}([0,1]^D)$, $g|_\calX \in  \mathbb{H}_{\epsilon}(\mathcal{X})$.
\end{lemma}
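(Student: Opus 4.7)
The plan is to exploit the classical RKHS restriction/extension correspondence, which rests on the observation that $h_\epsilon$ restricted to $\calX \times \calX$ is still a positive definite kernel and therefore determines $\mathbb{H}_\epsilon(\calX)$ uniquely. The bridge between the two RKHSs is the closed subspace
\[
V := \overline{\mathrm{span}}\{h_\epsilon(\cdot,x) : x \in \calX\} \subset \mathbb{H}_\epsilon([0,1]^D),
\]
and the restriction map $T : \mathbb{H}_\epsilon([0,1]^D) \to \R^{\calX}$, $T(g) := g|_\calX$. The whole lemma reduces to showing that $T|_V$ is an isometric isomorphism onto $\mathbb{H}_\epsilon(\calX)$, after which both parts follow cleanly.

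First I would prove (ii). For any finite linear combination $g = \sum_{i=1}^m \alpha_i h_\epsilon(\cdot, x_i)$ with $x_i \in \calX$, one has
\[
\| g \|^2_{\mathbb{H}_\epsilon([0,1]^D)} = \sum_{i,j} \alpha_i \alpha_j h_\epsilon(x_i,x_j) = \| T g \|^2_{\mathbb{H}_\epsilon(\calX)},
\]
because the reproducing kernel of $\mathbb{H}_\epsilon(\calX)$ is $h_\epsilon|_{\calX\times\calX}$. Hence $T$ is an isometry on the dense subspace of $V$ consisting of such combinations, and extends by continuity to an isometry $T|_V : V \to \mathbb{H}_\epsilon(\calX)$. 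Density of finite combinations in $\mathbb{H}_\epsilon(\calX)$ (standard for any RKHS) gives surjectivity. For general $g \in \mathbb{H}_\epsilon([0,1]^D)$, orthogonally decompose $g = g_V + g_\perp$ with $g_V \in V$ and $g_\perp \in V^\perp$; then for every $x \in \calX$, $g_\perp(x) = \langle g_\perp, h_\epsilon(\cdot,x)\rangle_{\mathbb{H}_\epsilon([0,1]^D)} = 0$ since $h_\epsilon(\cdot,x) \in V$, so $T g = T g_V \in \mathbb{H}_\epsilon(\calX)$, which is (ii).

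For (i), given $f \in \mathbb{H}_\epsilon(\calX)$, define $\bar f := (T|_V)^{-1} f \in V$; then $\bar f|_\calX = f$ and $\|\bar f\|_{\mathbb{H}_\epsilon([0,1]^D)} = \|f\|_{\mathbb{H}_\epsilon(\calX)}$ by the isometry. For any other extension $g \in \mathbb{H}_\epsilon([0,1]^D)$ with $g|_\calX = f$, write $g = \bar f + g_\perp$ where $g_\perp := g - \bar f$; the argument above shows $g_\perp|_\calX \equiv 0$, which is equivalent to $g_\perp \perp V$ (again using $\langle g_\perp, h_\epsilon(\cdot,x)\rangle = g_\perp(x) = 0$ for every $x \in \calX$, then taking closed span). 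Pythagoras then yields
\[
\| g\|^2_{\mathbb{H}_\epsilon([0,1]^D)} = \|\bar f \|^2_{\mathbb{H}_\epsilon([0,1]^D)} + \| g_\perp \|^2_{\mathbb{H}_\epsilon([0,1]^D)} \ge \|\bar f\|^2_{\mathbb{H}_\epsilon([0,1]^D)},
\]
with equality iff $g_\perp = 0$, i.e. $g = \bar f$. Uniqueness of $\bar f$ follows from the same decomposition applied to the difference of two norm-minimizers.

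The only mildly delicate step is justifying that $T|_V$ really surjects onto all of $\mathbb{H}_\epsilon(\calX)$ rather than just a dense subspace; I would handle this by noting that $T|_V$ is an isometry and its image contains the dense set $\mathrm{span}\{h_\epsilon(\cdot,x)|_\calX : x\in \calX\}$, hence the image is simultaneously complete (as an isometric image of the Hilbert space $V$) and dense, so it equals $\mathbb{H}_\epsilon(\calX)$. Nothing in this argument uses properties of $h$ beyond positive definiteness and continuity, so Assumption \ref{assump:general-h} enters only implicitly via the fact that $\mathbb{H}_\epsilon(\cdot)$ is well defined.
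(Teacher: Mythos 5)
Your proof is correct and takes essentially the same approach as the paper: both arguments hinge on the closed subspace spanned by $\{h_\epsilon(\cdot,x) : x\in\calX\}$ inside $\mathbb{H}_\epsilon([0,1]^D)$, identify it isometrically with $\mathbb{H}_\epsilon(\calX)$ via the dense set of finite kernel combinations, and then use the orthogonal decomposition plus Pythagoras for both (i) and (ii). The paper builds the isometry $\mathbf{\Phi}:\mathbb{H}_\epsilon(\calX)\to\mathbb{H}_\epsilon([0,1]^D)$ while you build its inverse $T|_V$, and the paper spells out the pointwise-convergence step (that the extension really restricts back to $f$) a bit more explicitly than you do, but these are presentational rather than substantive differences.
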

\begin{proof}[Proof of Lemma \ref{lemma:two_rkhs}]
The lemma follows \cite[Lemma 5.1]{yang2016bayesian} which assumed square-exponential kernel and that $\cal$ is a submanifold, but the argument extends here. 

To prove (i), we construct an isometry $\bf{\Phi}$ between 
$\mathbb{H}_{\epsilon}(\mathcal{X})$
and a complete subspace of $\mathbb{H}_{\epsilon}([0,1]^D)$
such that ${\bf \Phi}$ maps every member in 
\[
\tilde{\mathcal{H}} := 
\left\{ 
\sum_{i=1}^m a_i h_\epsilon( x_i, \cdot), \quad a_1, \cdots, a_m \in \R, \quad x_1, \cdots, x_m \in \calX, \quad  m \in \mathbb{N} \right\},
\]
viewed as a function on $\calX$ to the same function on the domain $[0,1]^D$.
Note that $\tilde{\mathcal{H}}$ is dense in $\mathbb{H}_{\epsilon}(\mathcal{X})$ so such ${\bf \Phi}$ can be constructed by extension.

For any $f \in \mathbb{H}_{\epsilon}(\mathcal{X})$,  we first prove the existence of $\bar f$.
Let $\bar f = {\bf \Phi}(f)$, 
then  $||\bar{f}||_{\mathbb{H}_{\epsilon}([0,1]^D)} = ||f||_{\mathbb{H}_{\epsilon}(\mathcal{X})}$ by that ${\bf \Phi}$ is an isometry.
Meanwhile, one can construct a sequence $f_n$ in $\tilde{\mathcal{H}}$ that converges to $f$ in $\mathbb{H}_{\epsilon}(\mathcal{X})$,
and also pointwisely on $\calX$: by reproducing property of kernel and Cauchy-Schwarz,
\[
| f_n(x) - f(x) | 
= | \langle f_n - f , h_\epsilon( x, \cdot) \rangle_{\mathbb{H}_{\epsilon}(\calX) }  |
\le \| f_n - f\|_{\mathbb{H}_{\epsilon}(\calX) } \| h_\epsilon( x, \cdot) \|_{\mathbb{H}_{\epsilon}(\calX) }, 
\]
where 
$ \| h_\epsilon( x, \cdot) \|_{\mathbb{H}_{\epsilon}(\calX) }
= h_\epsilon(x,x)^{1/2}= h(0)^{1/2} =1$,
and thus $f_n(x) - f(x) \to 0$ for any $x \in \calX$.
By that ${\bf \Phi}$ is an isometry and again $ \| h_\epsilon( x, \cdot) \|_{\mathbb{H}_{\epsilon}([0,1]^D) } = h_\epsilon(x,x)^{1/2}  =1$,
one can similarly show that  ${\bf \Phi} ( f_n) $ converges to $ {\bf \Phi}(f) $ pointwisely on $[0,1]^D$.
Combined with that ${\bf \Phi}(f_n) |_\calX = f_n$
by the definition of ${\bf \Phi}$ on $\tilde{\mathcal{H}}$, 
we have that that  ${\bf \Phi}(f) |_\mathcal{X} = f$. 

Now consider any  $g \in \mathbb{H}_{\epsilon}( [0,1]^D)  $ satisfying $g|_\mathcal{X}= f$.
By that $ (g-\bar f)|_\calX =0 $,
we have that $g-\bar f$ is in the  orthogonal complement of $ {\bf \Phi}( \mathbb{H}_{\epsilon}(\mathcal{X}) )$ in $\mathbb{H}_{\epsilon}([0,1]^D)$,
then 
\[
||g||_{\mathbb{H}_{\epsilon}([0,1]^D)}^2
= \| g- \bar f\|_{\mathbb{H}_{\epsilon}([0,1]^D)}^2 + 
 ||\bar{f}||_{\mathbb{H}_{\epsilon}([0,1]^D)}^2
 \]
by the Pythagorean theorem. 
This means that $||g||_{\mathbb{H}_{\epsilon}([0,1]^D)} \ge  ||\bar{f}||_{\mathbb{H}_{\epsilon}([0,1]^D)} $ and ``$=$'' is only achieved when 
$g = \bar f$ in $ \mathbb{H}_{\epsilon}([0,1]^D)$.  
This also proves that $\bar f = {\bf \Phi}(f)$ is the unique extension of $f$ in $\mathbb{H}_{\epsilon}([0,1]^D)$ such that preserves the RKHS norm of $f$ in $\mathbb{H}_{\epsilon}(\mathcal{X})$.

\vspace{5pt}

To prove (ii), again by the orthogonal decomposition $\mathbb{H}_{\epsilon}([0,1]^D) = {\bf \Phi}( \mathbb{H}_{\epsilon}(\mathcal{X}) ) \oplus {\bf \Phi}( \mathbb{H}_{\epsilon}(\mathcal{X}) )^\perp$, there exists $h \in  \mathbb{H}_{\epsilon}(\mathcal{X})$ such that 
 $g - {\bf \Phi}(h)  \in {\bf \Phi}( \mathbb{H}_{\epsilon}(\mathcal{X}) )^\perp$. This means that 
 \[
 \langle g - {\bf \Phi}(h) , h_\epsilon( x, \cdot) \rangle_{\mathbb{H}_{\epsilon}([0,1]^D)} = 0, \quad \forall x \in \calX,
 \]
 and because $h_\epsilon$ is the reproducing kernel, the l.f.s. equals $g(x) -  {\bf \Phi}(h)(x)$.
 By (i), we have that ${\bf \Phi}(h)(x) = h(x)$ for any $x\in \calX$. This means that 
 $g|_\calX = h $ which is in $ \mathbb{H}_{\epsilon}(\mathcal{X})$.
\end{proof}

We denote the unit ball in $\mathbb{H}_{\epsilon}(\mathcal{X})$ as   
  $\mathbb{H}^1_{\epsilon}(\mathcal{X})$.
The next lemma derives boundedness and a Lipschitz bound of functions in the unit RKHS ball.

\begin{lemma}
    \label{lemma:rkhs_lip}
    Under Assumption \ref{assump:general-h}, for any $\epsilon > 0 $ and 
   any $q \in \mathbb{H}^{1}_{\epsilon}(\mathcal{X})$, 
   it satisfies that $| q(x)| \le 1$, $\forall x\in \calX$;
Meanwhile,
    $|q(x) - q(x')| \le \epsilon^{- {1}/{2}} \tau_h ||x-x'||_{\R^D}$ for any $x, x' \in \calX$, 
    where  
    $$
    \tau_h^2 : = \int_{\R^D} \|\lambda \|^2 d\mu(\lambda).
    $$ 
\end{lemma}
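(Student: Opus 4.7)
My plan is to prove both claims by exploiting the reproducing property combined with Cauchy--Schwarz, reducing everything to computations involving only the kernel values $h_\epsilon(x,x)$ and $h_\epsilon(x,x')$. The spectral representation in \eqref{eq:def-spectral-measure-h1} will be the tool for turning kernel differences into integrals that can be bounded by $\|x-x'\|^2$.

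First, for the uniform bound $|q(x)| \le 1$: by the reproducing property in $\mathbb{H}_\epsilon(\calX)$ we have $q(x) = \langle q, h_\epsilon(x,\cdot)\rangle_{\mathbb{H}_\epsilon(\calX)}$, so Cauchy--Schwarz gives $|q(x)| \le \|q\|_{\mathbb{H}_\epsilon(\calX)} \cdot \|h_\epsilon(x,\cdot)\|_{\mathbb{H}_\epsilon(\calX)} = \|q\|_{\mathbb{H}_\epsilon(\calX)}\, h_\epsilon(x,x)^{1/2}$. Since $h_\epsilon(x,x) = h(0) = 1$ by the normalization in Assumption \ref{assump:general-h}(i), and $\|q\|_{\mathbb{H}_\epsilon(\calX)} \le 1$, the bound follows.

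For the Lipschitz estimate, the same trick applied to the difference gives
\[
|q(x) - q(x')| = |\langle q, h_\epsilon(x,\cdot) - h_\epsilon(x',\cdot)\rangle_{\mathbb{H}_\epsilon(\calX)}| \le \|h_\epsilon(x,\cdot) - h_\epsilon(x',\cdot)\|_{\mathbb{H}_\epsilon(\calX)},
\]
and expanding the squared norm via the reproducing property yields
\[
\|h_\epsilon(x,\cdot) - h_\epsilon(x',\cdot)\|^2_{\mathbb{H}_\epsilon(\calX)} = h_\epsilon(x,x) - 2h_\epsilon(x,x') + h_\epsilon(x',x') = 2 - 2h_\epsilon(x,x').
\]
Now I use the spectral representation: after the change of variable $d\mu_\epsilon(\lambda) = f_\epsilon(\lambda)d\lambda$ described around \eqref{eq:expression-ft-from-f1}, $h_\epsilon(x,x') = \int_{\R^D} e^{-i\lambda^T(x-x')/\sqrt{\epsilon}}\,d\mu(\lambda)$. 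Taking the real part and using the elementary inequality $1-\cos\theta \le \theta^2/2$,
\[
2 - 2h_\epsilon(x,x') = 2\int_{\R^D}\bigl(1 - \cos(\lambda^T(x-x')/\sqrt{\epsilon})\bigr)d\mu(\lambda) \le \epsilon^{-1}\int_{\R^D}(\lambda^T(x-x'))^2\, d\mu(\lambda).
\]
By Cauchy--Schwarz inside the integrand, $(\lambda^T(x-x'))^2 \le \|\lambda\|^2 \|x-x'\|^2$, so the last integral is at most $\epsilon^{-1}\tau_h^2\|x-x'\|^2$. Combining gives $|q(x) - q(x')| \le \epsilon^{-1/2}\tau_h\|x-x'\|$.

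I do not expect a serious obstacle here; the only small care needed is that the RKHS is defined on the restricted domain $\calX$, but since the computations only use values of the kernel at points $x,x' \in \calX$, the argument is identical whether one works in $\mathbb{H}_\epsilon(\calX)$ or $\mathbb{H}_\epsilon([0,1]^D)$ (and one could alternatively invoke Lemma \ref{lemma:two_rkhs} to transfer the bound from the ambient RKHS). Finiteness of $\tau_h$ is guaranteed by Assumption \ref{assump:general-h}(ii), since subexponential decay of $d\mu$ ensures all polynomial moments exist.
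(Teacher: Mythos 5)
Your proof is correct and follows essentially the same route as the paper's: reproducing property plus Cauchy--Schwarz for both parts, followed by a spectral-measure computation of $2 - 2h_\epsilon(x,x')$; the only cosmetic difference is that you invoke the reality of the kernel to get $1-\cos\theta \le \theta^2/2$ directly, while the paper uses the radial symmetry of $\mu_\epsilon$ to kill the linear term and then applies $|1 + i\xi - e^{i\xi}| \le \xi^2/2$ --- these are the same estimate. Your closing remark about finiteness of $\tau_h$ via Assumption \ref{assump:general-h}(ii) is a welcome sanity check that the paper leaves implicit.
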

\begin{proof}[Proof of Lemma \ref{lemma:rkhs_lip}]
The proof is the same as that of lemma 8.2 in \cite{yang2016bayesian}, 
which appiles when $\calX$ is a subset of $[0,1]^D$ and $h(0) =1$,
the latter implying that    $ \| h_\epsilon( x, \cdot) \|_{\mathbb{H}_{\epsilon}(\calX) }
= h_\epsilon(x,x)^{1/2}= h(0)^{1/2} =1$. We include the specifics for completeness.

For any  $q \in \mathbb{H}^{1}_{\epsilon}(\mathcal{X})$ and $ x \in \calX$,
by Cauchy-Schwarz,
\[
|q(x)| = |  \langle q , h_\epsilon( x, \cdot) \rangle_{\mathbb{H}_{\epsilon}(\calX) } | 
\le \|q \|_{\mathbb{H}_{\epsilon}(\calX) } \| h_\epsilon( x, \cdot) \|_{\mathbb{H}_{\epsilon}(\calX) }
= \|q \|_{\mathbb{H}_{\epsilon}(\calX) }  \le 1.
\]
For any $x, x' \in \calX$, similarly,
$| q(x) - q(x') | = | \langle q , h_\epsilon( x, \cdot) - h_\epsilon( x', \cdot)  \rangle_{\mathbb{H}_{\epsilon}(\calX) }|
\le  \|q \|_{\mathbb{H}_{\epsilon}(\calX) }  \| h_\epsilon( x, \cdot) - h_\epsilon( x', \cdot) \|_{\mathbb{H}_{\epsilon}(\calX) }
\le  \| h_\epsilon( x, \cdot) - h_\epsilon( x', \cdot) \|_{\mathbb{H}_{\epsilon}(\calX) }$, and, 
by that $\int_{\R^D} d\mu_\epsilon(\lambda) = h_\epsilon(x,x) = h(0)= 1$, 
\[
\| h_\epsilon( x, \cdot) - h_\epsilon( x', \cdot) \|_{\mathbb{H}_{\epsilon}(\calX) }^2
= 2 (1- h_\epsilon(x,x'))
= 2 \int_{\R^D} ( 1- e^{i \lambda^T(x-x')} )d \mu_\epsilon(\lambda).
\]
Because $d\mu_\epsilon$ is radial symmetric, 
we have $d\mu_\epsilon(-\lambda) =d\mu_\epsilon(\lambda) $,
and then $\int_{\R^D} \lambda d\mu_\epsilon(\lambda) = 0$. Then
\begin{align*}
\int_{\R^D} ( 1- e^{i \lambda^T(x-x')} )d \mu_\epsilon(\lambda)
& = \int_{\R^D} ( 1 + i\lambda^T(x-x')- e^{i \lambda^T(x-x')}  ) d \mu_\epsilon(\lambda) \\
& \le \int_{\R^D}  \frac{1}{2}  |\lambda^T(x-x') |^2 d \mu_\epsilon(\lambda)
 \le  \frac{1}{2} \| x-x'\|^2 \int_{\R^D}  \| \lambda\|^2 d \mu_\epsilon(\lambda),
\end{align*}
where the 1st inequality is by that $ |1 + i \xi - e^{i \xi}| \le \xi^2/2$ for any $\xi \in \R$.
The claim follows by that $ \int_{\R^D}  \| \lambda\|^2 d \mu_\epsilon(\lambda) = \frac{1}{\epsilon} \int_{\R^D}  \| \lambda\|^2 d \mu(\lambda) = \tau_h^2/\epsilon$.
\end{proof}

The third lemma characterizes the nested property between RKHS 
$\mathbb{H}_{\epsilon}(\mathcal{X})$ when $\epsilon$ decreases,
and intuitively, the smaller the $\epsilon$ the richer the space.

\begin{lemma}
    \label{lemma:compare_rkhs}
    Suppose $h_\epsilon$ satisfies Assumption \ref{assump:general-h},
    then $\epsilon_1 \ge \epsilon_2 > 0$ implies that
    $$
    \mathbb{H}^1_{\epsilon_1}(\mathcal{X}) 
    \subset 
    (\epsilon_1/\epsilon_2)^{ {D}/{4}}
    \mathbb{H}^1_{\epsilon_2}(\mathcal{X}).
    $$
\end{lemma}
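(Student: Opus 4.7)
The plan is to reduce the statement to a Fourier-side comparison on all of $\R^D$ and then restrict back to $\calX$. The key input is the radial monotonicity of the spectral density $f_1$ in Assumption \ref{assump:general-h}(iii), together with the scaling relation $f_\epsilon(\lambda) = \epsilon^{D/2} f_1(\sqrt{\epsilon}\lambda)$ from \eqref{eq:expression-ft-from-f1}.

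First, I would pass from $\mathbb{H}_\epsilon(\calX)$ to the RKHS on $\R^D$. By the same argument as Lemma \ref{lemma:two_rkhs} (stated there for $[0,1]^D$, but the proof uses only that $\calX$ is a subset of the larger domain and that $h_\epsilon$ is positive definite on $\R^D$), every $q \in \mathbb{H}^1_{\epsilon_1}(\calX)$ admits a norm-preserving extension $\bar q \in \mathbb{H}_{\epsilon_1}(\R^D)$ with $\|\bar q\|_{\mathbb{H}_{\epsilon_1}(\R^D)} = \|q\|_{\mathbb{H}_{\epsilon_1}(\calX)} \le 1$, and $\bar q|_\calX = q$; moreover the restriction of any member of $\mathbb{H}_{\epsilon_2}(\R^D)$ to $\calX$ lies in $\mathbb{H}_{\epsilon_2}(\calX)$ with norm at most the $\R^D$-norm. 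Thus it suffices to prove the embedding on $\R^D$, i.e.\ $\mathbb{H}^1_{\epsilon_1}(\R^D) \subset (\epsilon_1/\epsilon_2)^{D/4}\mathbb{H}^1_{\epsilon_2}(\R^D)$, and then take restrictions.

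Second, I would invoke the standard Fourier-side description of the RKHS of a translation-invariant kernel on $\R^D$: for any $g$ in $\mathbb{H}_\epsilon(\R^D)$ whose Fourier transform is $\hat g$,
\[
\|g\|_{\mathbb{H}_\epsilon(\R^D)}^2 = (2\pi)^{-D}\int_{\R^D} \frac{|\hat g(\lambda)|^2}{f_\epsilon(\lambda)}\,d\lambda,
\]
and $g$ lies in $\mathbb{H}_\epsilon(\R^D)$ iff this integral is finite. Now, because $\epsilon_1 \ge \epsilon_2 > 0$, the ratio $a := \sqrt{\epsilon_1/\epsilon_2} \ge 1$, so the radial monotonicity condition $f_1(a\mu) \le f_1(\mu)$ from Assumption \ref{assump:general-h}(iii) applied at $\mu = \sqrt{\epsilon_2}\lambda$ gives $f_1(\sqrt{\epsilon_1}\lambda) \le f_1(\sqrt{\epsilon_2}\lambda)$. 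Combining with the scaling identity,
\[
f_{\epsilon_1}(\lambda) = \epsilon_1^{D/2} f_1(\sqrt{\epsilon_1}\lambda) \le \epsilon_1^{D/2} f_1(\sqrt{\epsilon_2}\lambda) = (\epsilon_1/\epsilon_2)^{D/2} f_{\epsilon_2}(\lambda).
\]
Therefore $1/f_{\epsilon_2}(\lambda) \le (\epsilon_1/\epsilon_2)^{D/2}/f_{\epsilon_1}(\lambda)$, and integrating $|\hat{\bar q}(\lambda)|^2$ against both sides yields
\[
\|\bar q\|_{\mathbb{H}_{\epsilon_2}(\R^D)}^2 \le (\epsilon_1/\epsilon_2)^{D/2}\,\|\bar q\|_{\mathbb{H}_{\epsilon_1}(\R^D)}^2 \le (\epsilon_1/\epsilon_2)^{D/2}.
\]
Taking square roots gives $\|\bar q\|_{\mathbb{H}_{\epsilon_2}(\R^D)} \le (\epsilon_1/\epsilon_2)^{D/4}$, and restricting to $\calX$ yields $q = \bar q|_\calX \in (\epsilon_1/\epsilon_2)^{D/4}\mathbb{H}^1_{\epsilon_2}(\calX)$ as claimed.

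The only mildly delicate step is the first one, namely that the norm-preserving extension and norm-nonincreasing restriction (Lemma \ref{lemma:two_rkhs}) remain valid when the ambient box $[0,1]^D$ is replaced by $\R^D$; I expect no real obstacle here since the proof of Lemma \ref{lemma:two_rkhs} only uses positive definiteness of $h_\epsilon$ on the ambient domain and density of finite kernel sums, both of which hold verbatim on $\R^D$. The rest of the argument is a direct pointwise comparison of spectral densities enabled by radial monotonicity.
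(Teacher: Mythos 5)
Your proof is correct and takes essentially the same route as the paper's: extend from $\mathbb{H}_{\epsilon_1}(\calX)$ to an ambient Euclidean RKHS via Lemma \ref{lemma:two_rkhs}, compare spectral densities using the scaling identity \eqref{eq:expression-ft-from-f1} together with radial monotonicity (Assumption \ref{assump:general-h}(iii)) to get $f_{\epsilon_1}(\lambda) \le (\epsilon_1/\epsilon_2)^{D/2} f_{\epsilon_2}(\lambda)$, and restrict back to $\calX$. The only stylistic difference is that the paper uses $[0,1]^D$ as the ambient domain with the $L^2(\mu_\epsilon)$-parametrization from Lemma \ref{lemma:fourier-character-RKHS-RD}, whereas you use $\R^D$ with the direct Fourier-quotient norm formula — two equivalent formulations of the same underlying computation.
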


The proof is based on the following lemma which characterizes RKHS on $[0,1]^D$ by Fourier representation. 
\begin{lemma}
\label{lemma:fourier-character-RKHS-RD}
Suppose $h_\epsilon$  satisfies 
Assumption \ref{assump:general-h},
for any $\epsilon > 0$,  
 $ \mathbb{H}_\epsilon ([0,1]^D)$  consists of real parts of the functions 
\[
h_\psi(x) = \int_{\R^D} e^{i \lambda^T x } \psi(\lambda) d\mu_\epsilon(\lambda), \quad x\in [0,1]^D,
\]
where $\psi$ runs through the complex-valued space $L^2( \mu_\epsilon)$.
Moreover, for any $f \in \mathbb{H}_\epsilon ([0,1]^D)$, there exists $\psi \in L^2( \mu_\epsilon) $ 
s.t. $f = h_\psi$, which is real-valued,
and 
 $\| f \|_{ \mathbb{H}_\epsilon ([0,1]^D)} = \| \psi \|_{L^2( \mu_\epsilon)}$.
\end{lemma}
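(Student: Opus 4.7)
The plan is to build a candidate Hilbert space $\mathcal{F}$ via the Fourier representation, verify it has $h_\epsilon$ as its reproducing kernel, and then invoke the uniqueness of the RKHS associated with a given positive definite kernel to conclude $\mathcal{F}=\mathbb{H}_\epsilon([0,1]^D)$. Because the spectral measure $d\mu_\epsilon$ is even ($h$ is real-valued, so $f_\epsilon(-\lambda)=f_\epsilon(\lambda)$), I will work with the closed real subspace
\[
L^2_{\mathrm{sym}}(\mu_\epsilon):=\{\psi\in L^2(\mu_\epsilon):\psi(-\lambda)=\overline{\psi(\lambda)}\,\, \mu_\epsilon\text{-a.e.}\},
\]
for which every $h_\psi(x)=\int e^{i\lambda^Tx}\psi(\lambda)\,d\mu_\epsilon(\lambda)$ is automatically real. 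Taking real parts of arbitrary $\psi\in L^2(\mu_\epsilon)$ is equivalent to the symmetrization $\psi\mapsto\tfrac12(\psi(\lambda)+\overline{\psi(-\lambda)})$, which lands in $L^2_{\mathrm{sym}}(\mu_\epsilon)$.

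First, I would define $\mathcal{F}:=\{h_\psi|_{[0,1]^D}:\psi\in L^2_{\mathrm{sym}}(\mu_\epsilon)\}$ and equip it with $\|f\|_{\mathcal{F}}:=\inf\{\|\psi\|_{L^2(\mu_\epsilon)}:h_\psi|_{[0,1]^D}=f\}$. The map $\psi\mapsto h_\psi|_{[0,1]^D}$ is a bounded linear map from $L^2_{\mathrm{sym}}(\mu_\epsilon)$ into $C([0,1]^D)$ since $|h_\psi(x)|\le \|\psi\|_{L^2(\mu_\epsilon)}\mu_\epsilon(\R^D)^{1/2}$ (Cauchy--Schwarz; recall $\mu_\epsilon$ is finite by Assumption \ref{assump:general-h}(ii) with $\delta_h=0$). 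The infimum is attained by quotienting out the kernel $N:=\{\psi:h_\psi|_{[0,1]^D}\equiv0\}$, which is closed in $L^2_{\mathrm{sym}}(\mu_\epsilon)$, and identifying $\mathcal{F}$ isometrically with the orthogonal complement $N^\perp$. Hence $\mathcal{F}$ is a real Hilbert space, and every $f\in\mathcal{F}$ admits a unique representer $\psi_f\in N^\perp$ with $\|f\|_{\mathcal{F}}=\|\psi_f\|_{L^2(\mu_\epsilon)}$.

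Next I would verify the reproducing kernel property. Fix $x\in[0,1]^D$ and set $\psi_x(\lambda):=e^{-i\lambda^Tx}$, which lies in $L^2_{\mathrm{sym}}(\mu_\epsilon)$ because $\mu_\epsilon$ is finite and $\psi_x(-\lambda)=e^{i\lambda^Tx}=\overline{\psi_x(\lambda)}$. Using \eqref{eq:def-spectral-measure-h1} (and its $\epsilon$-rescaling),
\[
h_{\psi_x}(y)=\int_{\R^D}e^{i\lambda^T(y-x)}\,d\mu_\epsilon(\lambda)=h_\epsilon(x,y),
\]
so $h_\epsilon(x,\cdot)\in\mathcal{F}$. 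For any $f=h_\psi|_{[0,1]^D}$ with $\psi\in N^\perp$,
\[
\langle f,h_\epsilon(x,\cdot)\rangle_{\mathcal{F}}=\mathrm{Re}\langle\psi,\psi_x\rangle_{L^2(\mu_\epsilon)}=\mathrm{Re}\int_{\R^D}\psi(\lambda)e^{i\lambda^Tx}\,d\mu_\epsilon(\lambda)=\mathrm{Re}\,h_\psi(x)=f(x),
\]
provided I first check that $\psi_x\in N^\perp$, which follows from the fact that $\langle\varphi,\psi_x\rangle_{L^2(\mu_\epsilon)}=h_\varphi(x)=0$ for every $\varphi\in N$. Thus $\mathcal{F}$ is an RKHS on $[0,1]^D$ with reproducing kernel $h_\epsilon$. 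By uniqueness of the RKHS associated with a positive definite kernel, $\mathcal{F}=\mathbb{H}_\epsilon([0,1]^D)$ with matching norms, giving the claimed representation.

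The main obstacle, and the only delicate point, will be the well-definedness of the norm on $\mathcal{F}$: two distinct $\psi$'s may yield the same restriction to $[0,1]^D$ (even though the full Fourier inversion on $\R^D$ would be injective if $\mu_\epsilon$ had a density supported on all of $\R^D$, which it does under Assumption \ref{assump:general-h}(iii)). Handling this cleanly requires the quotient/orthogonal-complement argument above, and the verification that $\psi_x\in N^\perp$; the rest reduces to routine Hilbert-space manipulations and a direct appeal to the reproducing-kernel uniqueness theorem.
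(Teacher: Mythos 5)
Your proposal is correct, and it takes a genuinely different route from the paper. The paper's proof is a transcription of \cite[Lemma~4.1]{van2009adaptive}: it first invokes the subexponential moment condition (Assumption~\ref{assump:general-h}(ii)) to show that the $L^2(\mu_\epsilon)$-closure $\bar\calL$ of the complex span of the characters $\{e^{-i\lambda^T x}\}_{x\in[0,1]^D}$ is all of $L^2(\mu_\epsilon)$, then carries the RKHS description over from $\bar\calL$ to a real statement by means of a real-part projection $P^\R$ on coefficients and an isometry-extension argument. Your proof instead builds the candidate Hilbert space $\mathcal{F}$ directly as the image of $L^2_{\rm sym}(\mu_\epsilon)$ under $\psi\mapsto h_\psi|_{[0,1]^D}$, equips it with the quotient norm by factoring out the closed null space $N$, checks the reproducing property at the level of representers (the key observation $\psi_x\in N^\perp$, since $\langle\varphi,\psi_x\rangle_{L^2(\mu_\epsilon)}=h_\varphi(x)=0$ for $\varphi\in N$), and then appeals to RKHS uniqueness. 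This is arguably more self-contained: by quotienting out $N$ you never need to know that $\bar\calL=L^2(\mu_\epsilon)$, so you bypass Assumption~\ref{assump:general-h}(ii) entirely; the paper's route proves the stronger fact $N=\{0\}$ (via analyticity), giving uniqueness of the representer in all of $L^2(\mu_\epsilon)$, which the lemma as stated does not require. Two small points worth tidying: the phrase ``taking real parts of arbitrary $\psi$'' should read ``taking real parts of $h_\psi$'' (it is the symmetrization $\psi\mapsto\tfrac12(\psi(\lambda)+\overline{\psi(-\lambda)})$ that produces $h_{\psi'}=\mathrm{Re}\,h_\psi$); and the infimum defining $\|f\|_{\mathcal{F}}$ should be stated as running over $\psi\in L^2_{\rm sym}(\mu_\epsilon)$ (it coincides with the infimum over all of $L^2(\mu_\epsilon)$ by projection onto the closed real subspace, but say so). The ``$\mathrm{Re}$'' in the reproducing-property computation is harmless but redundant: for $\psi,\psi_x\in L^2_{\rm sym}(\mu_\epsilon)$ the $L^2(\mu_\epsilon)$ pairing is automatically real by the change of variable $\lambda\mapsto-\lambda$. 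None of these affect correctness.
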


\begin{proof}[Proof of Lemma \ref{lemma:fourier-character-RKHS-RD}]
The lemma follows \cite[Lemma 4.1]{van2009adaptive} applied to domain $[0,1]^D \subset \R^D$.
To show the statement of $f =h_\psi$, recall that the proof of \cite[Lemma 4.1]{van2009adaptive} is by first letting $\psi$ run though the complex-number linear span $\mathcal{L}$ of the sets of functions $\{e_x(\lambda) = e^{-i \lambda^T x}, \, x \in [0,1]^D\}$
and then take the $L^2(\mu_\epsilon)$ closure $\bar \calL$ of $\mathcal{L}$.
Under Assumption \ref{assump:general-h}(ii), 
the spectral measure $\mu_\epsilon$ satisfies the subexponential decay condition Eqn. (3.3) in \cite{van2009adaptive},
and then their Lemma 4.1 proved that $\bar \calL = L^2(\mu_\epsilon)$.

There is a mapping $P^{\R}$ from $\calL$ to its subset $\calL^{\R}$, which consists of real-number linear span of $\{ e_x\}$,
by taking the real-part of the coefficients in the linear combination.
One can verify that $\forall \varphi \in \calL$,  $h_{P^{\R} \varphi} = {\rm Re} h_\varphi$.
We then extend $P^{\R}$ to $\bar \calL = L^2(\mu_\epsilon)$ by taking the closure, 
then $\forall \varphi \in L^2(\mu_\epsilon)$,  ${\rm Re} h_\varphi = h_{ P^{\R} \varphi} $.
Thus, using the first part of the lemma we have $f = {\rm Re} h_\varphi$ for a $\varphi \in L^2(\mu_\epsilon)$,
and letting $\psi = P^{\R} \varphi$ gives that $f = h_\psi$ and is real-valued.
Finally, to show that $\| f \|_{ \mathbb{H}_\epsilon ([0,1]^D)} = \| \psi \|_{L^2( \mu_\epsilon)}$,
first verify that  $\| h_\psi \|_{ \mathbb{H}_\epsilon ([0,1]^D)} = \| \psi \|_{L^2( \mu_\epsilon)}$ for any $\psi \in \calL^{\R}$,
and this means that the mapping $H: \psi \mapsto h_\psi$ is an isometry from $ \calL^{\R}$ to $H( \calL^{\R})$.
By taking the closure $\bar \calL^\R$ of $\calL^\R$ in $L^2(\mu_\epsilon)$,
$H$ is also an isometry on $\bar \calL^\R$. 
One can verify that $P^{\R} (\bar \calL) = \bar \calL^\R$,
and our $\psi = P^{\R} \varphi \in P^{\R} (\bar \calL)$,
thus $\| h_\psi \|_{ \mathbb{H}_\epsilon ([0,1]^D)} = \| \psi \|_{L^2( \mu_\epsilon)}$ for our $\psi$.
\end{proof}

\begin{proof}[Proof of lemma \ref{lemma:compare_rkhs}]
We first prove the claim for RKHS in $[0,1]^D$: 
\begin{equation}\label{eq:lemma-compare-rkhs-euclidean-space}
    \mathbb{H}^1_{\epsilon_1}( [0,1]^D ) 
    \subset 
    (\epsilon_1/\epsilon_2)^{ {D}/{4}}
    \mathbb{H}^1_{\epsilon_2}( [0,1]^D ),
\end{equation}
which was addressed in \cite[Lemma 4.7]{van2009adaptive}.
However, we believe that the bound there misses a power of $D$ in the factor multiplied in front of the RKHS unit ball. 
We include a proof here for completeness.

For any $f \in \mathbb{H}^1_{\epsilon_1}( [0,1]^D ) $,
by Lemma \ref{lemma:fourier-character-RKHS-RD}, there exists $\psi \in L^2(\mu_{\epsilon_1})$ s.t.   $f = h_\psi$ and 
$ \| f\|_{\mathbb{H}_{\epsilon_1}( [0,1]^D )} =  \| \psi\|_{ L^2(\mu_{\epsilon_1}) } \le 1$.
Note that $f(x) = \int e^{i \lambda^T x} \psi(\lambda) f_{\epsilon_1}(\lambda) d\lambda
= \int e^{i \lambda^T x} \varphi(\lambda )f_{\epsilon_2}(\lambda) d\lambda$, where $ \varphi = \psi f_{\epsilon_1}/f_{\epsilon_2} $.
One can verify that $\varphi  \in L^2( \mu_{\epsilon_2})$: 
Note that
\[ 
\frac{f_{\epsilon_1} (\lambda)}{f_{\epsilon_2} (\lambda) } 
= (\frac{\epsilon_1}{\epsilon_2})^{D/2}
\frac{  f_1(\sqrt{\epsilon_1} \lambda)}{ f_1(\sqrt{\epsilon_2} \lambda)}
\le (\frac{\epsilon_1}{\epsilon_2})^{D/2}, \quad \forall \lambda \in \R^D,
\]
because $ f_1(\sqrt{\epsilon_1} \lambda) \le  f_1(\sqrt{\epsilon_2} \lambda)$ by radial monotonicity Assumption \ref{assump:general-h}(iii) and that $\epsilon_1 \ge \epsilon_2$.
Thus
\[
\| \varphi\|_{  L^2( \mu_{\epsilon_2})}^2
= \int   |\psi(\lambda)|^2  \frac{ f_{\epsilon_1} (\lambda)}{ f_{\epsilon_2} (\lambda)} f_{\epsilon_1} (\lambda) d\lambda
\le (\frac{\epsilon_1}{\epsilon_2})^{D/2} \int   |\psi(\lambda)|^2  f_{\epsilon_1} (\lambda) d\lambda 
\le (\frac{\epsilon_1}{\epsilon_2})^{D/2}.
\]
As a result, $f \in \mathbb{H}_{\epsilon_2}( [0,1]^D ) $ 
and $ \| f\|_{ \mathbb{H}_{\epsilon_2}( [0,1]^D )  } = \| \varphi\|_{  L^2( \mu_{\epsilon_2})} \le  ({\epsilon_1}/{\epsilon_2})^{D/4}$.
This finishes the proof of \eqref{eq:lemma-compare-rkhs-euclidean-space}.

The lemma then follows from  \eqref{eq:lemma-compare-rkhs-euclidean-space} combined with Lemma \ref{lemma:two_rkhs},
and the argument is the same as in  \cite[Lemma 8.1]{yang2016bayesian}.
Specifically, for any $f \in  \mathbb{H}^1_{\epsilon_1}(\mathcal{X}) $, by Lemma \ref{lemma:two_rkhs}(i),
there exists $\bar f \in \mathbb{H}_{\epsilon_1}( [0,1]^D ) $ s.t. 
$\bar f|_\calX = f$ and
$\| \bar f \|_{ \mathbb{H}_{\epsilon_1}( [0,1]^D ) } = \|f \|_{\mathbb{H}_{\epsilon_1}( \calX )} \le 1$.
By \eqref{eq:lemma-compare-rkhs-euclidean-space},
$\bar f \in \mathbb{H}_{\epsilon_2}( [0,1]^D ) $ and
$\| \bar f\|_{ \mathbb{H}_{\epsilon_2}( [0,1]^D ) } 
\le (\epsilon_1/\epsilon_2)^{ {D}/{4}} \| \bar f\|_{\mathbb{H}_{\epsilon_1}([0,1]^D ) }
\le (\epsilon_1/\epsilon_2)^{ {D}/{4}} $.
Then, by Lemma \ref{lemma:two_rkhs}(ii), $\bar f|_\calX = f \in \mathbb{H}_{\epsilon_2}( \calX ) $,
and by Lemma \ref{lemma:two_rkhs}(i), $  \| \bar f\|_{ \mathbb{H}_{\epsilon_2}( [0,1]^D)}  \ge \| f\|_{ \mathbb{H}_{\epsilon_2}( \calX ) }$.
This means that $ \| f\|_{ \mathbb{H}_{\epsilon_2}( \calX ) } \le  (\epsilon_1/\epsilon_2)^{ {D}/{4}} $, namely
$f \in (\epsilon_1/\epsilon_2)^{ {D}/{4}} \mathbb{H}^1_{\epsilon_2}( \calX )$.
\end{proof}

We also introduce a lemma to compute the RKHS norm of functions expressed as kernel integral operator applied to another function. 
\begin{lemma}\label{lemma:computation-Hnorm}
Suppose $h_\epsilon$  satisfies Assumption \ref{assump:general-h},
given $\calX \subset [0,1]^D$,
for any $\epsilon >0$, let $\mathbb{H}_\epsilon (\calX)$ 
be the RKHS associated with $h_\epsilon$.
Let $d\nu $ be a measure on $\calX$,
suppose $h_\epsilon (\cdot, y)$ is in  $L^2(\calX, d\nu)$ for any $y \in [0,1]^D$,
and 
$\int_\calX \int_\calX h_\epsilon(x,y)^2 d\nu(x) d\nu(y) < \infty$.
Then,
 for any $g \in L^2(\calX, d\nu)$, 
the function 
$f(x) = \int_{\calX} h_\epsilon( x, y) g(y)  d\nu(y)$  is in $\mathbb{H}_\epsilon (\calX)$, and 
$\| f \|_{\mathbb{H}_\epsilon (\calX)}^2 = \int_\calX \int_\calX h_\epsilon(x,y) g(x) g(y) d\nu(x ) d\nu(y)$.
\end{lemma}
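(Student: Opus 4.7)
The plan is to apply the Riesz representation theorem to a suitable bounded linear functional on $\mathbb{H}_\epsilon(\calX)$, and then read off both the formula for $f$ and the formula for $\|f\|_{\mathbb{H}_\epsilon(\calX)}^2$ from the reproducing property. Throughout, I will use that $h_\epsilon(y,y) = h(0) = 1$ under Assumption \ref{assump:general-h}(i), which makes $\|h_\epsilon(\cdot,y)\|_{\mathbb{H}_\epsilon(\calX)} = 1$ for every $y \in \calX$; and also that $\calX \subset [0,1]^D$ is bounded with $\nu(\calX) < \infty$ (this finiteness will follow from the stated integrability hypothesis together with $h_\epsilon(x,y) \ge c > 0$ on the compact subset of $\calX \times \calX$ at small enough scale, or can be added as a routine standing assumption on $\nu$).

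First, I would define the functional $T:\mathbb{H}_\epsilon(\calX) \to \R$ by $T(\phi) := \int_\calX \phi(y) g(y)\,d\nu(y)$ and check boundedness. By the reproducing property and Cauchy--Schwarz in $\mathbb{H}_\epsilon(\calX)$,
\[
|\phi(y)| = |\langle \phi, h_\epsilon(\cdot, y)\rangle_{\mathbb{H}_\epsilon(\calX)}| \le \|\phi\|_{\mathbb{H}_\epsilon(\calX)},
\]
so $\|\phi\|_{L^2(d\nu)} \le \nu(\calX)^{1/2} \|\phi\|_{\mathbb{H}_\epsilon(\calX)}$, and another Cauchy--Schwarz (in $L^2(d\nu)$) yields $|T(\phi)| \le \nu(\calX)^{1/2} \|g\|_{L^2(d\nu)} \|\phi\|_{\mathbb{H}_\epsilon(\calX)}$. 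Hence $T$ is a bounded linear functional on the Hilbert space $\mathbb{H}_\epsilon(\calX)$.

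Next, by Riesz representation there exists a unique $\tilde f \in \mathbb{H}_\epsilon(\calX)$ with $T(\phi) = \langle \phi, \tilde f\rangle_{\mathbb{H}_\epsilon(\calX)}$ for all $\phi$. Taking $\phi = h_\epsilon(\cdot, x)$ and using the reproducing property gives
\[
\tilde f(x) = \langle h_\epsilon(\cdot, x), \tilde f\rangle_{\mathbb{H}_\epsilon(\calX)} = T(h_\epsilon(\cdot, x)) = \int_\calX h_\epsilon(y,x) g(y)\, d\nu(y) = f(x),
\]
so $f = \tilde f \in \mathbb{H}_\epsilon(\calX)$. Then
\[
\|f\|_{\mathbb{H}_\epsilon(\calX)}^2 = \langle f, \tilde f\rangle_{\mathbb{H}_\epsilon(\calX)} = T(f) = \int_\calX f(y) g(y)\,d\nu(y) = \int_\calX \int_\calX h_\epsilon(x,y) g(x) g(y)\, d\nu(x)\,d\nu(y),
\]
where the last equality is a Fubini-type swap justified by the integrability hypothesis $\int\int h_\epsilon(x,y)^2\, d\nu(x)\,d\nu(y) < \infty$, which together with $g \in L^2(d\nu)$ makes the double integral absolutely convergent.

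The only genuinely delicate point is the boundedness of $T$, which required the pointwise bound $|\phi(y)| \le \|\phi\|_{\mathbb{H}_\epsilon(\calX)}$ and a finite measure argument. Everything else is a routine Riesz-representation-plus-reproducing-property calculation, standard for RKHS theory. No properties of $h$ beyond $h(0)=1$ and symmetry are used in this specific lemma.
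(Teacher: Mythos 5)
Your proof is correct (given the finite-measure caveat you flag) and takes a genuinely different route from the paper. The paper's argument goes through the Fourier/spectral representation of the RKHS: it writes $f = h_\psi$ with $\psi(\lambda) = \int_\calX e^{-i\lambda^\top y} g(y)\,d\nu(y)$, shows $\|\psi\|^2_{L^2(\mu_\epsilon)}$ equals the claimed double integral and is finite, invokes the Fourier characterization (Lemma~\ref{lemma:fourier-character-RKHS-RD}) to conclude $f\in\mathbb{H}_\epsilon([0,1]^D)$, and then restricts to $\calX$ via Lemma~\ref{lemma:two_rkhs}(ii). Your argument instead stays inside $\mathbb{H}_\epsilon(\calX)$ and uses the Riesz representation theorem for the functional $T(\phi)=\int\phi g\,d\nu$, recovering both $f$ and its norm from the reproducing property; it is more elementary and avoids the spectral machinery entirely. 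The trade-off is that your boundedness estimate $|T(\phi)|\le\nu(\calX)^{1/2}\|g\|_{L^2(\nu)}\|\phi\|_{\mathbb{H}_\epsilon(\calX)}$ requires $\nu(\calX)<\infty$, which the lemma statement does not assume; you correctly note this, and it does hold in every application the paper makes (Riemannian volume of a compact manifold or finite union of manifolds). It is worth observing that the paper's Fourier route also implicitly relies on $g\in L^1(\nu)$ for $\psi(\lambda)$ to be defined pointwise, so both proofs quietly lean on the finiteness of $\nu$. Your Fubini justification at the end (Cauchy--Schwarz in $L^2(\nu\times\nu)$ combining $\iint h_\epsilon^2<\infty$ with $\|g\|_{L^2}^2<\infty$) is exactly right.
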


It is possible to prove the conclusion in even more general settings.
In this work, we apply Lemma \ref{lemma:computation-Hnorm} to when $(\calX, dx) = (\calM, dV)$ or when $\calX$ is a finite union of disjoint manifolds (Assumption \ref{A4:disconnected}),
when $\calX$ is always a subset of $[0,1]^D$.
In our usage, the needed integrability conditions by Lemma \ref{lemma:computation-Hnorm} are always satisfied 
because $h_\epsilon$ is continuous under  Assumption \ref{assump:general-h} and $\iota( \calM)$ is continuous and compact domain.

\begin{proof}[Proof of Lemma \ref{lemma:computation-Hnorm}]
Consider $\epsilon > 0 $ fixed, and denote the kernel $h_\epsilon$ as $k$.
We first verify that $f(x)$ is well defined on $[0,1]^D$, and thus also on $\calX$.
For any $x\in [0,1]^D$, 
$|f(x)| \le \|g \|_{ L^2(\calX, d\nu) } ( \int_{\calX} k(x,y)^2 d\nu(y) )^{1/2} $
by Cauchy-Schwarz,
and then $f(x)$ is finite due to that both $g$ and $k(x, \cdot)$
are in $L^2(\calX, d\nu)$ ($k$ is symmetric).
This also gives that 
$\int_\calX f(x)^2 d\nu(x) 
\le \|g \|_{ L^2(\calX, d\nu) }^2  \int_\calX    \int_{\calX} k(x,y)^2 d\nu(y)   d\nu(x)  <\infty$,
and thus  $f \in L^2(\calX, d\nu)$.

Denote $\mathbb{H}_\epsilon (\calX)$ by $\tilde{ \mathbb{H}}$.
Suppose $f(x)$ is in $\tilde{ \mathbb{H}}$, then 
\begin{equation}\label{eq:rkhs-norm-integral-f-formal-1}
\| f \|_{\tilde{ \mathbb{H}}}^2 
= \langle \int_\calX k(\cdot, y) g(y) d\nu(y), \int_\calX k(\cdot, y') g(y') d\nu(y')  \rangle_{\tilde{ \mathbb{H}}}
= \int_\calX \int_\calX k(y,y') g(y)  g(y') d\nu(y) d\nu(y')
\end{equation}
by the producing property of $k$, i.e.,  $ \langle   k(\cdot, y) ,  k(\cdot, y')   \rangle_{\tilde{ \mathbb{H}}} = k(y,y')$.
We now show that the r.h.s. is finite: by definition of $f$ and Cauchy-Schwarz, 
\begin{align}\label{eq:rkhs-norm-f-finite-1}
\int_\calX \int_\calX k(x,y) g(x)  g(y) d\nu(x) d\nu(y) 
 = \int_\calX g(x) f(x) d\nu(x)
 \le \| g\|_{L^2(\calX, d\nu)} \|f\|_{L^2(\calX, d\nu)} < \infty,
\end{align}
because both $g$ and $f$ are in $L^2(\calX, d\nu)$.
It remains to show that $f$ is in $\tilde{ \mathbb{H}}$ to finish the proof. 

To do this, we will first show that $f$ (as a function on $[0,1]^D$) is in the RKHS ${ \mathbb{H}} = \mathbb{H}_\epsilon ([0,1]^D)$,
then $f|_\calX$ is in $\tilde{ \mathbb{H}} $ by Lemma \ref{lemma:two_rkhs}(ii).

Because $h_\epsilon(x,y)$ is real-valued, using the spectral measure representation, we have
\[ 
h_\epsilon(x,y) 
= \int_{\R^D} e^{-i \lambda^T(x-y)}  d\mu_\epsilon(\lambda)
= \int_{\R^D} e^{i \lambda^T(x-y)}  d\mu_\epsilon(\lambda).
\]
Inserting into the definition of $f$ and using the notation in Lemma \ref{lemma:fourier-character-RKHS-RD},
we have
\[
f(x) = \int_\calX  \int_{\R^D} e^{ i \lambda^T(x-y)} g(y) d\nu(y)  d\mu_\epsilon(\lambda)
= h_\psi(x), 
\quad \psi(\lambda) = \int_{\calX} e^{-i \lambda^T y} g(y) d\nu(y),
\]
and this $h_\psi(x) = f(x)$ is real-valued. 
Thus, by Lemma \ref{lemma:fourier-character-RKHS-RD}, to show that $f \in { \mathbb{H}}  $ it suffices to show that $\psi \in L^2(\R^D, \mu_\epsilon) $,
and this is the case because 
\begin{align*}
\int_{\R^D} |\psi(\lambda)|^2 d\mu_\epsilon(\lambda)
& =  \int_{\R^D} \int_{\calX}  \int_{\calX} e^{-i \lambda^T (y - y')} g(y)    g(y') d\nu(y) d\nu(y') d\mu_\epsilon(\lambda)\\
& = \int_{\calX}  \int_{\calX} h_\epsilon(y, y') g(y)    g(y') d\nu(y) d\nu(y'), 
\end{align*}
which is finite as shown in \eqref{eq:rkhs-norm-f-finite-1}.
As a result, $f \in { \mathbb{H}}  $ and  $f|_\calX \in \tilde{ \mathbb{H}}$.
\end{proof}

\subsubsection{RKHS covering lemma and small ball probability on a general subset}

These important estimates are used in the proofs in Section \ref{sec:pos_rate}.

\begin{lemma}[RKHS covering bound]
    \label{lemma:unit_ball_covering_general}
    Suppose  $\mathcal{X} \subset [0,1]^D$ satisfies Assumption \ref{assump:A1-A2-prime}(A1) with positive constants $r_0 $ and $C_\calX$ as therein,
    and  $\mathbb{H}^1_{t}(\mathcal{X})$ is the unit ball in the RKHS on $\calX$ associated with kernel $h_t$ satisfying Assumption \ref{assump:general-h}.
       Then, there exist $K_1>1$ and $c > 4$ s.t., 
    for any $0 < t < r_0^2$, we have
   \begin{align*}
   \log \calN(\varepsilon', \mathbb{H}^1_{t}(\mathcal{X}), ||\cdot||_{\infty}) 
    \le K_1 t^{-\varrho/2} (\log { \frac{c^D}{\varepsilon'} } )^{D+1}, \quad \forall 0 < \varepsilon ' < 1,
   \end{align*}
  and $\calN(\varepsilon', \mathbb{H}^1_{t}(\mathcal{X}), ||\cdot||_{\infty})  =1$ when $\varepsilon ' \ge 1$.
  The constant $c$ depends on $\mu$, 
  and $K_1$ depends on  $\mu$,  $D$ and $\calX$. 
  In particular, there is $K>1$ which depends on $\mu$, $D$ and $\calX$,
   s.t.
   \begin{align*}
   \log \calN(\varepsilon', \mathbb{H}^1_{t}(\mathcal{X}), ||\cdot||_{\infty}) 
    \le K t^{-\varrho/2} (\log { \frac{ 1}{\varepsilon'} } )^{D+1}, \quad \forall  0 < \varepsilon' < 1/2.
   \end{align*}
\end{lemma}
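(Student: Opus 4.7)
The plan is to adapt the Fourier-based covering argument of \cite{van2009adaptive} for the square-exponential RKHS on $[0,1]^D$, while exploiting the low-dimensional structure of $\calX$ via Assumption \ref{assump:A1-A2-prime}(A1) at scale $\sqrt{t}$. The case $\varepsilon' \ge 1$ is trivial since Lemma \ref{lemma:rkhs_lip} gives $\|q\|_\infty \le 1$ for $q \in \mathbb{H}^1_t(\calX)$, so the constant function $0$ forms a $1$-cover.

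First, by Lemma \ref{lemma:two_rkhs}(i), every $f \in \mathbb{H}^1_t(\calX)$ extends to some $\bar f \in \mathbb{H}^1_t([0,1]^D)$ without increasing the norm, and by Lemma \ref{lemma:fourier-character-RKHS-RD} one may write $\bar f(x) = \mathrm{Re}\int_{\R^D} e^{i\lambda^T x}\psi(\lambda)\,d\mu_t(\lambda)$ with $\|\psi\|_{L^2(\mu_t)}\le 1$. I would then split the Fourier integral at a cutoff $M$. Using Cauchy--Schwarz together with the Chernoff-type tail bound $\mu_t\{|\lambda|>M\} = \mu\{|\lambda|>M\sqrt{t}\} \le c_h^{2D}e^{-\delta_h M\sqrt{t}}$ that comes from \eqref{eq:expression-ft-from-f1} and Assumption \ref{assump:general-h}(ii), the high-frequency contribution is bounded by $c_h^D e^{-\delta_h M\sqrt{t}/2}$, which is at most $\varepsilon'/3$ once $M\sqrt{t} \sim \log(c_h^D/\varepsilon')$.

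Second, I would use (A1) at scale $r=\sqrt{t} < r_0$ to produce a $\sqrt{t}$-net $\{x_j\}_{j=1}^{N_1}\subset\calX$ of size $N_1 \le C_\calX t^{-\varrho/2}$, and for each $x\in\calX$ approximate the low-frequency part locally by Taylor-expanding the plane wave around the nearest $x_j$:
\[
e^{i\lambda^T x}
=e^{i\lambda^T x_j}\sum_{|\alpha|\le k}\frac{(i\lambda^T(x-x_j))^{|\alpha|}}{\alpha!}+r_k(\lambda,x,x_j).
\]
Since $\|x-x_j\|_\infty\le\sqrt{t}$, and $|\lambda|\le M$ on the low-frequency region, the integrated Taylor remainder against $|\psi|d\mu_t$ is controlled by $(M\sqrt{Dt})^{k+1}/(k+1)!$, which falls below $\varepsilon'/3$ as soon as $k \sim \sqrt{D}\,\log(c_h^D/\varepsilon')$. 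After this reduction, on each ball $B(x_j,\sqrt{t})\cap\calX$ the function $\bar f$ is approximated by a polynomial in $x-x_j$ of degree $\le k$, whose coefficients are $c_{j,\alpha}(\psi)=\tfrac{1}{\alpha!}\int_{|\lambda|\le M}e^{i\lambda^T x_j}(i\lambda)^\alpha\psi(\lambda)\,d\mu_t(\lambda)$.

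Third, I would cover the collection of coefficient vectors $\{c_{j,\alpha}(\psi)\}_{j,\alpha}$ by a uniform grid. Cauchy--Schwarz together with the moment bound $\int|\lambda|^{2m}d\mu_t \le c_h^{2D}(2m/(\delta_h\sqrt{t}))^{2m}$ (from Assumption \ref{assump:general-h}(ii) and $|\lambda|^{2m}\le (2m/\delta_h)^{2m}e^{\delta_h|\lambda|}$) bounds $|c_{j,\alpha}|$ a priori; pairing the grid spacing with the factor $|(x-x_j)^\alpha|\le t^{|\alpha|/2}$ and the piecewise count $\binom{D+k}{k}$ of monomials shows that at most $(C/\varepsilon')^{\binom{D+k}{k}}$ grid points per piece suffice to produce an $\varepsilon'/3$-sup-approximation on each piece, for a constant $C$ depending on $D$, $\mu$. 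Summing over the $N_1\lesssim t^{-\varrho/2}$ pieces gives
\[
\log\calN(\varepsilon',\mathbb{H}^1_t(\calX),\|\cdot\|_\infty)
\lesssim t^{-\varrho/2}\,\binom{D+k}{k}\log(C/\varepsilon')
\lesssim t^{-\varrho/2}\bigl(\log(c^D/\varepsilon')\bigr)^{D+1},
\]
after inserting $k\sim\log(c^D/\varepsilon')$ and using $\binom{D+k}{k}\le (e(D+k)/D)^D$. The ``in particular'' assertion follows for $\varepsilon'<1/2$ since then $\log(c^D/\varepsilon')\le (D\log c+1)\log(1/\varepsilon')$, and the extra $D$-dependent factor can be absorbed into a new constant $K$.

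The main obstacle will be the delicate bookkeeping of constants when coordinating $M$, $k$, and the grid spacing so that the three sources of error (high-frequency tail, Taylor remainder, coefficient quantization) each contribute at most $\varepsilon'/3$, and simultaneously ensuring that the ambient dimension $D$ enters the final estimate only inside the logarithm (through $c^D$) rather than multiplicatively with $t^{-\varrho/2}$. It is precisely the uniformity in $D$ of the subexponential decay in Assumption \ref{assump:general-h}(ii) that makes this separation possible and hence allows the covering bound to depend on the \emph{intrinsic} dimension $\varrho$ in the leading power of $t$.
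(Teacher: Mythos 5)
Your proposal takes a genuinely different route from the paper's proof. The paper first invokes a factorization result from \cite{hamm2021adaptive},
\[
\log\calN(\varepsilon',\mathbb{H}^1_t(\calX),\|\cdot\|_\infty) \;\le\; \calN(\sqrt t,\calX,\|\cdot\|_\infty)\cdot\log\calN(\varepsilon',\mathbb{H}^1_1([-1,1]^D),\|\cdot\|_\infty),
\]
which cleanly separates the $\sqrt t$-scale covering of $\calX$ from a \emph{unit-scale} RKHS covering on the cube, and then handles the cube case by analytically continuing $h_\psi$ to a strip in $\C^D$ and using the multivariate Cauchy integral formula, which yields the geometric coefficient bound $|D^n h_\psi(x)/n!|\lesssim c_h^D R^{-|n|}$ and hence needs only Taylor degree $q\sim\log(c_h^D/\varepsilon')$. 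You instead extend $f$ to $[0,1]^D$ via Lemma \ref{lemma:two_rkhs}, stay on the Fourier side throughout, build the piecewise net directly on a $\sqrt t$-net of $\calX$ obtained from (A1), and approximate locally via a frequency cutoff at $M$, a multinomial Taylor expansion of the plane wave, and a moment bound from Assumption \ref{assump:general-h}(ii). The payoff is that your route is entirely real-variable and makes the role of (A1) completely transparent; the cost is that the Cauchy--Schwarz moment bound $|c_{j,\alpha}|\lesssim M^{|\alpha|}/\alpha!$ decays less favorably in $|\alpha|$ than the complex-analytic $c_h^D R^{-|\alpha|}$, forcing the larger Taylor degree $k\sim\sqrt D\log(c_h^D/\varepsilon')$ and making the coefficient-quantization bookkeeping substantially more delicate than in the paper.

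Two specific points need repair before the bookkeeping closes. First, the claim that ``at most $(C/\varepsilon')^{\binom{D+k}{k}}$ grid points per piece suffice, for a constant $C$ depending on $D,\mu$'' is not literally correct: with spacings $\eta_\alpha\sim\varepsilon'/(N_2 t^{|\alpha|/2})$, the log of the per-piece grid size is of order $N_2\log(c_h^D N_2/\varepsilon') + \log(2D/\delta_h)\sum_{|\alpha|\le k}|\alpha|$, and both $N_2=\binom{D+k}{D}$ and $k$ themselves grow with $\log(1/\varepsilon')$, so the implicit ``$C$'' is $\varepsilon'$-dependent. The target bound $(\log(c^D/\varepsilon'))^{D+1}$ can still be reached — observe $\sum_{|\alpha|\le k}|\alpha| = D\binom{D+k}{D+1}\le N_2 k$ and absorb the $N_2\log N_2$ contribution into $K_1$ by the elementary inequality $\log L\le L/e$ with $L=\log(c^D/\varepsilon')$ — but this must be carried out explicitly rather than packaged into a free constant $C$. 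Second, the ``in particular'' step asserts $\log(c^D/\varepsilon')\le(D\log c+1)\log(1/\varepsilon')$ for $\varepsilon'<1/2$, which is equivalent to $\log(1/\varepsilon')\ge1$ and hence only holds for $\varepsilon'\le 1/e$; replace it with the paper's device $a+b\le 3ab$ for $a,b\ge\log 2$, which covers the full range $\varepsilon'<1/2$.
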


The proof adopts techniques from Lemma 4.5 in \cite{van2009adaptive},
and when we construct a net to cover the domain $\calX$ we invoke Assumption \ref{assump:A1-A2-prime}(A1) and bring in the factor $\varrho$ in the scaling.

\begin{proof}[Proof of Lemma \ref{lemma:unit_ball_covering_general}]

We apply the following result from \cite[Corollary A.8]{hamm2021adaptive}:
For all $t, \, \varepsilon' > 0$, we have 
$$
\log \calN(\varepsilon',\mathbb{H}^1_{t}(\mathcal{X}), ||\cdot||_{\infty}) \leq \calN(\sqrt{t},\mathcal{X}, ||\cdot||_{\infty}) \log\calN(\varepsilon',\mathbb{H}^1_{1}([-1,1]^D), ||\cdot||_{\infty}), 
$$
where in the covering number of $\calX$, $\| \cdot\|_\infty$ is in $\R^D$, and in the covering numbers of RKHS balls, $\| \cdot\|_\infty$ stands for the functional infinity norm on the corresponding domains.
By Assumption \ref{assump:A1-A2-prime}(A1), when $0<t<r_0^2$, $\calN(\sqrt{t},\mathcal{X}, ||\cdot||_{\infty}) \leq C_{\calX} t^{-\varrho/2}$. 
Hence, we need to bound  $\log\calN(\varepsilon',\mathbb{H}^1_{1}([-1,1]^D), ||\cdot||_{\infty})$.

Recall that $\mu=\mu_1$ is the spectral measure of kernel $h_1$. 
By Lemma \ref{lemma:two_rkhs} and Lemma \ref{lemma:fourier-character-RKHS-RD}, 
 any element of $\mathbb{H}^1_{1}([-1,1]^D)$ can be expressed as
$$
h_{\psi}(x)=\int_{\mathbb{R}^D}e^{-i \lambda^\top x} \psi(\lambda) d \mu(\lambda), \quad x \in [-1,1]^D,
$$
which is real-valued,
where $\psi \in L^2(\mu)$ and $\| \psi \|_{L^2(\mu)} = \| h_\psi \|_{\mathbb{H}_{1}([-1,1]^D) } \le 1$.
We extend $h_{\psi}(x)$ with $x \in \mathbb{R}^D$ to a function $h_{\psi}(z)$ with $z \in \mathbb{C}^D$.
Because $\int_{\R^D} | \psi(\lambda)|^2 d\mu(\lambda) \le 1$, 
by Cauchy-Schwartz,
$|h_{\psi}(z)|^2 \leq \int_{\mathbb{R}^D} e^{2\|\lambda\|_{\R^D} \|Im(z)\|_{\R^D}} d\mu(\lambda)$. 
Let $\delta_h$ and $c_h$ be the constants in Assumption \ref{assump:general-h}(ii),
and define $R :=\delta_h/2$.
Let $\Omega=\{z \in \mathbb{C}^D,   \|Im(z)\|_{\mathbb{R}^D}<R\}$. 
The $\C$-valued function $h_{\psi}(z)$ is analytic on $\Omega$, and 
$$
|h_{\psi}(z)| \leq (\int_{\mathbb{R}^D} e^{\delta_h \|\lambda\|_{\R^D}} d\mu(\lambda) )^{1/2} \leq c_h^D,
\quad  \forall z \in  \Omega.
$$

We will construct a set of piecewise polynomials to approximate $h_\psi$ in $\|\cdot \|_\infty$,
which then provides a net of $\mathbb{H}^1_{1}([-1,1]^D)$.
To proceed, we use the multi-index notations: Let $n=(n_1, n_2, \cdots, n_D)$, $n!=n_1! n_2! \cdots n_D!$, and $|n|=n_1+n_2+\cdots+ n_D$. For any $x \in \mathbb{R}^D$, observe that $B^{\mathbb{C}^D}_{R}(x)$ is a ball contained in $\Omega$. Hence, by Cauchy's formula, 
\begin{align}\label{Appendix: bound cauchy formula 0}
\left| \frac{D^n h_{\psi}(x)}{n!} \right|
\leq \frac{c_h^D}{R^{|n|}}, \quad \forall n=(n_1, n_2, \cdots, n_D).
\end{align}
Note that a ball of radius $R/2\sqrt{D}$ in $\|\cdot\|_\infty$ in $\mathbb{R}^D$ is contained in a ball of radius $R/2$ in the Euclidean norm.  Then we can construct a net $\{p_1, \cdots, p_m\} \subset [-1,1]^D \subset \mathbb{R}^D$ such that $\{B^{\mathbb{R}^D}_{R/2}(p_i)\}$ covers $[-1,1]^D$ and $m \leq (4\sqrt{D}/R)^{D}$.

We construct a set $\mathcal{S}$ of piecewise polynomials of degree at most $q$ associated with $\{B^{\mathbb{R}^D}_{R/2}(p_i)\}$ on $[-1,1]^D$,
where $q$ is to be determined.
We denote the cardinal number $\mathcal{S}$ by  $|\mathcal{S}|$.
We partite $[-\frac{c_h^D}{R^{|n|}}, \frac{c_h^D}{R^{|n|}}]$ into intervals of length between $\frac{\varepsilon'}{2R^{|n|}}$ and $\frac{\varepsilon'}{R^{|n|}}$,
and  let $a_{i,n}$ be any end point of these intervals. 
We construct the piecewise polynomials $P$ in $\mathcal{S}$ as follows:
$$
P=\sum_{i=1}^m P_i \chi_{B^{\mathbb{R}^D}_{R/2}(p_i)},  \quad\quad P_i(x)= \sum_{|n| \leq q}a_{i,n} [x-p_i]^n, 
\quad\quad x \in B^{\mathbb{R}^D}_{R/2}(p_i), 
$$
where for $x =(x_1, \cdots, x_D) \in \R^D$,  $[x]^n$ stands for $x_1^{n_1} \cdots x_D^{n_D}$.
Therefore, 
\begin{equation}\label{eq:logS-bound-1-proof}
\log | \mathcal{S} |
\leq mq^D \log(\frac{2 c_h^D}{R^{|n|}}/\frac{\varepsilon'}{2R^{|n|}})
=  mq^D \log(\frac{4c_h^D}{\varepsilon'}).
\end{equation}

Note that, for $c_2>1$ a universal constant, we have  $\sum_{\ell=1}^{\infty}\frac{\ell^{D-1}}{(4/3)^\ell} \leq c_2^D$.
Then, for any $x \in B^{\mathbb{R}^D}_{R/2}(p_i)$, by \eqref{Appendix: bound cauchy formula 0} we have
\begin{align}\label{Appendix: polynomial approximation 0.1}
&|h_{\psi}(x)- \sum_{|n|\leq q}\frac{D^n h_{\psi}(p_i)}{n!}[x-p_i]^n| \nonumber \\
\leq & |\sum_{|n|>q}\frac{D^n h_{\psi}(p_i)}{n!}[x-p_i]^n| 
\leq \sum_{|n|>q} \frac{c_h^D}{R^{|n|}} (\frac{R}{2})^{|n|} 
\leq c_h^D \sum_{\ell=q+1}^\infty \frac{\ell^{D-1}}{2^\ell} \nonumber \\
= &c_h^D \sum_{\ell=q+1}^\infty \frac{\ell^{D-1}}{(4/3)^\ell (3/2)^\ell} 
 \leq c_h^D \sum_{\ell=q+1}^\infty \frac{\ell^{D-1}}{(4/3)^\ell (3/2)^q} 
 \leq  (c_h c_2)^D  (\frac{2}{3})^q.
\end{align}
Moreover, there exists $P \in \mathcal{S}$ such that
\begin{align}\label{Appendix: polynomial approximation 0.2}
|\sum_{|n|\leq q}\frac{D^n h_{\psi}(p_i)}{n!}[x-p_i]^n-P(x)| \leq \sum_{|n|\leq q} \frac{\varepsilon'}{R^{|n|}} (\frac{R}{2})^{|n|} \leq \varepsilon' \sum_{\ell=1}^q \frac{\ell^{D-1}}{2^\ell} \leq c_2^D \varepsilon'.
\end{align}
We require $c_h^D (\frac{2}{3})^q \leq \varepsilon'$ which is satisfied by choosing 
$q=\lceil 3 \log(\frac{c_h^D }{\varepsilon'}) \rceil$.  
 By \eqref{Appendix: polynomial approximation 0.1}\eqref{Appendix: polynomial approximation 0.2} and triangle inequality,   
 $\|h_{\psi}-P\|_{\infty} \leq 2 c_2^D \varepsilon'$. 
 This means that 
$$ 
\log \calN( 2 c_2^D \varepsilon', \mathbb{H}^1_{1}([-1,1]^D), ||\cdot||_{\infty}) 
\leq \log  | \mathcal{S} |.
$$
We revisit \eqref{eq:logS-bound-1-proof} to continue.
By substituting the bounds of $m$, $q$ and  $R=\delta_h/2$ respectively, we have
\begin{align*}
\log  | \mathcal{S} |
\leq& m q^D \log(\frac{4 c_h^D}{\varepsilon'}) 
\leq  (8\sqrt{D}/\delta_h)^D  3^D \log(\frac{c_h^D }{\varepsilon'})^D \log(\frac{4 c_h^D}{\varepsilon'}) \\
\leq &  (24\sqrt{D}/\delta_h)^D \log(\frac{4 c_h^D }{\varepsilon'})^{D+1}, 
\end{align*}
and the argument so far holds for any $\varepsilon' > 0$.
Define $\varepsilon : = 2c_2^D \varepsilon'$, then we have
\begin{equation}\label{Appendix:lemma:unit_ball_covering_manifold:final}
\log \calN(\varepsilon,\mathbb{H}^1_{1}([-1,1]^D), ||\cdot||_{\infty}) 
\leq  (24\sqrt{D}/\delta_h)^D \log(\frac{8 (c_h c_2)^D }{\varepsilon})^{D+1}.
\end{equation}
We will utilize this upper bound when $\varepsilon < 1$, because otherwise the covering number can be bounded trivially by 1, see below.

In conclusion, suppose $0<t<r_0^2$,  when $ 0 < \varepsilon < 1$, 
we have
\begin{align}\label{lemma:unit_ball_covering_manifold full result}
\log \calN(\varepsilon, \mathbb{H}^1_{t}( \calX ), ||\cdot||_{\infty}) 
\leq K_1 t^{-\varrho/2} \log(\frac{ c^D}{\varepsilon})^{D+1}, 
\end{align}
where $K_1 : = (C_{\calX} \vee 1) (\frac{24\sqrt{D}}{  \delta_h \wedge 1})^D>1$ 
and $c : =8   c_h c_2>4$.  
Meanwhile, 
by Lemma \ref{lemma:rkhs_lip}, for any $f^t \in \mathbb{H}^1_{t}(\mathcal{X})$ we always have $\|f^t\|_{\infty} \leq 1$. 
Hence, $\calN(\varepsilon, \mathbb{H}^1_{t}(\mathcal{X}), ||\cdot||_{\infty})=1$ for $\varepsilon \geq 1$.

Finally, to derive the claimed bound when $\varepsilon < 1/2$, we use the elementary relationship that $a+b \leq 3ab$ if $a \geq \log2$ and $b \geq \log2$. 
Thus, 
if $\varepsilon < \frac{1}{2}$, then $\log(\frac{c^D}{\varepsilon})=\log(c^D)+\log(\frac{1}{\varepsilon}) 
\leq 3 \log(c^D)\log(\frac{1}{\varepsilon})$. 
Substituting into \eqref{lemma:unit_ball_covering_manifold full result}, this gives 
$$
\log \calN(\varepsilon,\mathbb{H}^1_{t}( \calX), ||\cdot||_{\infty}) \leq  K t^{-\varrho/2}( \log \frac{1}{\varepsilon})^{D+1},
\quad \varepsilon < {1}/{2},
$$
where $K= (C_{\calX} \vee 1) (\frac{24\sqrt{D}}{  \delta_h \wedge 1})^D
(3D\log c )^{D+1} >1$. 
\end{proof}

\begin{lemma}[Small ball probability of Gaussian measure]
    \label{lemma:small_ball}
    Let $\calX$ and $h_t$ be as in Lemma \ref{lemma:unit_ball_covering_general},
    and
    let $f^t$ be the Gaussian process on $\calX$ associated with kernel $h_t$. 
Then, there exists $ C > 1$ and s.t. 
for any 
$0<t < \min\{ r_0^2, 1 \}$ and $0<\varepsilon' <  1/2$, we have 
    $$ 
    \phi_0^{t}(\varepsilon') 
    = -\log
    \P [ ||f^{t}||_{\infty} \le \varepsilon' | \, t] 
    \le C  t^{- \varrho /2} (\log  \frac{1}{\sqrt{t}\varepsilon'} )^{D+1}.
    $$
The constant $C$ depends on $\varrho$, $D$, $C_\calX$ 
and constants 
$\delta_h$, $c_h$ and $a_1$ as in Assumption \ref{assump:general-h}.
\end{lemma}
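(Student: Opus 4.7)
The plan is to combine the RKHS covering-number estimate of Lemma \ref{lemma:unit_ball_covering_general}, which already carries the intrinsic-dimension dependence through the factor $t^{-\varrho/2}$, with a standard Kuelbs--Li type duality that converts RKHS entropy bounds into small-ball probability bounds for centered Gaussian measures (cf.\ Kuelbs--Li 1993, and the concentration-function machinery used in the Gaussian-prior literature such as van der Vaart--van Zanten 2008, 2009). In the form we need, the duality asserts that for a centered Gaussian measure on a separable Banach space $(B,\|\cdot\|)$ with unit RKHS ball $K_1$, provided $\varepsilon \mapsto \log\calN(\varepsilon, K_1, \|\cdot\|)$ is slowly varying near $0$,
\[
\phi_0(\varepsilon)\;\le\; c_1\log\calN(c_2\varepsilon, K_1, \|\cdot\|)+c_3,
\]
with absolute constants $c_1,c_2,c_3>0$. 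Our polylogarithmic entropy bound trivially satisfies the slow-variation hypothesis, and continuity of the sample paths of $f^t$ (which follows from Assumption \ref{assump:general-h}, together with the subexponential spectral decay) ensures separability of the Gaussian measure on $(C(\calX),\|\cdot\|_\infty)$.

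Substituting $\log\calN(\varepsilon, \mathbb{H}^1_t(\calX),\|\cdot\|_\infty)\le K\,t^{-\varrho/2}(\log(1/\varepsilon))^{D+1}$ from Lemma \ref{lemma:unit_ball_covering_general} (valid for $0<\varepsilon<1/2$ and $0<t<r_0^2$) into the duality gives
\[
\phi_0^t(\varepsilon')\;\le\; c_1 K\,t^{-\varrho/2}\bigl(\log(1/(c_2\varepsilon'))\bigr)^{D+1}+c_3.
\]
Since $0<t\le 1$, we have $\log(1/\varepsilon')\le \log(1/(\sqrt t\,\varepsilon'))$, and the absolute constants together with the explicit $K$ from Lemma \ref{lemma:unit_ball_covering_general} can be collapsed into a single $C>1$ with the dependence on $\varrho$, $D$, $C_\calX$, $\delta_h$, $c_h$, $a_1$ claimed in the lemma. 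This yields the stated bound $\phi_0^t(\varepsilon')\le C\,t^{-\varrho/2}(\log(1/(\sqrt t\,\varepsilon')))^{D+1}$.

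The main technical obstacle is invoking and quantifying the Kuelbs--Li duality with explicitly tracked constants. A self-contained derivation, which one could include for completeness, proceeds via a Cameron--Martin / Borell argument: for a parameter $M>0$, cover the dilated RKHS ball $M\,\mathbb{H}^1_t(\calX)$ by an $\varepsilon'/2$-net in $\|\cdot\|_\infty$, use the Cameron--Martin shift combined with Borell's Gaussian isoperimetric inequality to bound $\P(\|f^t - h\|_\infty\le \varepsilon')$ for each net element $h$, take a union bound, and optimize $M$ to balance the Cameron--Martin penalty $M^2/2$ against the entropy contribution. Choosing, for instance, $M^2\asymp K\,t^{-\varrho/2}(\log(1/\varepsilon'))^{D+1}$ makes the two contributions of the same order. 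The remaining bookkeeping step, absorbing the additional $\log M$ inside a logarithm of the form $\log(1/(\sqrt t\,\varepsilon'))$, is elementary once one exploits $t\le 1$ and collects constants.
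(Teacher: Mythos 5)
Your proposal identifies the right starting point (the Kuelbs--Li metric-entropy-to-small-ball duality, plus the covering-number bound of Lemma~\ref{lemma:unit_ball_covering_general}), which is also what the paper does, but the form of the duality you invoke does not exist and the gap that remains is exactly the bulk of the paper's proof.

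Kuelbs--Li (Theorem~2 of \cite{kuelbs1993metric}) gives the \emph{self-referential} bound
\[
\phi_0^{t}(2\varepsilon') \;\le\; \log 2 \;+\; \log\calN\!\Bigl(\tfrac{\varepsilon'}{\sqrt{2\phi_0^{t}(\varepsilon')}},\,\mathbb{H}^1_t(\calX),\,\|\cdot\|_\infty\Bigr),
\]
not the clean statement $\phi_0(\varepsilon)\le c_1\log\calN(c_2\varepsilon,K_1,\|\cdot\|)+c_3$ with absolute $c_1,c_2,c_3$ that you assert. The radius in the covering number is $\varepsilon'/\sqrt{\phi_0^t(\varepsilon')}$, which is much smaller than $c_2\varepsilon'$, so the right-hand side is genuinely larger than $\log\calN(c_2\varepsilon',\cdot)$; there is no slow-variation hypothesis that lets you wipe out the $\sqrt{\phi_0^t}$. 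Your alternative Cameron--Martin/Borell sketch, carried out correctly, reproduces exactly this self-referential inequality: taking $M\asymp\sqrt{\phi_0^t(\varepsilon')}$ is what makes the Borell term $\Phi(\Phi^{-1}(e^{-\phi_0^t(\varepsilon')})+M)\ge 1/2$, and the entropy term becomes $\calN(\varepsilon'/M,\mathbb{H}^1_t,\|\cdot\|_\infty)$ with $M\asymp\sqrt{\phi_0^t(\varepsilon')}$ inside. Your proposed choice $M^2\asymp K\,t^{-\varrho/2}(\log(1/\varepsilon'))^{D+1}$ presupposes the very bound on $\phi_0^t(\varepsilon')$ you are trying to prove; that is circular.

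What the paper actually does to break the circularity is the substantial part of the argument you have not supplied: it obtains an \emph{a priori crude} bound $2\sqrt{2\phi_0^t(\varepsilon'/2)}\le B_1\,t^{-3\varrho/2-1/4}(\varepsilon')^{-7/2}$ via the Li--Linde machinery (bounding the dyadic entropy numbers $e_\ell(u_t)$ using Lemma~\ref{lemma:unit_ball_covering_general}, passing to approximation numbers $\ell_n$ through duality for entropy numbers, invoking Proposition~2.3 of \cite{li1999approximation}, and separately estimating $s_t=\E[\|f^t\|_\infty^2\mid t]^{1/2}\lesssim D t^{-1/2}$ via Borell--TIS and Dudley, cf.\ Lemma~\ref{Appendix: lemma bounds on s t}). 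Only after substituting this crude polynomial bound inside $\log\calN(\varepsilon'/\bar\lambda,\cdot)$ does one obtain $t^{-\varrho/2}\bigl(\log(\bar\lambda/\varepsilon')\bigr)^{D+1}$, and since $\bar\lambda$ contains $t^{-3\varrho/2-1/4}$, the extra $\log(1/t)$ contribution is the true reason the final estimate reads $\bigl(\log(1/(\sqrt t\,\varepsilon'))\bigr)^{D+1}$ with a constant absorbing a factor $(3\varrho+9)^{D+1}$. Your remark that $\log(1/\varepsilon')\le\log(1/(\sqrt t\,\varepsilon'))$ because $t\le 1$ treats the $\sqrt t$ as cosmetic, whereas it is actually forced by the bootstrap. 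In summary: same starting theorem, but the heart of the proof---the a priori estimate needed to dereference the implicit $\sqrt{\phi_0^t}$, and the resulting $t$-dependence of the logarithm---is missing from your proposal.
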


The framework of the proof was outlined in Lemma 4.6 of \cite{van2009adaptive}. 
We follow techniques from Lemma 3 of \cite{castillo2024deep} to fill in the details and derive the constants explicitly.

\begin{proof}[Proof of Lemma \ref{lemma:small_ball}]
As shown in the beginning of the proof of Lemma 4.6 in \cite{van2009adaptive},
by Theorem 2 in \cite{kuelbs1993metric},
 for any $\varepsilon'>0$ and $t>0$, 
$ \phi_0^{t}(2\varepsilon')+\log(\frac{1}{2}) \leq 
\log \calN(\varepsilon'/\sqrt{2\phi_0^{t}(\varepsilon')},\mathbb{H}^1_{t}(\mathcal{X}), ||\cdot||_{\infty}).$
Hence,
\begin{align}\label{thm:small_ball_manifold 01}
\phi_0^{t}(\varepsilon') 
\leq  \log\calN(\varepsilon'/\Big(2\sqrt{2\phi_0^{t}(\varepsilon'/2)}\Big),\mathbb{H}^1_{t}(\mathcal{X}), ||\cdot||_{\infty})
	+ \log 2.
\end{align}
Since $\log \calN(\varepsilon',\mathbb{H}^1_{t}(\mathcal{X}), ||\cdot||_{\infty})$ is estimated in Lemma \ref{lemma:unit_ball_covering_general}, we need to find a crude upper bound for $2\sqrt{2\phi_0^{t}(\varepsilon'/2)}$. 

Suppose $v$ is a compact linear operator from a separable Hilbert space space $(E, (\cdot, \cdot)_{E})$ with a unit ball $B_E$ to a Banach space $(F, \|\cdot\|_{F})$. Recall the following definition of a functional $e_\ell$ from  \cite{li1999approximation} for $\ell \geq 1$: 
$$e_\ell(v)=\inf\{\eta>0: \calN(\eta, v(B_E),  \|\cdot\|_{F})\leq 2^{\ell-1}\}$$
Specifically, in this proof, we consider $u_t:\mathbb{H}_{t}(\mathcal{X}) \rightarrow C(\mathcal{X}, ||\cdot||_{\infty})$ which is the inclusion map. Then, for $\ell \geq 1$,
$$e_\ell(u_t)=\inf\{\eta>0: \log \calN(\eta, \mathbb{H}^1_{t}(\mathcal{X}), ||\cdot||_{\infty})\leq (\ell-1)\log 2\}.$$
By the part when $\varepsilon' \ge 1$ in Lemma \ref{lemma:unit_ball_covering_general}, $e_\ell(u_t) \leq 1$ for all $\ell$, and in particular when  $\ell=1$.
For $\ell \geq 2$ and $t<r_0^2$, $e_\ell(u_t)$ should be bounded above by the solution $\eta^*$ of 
$$K_1 t^{-\varrho/2}\log(\frac{c^D}{\eta^*})^{D+1}= (\ell-1)\log 2,$$
where, according to the proof of Lemma \ref{lemma:unit_ball_covering_general}, 
$K_1= (C_{\calX} \vee 1) (\frac{24\sqrt{D}}{  \delta_h \wedge 1})^D>1,$
and $c>4$ depends on $c_h$ in Assumption \ref{assump:general-h}(ii). 
Hence, for any $\ell \geq 2$ and $t<r_0^2$,
$$e_\ell(u_t) \leq c^D \exp\Big(-(K_1 t^{-\varrho/2})^{-1/(D+1)} ((\ell-1)\log 2)^{1/(D+1)}\Big).$$
When $\ell \geq 2$, we have $\ell \leq 2\ell-2$. Therefore,
\begin{align*}
& \ell e_\ell(u_t) \leq (2\ell-2)  c^D \exp\Big(-(K_1 t^{-\varrho/2})^{-1/(D+1)} ((\ell-1)\log 2)^{1/(D+1)}\Big) \\
& = \frac{2 c^D}{\log2} (K_1 t^{-\varrho/2})  \frac{(\ell-1)\log 2}{K_1 t^{-\varrho/2}} \exp\Big(-(K_1 t^{-\varrho/2})^{-1/(D+1)} ((\ell-1)\log 2)^{1/(D+1)}\Big)\\
& \leq 3 c^D K_1 t^{-\varrho/2} D^{D+1},
\end{align*}
where we use the fact $ye^{-y^{1/(D+1)}}$ has a maximum $(\frac{1+D}{e})^{1+D}\leq D^{D+1}$ over $y \geq 0$ and $y=\frac{(\ell-1)\log 2}{K_1 t^{-\varrho/2}}$.
In conclusion, for $\ell \geq 1$ and $t <\min\{ r_0^2, 1 \}$,
\begin{align}\label{lemma: e ell (u t) upper bound}
\ell e_\ell(u_t) \leq 3 c^D K_1 D^{D+1} t^{-\varrho/2}.
\end{align}

Let $\{\tilde{f}^t_i\}_{i=1}^\infty$ be an orthonormal basis of ${H}_{t}(\mathcal{X})$. The $n$ th approximation number of $u_t$ is defined as 
$$\ell_n(u_t)=\inf\Big\{\big(\mathbb{E}\|\sum_{j=n}^\infty a_j u_t(\tilde{f}^t_i)\|^2_\infty\big)^{1/2}:   a_j\stackrel{i.i.d}{\sim}\mathcal{N}(0,1) \Big\},$$
where the infimum is taken over all orthonormal basis $\{\tilde{f}^t_i\}_{i=1}^\infty$. Moreover, we have the following definition of $n$ th approximation number of $f^t$:
$$\ell_n( f^t)=\inf\Big\{\big(\mathbb{E}\|\sum_{j=n}^\infty a_jg_j\|^2_\infty\big)^{1/2}:  f^t\stackrel{d}{=}\sum_{j=1}^\infty a_jg_j, \quad a_j\stackrel{i.i.d}{\sim}\mathcal{N}(0,1), \quad g_j \in C(\mathcal{X}, \| \cdot \|_\infty) \Big\}.$$
By Lemma 2.3 in \cite{li1999approximation}, 
$\ell_n(u_t)= \ell_n( f^t) :=\ell_n.$
By Lemma 2.1 in \cite{li1999approximation}, there are universal constants $\tilde{c}_1$ and $\tilde{c}_2$ s.t.
$$\ell_n \leq \tilde{c}_1\sum_{m \geq \tilde{c}_2n } e_m(u_t^*) m^{-1/2}(1+\log m),$$
where $u_t^*$ is the dual of $u_t$. By \cite{tomczak1987dualite}, for any $m \geq 1$,
$$m e_m (u_t^*) \leq \sup_{\ell \leq m} \ell e_\ell (u_t^*) \leq 32 \sup_{\ell \leq m} \ell e_\ell(u_t).$$
Therefore, by \eqref{lemma: e ell (u t) upper bound}, 
\begin{align*}
4\ell_n \leq & 384 \tilde{c}_1 c^D K_1 D^{D+1} t^{-\varrho/2} \sum_{m \geq \tilde{c}_2n }m^{-3/2}(1+\log m) \\
\leq& c^*  c^D K_1 D^{D+1} t^{-\varrho/2} n^{-1/2}(\log n+1)  ,
\end{align*}
where $c^*>1$ is a universal constant depending on $\tilde c_1, \tilde c_2$. 
Define
$$n(\varepsilon')=\max\{n, 4\ell_n \geq \varepsilon'\}.$$
By \cite{li1999approximation}, $\ell_n$ is a decreasing function of $n$. Let 
$$B=c^*  c^D K_1 D^{D+1},$$ 
then, $n(\varepsilon')$ can be bounded above by the solution of $B  t^{-\varrho/2} n_*^{-1/2}(\log n_*+1)=\varepsilon'$.
  Note that since $n_* \geq 1$,
$$n_* = B^2  t^{-\varrho}(\varepsilon')^{-2} (\log n_*+1)^2 \leq 4 B^2  t^{-\varrho}(\varepsilon')^{-2} (n_*)^{1/2}.$$
Hence, we have a crude upper bound $n_* \leq 16 B^4 t^{-2\varrho}(\varepsilon')^{-4}$. If we use $(\log n_*+1)^2 \leq 12 n_*^{1/4}$  for $n_* \geq 1$ and the crude upper bound, then we have a refined upper bound $n_* \leq 24 B^3  t^{-3\varrho/2}(\varepsilon')^{-3}$. When $t <\min(r_0^2, 1)$ and $\varepsilon' < 1$, 
$$1 \leq n(\varepsilon') \leq n_* \leq 24 B^3  t^{-3\varrho/2}(\varepsilon')^{-3}.$$ 

Define $s_t = \mathbb{E} [ \| f^t\|^2_\infty | t ]^{1/2}$. 
When $t < 1$, 
by Lemma \ref{Appendix: lemma bounds on s t}, 
$s_t  \leq c_3 D t^{-1/2}$,
where $c_3 > 1$ is a constant depending $a_1$ in Assumption \ref{assump:general-h}(i). 
 By Proposition 2.3 in \cite{li1999approximation}, $\log \P [ ||f^{t}||_{\infty} \le \varepsilon' | \, t] \geq \frac{3}{4}(\frac{\varepsilon'}{6s_t n(\varepsilon') })^{n(\varepsilon')} \geq (\frac{\varepsilon'}{8s_t n(\varepsilon') })^{n(\varepsilon')}$. 
 Therefore, we substitute $s_t  \leq c_3 D t^{-1/2}$ and obtain
$$\phi_0^{t}(\varepsilon') \leq n(\varepsilon') \log(\frac{8s_t n(\varepsilon') }{\varepsilon'}) \leq 8c_3 D t^{-1/2} n(\varepsilon')^2/\varepsilon',
$$
where we use that $\log x < x $ for $ x >0$. 
Substituting the upper bound of $n(\varepsilon')$, we conclude that 
when $0<t < \min\{ r_0^2, 1 \}$ and $0<\varepsilon' < 1$,
\begin{align}\label{thm:small_ball_manifold 2}
2\sqrt{2\phi_0^{t}(\varepsilon'/2)} \leq 2 \sqrt{9216 c_3 D B^6 t^{-3\varrho-1/2}(\varepsilon'/2)^{-7}}
\leq B_1 t^{-3\varrho/2-1/4}(\varepsilon')^{-7/2} =: \bar \lambda,
\end{align}
where $B_1=c_4 (c_3D)^{1/2} (c^*  c^D K_1 D^{D+1})^3$ and $c_4>1$ is a universal constant,
and also $B_1 >1$.

We are ready to revisit \eqref{thm:small_ball_manifold 01}. By \eqref{thm:small_ball_manifold 2},
\[
\log\calN(\varepsilon'/\Big(2\sqrt{2\phi_0^{t}(\varepsilon'/2)}\Big),\mathbb{H}^1_{t}(\mathcal{X}), ||\cdot||_{\infty})
\le \log\calN(\varepsilon'/ \bar \lambda,\mathbb{H}^1_{t}(\mathcal{X}), ||\cdot||_{\infty}).
\]
Since $B_1>1$, $t<1$, $\varepsilon'<1$,  we have $\bar \lambda > 1$, and then 
$\varepsilon'/\bar \lambda <1$. 
By Lemma \ref{lemma:unit_ball_covering_general}, and define $B_2: =c^DB_1$, 
\[
\log\calN(\varepsilon'/ \bar \lambda,\mathbb{H}^1_{t}(\mathcal{X}), ||\cdot||_{\infty})
\le  K_1 t^{-\varrho/2}\log(\frac{B_2}{ t^{3\varrho/2+1/4}(\varepsilon')^{9/2}})^{D+1}.
\]
Putting together, we have
\begin{align*}
\phi_0^{t}(\varepsilon') \leq & K_1 t^{-\varrho/2}\log(\frac{B_2}{ t^{3\varrho/2+1/4}(\varepsilon')^{9/2}})^{D+1}+\log 2\\
\leq &  K_1 (3 \varrho+9)^{D+1} t^{-\varrho/2} \log(\frac{B_2^{1/(3\varrho+9)}}{ \sqrt{t}\varepsilon'})^{D+1}+\log 2.
\end{align*}
Note that $B_2=c^D c_4 (c_3D)^{1/2}\big (c^* c^D  (C_{\calX} \vee 1) (\frac{24\sqrt{D}}{  \delta_h \wedge 1})^D D^{D+1}\big)^3 \leq (K' D)^{9D/2+4}$, where $K'>4$ depends on $C_{\calX}$ in Assumption \ref{assump:A1-A2-prime}(A1) about $\calX$  
and $\delta_h$, $c_h$ and $a_1$ in Assumption \ref{assump:general-h} about $h$. 
Therefore, $B_2^{1/(3\varrho+9)} \leq (K'D)^{D}$. Hence, when $t<\min \{ r^2_0, 1 \}$ and $\varepsilon'<1$,  
$$\phi_0^{t}(\varepsilon') \leq K_1 (3\varrho+9)^{D+1} t^{-\varrho/2} \log(\frac{(K'D)^{D}}{ \sqrt{t}\varepsilon'})^{D+1}+\log2.$$

Observe that $ K_1 (3\varrho+9)^{D+1} t^{-\varrho/2} \log(\frac{(K'D)^{D}}{ \sqrt{t}\varepsilon'})^{D+1} \geq 1>\log 2$. Therefore,
 $$\phi_0^{t}(\varepsilon') \leq 2K_1 (3\varrho+9)^{D+1} t^{-\varrho/2} \log(\frac{(K'D)^{D}}{ \sqrt{t}\varepsilon'})^{D+1}.
 $$
Finally, similar to the proof of Lemma \ref{lemma:unit_ball_covering_general},  when in addition $\varepsilon< {1}/{2}$, since $(K'D)^{D}>4$,
\begin{align*}
\log(\frac{(K'D)^{D}}{\sqrt{t}\varepsilon'})=\log((K'D)^{D})+\log(\frac{1}{ \sqrt{t}\varepsilon'}) \leq& 3\log((K'D)^{D})\log(\frac{1}{ \sqrt{t}\varepsilon'}) \\
=& 3D\log(K'D)\log(\frac{1}{ \sqrt{t}\varepsilon'}).
\end{align*}
In conclusion, when $t<\min \{ r^2_0, 1 \}$ and $\varepsilon< {1}/{2}$,
\begin{align*}
\phi_0^{t}(\varepsilon') \leq & 2K_1 (3\varrho+9)^{D+1}(3D)^{D+1}\log(K'D)^{D+1}  t^{-\varrho/2}  \log(\frac{1}{ \sqrt{t}\varepsilon'})^{D+1} \\
=&2  (C_{\calX} \vee 1) (\frac{24\sqrt{D}}{  \delta_h \wedge 1})^D  (3\varrho+9)^{D+1}(3D)^{D+1}\log(K'D)^{D+1}  t^{-\varrho/2}  \log(\frac{1}{ \sqrt{t}\varepsilon'})^{D+1}.
\end{align*}
This proves the lemma with $C=2 (C_{\calX} \vee 1) (\frac{24\sqrt{D}}{  \delta_h \wedge 1})^D  (3\varrho+9)^{D+1}(3D)^{D+1}\log(K'D)^{D+1}$ and $ C>1$. 
\end{proof}

\begin{lemma}\label{Appendix: lemma bounds on s t}
Suppose  $\mathcal{X} \subset [0,1]^D$ satisfies Assumption \ref{assump:A1-A2-prime}(A1). For $t \leq 1$, under Assumption \ref{assump:general-h} about the kernel $h$, consider the Gaussian process $f^{t}$ on $ \mathcal{X} $ associated with $h_t$.  Let $s_t = \mathbb{E} [ \| f^t\|^2_\infty | t ]^{1/2}$. Then, $s_t \leq c D t^{-1/2}$, where $c > 1$  is a constant depending $a_1$ Assumption \ref{assump:general-h}(i) on $h$.
\end{lemma}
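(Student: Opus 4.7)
The plan is to bound $\mathbb{E}[\|f^t\|_\infty^2 \mid t]$ using Dudley's entropy integral for the suprema of Gaussian processes, together with the trivial ambient covering of $[0,1]^D$. Since $f^t$ is a centered Gaussian process on $\mathcal{X}$, the canonical pseudo-metric is $d_t(x,x')^2 = \mathbb{E}[(f^t_x - f^t_{x'})^2] = 2(1 - h(\|x-x'\|^2/t))$. From Assumption \ref{assump:general-h}(i) we have $|h'(r)| \le a_1 e^{-ar} \le a_1$, so $|1 - h(r)| \le a_1 r$ for $r \ge 0$, which gives $d_t(x,x') \le \sqrt{2 a_1}\, \|x-x'\|/\sqrt{t}$.

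Since $\mathcal{X} \subset [0,1]^D$, Euclidean distances satisfy $\|x-x'\| \le \sqrt{D}\,\|x-x'\|_\infty \le \sqrt{D}$, so the $d_t$-diameter of $\mathcal{X}$ is at most $\sqrt{2 a_1 D/t}$, and for every $\epsilon > 0$ with $\epsilon\sqrt{t}/\sqrt{2 a_1 D} \le 1$,
\[
\mathcal{N}(\epsilon, \mathcal{X}, d_t)
\le \mathcal{N}\bigl(\epsilon \sqrt{t}/\sqrt{2 a_1 D},\, [0,1]^D, \|\cdot\|_\infty\bigr)
\le \bigl(\sqrt{2 a_1 D}/(\epsilon\sqrt{t})\bigr)^D.
\]
Here I deliberately use the ambient cube covering rather than Assumption (A1), which is what allows the bound to depend only on $D$, not on $\varrho$.

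By Dudley's entropy bound and the change of variables $u = \epsilon\sqrt{t}/\sqrt{2 a_1 D}$,
\[
\mathbb{E}[\|f^t\|_\infty \mid t] \;\lesssim\; \int_0^{\sqrt{2 a_1 D/t}} \sqrt{\log \mathcal{N}(\epsilon, \mathcal{X}, d_t)}\, d\epsilon
\;\le\; \sqrt{D}\sqrt{2 a_1 D/t} \int_0^1 \sqrt{\log(1/u)}\, du,
\]
and the inner integral equals $\sqrt{\pi}/2$. This yields $\mathbb{E}[\|f^t\|_\infty \mid t] \le c_1 D\, t^{-1/2}$ for a constant $c_1$ depending only on $a_1$ (and an absolute Dudley constant).

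To pass from the first moment to the second, I would apply the Borell--TIS inequality: since $\sup_{x\in\mathcal{X}} \mathrm{Var}(f^t_x) = h(0) = 1$, we obtain $\mathrm{Var}(\|f^t\|_\infty \mid t) \le c_2$ for an absolute constant $c_2$, hence
\[
s_t^2 = \mathbb{E}[\|f^t\|_\infty^2 \mid t] \le (\mathbb{E}[\|f^t\|_\infty\mid t])^2 + c_2 \le c_1^2 D^2 t^{-1} + c_2.
\]
Since $t \le 1$, we have $t^{-1/2} \ge 1$, so $s_t \le (c_1 + \sqrt{c_2})\, D\, t^{-1/2} =: c D t^{-1/2}$, with $c$ depending only on $a_1$. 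The main subtlety is not a technical obstacle but a choice: one must use the $D$-dimensional cube covering rather than the sharper (A1) bound, because only the former produces the claimed linear-in-$D$ scaling without introducing $\varrho$ or $C_{\mathcal{X}}$; the rest is a routine combination of Dudley and Borell--TIS.
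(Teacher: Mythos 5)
Your proof is correct and follows essentially the same route as the paper: bound the canonical metric $d_t$ by a multiple of $t^{-1/2}\|x-x'\|$ via $|h'|\le a_1$, cover $\mathcal{X}$ through the ambient cube $[0,1]^D$ (deliberately \emph{not} using (A1), so the bound scales with $D$ rather than $\varrho$), apply Dudley's entropy integral for the first moment and Borell--TIS for the variance, and absorb constants using $t\le 1$, $D\ge 1$. The only cosmetic difference is that the paper retains the base term $\mathbb{E}|W_{y_0}|\le 1$ in Dudley's bound explicitly before absorbing it, while you suppress it under the $\lesssim$; this does not affect the conclusion.
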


The proof of Lemma \ref{Appendix: lemma bounds on s t}
uses similar techniques as Lemma 5 in \cite{castillo2024deep}, and we include the detailed proof for completeness.
Given a general  topological space $S$, 
denote by $W_y$, $y \in S$, a GP on $S$. 
Let $\| W \|_\infty := \sup_{y \in S} |W_y |$. Recall the Borell-TIS and Dudley's inequality for GP, 
see e.g. Chapter 2 of \cite{gine2021mathematical}.
\begin{lemma}[Borell–TIS, Theorem 2.5.8 \cite{gine2021mathematical}]\label{Borell's inequality}
Suppose $W_y$, $y\in S$, 
is a centered GP where $\Pr[ \|W\|_\infty<\infty ] > 0$. 
Let $\sigma^2= \sup_{y \in S} \mathbb{E} W_y^2 $. Then, $\forall u \ge 0$,
\[
\Pr [  \|W\|_\infty - \E \|W\|_\infty \ge u ] \le e^{-u^2/2\sigma^2}, \quad
\Pr [  \|W\|_\infty - \E \|W\|_\infty \le -u ] \le e^{-u^2/2\sigma^2}.
\]
\end{lemma}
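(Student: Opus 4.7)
The plan is to reduce the Borell--TIS inequality to the Gaussian concentration inequality for Lipschitz functions of standard Gaussian vectors, the latter being the deep analytic ingredient. The high-level idea is that $\|W\|_\infty$, viewed as a functional of the underlying Gaussian vector that generates the process, is $\sigma$-Lipschitz, and Gaussian concentration then yields the desired subgaussian deviation bounds with the claimed constant $1/(2\sigma^2)$.

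First I would perform a standard measurability/separability reduction. After passing to a separable version of $W$, there is a countable subset $\{y_k\}_{k\ge 1} \subset S$ with $\|W\|_\infty = \sup_k |W_{y_k}|$ almost surely. Set $Z_n := \max_{k \le n} |W_{y_k}|$, so $Z_n \uparrow \|W\|_\infty$ a.s. The hypothesis $\Pr[\|W\|_\infty < \infty] > 0$ combined with the Gaussian zero--one law (the event is linear, hence its indicator is measurable with respect to the tail $\sigma$-algebra of the Karhunen--Loève expansion) forces $\|W\|_\infty < \infty$ a.s., and a standard argument using the finiteness of $\sigma^2$ together with the already-established finite-dimensional concentration (to be proved next) shows $\E \|W\|_\infty < \infty$, so $\E Z_n \uparrow \E\|W\|_\infty$ by monotone convergence.

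Second, I handle the finite-dimensional case. Write $(W_{y_1},\dots,W_{y_n}) = A X$ with $X \sim N(0, I_n)$ and $A A^\top$ equal to the finite-dimensional covariance matrix. Define $g_n(x) := \max_{k \le n} |(Ax)_k|$. For any $x, x' \in \mathbb{R}^n$,
\[
|g_n(x) - g_n(x')| \le \max_k |(A(x-x'))_k| \le \max_k \|A^\top e_k\|\,\|x-x'\| \le \sigma \|x-x'\|,
\]
since $\|A^\top e_k\|^2 = (AA^\top)_{kk} = \E W_{y_k}^2 \le \sigma^2$. Thus $g_n$ is $\sigma$-Lipschitz on $\mathbb{R}^n$. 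Invoking the Gaussian concentration inequality -- for any $L$-Lipschitz $f:\mathbb{R}^n \to \mathbb{R}$ and $X \sim N(0,I_n)$, both $\Pr[f(X) - \E f(X) \ge u]$ and $\Pr[\E f(X) - f(X) \ge u]$ are at most $e^{-u^2/(2L^2)}$ -- gives $\Pr[Z_n - \E Z_n \ge u] \le e^{-u^2/(2\sigma^2)}$ and the matching lower tail. Passing $n \to \infty$ via Fatou's lemma applied to the complementary events (using $Z_n \uparrow \|W\|_\infty$ and $\E Z_n \to \E\|W\|_\infty$) yields the claimed two-sided bounds.

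The main obstacle is the Gaussian concentration inequality with the sharp constant $1/(2L^2)$; this is the nontrivial ingredient and admits several self-contained routes. Route (i): derive the Gaussian isoperimetric inequality of Sudakov--Tsirelson and Borell by symmetrization from the uniform measure on the high-dimensional sphere, then translate half-space-extremality into a Lipschitz concentration statement with the sharp constant. Route (ii): prove the Gaussian logarithmic Sobolev inequality by tensorization from the two-point Bernoulli case via the central limit theorem, then apply the Herbst argument -- differentiating the Laplace transform $\psi(\lambda) = \log \E e^{\lambda(f - \E f)}$ and using log-Sobolev to obtain $\psi'(\lambda) \le \lambda L^2/2$, integrating, and applying Markov's inequality. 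Route (iii): the Ornstein--Uhlenbeck interpolation argument of Pisier, which is technically simplest but gives a suboptimal constant $2/(\pi^2 L^2)$. For a clean derivation with the sharp constant stated in the lemma I would follow route (ii), since the log-Sobolev/Herbst combination is the shortest self-contained path to subgaussian tails with constant exactly $1/(2L^2)$.
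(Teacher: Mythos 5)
The paper does not prove this statement at all: it is quoted verbatim as Theorem 2.5.8 of the cited reference \cite{gine2021mathematical}, so there is no in-paper argument to compare against. Your sketch is a correct rendition of the standard proof of Borell--TIS as given in that literature: reduce to a countable (separable) version and finite-dimensional maxima $Z_n$, note that $x \mapsto \max_{k\le n}|(Ax)_k|$ is $\sigma$-Lipschitz because $(AA^\top)_{kk} = \E W_{y_k}^2 \le \sigma^2$, apply Gaussian concentration for Lipschitz functions with the sharp constant $1/(2\sigma^2)$ (via isoperimetry or log-Sobolev plus Herbst, both of which do give the stated constant, whereas Pisier's interpolation would not), and pass to the limit using $Z_n \uparrow \|W\|_\infty$ together with uniform boundedness of $\E Z_n$. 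The remaining loose ends you gesture at --- the Gaussian zero--one law turning $\Pr[\|W\|_\infty<\infty]>0$ into almost-sure finiteness, the deduction $\E\|W\|_\infty<\infty$ from the finite-dimensional lower-tail bound plus a median-type argument, and the $u'\uparrow u$ limit at the tail boundary --- are all standard and fixable as you indicate, so I see no genuine gap; the heavy lifting is exactly the Gaussian concentration/isoperimetry input that the cited theorem encapsulates.
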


\begin{lemma}[Dudley, Theorem 2.3.7 \cite{gine2021mathematical}]\label{Dudley's inequality}
Define a metric $d(y,y')^2 :=\mathbb{E} | W_y -W_{y'} |^2 $ on $S$,
and let $2\sigma_0 = \sup_{y,y'}d(y,y')$.
Then, for any $y_0 \in S$,
$$
\mathbb{E}\|W\|_\infty \leq  \mathbb{E} |W_{y_0} |
+4 \sqrt{2} \int_{0}^{\sigma_0} \sqrt{2\log \calN(\varepsilon, S, d) } d\varepsilon.
$$
\end{lemma}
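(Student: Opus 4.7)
The plan is to prove the bound by the standard chaining argument. First I would reduce to the case of a finite subset: assume without loss of generality that $(S,d)$ is separable (else the supremum is not even measurable), take a countable dense subset $S_0$, and write $\|W\|_\infty = \sup_{y \in S_0}|W_y|$ as a monotone limit of $\sup_{y \in F}|W_y|$ over finite $F \uparrow S_0$. By monotone convergence it suffices to establish the inequality with $\|W\|_\infty$ replaced by $\sup_{y \in F}|W_y|$ for an arbitrary finite $F$, uniformly in $F$.

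Next I would set up the chain. Put $\varepsilon_k = 2^{-k}\sigma_0$ for $k \ge 0$. Since $\operatorname{diam}(S,d) \le 2\sigma_0$, a single point is an $\sigma_0$-net, so I can take $N_0 = \{y_0\}$, and for $k \ge 1$ pick an $\varepsilon_k$-net $N_k$ of $S$ with $|N_k| = \calN(\varepsilon_k,S,d)$. Let $\pi_k: S \to N_k$ send each point to a nearest net element, so $d(y,\pi_k(y))\le \varepsilon_k$. For $F$ finite I may choose $K$ so large that $\pi_K(y)=y$ on $F$, so the telescoping identity
\[
W_y = W_{y_0} + \sum_{k=0}^{K-1}\bigl(W_{\pi_{k+1}(y)} - W_{\pi_k(y)}\bigr)
\]
holds pointwise on $F$. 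Applying $\sup_{y\in F}|\cdot|$ and the triangle inequality yields $\sup_{y\in F}|W_y| \le |W_{y_0}| + \sum_{k=0}^{K-1} M_k$ where $M_k := \sup_{y\in F}|W_{\pi_{k+1}(y)}-W_{\pi_k(y)}|$.

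The heart of the argument is the expectation bound on $M_k$. Each increment $W_{\pi_{k+1}(y)} - W_{\pi_k(y)}$ is centered Gaussian with standard deviation at most $d(\pi_{k+1}(y),y) + d(y,\pi_k(y)) \le \varepsilon_{k+1}+\varepsilon_k = \tfrac{3}{2}\varepsilon_k \le 3\varepsilon_{k+1}$, and as $y$ ranges over $F$ the pair $(\pi_{k+1}(y),\pi_k(y))$ takes at most $|N_{k+1}|\cdot|N_k| \le |N_{k+1}|^2$ values. The standard Gaussian maximal inequality $\mathbb{E}\max_{i\le N}|g_i| \le \sigma\sqrt{2\log(2N)}$ then gives $\mathbb{E} M_k \le 3\varepsilon_{k+1}\sqrt{2\log(2|N_{k+1}|^2)} \le 3\sqrt{2}\,\varepsilon_{k+1}\sqrt{2\log(2)+2\log|N_{k+1}|}$, which I would further bound by a constant times $\varepsilon_{k+1}\sqrt{\log\calN(\varepsilon_{k+1},S,d)}$ (using $\log(2|N|^2)\le 4\log\calN$ once $|N|\ge 2$; the $|N|=1$ terms vanish since the two projections coincide). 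Summing in $k$ and identifying $\sum_k (\varepsilon_k-\varepsilon_{k+1})\sqrt{\log\calN(\varepsilon_{k+1},\cdot)}$ with the lower Darboux sum of $\varepsilon\mapsto\sqrt{2\log\calN(\varepsilon,S,d)}$ on $(0,\sigma_0]$ produces the integral $\int_0^{\sigma_0}\sqrt{2\log\calN(\varepsilon,S,d)}\,d\varepsilon$, with the constants collapsing into the stated $4\sqrt{2}$.

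The main obstacle will be the constant bookkeeping: a naive chaining gives $C\int_0^{\sigma_0}\sqrt{\log\calN}\,d\varepsilon$ with a suboptimal $C$, and extracting exactly $4\sqrt{2}$ requires the careful geometric-series comparison $\varepsilon_{k+1}=(\varepsilon_k-\varepsilon_{k+1})$ and the cleanest form of the Gaussian maximal inequality. A secondary technical point is the passage from finite $F$ to all of $S$; this is handled by the separability reduction above, together with the observation that the right-hand side does not depend on $F$, so one may take the supremum (or use monotone convergence) to conclude.
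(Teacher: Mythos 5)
First, note that the paper itself does not prove this lemma: it is quoted as Theorem 2.3.7 of \cite{gine2021mathematical} and used as a black box, so you are reconstructing a cited classical fact. Your skeleton (separability reduction, dyadic nets, telescoping chain, Gaussian maximal inequality, sum-to-integral comparison) is the standard and correct route, and it certainly yields a bound of the form $\mathbb{E}\|W\|_\infty \le \mathbb{E}|W_{y_0}| + C\int_0^{\sigma_0}\sqrt{\log \calN(\varepsilon,S,d)}\,d\varepsilon$ for some absolute constant $C$.

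However, as written the argument does not deliver the lemma with its stated constant, for two concrete reasons. (a) The claim that $\{y_0\}$ is a $\sigma_0$-net because $\mathrm{diam}(S,d)\le 2\sigma_0$ is false: a diameter bound of $2\sigma_0$ only makes an arbitrary point a $2\sigma_0$-net (three Gaussian variables pairwise at $L^2$-distance $2\sigma_0$ give a counterexample, and the lemma must hold for \emph{any} $y_0$), so the level-$0$ increment has standard deviation up to $2\sigma_0+\varepsilon_1=5\varepsilon_1$, and your uniform bound $\varepsilon_k+\varepsilon_{k+1}\le 3\varepsilon_{k+1}$ fails at $k=0$. (b) The sum-to-integral step goes the wrong way: since $\varepsilon\mapsto\sqrt{\log\calN(\varepsilon,S,d)}$ is non-increasing, $\sum_k(\varepsilon_k-\varepsilon_{k+1})\sqrt{\log\calN(\varepsilon_{k+1},S,d)}$ evaluates the integrand at left endpoints and is an \emph{upper}, not lower, Darboux sum; it dominates the integral rather than being dominated by it. The standard repair is to compare the level-$(k+1)$ term with $\int_{\varepsilon_{k+2}}^{\varepsilon_{k+1}}\sqrt{\log\calN}\,d\varepsilon$, at the cost of a factor $2$. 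Once both points are patched using exactly the estimates you quote (increments $3\varepsilon_{k+1}$, a union over $|N_{k+1}|^2$ pairs, $\sqrt{2\log(2N^2)}\le 2\sqrt2\sqrt{\log N}$ for $N\ge 2$, and the factor $2$ from the interval shift), the constant comes out on the order of $10$--$12$ in front of $\int_0^{\sigma_0}\sqrt{2\log\calN(\varepsilon,S,d)}\,d\varepsilon$, not $4\sqrt2$ — so "the constants collapse into the stated $4\sqrt2$" is asserted rather than derived. Reaching $4\sqrt2$ requires the sharper bookkeeping of the cited source: chain each net point to a single parent in the next coarser net, so that level $k$ contributes only $|N_{k+1}|$ increments each of size at most $\varepsilon_k=2\varepsilon_{k+1}$, and use that $\calN(\varepsilon,S,d)\ge 2$ for every $\varepsilon<\sigma_0$ to absorb the top link and the additive $\log 2$ terms into the entropy integral. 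A smaller technical point: choosing $K$ so that $\pi_K(y)=y$ on $F$ requires adjoining $F$ to the finest net, which perturbs $|N_K|$; it is cleaner to keep minimal nets and bound $\mathbb{E}\sup_{y\in F}|W_y-W_{\pi_K(y)}|\le |F|\,\varepsilon_K\sqrt{2/\pi}\to 0$ as $K\to\infty$.
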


\begin{proof}[Proof of Lemma \ref{Appendix: lemma bounds on s t}]
Denote by $\mathbb{V}$ the variance of a random variable.
By that 
$$
s_t^2=\mathbb{E} [ \| f^t\|_\infty | t ]^2+ \mathbb{V} [ \| f^t\|_\infty | t ],
$$
we bound each term in the above expression. 
Since $\mathbb{E}[f^t(x)^2|t]=h_t(x,x) = h(0)=1$ for any $x \in \calX$,  we can apply
Lemma \ref{Borell's inequality} with $\sigma^2 =1$ to obtain
$$
\Pr\{|\|f^t\|_\infty-\mathbb{E}\|f^t\|_\infty| \ge u|t\} \leq 2e^{-u^2/2}.
$$
Hence,
$$
\mathbb{V} [ \| f^t\|_\infty | t ] 
= \int_{0}^\infty 2u \Pr\{|\|f^t\|_\infty-\mathbb{E}\|f^t\|_\infty| \ge u|t\} du 
\leq \int_{0}^\infty 4u 2e^{-u^2/2}du =4.
$$

To bound $\E  [ \| f^t\|_\infty | t ]$, we will apply Lemma \ref{Dudley's inequality}
by that $W = f^t$ is a centered GP on $\calX$.
Let $y_0 $ be an arbitrary point in $\calX$, $\E |W_{y_0}| \le (\E W_{y_0}^2 )^{1/2} = h_t(y_0, y_0)^{1/2} = \sqrt{h(0)} = 1$.
Consider the metric $d$ on $\mathcal{X}$ defined as 
$$d(x, x')^2
: =\mathbb{E}[|f^t(x)-f^t(x')|^2|t]
= 2 h(0)-2h( {\|x-x'\|^2_{\mathbb{R}^D}}/{t}).
$$
By Mean Value Theorem and $|h'(r)| \le a_1 e^{-ar}$ as in Assumption \ref{assump:general-h}(i),
$d(x, x')^2 \leq 2 a_1\|x-x'\|^2_{\mathbb{R}^D} / t$. Then,
\begin{align*}
\calN(\varepsilon, \calX, d) 
& \leq \calN(\sqrt{ \frac{t}{2a_1}}\varepsilon, \calX, \|\cdot\|_{\mathbb{R}^D}) 
\leq \calN(\sqrt{ \frac{t}{2a_1} }\varepsilon, [0,1]^D, \|\cdot\|_{\mathbb{R}^D}) \\
& \leq \calN(\sqrt{ \frac{t}{2 a_1 D}}\varepsilon, [0,1]^D, \|\cdot\|_{\infty})
\leq (\sqrt{\frac{ 2a_1D}{t}}\varepsilon^{-1})^{D}.
\end{align*}
Moreover, for  $x, x' \in \mathcal{X} \subset [0,1]^D$, $d(x, x') \leq \sqrt{\frac{2 a_1 D}{t}} = : \alpha$. 
Hence, $ 2 \sigma_0 \le \alpha$ and we have
\begin{align*}
\int_{0}^{\sigma_0} \sqrt{2\log \calN(\varepsilon, \calX, d)  } d\varepsilon
\le &\int_{0}^{ \alpha } \sqrt{2\log \calN(\varepsilon, \calX, d) } d\varepsilon 
\leq \sqrt{2D}  \int_{0}^{ \alpha } \sqrt{\log( \alpha \varepsilon^{-1})} d\varepsilon \\
=& \sqrt{2D} \frac{\sqrt{\pi}}{2} \alpha
 =   D \sqrt{\pi} \sqrt{\frac{a_1}{t}}, 
\end{align*}
where we used that $\int_0^1 \sqrt{ \log \frac{1}{u}} du = \sqrt{\pi}/2$.
Lemma \ref{Dudley's inequality} then gives that 
$$
\mathbb{E}[ \|f^t\|_\infty| t]^2
 \leq \left( 1 +4 \sqrt{2} D \sqrt{\pi} \sqrt{\frac{a_1}{t}} \right)^2 
 \le 2(1+ 128 a_1 \frac{ D^2}{t}).
$$

Putting together, we have 
$
s_t^2 \le 2(1+ 128 a_1 \frac{ D^2}{t}) + 4 \le (6 + 256 a_1) \frac{D^2}{t}$,
because $ t \le 1$ and $D \ge 1$. 
\end{proof}

\subsection{Differential geometry lemmas}\label{app:diff-geo-lemmas}

\subsubsection{Riemannian geometry concepts}

Given the metric tensor $g$, 
the manifold (Riemannian) distance between two points $x$ and $y$ on $\calM$ is the infimum of the lengths of all piece-wise regular curves on $\calM$ connecting $x$ and $y$.
Because $\calM$ is connected and compact, for any two points $x,y$ on $\calM$ there exists a length-minimizing geodesic joining from $x$ to $y$, and the length of the geodesic is equal to the manifold distance. 
We call this distance the {\it geodesic distance} and denote it by $d_{\calM}(x,y)$.
In this case, $(\calM, d_{\calM})$ is a complete metric space.
Meanwhile,  $g$ induces a (local) Riemannian volume form on $\mathcal{M}$, denoted by $dV$, and $(\calM, dV)$ is a measure space. 
We call $Vol(\calM) := \int_\calM dV$ the volume of $\calM$.

We consider the geodesic ball centered at a point $x \in \calM$ that is diffeomorphic to a Euclidean ball in $\R^d$. 
This is characterized by the {\it exponential map} at $x$ when the radius of the ball is less than the {\it injectivity radius} of $\mathcal{M}$,
denoted as $\xi > 0$. 
 $\xi = \min_{x \in \calM} inj(x)$, where $inj(x)$ is the injectivity radius at point $x$. 
   For any $x \in \mathcal{M}$, let $T_x \mathcal{M} \cong \mathbb{R}^d$ denote the tangent space of $\mathcal{M}$ at $x$. 
Let $B_r(x) \subset \calM$ denote the open geodesic ball of radius $r$ centered at $x$, and $B^{\R^d}_r(u)$ the open ball in $\R^d$ of radius $r$ and center $u$. 
Then 
$$
\exp_x: B^{\mathbb{R}^d}_{\xi}(0) \subset  \R^d \cong T_x \mathcal{M} \rightarrow B_{\xi}(x) \subset \calM
$$
is a diffeomorphism, and the corresponding coordinates are called {\it normal coordinates} at $x$. 

The normal coordinate is closely related to the geodesic curve.
For any $y \in B_\xi(x)$, there is a unique (constant speed) minimizing geodesic $\gamma$ from $x$ to $y$, satisfying $\gamma(0) = x$ and $\gamma(t) = y$ where $t = d_\calM(x,y) < \xi$. This curve can be extended to be defined on $t \in (-\xi, \xi)$, and along the curve, $\dot \gamma(t) \in T_{\gamma(t)} \calM$ and $\| \dot \gamma(t) \| \equiv 1$ where the norm is induced by $g$ at $\gamma(t)$.
At $t=0$, $\dot \gamma(0) = v \in S^{d-1} \subset T_x\calM$, and $\gamma$ can be expressed as $\gamma(t) = \exp_x( tv)$.

The $\ell$-th covariant derivative of $f$ is an order-$\ell$ tensor field on $\calM$.
We introduce the notations for vector fields and tensor fields on $\calM$.
We say that $U$ is a vector field on $\calM$ if $U(x) \in T_x \calM$ for every $x\in \calM$,
 and usually we consider $U$ having the same order of differentiability as $\calM$, e.g., when $\calM$ is smooth then $U$ is also smooth.
We can also consider a vector field on a neighborhood on $\calM$.
 For an order-$r$ tensor field $T^{(r)}$ on $\calM$ (or a neighborhood of $\calM$),
 the evaluation $T^{(r)}$ at a point $x$ gives a tensor $T^{(r)}(x): \underbrace{T_x \calM \times \cdots T_x \calM}_{\text{$r$ many}} \to \R$.
For vector fields $U_1, \cdots, U_r$ on $\calM$, 
we use the notation $T^{(r)}(U_1 ,\cdots, U_r)|_x = T^{(r)}(x)( U_1(x), \cdots, U_r(x))$.

The first covariant derivative $\nabla f$ is an order-1 tensor field and can be specified by ``directional derivative'':
at any $x\in \calM$ and  $\forall v \in T_x \calM$, 
we denote $\nabla f (x)(v)$ also as $ \nabla_v f (x) := \frac{d}{dt} f( \gamma (t))|_{t=0}$, 
where $\gamma(t)$ is a differentiable curve on $\calM $ s.t. $\gamma (0)= x$ and $\gamma'(0) = v$. 
By definition, for a vector field $U$, $\nabla f(U)|_x = \nabla f(x) (U(x))= \nabla_{U(x)} f (x)$.
We also write $\nabla f(U)$ as $\nabla_U f$.
Given $\ell$-th derivative of $f$, the $\ell+1$-derivative is defined as 
\begin{align*}
& \nabla^{\ell+1} f(V, U_1,\dots, U_\ell) 
 = (\nabla_V \nabla^\ell f) (U_1, \dots, U_\ell)  \\
&~~~~~~ 
= \nabla_V [  \nabla^\ell f (U_1, \dots, U_\ell) ]
 	 - \nabla^\ell f ( \nabla_V U_1, \cdots, U_\ell) 
	 - \cdots
	 - \nabla^\ell f (U_1, \cdots,  \nabla_V U_\ell), 
\end{align*}
where $V$ and  $U_1,\dots, U_\ell$  are arbitrary vector fields on $\calM$.
For a general differentiable tensor field $T^{(r)}$ and vector fields $V$, $U_1,\dots, U_r$, we have
\begin{align*}
& \nabla T^{(r)}( V, U_1, \cdots, U_r) 
=  ( \nabla_V T^{(r)} )(U_1, \cdots, U_r)  \\   
&~~~~~~ 
 = \nabla_V [ T^{(r)} (U_1, \cdots, U_r)  ] 
	- T^{(r)} (\nabla_V U_1, \cdots, U_r) 
	- \cdots 
	-  T^{(r)} ( U_1, \cdots, \nabla_V U_r).
\end{align*}
The definition of $\nabla^{\ell+1} f$ is same as the covariant derivative of the tensor field $\nabla^\ell f$.

\subsubsection{Local expansions of volume form and Euclidean distance }

We recall the notations as in Section \ref{sec:rem_manifold}.
In particular, $\calM $ is a $d$-dimensional connected close (compact and without boundary) Riemannian manifold
 isometrically embedded in $\R^D$ through $\iota: \calM \to \R^D$.
We say $\calM$ is $C^r$ with $r \ge 1$ integer when both the Riemannian metric $g$ and the embedding map $\iota$ are at least $C^r$.
The injectivity radius of $\calM$ is $\xi $, and the reach of $\iota(\calM)$ is $\tau $.

We first introduce a lemma on the comparison of Euclidean distance and manifold geodesic distance.

\begin{lemma}[Proposition 6.3 in \cite{niyogi2008finding}]\label{lemma:manifold-reach}
    Suppose $\calM $ is a $C^2$ manifold isometrically embedded in $\R^D$ with reach $\tau > 0$, 
    then for any two points $x, y \in \calM$ with $\| \iota(x) - \iota(y)\|< \tau/2$,
    we have $ d_\calM(x,y) \le 2 \| \iota(x) - \iota(y)\|$.
\end{lemma}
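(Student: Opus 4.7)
The plan is to exploit the nearest-point projection onto $\iota(\calM)$ to construct a short curve between $x$ and $y$ directly, bypassing the need for pointwise geodesic estimates. The only nontrivial ingredient is a classical consequence of the reach condition: on the open $\tau$-tubular neighborhood $U = \{p \in \mathbb{R}^D : \mathrm{dist}(p, \iota(\calM)) < \tau\}$, the nearest-point projection $\pi : U \to \iota(\calM)$ is well-defined and $C^1$, with operator-norm bound
\[
\|D\pi(p)\|_{\mathrm{op}} \;\le\; \frac{\tau}{\tau - r}, \qquad r := \mathrm{dist}(p, \iota(\calM)) < \tau.
\]
This is the analytic substance of the reach assumption, established by Federer in \emph{Curvature Measures} (1959). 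I would invoke it as a black box, citing Federer as well as the analogous statement in \cite{niyogi2008finding}, rather than reprove it.

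Granted this, set $\ell := \|\iota(x) - \iota(y)\|$ and consider the ambient straight segment $\sigma : [0,1] \to \mathbb{R}^D$, $\sigma(t) := (1-t)\iota(x) + t\iota(y)$. For each $t$, $\sigma(t)$ lies at Euclidean distance at most $\min(t,1-t)\,\ell \le \ell/2$ from one of the endpoints $\iota(x), \iota(y) \in \iota(\calM)$, so the image of $\sigma$ is contained in the $(\ell/2)$-tubular neighborhood of $\iota(\calM)$. Since $\ell/2 < \tau/4 < \tau$, the projection $\pi$ is defined and $C^1$ along $\sigma$, and $\eta := \pi \circ \sigma$ is a curve in $\iota(\calM)$ joining $\iota(x)$ to $\iota(y)$. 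The derivative bound yields
\[
\mathrm{length}(\eta) \;\le\; \int_0^1 \|D\pi(\sigma(t))\|_{\mathrm{op}} \, \|\sigma'(t)\| \, dt \;\le\; \frac{\tau\,\ell}{\tau - \ell/2} \;=\; \frac{\ell}{1 - \ell/(2\tau)}.
\]
Pulling $\eta$ back to $\calM$ via the isometric embedding $\iota$ produces a path from $x$ to $y$ of the same length, so $d_\calM(x,y) \le \mathrm{length}(\eta)$. Under the hypothesis $\ell < \tau/2$, we have $\ell/(2\tau) < 1/4$, and hence $d_\calM(x,y) < (4/3)\ell < 2\ell$.

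The principal obstacle in this plan is the Federer Lipschitz bound on $\pi$; once that fact is granted, the remainder is the elementary observation that the straight segment between the two endpoints stays well inside the tube on which $\pi$ is tame. As a byproduct the argument gives the sharper constant $4/3$ in place of $2$, suggesting that the radius condition $\ell < \tau/2$ could be relaxed somewhat (at the cost of a larger constant) by the same projection method, though the version stated in the lemma is all that is needed downstream.
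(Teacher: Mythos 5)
Your proof is correct and takes a genuinely different route from the paper's. The paper's proof is a two-line reduction: it quotes the conclusion of Proposition 6.3 of \cite{niyogi2008finding} verbatim, namely $d_\calM(x,y)\le\tau\bigl(1-\sqrt{1-2\|\iota(x)-\iota(y)\|/\tau}\bigr)$, and then applies the elementary inequality $1-\sqrt{1-u}\le u$ for $u\in[0,1]$ to land on the stated $2\|\iota(x)-\iota(y)\|$. You instead re-derive a comparable bound from a more primitive ingredient, Federer's Lipschitz estimate $\tau/(\tau-r)$ for the nearest-point projection onto a set of reach $\tau$ at distance $r<\tau$, by pushing the Euclidean segment onto $\iota(\calM)$ and using that the segment never leaves the $(\ell/2)$-tube. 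Both arguments ultimately invoke an external result as a black box; the trade-offs are that the paper's version is shorter and fully offloads the geometry to the cited proposition, while yours makes the mechanism explicit (the tube bound plus the Lipschitz projection) and incidentally produces a sharper constant, $\ell/(1-\ell/(2\tau))<\tfrac{4}{3}\ell$ rather than $2\ell$. Two small remarks: for the length estimate the Lipschitz bound of Federer's Theorem 4.8(8) already suffices — you do not actually need the pointwise $C^1$ derivative bound $\|D\pi(p)\|_{op}\le\tau/(\tau-r)$, since a Lipschitz map with constant $L$ sends a curve of length $\ell$ to one of length at most $L\ell$ — although for a $C^2$ manifold the $C^1$ statement is also standard, so nothing is wrong. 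And the constant $\tau/(\tau-\ell/2)$ could be sharpened slightly by integrating the $t$-dependent bound $\tau/(\tau-\min(t,1-t)\ell)$, but the uniform version you use is adequate here.
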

\begin{proof}[Proof of Lemma \ref{lemma:manifold-reach}]
It was shown in \cite[Proposition 6.3]{niyogi2008finding} that for any two points $x, y \in \calM$ with $ \| \iota(x) - \iota(y)\|< \tau/2$, 
we always have $d_\calM(x,y) \le \tau - \tau \sqrt{1 - 2 \| \iota(x) - \iota(y)\| /\tau}$.
The statement of the lemma then follows  from 
that $1- \sqrt{1-x} \le x$ for all $x \in [0,1]$.
\end{proof}

The local expansions of volume form and Euclidean distance at a point $x \in \calM$ have been derived in the literature, see e.g. \cite{gray1974volume,wu2018think}. 
We recall these expansions in the first few terms: below, $(t,\theta)$ is the polar coordinate in $T_x \calM \cong \R^d$.
We have
\begin{align*}
dV(\exp_x(t \theta ))=\Big(&\,1-\frac{1}{6}\texttt{Ric}_{x}(\theta,\theta)t^2-\frac{1}{12}\nabla_\theta \texttt{Ric}_{x}(\theta,\theta) t^3 \\
&\,-\big(\frac{1}{40}\nabla^2_{\theta}\texttt{Ric}_{x}(\theta,\theta)
+\frac{1}{180}\sum_{a,b=1}^d
\texttt{R}_x(\theta, E_a,\theta, E_b)^2
- \frac{1}{72}\texttt{Ric}_{x}(\theta,\theta)^2\big) t^{4}\nonumber\\
&\,+ O(t^5) \Big) t^{d-1}dt d\theta,\nonumber
\end{align*}
where  $\texttt{R}_x$ and $\texttt{Ric}_{x}$ are the curvature tensor and the Ricci curvature tensor of $\mathcal{M}$ at $x$ respectively, and $\{E_a\}_{a=1}^d$ is an orthonormal basis of $T_x \calM$; 
\begin{align*}
& \|\iota \circ \exp_x( t \theta )-\iota(x)\|^2_{\mathbb{R}^D} 
\,=  t^2-\frac{1}{12} \|\Second_x(\theta,\theta)\|^2 t^4 -\frac{1}{12} \nabla_{\theta}\Second_x(\theta,\theta)  \cdot  \Second_x(\theta,\theta) t^5 \\
&~~~~~~~~~~~~~~~~~~~~~~~~
- \big(\frac{1}{40}\nabla^2_{\theta}\Second_x(\theta,\theta) \cdot \Second_x(\theta,\theta) + \frac{1}{45}\nabla_{\theta}
	\Second_x(\theta,\theta) \cdot \nabla_{\theta}\Second_x(\theta,\theta) \big)t^6+O(t^7),  
\end{align*}
where $\Second_x$ is the second fundamental form of $\iota(\mathcal{M})$ at $\iota(x)$. 

These expansions, however, are not enough for our purpose because in this work we will need to expand to arbitrary high order
 depending on $k$ the target function differentiability order.
Meanwhile, we do not use the specific expressions of the expansion and can treat each $O(t^\ell)$ term as abstract. 
An important observation is that in the above expansions, 
the terms that involve $\theta$ and multiplied to $t^\ell$ are tensor fields evaluated at $x$ and $(\theta, \cdots, \theta) \in T_x \calM \times \cdots \times T_x \calM$, 
and that is why the $t^\ell$ factor can be extracted thanks to the linearity of tensor fields.
For our analysis, we will need to show that this pattern holds for expansion to arbitrarily high orders;
we also need to show that the constant in big-O is uniform for $x$ and reveal the  constant dependence. 
These needed results are summarized in the following lemma, for which we include a proof for completeness.

\begin{lemma}\label{expansion-of-volume-of-form-and-Euclidean-distance}
For any $x \in \calM$, we consider the normal coordinates at $x$ provided by $\exp_x:  T_x\calM  \cong  \R^d  \to \calM$, and on $\R^d$ we use the polar coordinates $(t,\theta)$. 

\begin{itemize}

\item[(i)] Local expansion of volume form.

\begin{itemize}
\item[a)] Suppose $\calM$ is $C^2$, then $\forall  0 \le t < \xi$, $\theta \in S^{d-1} \subset T_x \calM$, 
\[
dV( \exp_x (t \theta)) = (1 + R_V(t) ) t^{d-1} dt d\theta, \quad |R_V(t)| \le C_{V,1} t^2,
\]
where the constant  $C_{V,1}$ depends on $d$ and the uniform bounds of up to the 2nd intrinsic derivative of the Riemannian metric $g$, and $C_{V,1}$ is uniform for $x \in \calM$. 

\item[b)]
 Given $\calK \ge 2$, suppose $\calM$ is $C^{ \calK+1}$. 
For each $\ell = 2, \cdots, \calK$,  there exist an order-$\ell $ tensor field $\bar V_\ell$ on $\calM$ 
such that, after defining $V_\ell( x, v) := \bar V_\ell (x)( v, \cdots, v)$ for any $v \in T_x \calM$,
we have
$\forall  0 \le t < \xi$, $\theta \in S^{d-1} \subset T_x \calM$, 
\begin{align*}
dV(\exp_x( t \theta ))= (1 + \sum_{\ell=2}^{\mathcal{K}} V_\ell (x, \theta) t^\ell+R_{V, \mathcal{K}}(t))t^{d-1}dtd\theta,  
\quad | R_{V, \mathcal{K}}(t)| \leq C_{V, \mathcal{K}} t^{\mathcal{K}+1},
\end{align*}
where the constant $C_{V, \mathcal{K}}$ depends on $d$ and the uniform bounds of the intrinsic derivatives of $g$ up to $(\mathcal{K}+1)$-th order,
and $C_{V, \mathcal{K}}$ is uniform for $x \in \calM$. 
In addition, 
$\bar V_\ell$ can be expressed through the products and sums of the curvature tensor $\texttt{R}$ of $\calM$ and its covariant derivatives up to ($\ell-2$)-th order, including a contraction of the tensors,
with coefficients depending on $d$ and $\ell$.
\end{itemize}

\item[(ii)] Local expansion of squared Euclidean distance.

\begin{itemize}
\item[a)] Suppose $\calM$ is $C^3$, then
$\forall  0 \le t < \min \{ 1, \xi \}$, $\theta \in S^{d-1} \subset T_x \calM$, 
\[
\|\iota \circ \exp_x(t  \theta  )-\iota(x)\|^2_{\mathbb{R}^D}
=t^2 + R_q(t), 
\quad |R_q(t)| \le c_{q,3} t^4,
\]
where the constant $c_{q,3}$ depends on 
the $\|\cdot\|_\infty$ norm
of  the second fundamental form $\Second$ of $\iota(\calM)$ and its first covariant derivative, 
and $c_{q,3}$ is uniform for $x \in \calM$.

\item[b)] Given $\calJ \ge 4$, suppose $\calM$ is $C^{ \calJ}$.
 For each $j = 4, \cdots, \calJ$,  there exist an order-$j$ tensor field $\bar q_j$ on $\calM$  
 such that, after defining $q_j( x, v) := \bar q_j (x)( v, \cdots, v)$ for any $v \in T_x \calM$, 
 we have $\forall  0 \le t <  \min\{ 1, \xi \}$, $\theta \in S^{d-1} \subset T_x \calM$, 
 \begin{align*}
\|\iota \circ \exp_x( t  \theta )-\iota(x)\|^2_{\mathbb{R}^D}=t^2+ \sum_{j =4}^{\mathcal{J}} q_j (x, \theta)t^j+R_{q,\mathcal{J}}(t),
\quad 
|R_{q,\mathcal{J}}(t)| \leq c_{q, \mathcal{J}} t^{\mathcal{J}+1},
\end{align*}
where the constant $c_{q, \mathcal{J}}$ depends on 
the  $\|\cdot\|_\infty$ norm of the up to ($\calJ-2$)-th covariant derivatives of $\Second$,
and $c_{q, \mathcal{J}}$ is uniform for $x \in \calM$. 
In addition, $\bar q_j$ can be expressed through the dot products and sums of the second fundamental form  $\Second$
of $\iota(\mathcal{M})$ and its covariant derivatives up to ($j-4$)-th order with coefficients depending on $j$.
\end{itemize}
\end{itemize}
\end{lemma}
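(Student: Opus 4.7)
The plan is to work in normal coordinates at an arbitrary $x \in \calM$ via $\exp_x: B_\xi^{\R^d}(0) \to \calM$ and reduce both expansions to Taylor expansions of smooth Euclidean-valued functions, then extract the tensor field structure from the invariance of the constructions. For part (i), I will invoke the classical expansion of the metric components $g_{ij}(u)$ in normal coordinates: $g_{ij}(u) = \delta_{ij} + O(|u|^2)$, with each Taylor coefficient at $u=0$ being a universal polynomial in the components of the curvature tensor $\texttt{R}_x$ and its iterated covariant derivatives at $x$ (derivable via the Jacobi field equation along radial geodesics). Consequently, $\sqrt{\det g_{ij}(u)}$, which gives the volume element $dV$ in normal coordinates, admits a Taylor expansion whose degree-$\ell$ term is a homogeneous polynomial of degree $\ell$ in $u$. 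Writing $u = t\theta$ and collecting, the coefficient of $t^\ell$ is a homogeneous polynomial of degree $\ell$ in $\theta$; because it arises from contractions of curvature and its covariant derivatives, it takes the form $\bar V_\ell(x)(\theta,\ldots,\theta)$ for a symmetric order-$\ell$ tensor field $\bar V_\ell$ on $\calM$ involving covariant derivatives of $\texttt{R}$ up to order $\ell-2$. The remainder is controlled by Taylor's integral remainder in terms of uniform bounds on the $(\calK+1)$-th derivatives of $g_{ij}$, and compactness of $\calM$ together with $C^{\calK+1}$-regularity produces a single constant $C_{V,\calK}$ uniform in $x$. The low-regularity case (i)(a) follows by specializing to $\calK = 2$, or equivalently by a cruder second-order Taylor argument requiring only $C^2$ regularity of $g$.

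For part (ii), set $F_x := \iota \circ \exp_x: B_\xi^{\R^d}(0) \to \R^D$ and $\phi(t) := F_x(t\theta) - \iota(x)$, where $\gamma(t) := \exp_x(t\theta)$ is a unit-speed geodesic. Then $\|F_x(t\theta) - \iota(x)\|_{\R^D}^2 = \phi(t)\cdot\phi(t)$ is a smooth scalar function of $t$ whose Taylor coefficients I will compute successively. The first derivative $\phi'(0) = d\iota_x(\theta)$ has unit Euclidean norm by the isometric embedding hypothesis, giving the leading $t^2$ term; the second derivative along a geodesic equals the second fundamental form, $\phi''(0) = \Second_x(\theta,\theta)$, which is orthogonal to $d\iota_x(T_x\calM)$ and forces the $t^3$ coefficient of $\phi\cdot\phi$ to vanish. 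Higher derivatives $\phi^{(k)}(0)$ can be expressed, by iterating the geodesic equation $\nabla_{\dot\gamma}\dot\gamma = 0$ together with the Gauss and Weingarten formulas along $\gamma$, as universal polynomial contractions of $\Second$ and its covariant derivatives of order $\le k-2$ with $k$ copies of $\theta$. Collecting terms in $\phi(t)\cdot\phi(t)$ identifies each coefficient $q_j(x,\theta)$ with $\bar q_j(x)(\theta,\ldots,\theta)$ for a symmetric order-$j$ tensor field $\bar q_j$ involving covariant derivatives of $\Second$ up to order $j-4$. Taylor's theorem then bounds the remainder for $t \le 1$ by a constant $c_{q,\calJ}$ controlled uniformly in $x$, by compactness, in terms of the $\|\cdot\|_\infty$-norms of the covariant derivatives of $\Second$ up to order $\calJ - 2$. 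The low-regularity case (ii)(a) follows by specializing to $\calJ = 3$.

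The main obstacle will be making rigorous the claim that the Taylor coefficients realize as evaluations of bona fide tensor fields to arbitrary order. It is routine case-by-case at low orders, but to capture all $\ell \le \calK$ (or $j \le \calJ$) simultaneously one must argue that the coefficient of $t^\ell$, though computed in a specific choice of normal coordinates and orthonormal frame at $x$, is actually a well-defined contraction of a tensor field with $\theta$, independent of these choices. For the volume form this reduces to the universality of the normal-coordinate expansion of $g_{ij}$; for the Euclidean-distance expansion, the analogous claim follows from the tensorial character of the Gauss/Weingarten expressions for the higher derivatives of $\iota\circ\gamma$ along $\gamma$. Once these tensorial expressions are obtained, the uniformity of remainder constants over $x \in \calM$ is a direct consequence of compactness and the continuity of the finitely many manifold-geometric tensors involved.
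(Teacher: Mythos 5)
Your proposal follows essentially the same route as the paper: for (i), Taylor-expand $\sqrt{\det[\tilde g_{ij}]}$ in normal coordinates and realize the coefficients as contractions of tensor fields built from curvature (the paper carries this out via a Faà di Bruno expansion and an explicit identification $D^k\tilde g_{ij}(0) = G_k(x)(E_i,E_j,\cdot,\ldots,\cdot)$); for (ii), Taylor-expand $\varphi(t) = \iota\circ\exp_x(t\theta)$, use the geodesic equation $\nabla_{\dot\gamma}\dot\gamma = 0$ to obtain $\varphi^{(i)}(0) = \nabla^{i-2}\Second_x(\theta,\ldots,\theta)$, exploit $\varphi'\cdot\varphi' = 1$ to kill the cubic term, and collect inner products. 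Your self-identified obstacle (that the coefficients are genuine tensor-field evaluations, independent of the choice of normal frame) is exactly the point the paper addresses by constructing the tensor fields $H_m$, $\bar V_\ell$, and $\bar q_j$ explicitly from the curvature tensor, respectively the second fundamental form, and the resolution you sketch is correct.
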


\begin{proof}[Proof of Lemma \ref{expansion-of-volume-of-form-and-Euclidean-distance}]

(i) To analyze the volume form, we will need to consider the determinant of the Riemannian metric tensor $g$ represented as a $d$-by-$d$ matrix $[g] = [ g_{ij} ]_{ij}$ under a local coordinates. 
We use normal coordinates at $x \in \calM$ provided by $\exp_x$. 
Let $\{E_i\}_{i=1}^d$ be an orthonormal basis of $T_x \calM$. 
We construct $g_{ij}$ for each $i,j$ as a function on $B_\xi(x)$ by
\[
g_{ij} := g( X_i, X_j ), \quad g_{ij}: B_\xi(x) \to \R, \quad i,j = 1,\cdots, d,
\]
where $\{ X_i \}_{i=1}^d$ is a frame on $B_\xi(x)$ induced by $d \exp_x$ (from the frame on $T_x \calM \cong \R^d$)
satisfying that $X_i( x) = E_i$. 
When $\calM$ is $C^r$, $g_{ij}$ is $C^r$ on $B_\xi(x)$, and so is $\det [g]$.
Note that because $\{ X_i\}_i$ is orthonormal at $x$,  we have $g_{ij}(x)=\delta_{ij}$ and  $\nabla g_{ij}(x)=0$. 
We defne $\tilde g_{ij} : = g_{ij} \circ \exp_x$ 
and identify vectors in $\R^d$ with those in $T_x \calM$ (using the basis $\{ E_ i \}_i$).
Then $\tilde g_{ij} $ is $C^r$ on $B_\xi^{\R^d}(0)$,
and so is $\det [\tilde g]$.

Recall that $dV( \exp_x( v )) = \sqrt{  \det [ g( \exp_x( v ))  ] } dv$ for any $v \in B_\xi^{\R^d}(0)$.
Then, for any 
$\theta \in S^{d-1} \subset T_x \calM$
and $ 0 \le t  <\xi$, we have
\[
dV( \exp_x( t \theta ))
= \sqrt{\det [ \tilde{g} ( {\theta} t) ]} t^{d-1}dtd\theta.  
\]
When $\calM$ is $C^{\calK+1}$, 
we apply a one-dimensional Taylor expansion of $\sqrt{\det [ \tilde{g} ( {\theta} t) ]}$ at $t=0$.
Recall that 
$\sqrt{\det  [\tilde g   (0) ]}=1$ and 
$\frac{d}{ds}\sqrt{\det [\tilde{g}  ({\theta}  s)] } \big |_{s=0}=0$. Then,
\begin{equation}\label{eq:vol-form-expand-lemma-proof-1}
\sqrt{\det [ \tilde{g} ( {\theta} t) ]}
 =1+\sum_{\ell=2}^{\mathcal{K}}
 	\frac{1}{\ell!}\frac{d^\ell}{ds^\ell} \sqrt{\det [\tilde{g}({\theta} s)] } \big |_{s=0} 
 	 t^\ell+R_{V, \mathcal{K}}(t).
\end{equation}

\

a) When $\calK = 1$, the expansion \eqref{eq:vol-form-expand-lemma-proof-1} is reduced to 
$\sqrt{\det [\tilde{g}(\theta t)] }=1+R_V(t)$, 
where $ |R_V(t)| \le  \frac{1}{2} \Big|\frac{d^2}{ds^2} \sqrt{ \det [\tilde{g} (\theta s) ] } |_{s=t'}\Big| t^{2}$ for $0 \leq t' \leq t<\xi$. Thus, by the definition of determinant and the Product Rule, $|R_V(t)| \leq C_{V,1}t^2$ with $C_{V,1}$ depending on $\frac{d^\ell}{ds^\ell}\tilde{g}_{ij}(\theta s)|_{s=t'}$ for $\ell =0,1, 2$ and all $i,j$.
 Hence, $C_{V,1}$ depends on $d$ and the uniform bounds of up to the 2nd intrinsic derivative of the Riemannian metric $g$.

\

b) The remainder $|R_{V, \mathcal{K}}(t)| \leq \frac{1}{(\mathcal{K}+1)!} \Big|\frac{d^{\mathcal{K}+1}}{ds^{\mathcal{K}+1}}
	\sqrt{ \det [\tilde{g} (\theta s) ] } 
	|_{s=t'}\Big| t^{\mathcal{K}+1}$ for $0 \leq t' \leq t<\xi$,
and then, similarly as in a),
 $|R_{V, \mathcal{K}}(t)| \leq C_{V, \mathcal{K}} t^{\mathcal{K}+1}$
with $C_{V, \mathcal{K}}$ depending on $d$ and the intrinsic derivatives of $g$ up to ($\mathcal{K}+1$)-th order. 
It remains to show that
\begin{equation}\label{eq:compute-derivative-detg-proof-goal}
\frac{1}{\ell!}\frac{d^\ell}{ds^\ell} \sqrt{\det [\tilde{g}(\theta s)] } \big |_{s=0} 
= \bar{V}_\ell(x)(\theta, \cdots, \theta),
\quad \ell = 2, \cdots, \calK,
\end{equation}
 where $\bar V_\ell$ is a tensor field on $\calM$ as described in the statement of b).
 
To compute the l.f.s. of \eqref{eq:compute-derivative-detg-proof-goal}, let $f(t)=\sqrt{t}$, then the higher-order Chain Rule gives that
\begin{align*}
\frac{d^\ell}{ds^\ell}\sqrt{ \det[ \tilde{g} (\theta s) ] }|_{s=0}
&= \sum_{i_1+2i_2+\cdots+\ell i_\ell=\ell}\binom{\ell}{i_1, \cdots, i_\ell}f^{(i_1+i_2+\cdots+i_\ell)} ( \det [\tilde{g} (0)] )\\
&~~~~~~~~~~~~~~~~~~	
	\prod_{m=1}^\ell \Big(
        \frac{1}{m!}\frac{d^m}{ds^m}\det[ \tilde{g} (\theta s) ]|_{s=0}
    \Big)^{i_m},
\end{align*}
and we always use the convention $u^0 = 1$ even when $u=0$.
Since $\det [\tilde{g} (0)] =1$, we define 
$b(i_1, \cdots, i_\ell):=f^{(i_1+i_2+\cdots+i_\ell)}( \det [\tilde{g} (0)]  )=f^{(i_1+i_2+\cdots+i_\ell)}(1)$. Then, 
\begin{align}
\frac{1}{\ell!}\frac{d^\ell}{ds^\ell}\sqrt{\det[ \tilde{g} (\theta s) ]} & |_{s=0} 
 = \sum_{i_1+2i_2+\cdots+\ell i_\ell=\ell}\frac{b(i_1, \cdots, i_\ell)}{i_1! (1!)^{i_1} \cdots i_\ell! (\ell!)^{i_{\ell}}}
	\prod_{m=1}^\ell \Big( \frac{d^m}{ds^m}\det[ \tilde{g} (\theta s) ]|_{s=0}\Big)^{i_m} \nonumber \\
& = \sum_{i_1+2i_2+\cdots+\ell i_\ell=\ell}\frac{b(i_1, \cdots, i_\ell)}{i_1! (1!)^{i_1} \cdots i_\ell! (\ell!)^{i_{\ell}}}
	\prod_{m=1}^\ell \Big( 
			D^m \det[ \tilde{g}](0)( \theta, \cdots, \theta)
				 \Big)^{i_m}, \label{eq:dlds-detg-proof-2}
\end{align}
where in the second equality we used that 
$
\frac{d^m}{ds^m} \det[ \tilde{g}  ] (\theta s) |_{s=0}
= D^m \det[ \tilde{g}](0)( \theta, \cdots, \theta).
$

The tensor $D^m \det[ \tilde{g}](0)$ will be of central importance for our analysis, and we will show that it can be 
characterized by the curvature tensor on $\calM$ (and its covariant derivatives).
Recall that for $y \in B_\xi(x)$ and $v = \exp_x^{-1}(y) \in B_\xi^{\R^d}(0)$,
$\sqrt{\det [g](y)} =  \sqrt{\det[ \tilde{g}](v)} = {dV(y)}/{dv}$,  
thus the function $\det [g]: B_\xi(x) \to \R$  is invariant to the choice of the basis $\{ E_i\}_i$ at $x$
(even though the matrix function  $[g] = [g_{ij}]_{ij}$ depends on the choice of $\{ E_i\}_i$).
This implies that the tensor
$
D^m \det[ \tilde{g}](0)= \nabla^m \det [g] (x):  T_x \calM \times \cdots \times  T_x \calM \to \R
$
is independent from the choice of $\{ E_i \}_i$. 
Below, we further show that it is a tensor field evaluated at $x$.

By the definition of determinant, 
$\det [ A ] =\sum_{\sigma \in P(d)} A_{1 \sigma(1)} \cdots A_{d \sigma (d)}$
where $P(d)$ denotes the permutation group.
We introduce $s_1, \cdots, s_d$ 
by letting $s_1=0$ and  $s_{i+1}=s_{i}+j_i$,
where $ j_1,\cdots, j_d \ge 0 $ satisfies that $j_1 + \cdots + j_d = m$.
Then, by the Product Rule, for any $v_1, \cdots, v_m \in T_x \calM$,  we have
\begin{align}
 D^m \det[ \tilde{g}](0) &  ( v_1 , \cdots, v_m)
= \sum_{\sigma \in P(d)} D^m \big( \tilde{g}_{1 \sigma(1)}\cdots \tilde{g}_{d \sigma (d)}\big)(0)  (v_1, \cdots, v_m)  \nonumber \\
& =  \sum_{\sigma \in P(d)} 
	\sum_{\substack{j_1+\cdots+j_d=m \\ 
			0 \leq j_1, \cdots ,j_d}}
		\binom{ m }{j_1, \cdots, j_d} \prod_{i=1}^d   D^{j_i} \tilde{g}_{i \sigma(i)}(0)(v_{s_{i}+1}, 	\cdots, v_{s_{i}+j_i}) .
	\label{eq:tensor-field-proof-middle-step2}
\end{align}
We use the following fact that 
\begin{equation}\label{eq:expansion-of-volume-form eqn 0}
D^k \tilde g_{ij}(0)(  {w}_1, \cdots,  {w}_k)
= G_{k}(x)( E_i, E_j, w_1, \cdots, w_k),
\quad \forall w_1, \cdots, w_k \in T_x\calM,
\end{equation}
where $G_{k}$ is an order-($k+2$) tensor field on $\calM$,
and it  can be expressed through the products and sums of 
the curvature tensor  $\texttt{R}$  of $\calM$ 
and its up to ($k-2$)-th covariant derivatives
with coefficients depending on $d$ and $k$.
For example, when $k = 2$,
$$
G_2(z) ( V_1 , V_2, w_1, w_2)=-\frac{2}{3}\texttt{R}_z ( V_1, w_1, V_2, w_2), \quad \forall z \in \calM, \quad \forall V_1, V_2, w_1, w_2 \in T_z \calM.
$$
We further define $G_0 = g$ and $G_1 = 0$, which are tensors of orders $2$ and $3$ respectively,
then \eqref{eq:expansion-of-volume-form eqn 0} holds for all $0 \leq  k \leq \calK$.
Inserting \eqref{eq:expansion-of-volume-form eqn 0} into \eqref{eq:tensor-field-proof-middle-step2}, we have that 
\begin{align*}
 \nabla^m \det [g] (x) ( v_1 , \cdots, v_m)
=  D^m \det[ \tilde{g}](0)   ( v_1 , \cdots, v_m) 
&= H_m(x) ( v_1 , \cdots, v_m),
\end{align*}
where $H_m$ is an order-$m$ tensor field on $\calM$ defined as follows: 
for any $z \in \calM$, let $\{ e_i\}_{i=1}^d$ be an orthonormal basis at $T_z \calM$, define
\begin{align*}
 H_m(z) ( w_1, \cdots, w_m) 
 :=
  \sum_{\sigma \in P(d)}  &
	\sum_{\substack{j_1+\cdots+j_d=m \\ 
			0 \leq j_1, \cdots ,j_d}}
		\binom{ m }{j_1, \cdots, j_d} 
			 \prod_{i=1}^d    
			G_{j_i}(z) ( e_i, e_{\sigma(i)},  \\
&			w_{s_{i}+1}, \cdots, w_{s_{i}+j_i}), 
\end{align*}
and though the expression involves $\{e_i\}_i$ the definition is invariant to the choice. 
To explicitly show the differentiability of $H_m$, 
 let $\{\calE_i \}_{i=1}^d$ be a parallel frame on the neighborhood $B_\xi(x)$ s.t. $\calE_i(x) = E_i$,  
then $\{\calE_i(y)\}_{i=1}^d$ is an orthonormal basis of $T_y \calM$, and 
\begin{align*}
H_m(y) ( w_1, \cdots, w_m) 
 =   \sum_{\sigma \in P(d)} &
	\sum_{\substack{j_1+\cdots+j_d=m \\ 
			0 \leq j_1, \cdots ,j_d}} 
		\binom{ m }{j_1, \cdots, j_d} 
		\prod_{i=1}^d   
		G_{j_i}(y) ( \calE_i(y) , \calE_{\sigma(i)}(y), \\
&		w_{s_{i}+1}, \cdots, w_{s_{i}+j_i}),  
 \quad \forall y \in B_\xi(x), \quad \forall w_1, \cdots, w_m \in T_y \calM,
\end{align*}
 and then the covariant derivatives of $H_m$ can be computed via those of $\texttt{R}$.
This shows that the tensor field $H_m$  can be expressed through the products and sums of 
the curvature tensor $\texttt{R}$ and its up to ($m-2$)-th covariant derivatives,
including a contraction of the tensors (after evaluating at a set of orthonormal basis in the first two variables).

Putting the expression back to \eqref{eq:dlds-detg-proof-2}, we have
\begin{align*}
\frac{1}{\ell!}\frac{d^\ell}{ds^\ell}\sqrt{\det[ \tilde{g} (\theta s) ]} & |_{s=0} 
 & = \sum_{i_1+2i_2+\cdots+\ell i_\ell=\ell}\frac{b(i_1, \cdots, i_\ell)}{i_1! (1!)^{i_1} \cdots i_\ell! (\ell!)^{i_{\ell}}}
	\prod_{m=1}^\ell \Big( 
			H_m(x)( \theta, \cdots, \theta)
				 \Big)^{i_m}, 
\end{align*}
and this proves \eqref{eq:compute-derivative-detg-proof-goal} after we define
\[
\bar V_\ell  = \sum_{i_1+2i_2+\cdots+\ell i_\ell=\ell}\frac{b(i_1, \cdots, i_\ell)}{i_1! (1!)^{i_1} \cdots i_\ell! (\ell!)^{i_{\ell}}}
	\prod_{m=1}^\ell 
			H_m^{i_m}.
\]
Inheriting from $H_m$ the characterization using the curvature tensor and its covariant derivatives, the vector field $\bar V_\ell$ satisfies the description stated in b).

\

(ii) 
In this part of proof we use $\cdot$ to denote vector inner-product in $\R^D$.
At any $x \in \calM$, for any $ \theta \in S^{d-1} \subset T_x \calM$ and $0 \leq t <\xi$, let $\gamma(t)=\exp_x(\theta t)$ be the unit speed geodesic on $\calM$, we have $\gamma(0) =x$ and $\gamma'(0) = \theta$.  
Let $\varphi(t)=\iota(\gamma(t))$ and $\varphi^{(i)}(t)$ denote its $i$-th derivative. 
Note that $\varphi^{(2)}(s)=\Second_{\gamma(s)} (\dot\gamma(s),\dot\gamma(s))$. 
We claim that when $\calM$ is $C^\mathcal{J}$,
\begin{align}\label{expansion-of-Euclidean-distance eqn3}
\varphi^{(i)}(s)=\nabla^{i-2}\Second_{\gamma(s)} (\dot\gamma(s),\cdots,\dot\gamma(s)), 
\quad \forall 0 \leq s<\xi,
\quad i = 2, \cdots, \calJ,
\end{align}
which can be proved by induction: 
the equation in  \eqref{expansion-of-Euclidean-distance eqn3} holds at $i=2$; 
suppose it holds at $i$, at $i+1$, we have 
$
\varphi^{(i+1)}(s)=\frac{d}{ds}(\varphi^{(i)}(s))
=\nabla_{\dot\gamma(s)}\nabla^{i-2}\Second_{\gamma(s)} (\dot\gamma(s),\cdots, \dot\gamma(s))
=\nabla^{i-1}\Second_{\gamma(s)} (\dot\gamma(s),\cdots, \dot\gamma(s)),
$
where we use $\nabla_{\dot\gamma} \dot\gamma=0$ along $\gamma$ in the last step.

\

a) If $\calM$ is $C^3$, then
$$
\iota \circ \exp_x( \theta t )-\iota(x)=\varphi^{(1)}(0)t+\frac{1}{2}\varphi^{(2)}(0)t^2+r_q(t),$$
where 
$\|r_q(t)\|_{\mathbb{R}^D} \leq \frac{1}{6}\max_{0 \leq s \leq t}\|\varphi^{(3)}(s)\|_{\mathbb{R}^D} t^{3}$. By \eqref{expansion-of-Euclidean-distance eqn3}, 
we have  $\|r_q(t)\|_{\mathbb{R}^D} \leq c_0 t^3$, where $c_0 :=\|\nabla \Second\|_\infty/6$. 
Since $\varphi^{(1)}(t) \cdot \varphi^{(1)}(t)=1$, we have $\varphi^{(1)}(t) \cdot \varphi^{(2)}(t)=0$. Therefore
\begin{align*}
\|\iota \circ \exp_x( \theta t )-\iota(x)\|^2_{\mathbb{R}^D}=t^2+\frac{1}{4}\varphi^{(2)}(0)\cdot \varphi^{(2)}(0) t^4+2 \varphi^{(1)}(0) \cdot r_q(t) t + \varphi^{(2)}(0) \cdot r_q(t) t^2,
\end{align*}
and then
$$R_q(t)=\frac{1}{4}\varphi^{(2)}(0)\cdot \varphi^{(2)}(0) t^4+2 \varphi^{(1)}(0) \cdot r_q(t) t + \varphi^{(2)}(0)\cdot   r_q(t) t^2.$$
Since $t <1$ and $\|\varphi^{(1)}(0)\|_{\mathbb{R}^D}=1$, $|R_q(t)|\leq c_{q,3}t^4$ where $c_{q,3}$ depends on$\|\varphi^{(2)}(0)\|_{\mathbb{R}^D}$ and $c_0$. 
Recall that $c_0 = \| \nabla \Second \|_\infty/6$
and \eqref{expansion-of-Euclidean-distance eqn3}, $c_{q,3}$ depends on the $\|\cdot\|_\infty$ norm of $\Second$ and its first covariant derivative.

\

b) Suppose $\calM$ is $C^{\mathcal{J}}$ with $\mathcal{J} \geq 4$, then 
$$\iota \circ \exp_x( \theta t )-\iota(x)=\sum_{i=1}^{\mathcal{J}-1}\frac{1}{i!}\varphi^{(i)}(0)t^i+r_{q,\mathcal{J} }(t),$$
where $r_{q,\mathcal{J} }(t) \in \mathbb{R}^D$ and $\|r_{q,\mathcal{J} }(t)\|_{\mathbb{R}^D} \leq \frac{1}{\mathcal{J}!} \max_{0 \leq s \leq t}\|\varphi^{(\mathcal{J})}(0)\|_{\mathbb{R}^D} t^{\mathcal{J}}$.
By \eqref{expansion-of-Euclidean-distance eqn3}, we have  $\|r_{q,\mathcal{J} }(t)\|_{\mathbb{R}^D} \leq c_{\mathcal{J}} t^{\mathcal{J}}$ with $c_{\mathcal{J}} =\|\nabla^{\mathcal{J}-2} \Second\|_\infty/\mathcal{J}!$. Therefore, we have
\begin{align}\label{expansion-of-Euclidean-distance eqn1}
\|\iota \circ \exp_x( \theta t )-\iota(x)\|^2_{\mathbb{R}^D}=\sum_{j=2}^{\mathcal{J}}\Big(\sum_{i+\ell=j, \hspace{1.5mm} 1\leq i, \ell}\frac{1}{i! \ell!}\varphi^{(i)}(0) \cdot \varphi^{(\ell)}(0)\Big) t^j +R_{q,\mathcal{J}}(t),
\end{align}
where
$$R_{q,\mathcal{J}}(t)=\sum_{j=\mathcal{J}+1}^{2\mathcal{J}-2}\Big(\sum_{i+\ell=j, \hspace{1.5mm} 2 \leq i, \ell \leq \mathcal{J}-1}\frac{1}{i! \ell!}\varphi^{(i)}(0) \cdot \varphi^{(\ell)}(0)\Big) t^j+ \sum_{j=1}^{\mathcal{J}-1}\frac{2}{j!} \Big( \varphi^{(j)}(0) \cdot r_{q,\mathcal{J} }(t) \Big) t^j.$$
Since $t <1$ and $\|\varphi^{(1)}(0)\|_{\mathbb{R}^D}=1$, $|R_{q,\mathcal{J}}(t)| \leq c_{q,\mathcal{J}}t^{\mathcal{J}+1}$ where $c_{q,\mathcal{J}}$ depends on$\|\varphi^{(j)}(0)\|_{\mathbb{R}^D}$ for $j=2, \cdots, \mathcal{J}-1$ and $c_{\mathcal{J}}$. Hence, by \eqref{expansion-of-Euclidean-distance eqn3}, $c_{q,\mathcal{J}}$ depends on the $\|\cdot\|_\infty$ norm of the up to ($\calJ-2$)-th covariant derivatives of $\Second$.

Since $\varphi^{(1)}(t) \cdot \varphi^{(1)}(t)=1$, by applying the high order product rule, we have $\varphi^{(1)}(t) \cdot \varphi^{(2)}(t)=0$ and for $j>2$,
$$
\varphi^{(1)}(t) \cdot \varphi^{(j)}(t)=-\frac{1}{2}\sum_{i=1}^{j-2} \binom{j-1}{i}\varphi^{(1+i)}(t) \cdot \varphi^{(j-i)}(t).
$$
Hence, \eqref{expansion-of-Euclidean-distance eqn1} can be simplified to
\begin{align}\label{expansion-of-Euclidean-distance eqn2}
\|\iota \circ \exp_x( \theta t )-\iota(x)\|^2_{\mathbb{R}^D}
& =t^2+\sum_{j=4}^{\mathcal{J}}\Big(
	\sum_{i+\ell=j, \hspace{1.5mm} 2\leq i, \ell \leq j-2}\frac{1}{i! \ell!}\varphi^{(i)}(0) \cdot \varphi^{(\ell)}(0) \nonumber\\
&~~~
	-\frac{1}{(j-1)!}\sum_{i=1}^{j-3} \binom{j-2}{i}\varphi^{(1+i)}(0) \cdot \varphi^{(j-i-1)}(0)\Big) t^j +R_{q,\mathcal{J}}(t),
\end{align}
For $4 \leq j \leq \mathcal{J}$, we define the  order-$j$  tensor field as follows: for any vector fields $U_1 ,\cdots, U_j$,
\begin{align*}
\bar{q}_j(U_1, \cdots, U_j) 
&: = \sum_{i+\ell=j, \hspace{1.5mm} 2\leq i, \ell \leq j-2}\frac{1}{i! \ell!}\nabla^{i-2}\Second(U_1,\cdots, U_i) \cdot \nabla^{\ell-2}\Second(U_{i+1},\cdots, U_{i+\ell}) \\
&~~~
-\frac{1}{(j-1)!}\sum_{i=1}^{j-3} \binom{j-2}{i}\nabla^{i-1}\Second(U_1,\cdots, U_{i+1}) \cdot \nabla^{j-i-3}\Second(U_{i+2},\cdots, U_{j}).
\end{align*}
By comparing to \eqref{expansion-of-Euclidean-distance eqn2} and using  \eqref{expansion-of-Euclidean-distance eqn3}, 
this proves the expansion in b) after we define $q_j( x, v) := \bar q_j (x)( v, \cdots, v)$ for any $v \in T_x \calM$.
Finally, $\bar{q}_j$ is expressed through the dot products and sums of the second fundamental form $\Second$ 
and its covariant derivatives up to ($j-4$)-th order with coefficients depending on $j$.  
\end{proof}

\subsubsection{Taylor expansion of manifold H\"older function and a lemma on tensor field}

\begin{lemma}\label{lemma:taylor-f-intrinsic-Holder}
Given $f \in C^{k, \beta}(\calM)$ for non-negative integer $k$ and $0 <\beta \le 1$,
for any $x \in \calM$, any $\theta \in S^{d-1}$ and $t < \xi$, we have
\[
f( \exp_x(t \theta)) = \sum_{i=0}^k \frac{t^i}{i!} \nabla_\theta^i f(x)  + r_f(t),
\quad |r_f(t)| \le \frac{1}{k!} L_{k, \beta}(f, x) t^{k+\beta}.
\]
\end{lemma}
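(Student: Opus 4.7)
}
The plan is to reduce the problem to a one-dimensional Taylor expansion along the radial geodesic. Define $\gamma(s) := \exp_x(s\theta)$ for $0 \le s \le t < \xi$ and set $\phi(s) := f(\gamma(s))$. Since $\gamma$ is the unit-speed geodesic with $\gamma(0) = x$, $\dot\gamma(0) = \theta$, we have the two facts that will drive the computation: (a) $\nabla_{\dot\gamma}\dot\gamma = 0$ along $\gamma$ (geodesic equation); and (b) $d_\calM(x,\gamma(s)) = s$ and $\dot\gamma(s) = P_{x,\gamma(s)}\theta$ (parallel transport along $\gamma$).

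The first step is to verify by induction that $\phi^{(i)}(s) = \nabla_{\dot\gamma(s)}^i f(\gamma(s))$ for all $0 \le i \le k$. The base case $i=1$ is the definition of $\nabla f$. For the inductive step, interpret $\nabla^i f$ as an order-$i$ tensor field on $\calM$; then differentiating $\phi^{(i)}(s) = (\nabla^i f)(\dot\gamma(s),\ldots,\dot\gamma(s))$ and applying the product rule for the covariant derivative gives a $\nabla^{i+1}f(\dot\gamma,\dot\gamma,\ldots,\dot\gamma)$ term plus $i$ terms each involving $\nabla_{\dot\gamma}\dot\gamma$, all of which vanish by (a). Evaluating at $s=0$ yields $\phi^{(i)}(0) = \nabla_\theta^i f(x)$, so the polynomial part of the stated expansion is exactly the standard Taylor polynomial of $\phi$ at $0$.

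The second step is to bound the remainder. Since $f \in C^{k,\beta}(\calM)$, the function $\phi \in C^k[0,\xi)$, and Taylor's theorem with integral remainder gives
\[
r_f(t) = \phi(t) - \sum_{i=0}^k \frac{t^i}{i!}\phi^{(i)}(0) = \frac{1}{(k-1)!}\int_0^t (t-s)^{k-1}\bigl[\phi^{(k)}(s) - \phi^{(k)}(0)\bigr]\,ds
\]
for $k \ge 1$ (the case $k=0$ is immediate from the definition of $L_{0,\beta}(f,x)$). By fact (b), $\phi^{(k)}(s) = \nabla_{P_{x,\gamma(s)}\theta}^k f(\gamma(s))$ while $\phi^{(k)}(0) = \nabla_\theta^k f(x)$, so by the definition of $L_{k,\beta}(f,x)$,
\[
|\phi^{(k)}(s) - \phi^{(k)}(0)| \le L_{k,\beta}(f,x)\, d_\calM(x,\gamma(s))^\beta = L_{k,\beta}(f,x)\, s^\beta.
\]
Substituting and computing the beta integral,
\[
|r_f(t)| \le \frac{L_{k,\beta}(f,x)}{(k-1)!}\int_0^t (t-s)^{k-1} s^\beta\,ds = \frac{\Gamma(\beta+1)\,\Gamma(k)}{(k-1)!\,\Gamma(k+\beta+1)}\, L_{k,\beta}(f,x)\, t^{k+\beta}.
\]
The constant equals $1/\bigl[(\beta+1)(\beta+2)\cdots(\beta+k)\bigr] \le 1/k!$ since $\beta \ge 0$, which is exactly the claimed bound.

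The only mildly delicate step is the tensorial induction in the first paragraph; the care needed is in using the product rule with a vector field that is only defined along $\gamma$, which is handled cleanly by the vanishing of $\nabla_{\dot\gamma}\dot\gamma$. Everything else is a one-dimensional Taylor computation.
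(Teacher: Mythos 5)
Your proof is correct and follows essentially the same strategy as the paper's: reduce to a one-dimensional Taylor expansion of $\phi(s)=f(\exp_x(s\theta))$ along the radial geodesic, use the parallel-transport identity $\phi^{(i)}(s)=\nabla^i_{P_{x,\gamma(s)}\theta}f(\gamma(s))$ enabled by $\nabla_{\dot\gamma}\dot\gamma=0$, and apply the H\"older bound on $\nabla^k f$. The only (cosmetic) difference is that the paper uses the Lagrange form of the remainder at order $k-1$ and then adds-and-subtracts $\nabla^k_\theta f(x)$, whereas you use the integral form at order $k$ and evaluate the beta integral, which actually yields the slightly sharper constant $1/[(\beta+1)\cdots(\beta+k)]\le 1/k!$.
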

\begin{remark}
In the bound of the remainder term,
the factor $L_{k, \beta}(f, x)  \le L_{k, \beta}(f)  \le \|f\|_{k,\beta}$. 
The lemma derives a Taylor expansion of $f$ 
but it differs from the Taylor expansion in normal coordinates of $f$, namely that of $ f \circ \exp_x$ as a function on $\R^d$. 
Specifically, the proof uses the fact that  along the geodesic $\gamma(t)=\exp_x(t\theta)$ from $x$,
the $t$-derivatives of the function $f \circ \gamma$
 can always be interpreted as covariant derivatives of $f$ at $\gamma(t)$ in the direction of $\dot \gamma$, 
due to that $\dot \gamma$ is parallel along $\gamma$. 
In contrast, the partial derivatives of $ f \circ \exp_x$ in $\R^d$ usually do not equal the covariant derivatives of $f$ unless it is at the origin.
\end{remark}

\begin{proof}[Proof of Lemma \ref{lemma:taylor-f-intrinsic-Holder}]
For fixed $x$, $\theta$, 
we use the geodesic $\gamma(t)$ s.t. $\gamma(0) = x$, $\dot \gamma(0)=\theta$,
and then we consider 
$F(t): = f(\exp_x( t \theta) ) = f( \gamma(t))$
as a one-dimensional function of $t$.
For any $l = 0,\cdots, k$  and $ |t | < \xi$,  we have
\[
F^{(l)}(t) = \nabla^l f( \gamma(t) ) ( \dot \gamma(t), \cdots, \dot \gamma(t) ),
\quad \dot \gamma(t) = P_{x, \gamma(t) } \theta,
\]
where $P_{x,y}: T_x \calM \to T_y \calM$ is the parallel transport.
In particular, $F^{(l)}(0) = \nabla^l f( x ) ( \theta, \cdots, \theta )$.

By Taylor expansion of $F(t)$ at $t=0$ up to $(k-1)$-th derivative, we have
\[
f( \exp_x(t \theta)) 
= F(t)
= \sum_{i=0}^{k-1} 
\frac{t^i}{i!} \nabla_\theta^i f(x)  + 
\frac{t^k}{k!}  \nabla_{ P_{x, \gamma(s) } \theta }^k f( \gamma( s) ),
\quad \text{for some $s \in [0,t]$.}
\]
By the definition of $L_{k,\beta}(f,x)$, we have that 
\[
| \nabla^k_{P_{x, \gamma(s) } \theta } f( \gamma(s) ) - \nabla^k_\theta f(x) |
 \le L_{k,\beta}(f,x) s^\beta 
 \le L_{k,\beta}(f,x) t^\beta.
\]
This gives that 
\[
f( \exp_x(t \theta)) = \sum_{i=0}^{k-1} 
\frac{t^i}{i!} \nabla_\theta^i f(x)  + 
\frac{t^k}{k!}  ( \nabla^k_\theta f(x) +  \tilde r(t)  ),
\quad |\tilde r(t)| \le L_{k,\beta}(f,x) t^\beta,
\]
which proves the lemma.
\end{proof}
\begin{lemma}\label{lemma:bound-Lip-tensor-field-derivative}
Suppose $T^{(r)}$ is an $C^{p+1}$ order-$r$ tensor field on $\calM$, 
$r \ge 0$, $p \ge 0$,
and recall the condensed notation \eqref{eq:condense-notation-tensor-Ti}.
Then, for any $x \in \calM$,
\begin{align*}
\sup_{y \in B_\xi(x) } \sup_{v, \, \theta \in S_x^{d-1}} 
|\nabla_v^p T^{(r)}(x)(\theta) - \nabla_{P_{x,y} v}^p T^{(r)}(y)( P_{x,y} \theta)   |
\le M_{p+1} d_\calM(x,y),
 \end{align*}
where the constant $M_{p+1}$ is defined as
\[
M_{p+1}:=
\sup_{x \in \calM}
\sup_{ v, \, \theta \in S_x^{d-1}} | \nabla^{p+1} T^{(r)}(x)(\underbrace{v, \cdots, v}_{\text{$p+1$ many}}, 
											\underbrace{\theta, \cdots, \theta}_{\text{$r$ many}}) |.
\]
\end{lemma}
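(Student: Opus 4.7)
The plan is to parametrize the unique minimizing geodesic $\gamma$ joining $x$ to $y$ inside $B_\xi(x)$ and reduce the estimate to a one-dimensional fundamental-theorem-of-calculus bound. Let $\gamma:[0,d_\calM(x,y)] \to \calM$ be the unit-speed geodesic with $\gamma(0)=x$ and $\gamma(d_\calM(x,y))=y$, and set $V(s):=P_{x,\gamma(s)}v$, $\Theta(s):=P_{x,\gamma(s)}\theta$, the parallel transports of $v$ and $\theta$ along $\gamma$. Define
\[
F(s):=\nabla^p T^{(r)}(\gamma(s))\bigl(\underbrace{V(s),\ldots,V(s)}_{p\text{ copies}},\underbrace{\Theta(s),\ldots,\Theta(s)}_{r\text{ copies}}\bigr),
\]
so that $F(0)=\nabla^p_v T^{(r)}(x)(\theta)$ and $F(d_\calM(x,y))=\nabla^p_{P_{x,y}v}T^{(r)}(y)(P_{x,y}\theta)$ are precisely the two scalars whose difference must be bounded.

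The main computation is $F'(s)$, via the Leibniz rule for the covariant derivative of a tensor field paired with vector fields. Expanding
\begin{align*}
F'(s)&=\nabla_{\dot\gamma(s)}\bigl[\nabla^p T^{(r)}(V,\ldots,V,\Theta,\ldots,\Theta)\bigr]\\
&=(\nabla_{\dot\gamma}\nabla^p T^{(r)})(V,\ldots,\Theta)+\sum_{i=1}^{p}\nabla^p T^{(r)}(\ldots,\nabla_{\dot\gamma}V,\ldots)+\sum_{j=1}^{r}\nabla^p T^{(r)}(\ldots,\nabla_{\dot\gamma}\Theta,\ldots),
\end{align*}
I would invoke the parallelism of $V$ and $\Theta$ along $\gamma$, i.e.\ $\nabla_{\dot\gamma}V\equiv 0$ and $\nabla_{\dot\gamma}\Theta\equiv 0$, to kill every term in the two sums. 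What remains is
\[
F'(s)=\nabla^{p+1}T^{(r)}(\gamma(s))\bigl(\dot\gamma(s),V(s),\ldots,V(s),\Theta(s),\ldots,\Theta(s)\bigr),
\]
an evaluation of $\nabla^{p+1}T^{(r)}$ on $p{+}1{+}r$ unit tangent vectors at $\gamma(s)$. Bounding this by $M_{p+1}$ and integrating yields $|F(d_\calM(x,y))-F(0)|\le M_{p+1}\,d_\calM(x,y)$, and taking suprema over $v,\theta \in S_x^{d-1}$ and $y \in B_\xi(x)$ gives the lemma.

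The one delicate point is the step $|F'(s)|\le M_{p+1}$: as defined, $M_{p+1}$ is the supremum over the ``diagonal'' tuple $(v,\ldots,v,\theta,\ldots,\theta)$, while $F'(s)$ involves the non-diagonal tuple $(\dot\gamma,V,\ldots,V,\Theta,\ldots,\Theta)$ in which $\dot\gamma$ need not coincide with $V$, and on a general curved manifold $\nabla^{p+1}T^{(r)}$ is not symmetric in its first $p+1$ arguments. This is the main obstacle, but it is a bookkeeping issue rather than a real one: in the application (Lemma~\ref{lemma:22_holder}(ii)), the constant furnished for $M_{p+1}$ -- for instance $2^{p}\bar C_A\bar C_V$ via \eqref{eq:prod-rule-AV-proof-2} -- is itself obtained by a product-rule argument that extends verbatim from diagonal to arbitrary unit-vector inputs, so interpreting $M_{p+1}$ as the corresponding operator-norm-type uniform bound costs nothing and closes the proof. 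The required $C^{p+1}$ regularity of $T^{(r)}$ is precisely what makes $\nabla^{p+1}T^{(r)}$ a continuous tensor field, ensuring $M_{p+1}<\infty$ and justifying the use of the fundamental theorem of calculus for $F$.
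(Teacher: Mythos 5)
Your proposal matches the paper's proof in every structural respect: both parametrize the unique minimizing geodesic, parallel-transport $v$ and $\theta$, use parallelism together with $\nabla_{\dot\gamma}\dot\gamma = 0$ to reduce the derivative to a single evaluation of $\nabla^{p+1}T^{(r)}$, and then bound this derivative. (The paper applies the Mean Value Theorem where you integrate; this is cosmetic.) The one step you flag as delicate is exactly where the paper does its real work, and you missed the mechanism. Rather than enlarging $M_{p+1}$ to an operator-norm supremum, the paper keeps the diagonal definition: at $z=\gamma(s)$ and for fixed $\theta$, it regards $T_\theta := \nabla^{p+1}T^{(r)}(z)(\cdot,\ldots,\cdot,\theta,\ldots,\theta)$ as a \emph{symmetric} $(p+1)$-tensor -- invoking the normal-coordinate representation $\nabla^k f(x)=D^k\tilde f(0)$ of Section \ref{sec:rem_manifold} -- and then applies Banach's Theorem to conclude $|T_\theta(\dot\gamma, V, \ldots, V)| \le \sup_{v}|T_\theta(v,\ldots,v)| \le M_{p+1}$. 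This closes the argument with $M_{p+1}$ exactly as stated, without any enlargement of the constant.

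That said, your instinct to distrust the symmetry is justified. By the Ricci identity, the iterated covariant derivative $\nabla^{p+1}T^{(r)}$ is \emph{not} symmetric in its first $p+1$ slots once curvature is present (the failure occurs for any $p\ge 1$ when $r\ge 1$, and for any $p\ge 2$ when $r=0$), and the identification $\nabla^k f(x)=D^k\tilde f(0)$ holds on the diagonal but not as an equality of tensors for $k\ge 3$. So the paper's symmetry-plus-Banach step is more delicate than it acknowledges; your proposed repair -- interpreting $M_{p+1}$ as an operator-norm-type bound, which is harmless in the application because the constants $\bar C_A$, $\bar C_V$ in \eqref{eq:prod-rule-AV-proof-2} already come from $\|\cdot\|_\infty$ bounds on curvature and $\Second$ rather than purely diagonal evaluations -- is the more robust way to close the argument. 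But relative to the paper's own stated proof, the Banach's-Theorem step is the ingredient you left out.
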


\begin{proof}[Proof of Lemma \ref{lemma:bound-Lip-tensor-field-derivative}]
For any $x\in\calM$ and any $y \in B_\xi (x)$, we consider the radial geodesic $\gamma(t) = \exp_x(t w)$ from $x$ to $y$ s.t. $\gamma(s) = y$, $s = d_\calM(x,y)$,
 $\gamma(0)=x$, and $\dot \gamma(0) = w \in S_x^{d-1}$.
Given any $v, \, \theta \in S_x^{d-1}$ fixed,
we can define two parallel vector fields $U$ and $V$ along the geodesic $\gamma(t)$ for $|t| \le \xi$ as 
\[
V( \gamma(t) ) = P_{x, \gamma(t)} v,
\quad 
U( \gamma(t) ) = P_{x, \gamma(t) } \theta, 
\]
and then we have 
\[
\nabla_{\dot \gamma} V = 0,  
\quad \nabla_{\dot \gamma} U = 0, 
\quad \text{along the geodesic $\gamma(t)$, $| t | < \xi$.}
\]
We consider the function 
\[
H( t) : = \nabla^p_{P_{x, \gamma(t)} v} T^{(r)} (\gamma(t) )( P_{x, \gamma(t)} \theta)
	=  \nabla^p T^{(r)} ( \underbrace{V, \cdots, V}_{\text{$p$ many}},  \underbrace{U, \cdots, U}_{\text{$r$ many}} )|_{\gamma(t)},
\]
and we have
\[
H(0) = \nabla^p_v T^{(r)}(x)(\theta), \quad 
H(s) = \nabla^p_{P_{x, y} v} T^{(r)} (y )( P_{x, y} \theta).
\]
By Mean Value Theorem, $H(s) - H(0) = s H'(t)$ at some $t \in [0,s]$, and observe that 
\[
H'(t) = \nabla^{p+1} T^{(r)} (\gamma(t))( \dot \gamma(t),
	\underbrace{V(\gamma(t)), \cdots, V(\gamma(t))}_{\text{$p$ many}},  
	\underbrace{U (\gamma(t)), \cdots, U(\gamma(t))}_{\text{$r$ many}} )
\]
because $V$ and $U$ are parallel along $\gamma$. 
To bound $|H'(t)|$, 
we let $\gamma(t) =z $, 
and for each fixed $\theta \in S_z^{d-1}$ we consider the tensor 
\[
T_\theta(w_1, \cdots, w_{p+1}) 
:= \nabla^{p+1} T^{(r)}(z)( w_1, \cdots, w_{p+1},  
		\underbrace{\theta, \cdots, \theta}_{\text{$r$ many}}),
		\quad  w_1, \cdots, w_{p+1} \in S_z^{d-1}.
\] 
Because $T^{(r)}$ is $C^{p+1}$ on $\calM$,
$T_\theta$ is a symmetric tensor of order $p+1$ (under normal coordinates),
and then by  Banach's Theorem (see Section \ref{sec:rem_manifold}) we have
$| T_\theta(w_1, \cdots, w_{p+1})  | \le \sup_{v \in S_z^{d-1}} | T_\theta(v, \cdots, v)  |$, namely, 
we have that $\forall \theta \in S_z^{d-1}$, 
\[
|\nabla^{p+1} T^{(r)}(z)( w_1, \cdots, w_{p+1},  
		\underbrace{\theta, \cdots, \theta}_{\text{$r$ many}}) |
\le \sup_{v \in S_z^{d-1}} 
|\nabla^{p+1} T^{(r)}(z)( \underbrace{v, \cdots, v}_{ \text{$p+1$ many} },  
		\underbrace{\theta, \cdots, \theta}_{\text{$r$ many}}) |.
\]
Back to the expression of $H'(t)$, since $\dot \gamma(t), \, V(\gamma(t)) ,\, U(\gamma(t)) \in S_z^{d-1}$,
we have
\begin{align*}
|H'(t)| & \le  \sup_{w, \, v, \, \theta \in S_{z }^{d-1}} | \nabla^{p+1} T^{(r)}(z)(w, \underbrace{v, \cdots, v}_{\text{$p$ many}}, 
											\underbrace{\theta, \cdots, \theta}_{\text{$r$ many}}) | \\
& 	\le  \sup_{ v, \, \theta \in S_{z }^{d-1}} | \nabla^{p+1} T^{(r)}(z)(\underbrace{v, \cdots, v}_{\text{$p+1$ many}}, 
											\underbrace{\theta, \cdots, \theta}_{\text{$r$ many}}) |
											\le M_{p+1}.
\end{align*}
This proves that 
$|\nabla_v^p T^{(r)}(x)(\theta) - \nabla_{P_{x,y} v}^p T^{(r)}(y)( P_{x,y} \theta)   |
= |H(0) - H(s)| \le s M_{p+1} $.
\end{proof}

\subsection{Auxiliary lemmas}

The following lemma is used in the proof of Lemma  \ref{lemma:22_holder}.
\begin{lemma}\label{upper incomplete gamma bound}
Suppose $ d, k \in \mathbb{Z}$, $d\geq 1$, $k \geq 0$.
If  $0<\epsilon<{1}/{e}$, then there exists a constant $c(k,d)$ only depending on $k$ and $d$ such that 
\[
\int_{2\sqrt{(d+k+1)\log(\frac{1}{\epsilon})}}^\infty e^{-t^2/2} t^i dt \leq c(k,d) \epsilon^{d+k+1},
\quad \forall  0 \leq i \leq 2k+d-1.
\] 
\end{lemma}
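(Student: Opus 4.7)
The plan is to use the clean splitting $e^{-t^2/2} = e^{-t^2/4} \cdot e^{-t^2/4}$, exploiting the fact that the lower limit has been engineered precisely so that evaluating one of these factors at $t = R := 2\sqrt{(d+k+1)\log(1/\epsilon)}$ extracts exactly the target factor $\epsilon^{d+k+1}$. Indeed, writing $N := d+k+1$, we have $R^2/4 = N \log(1/\epsilon)$ and hence $e^{-R^2/4} = \epsilon^{d+k+1}$. By monotonicity of $e^{-t^2/4}$, this gives the uniform bound $e^{-t^2/4} \leq \epsilon^{d+k+1}$ for all $t \geq R$.

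Applying this pointwise under the integral, for any $i$ in the allowed range,
\[
\int_R^\infty e^{-t^2/2}\, t^i\, dt \;\leq\; \epsilon^{d+k+1} \int_R^\infty e^{-t^2/4}\, t^i\, dt \;\leq\; \epsilon^{d+k+1} \int_0^\infty e^{-t^2/4}\, t^i\, dt.
\]
The remaining integral is a pure Gaussian moment: the substitution $u = t/2$ shows it equals $2^i\, \Gamma((i+1)/2)$, a constant depending only on $i$. Since $i$ ranges over the finite set $\{0, 1, \ldots, 2k+d-1\}$ determined by $(k,d)$, one may take $c(k,d) := \max_{0 \leq i \leq 2k+d-1} 2^i\, \Gamma((i+1)/2)$, which completes the proof.

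There is no real obstacle: the key observation is that the $\sqrt{\log}$ growth of $R$ becomes immaterial once a full factor $\epsilon^{d+k+1}$ has been pulled out, so one sidesteps the bookkeeping that an integration-by-parts asymptotic $I_i \sim R^{i-1} e^{-R^2/2}$ would require (that route would still work, but would then need a separate step verifying that $\epsilon^{d+k+1}(\log(1/\epsilon))^{(i-1)/2}$ is bounded on $(0, 1/e)$). As a by-product, the splitting argument does not actually use the restriction $\epsilon < 1/e$, and the stated inequality in fact holds on all of $(0,1)$.
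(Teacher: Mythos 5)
Your proof is correct, and it shares the paper's core observation: the lower limit $R = 2\sqrt{(d+k+1)\log(1/\epsilon)}$ is engineered so that $e^{-R^2/4} = \epsilon^{d+k+1}$, which one harvests by splitting $e^{-t^2/2}$ as $e^{-t^2/4}\cdot e^{-t^2/4}$. Your execution differs in where the remaining work goes. You bound the second $e^{-t^2/4}$ factor uniformly by its value at the left endpoint and then dominate the residual $\int_R^\infty e^{-t^2/4}t^i\,dt$ by the full Gaussian moment $\int_0^\infty e^{-t^2/4}t^i\,dt = 2^i\Gamma\!\left(\tfrac{i+1}{2}\right)$. The paper instead first majorizes $t^i \le t^{2k+d-1}$ (valid for $t\ge 1$), then bounds $e^{-t^2/4}t^{2k+d-2}$ by a constant, and evaluates $\int_{t_0}^\infty e^{-t^2/4}\,t\,dt = 2\epsilon^{d+k+1}$ exactly. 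Because the paper leads with $t^i\le t^{2k+d-1}$, it needs $t_0 > 1$ (and in fact checks $t_0>2$), which is where $\epsilon < 1/e$ enters; your route never uses that monotonicity step, and your by-product observation that the restriction $\epsilon < 1/e$ is superfluous to the inequality is therefore correct. Both routes are elementary and yield explicit constants; yours is slightly leaner and makes the mechanism behind the choice of $R$ more transparent.
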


\begin{proof}[Proof of Lemma \ref{upper incomplete gamma bound}]
Let $t_0 = 2\sqrt{(d+k+1)\log(\frac{1}{\epsilon})}$. 
Since $\epsilon< {1}/{e}$, 
then $t_0 >2$ always. 
For any $t>2$ and any $0 \leq i \leq 2k+d-1$,
$ e^{-t^2/2} t^i \leq  e^{-t^2/2} t^{2k+d-1}$.
In addition, there exists a constant $c(k,d)$ such that
\[
e^{-t^2/2} t^{2k+d-1} \le c(k,d) e^{-t^2/4}\frac{t}{2}, \quad \forall t \in (2,\infty).
\]
Putting together, we have
$$
\int_{t_0}^\infty e^{-t^2/2} t^i dt 
\le \int_{t_0}^\infty e^{-t^2/2} t^{2k+d-1} dt  
\leq \frac{c(k,d)}{2} \int_{t_0}^\infty e^{-t^2/4} t dt  
=  c(k,d) \epsilon^{d+k+1},
$$
where in the last inequality we used that 
$\int_{t_0}^\infty e^{-t^2/4} t dt  = 2 e^{-t_0^2/4} = 2 \epsilon^{ d+k+1}$.
\end{proof}

We also recall the classical Bernstein inequality used in the proof of Lemma \ref{lemma:concentrate_v_hat}.
\begin{lemma}[Bernstein inequality]
    \label{lemma:berstein}
   Let $\xi_j$ be i.i.d bounded random variables, 
   $j = 1, \cdots, N$. 
   $\E(\xi_j) = 0$,
   $|\xi_j| \le L $ and $\E \xi_j^2 \le \nu$ for positive constants $L$ and $\nu$.
   Then,  $\forall \tau > 0$, 
   $$ 
   \Pr 
   \Big[ \frac{1}{N}\sum_{j=1}^N \xi_j > \tau \Big], 
   \Pr 
   \Big[ \frac{1}{N}\sum_{j=1}^N \xi_j < -\tau \Big] \le \exp 
   \{- \frac{\tau^2N}{2(\nu+\frac{\tau L}{3})} \}. 
   $$
   In particular, when $\tau L< 3 \nu$, both the tail probabilities are bounded by $\exp\{-\frac{1}{4}\frac{N\tau^2}{\nu}\}$.
\end{lemma}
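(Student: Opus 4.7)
The plan is to establish the Bernstein inequality by the classical Chernoff moment generating function (MGF) argument. I would treat only the upper tail $\Pr[N^{-1}\sum_j \xi_j > \tau]$; the lower tail bound follows immediately by applying the same argument to the variables $-\xi_j$, which satisfy the identical hypotheses $\E(-\xi_j)=0$, $|-\xi_j|\le L$, $\E(-\xi_j)^2 \le \nu$.

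First I would apply Markov's inequality to $\exp(\lambda \sum_j \xi_j)$ for a free parameter $\lambda>0$ and use independence to factor the MGF, obtaining
$$
\Pr\Big[\tfrac{1}{N}\textstyle\sum_j \xi_j > \tau\Big] \;\le\; e^{-\lambda N \tau}\,\big(\E\, e^{\lambda \xi_1}\big)^N.
$$
Next, I would bound the single-variable MGF. Since $\E\xi_j=0$, $\E\xi_j^2\le \nu$, and $|\xi_j|\le L$, the crude bound $|\E\xi_j^k|\le L^{k-2}\nu$ for $k\ge 2$ gives
$$
\E\, e^{\lambda \xi_j} \;=\; 1+\sum_{k\ge 2}\frac{\lambda^k \E\xi_j^k}{k!} \;\le\; 1+\nu \lambda^2 \sum_{k\ge 2}\frac{(\lambda L)^{k-2}}{k!}.
$$
Using the elementary estimate $k!\ge 2\cdot 3^{k-2}$ for $k\ge 2$, the sum is bounded by $\tfrac{1}{2}\sum_{k\ge 0}(\lambda L/3)^k = \tfrac{1}{2(1-\lambda L/3)}$, valid whenever $\lambda L<3$. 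Combining with $1+u\le e^u$, this yields $\E e^{\lambda \xi_j}\le \exp\!\big(\tfrac{\lambda^2 \nu}{2(1-\lambda L/3)}\big)$.

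Putting everything together, the Chernoff bound becomes
$$
\Pr\Big[\tfrac{1}{N}\textstyle\sum_j \xi_j > \tau\Big] \;\le\; \exp\!\Big(-\lambda N\tau + \frac{N\lambda^2 \nu}{2(1-\lambda L/3)}\Big).
$$
The final step is to optimize over $\lambda\in(0,3/L)$. I would substitute the choice $\lambda^\ast = \tau/(\nu+\tau L/3)$, verify it satisfies $\lambda^\ast L<3$, and check by direct substitution that the exponent reduces to $-\tfrac{N\tau^2}{2(\nu+\tau L/3)}$, which is the asserted bound. The ``in particular'' clause is then immediate: under $\tau L<3\nu$ we have $\nu+\tau L/3<2\nu$, so the exponent is bounded above by $-N\tau^2/(4\nu)$.

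There is no conceptual obstacle here; the inequality is classical. The only mildly delicate step is the algebraic bounding of the MGF tail via the geometric-series estimate $k!\ge 2\cdot 3^{k-2}$, which is precisely what produces the $1-\lambda L/3$ denominator and, in turn, the Bernstein-type $\nu+\tau L/3$ denominator after optimization. All remaining manipulations are routine calculus and arithmetic.
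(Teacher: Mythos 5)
Your proof is correct and is the standard Chernoff/MGF argument for Bernstein's inequality; the paper states this lemma without proof since it is a classical textbook result, so there is nothing in the paper to compare against. Your steps all check out: the moment bound $|\E\xi_j^k|\le L^{k-2}\nu$ follows from $|\xi_j|^k\le L^{k-2}\xi_j^2$; the estimate $k!\ge 2\cdot 3^{k-2}$ holds for $k\ge 2$ (trivially at $k=2,3$ and inductively thereafter since $k+1\ge 3$); the geometric series converges for $\lambda L<3$; and the choice $\lambda^\ast=\tau/(\nu+\tau L/3)$ indeed satisfies $\lambda^\ast L<3$ and, by the identity $1-\lambda^\ast L/3=\nu/(\nu+\tau L/3)$, makes the exponent collapse to exactly $-N\tau^2/\bigl(2(\nu+\tau L/3)\bigr)$. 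The final reduction to $-N\tau^2/(4\nu)$ under $\tau L<3\nu$ is likewise immediate.
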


\section{Experimental details}
\label{ap:numerical}

\subsection{Algorithm}
\label{ap:algo}

Denote the observation data as  $X = \{X_i \}_{i=1}^{n}$, $Y= \{Y_i \}_{i=1}^{n}$, and we are also given a stand-alone test set $X^{te} = \{ X^{\textit{te}}_j \}_{j=1}^{n_{te}}$. 
We also denote by $X$ the $n$-by-$D$ matrix, where each row is a sample $X_i \in \R^D$.
Similarly, $Y$ is a length-$n$ vector,
and $X^{te}$ is an  $n_{te}$-by-$D$ matrix.
Given a kernel bandwidth $t$, we denote by $h_t(X,X)$ the $n$-by-$n$ kernel matrix, the $i,j$-th entry of which equals $h_t(X_i,X_j)$.
Similarly, $h_{t}(X, X_j^{\textit{te}})$ is a length-$n$ vector whose $i$-th entry equals $h_t(X_i,X_j^{te} )$.

Following the proposed method in Section \ref{sec:knn_prior},
we use the prior $p(t)$ as defined in \eqref{eq:EBprior}\eqref{eq:def-Tn-knn}\eqref{eq:def_vn}, 
where 
$k = \lceil 0.25 \log^2(n) \rceil $ and when $n < 200$ we set $k=2$.
The statistic $T_n$ is computed by averaging on a random subset $S$, where
we choose $|S| = \lceil ( \log n)^3 \rceil$
and  $|S| = n$ when $n < 200$.
We sample $t$ from the marginal posterior which can be written as
\begin{equation}\label{marginal_pos}
    p(t|X,Y) \propto L(Y|X, t)p(t),
\end{equation}
where $L(Y|X, t)$ is the marginal log-likelihood, and
\begin{align}\label{eq:log-likelihood-algo}
   & \log( L(Y|X, t)) 
    = \log(P(Y|X ,t ))  \\
   & ~~~
   = -\frac{1}{2} Y^{T}(h_{t}(X,X)+\sigma^2I)^{-1}Y - \frac{1}{2}\log(|h_{t}(X,X)+\sigma^2I|)  - \frac{n}{2} \log(2 \pi).
   \nonumber
\end{align}
In \eqref{marginal_pos}, 
we do not need to obtain the normalizing constant in $p(t)$ 
because the Metropolis-Hasting MCMC only needs the ratio of the marginal posterior.

Given a bandwidth $t$, we can compute 
\begin{equation}
\label{condition_pos}
   \hat f(X_j^{te} | t): = 
   \E [ f^t(X_j^{\textit{te}})|X, Y] 
    = Y^T (h_{t}(X,X)+\sigma^2 I)^{-1} h_{t}(X, X_j^{\textit{te}}).
\end{equation}
By an MCMC sampling of $t$ from \eqref{marginal_pos},
one can compete \eqref{condition_pos} in each iteration.
Taking average of $\hat f(X_j^{te} | t)$ over the iterations
provides an estimate of the posterior mean of $f$ on the test samples. 
The procedure is summarized in  Algorithm \ref{alg:gibbs}.
In all reported experiments in Section \ref{sec:num}, we conduct 3000 iterations with the first 1000 iterations discarded as burn-in.

\begin{algorithm}[t] 
\caption{Bayesian posterior-mean estimator (output on a test set)}

\begin{flushleft}

\textit{Input}: hyperparameters  $\sigma^2 ,a_0, b_0$, 
observed data $X = \{X_i \}_{i=1}^{n}$, $Y= \{Y_i \}_{i=1}^{n}$, 
 test set $X^{te} = \{ X^{\textit{te}}_j \}_{j=1}^{n_{te}}$, number of iterations $B$.

 \vspace{5pt}
 
 \textit{Output}: $\{\hat f (X^{\textit{te}}_j)\}_{j=1}^{n_{te}}$ on the test set
\end{flushleft}

\begin{algorithmic}[1]
        \State Set initial value $t_0$.
	\For {$b = 1, \cdots, B$}
		\State 
	Sample $t_b \sim t | X, Y$ by Metropolis-Hasting MCMC 
    from the marginal posterior \eqref{marginal_pos},
    where $L(Y|X,t)$ is as in \eqref{eq:log-likelihood-algo},
    and $p(t)$ is the EB prior as in \eqref{eq:EBprior}\eqref{eq:def-Tn-knn}\eqref{eq:def_vn}.
    
        \State 
		 Compute the conditional posterior mean 
   $\hat f_b (X_j^{te} | t_b) = \E [  f^{t_b }(X^{\textit{te}}_j) |  X, Y ]$ by  \eqref{condition_pos} for $ j=1, \cdots, n_{te}$.
    \EndFor	
    \\
    Compute $\hat{f}(X^{\textit{te}}_j) = \frac{1}{B} \sum_{b=1}^{B} \hat f_b (X_j^{te} | t_b) $ for $ j=1, \cdots, n_{te}$.
\end{algorithmic} 
\label{alg:gibbs}
\end{algorithm}

\begin{figure}[t!]
\centering
 \begin{minipage}{0.45\textwidth}
 \centering
\includegraphics[height=0.75\linewidth]{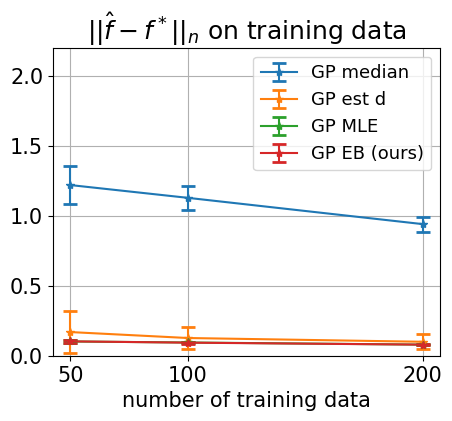}\subcaption{}
\end{minipage}
 \begin{minipage}{0.45\textwidth}
  \centering
\includegraphics[height=0.75\linewidth]{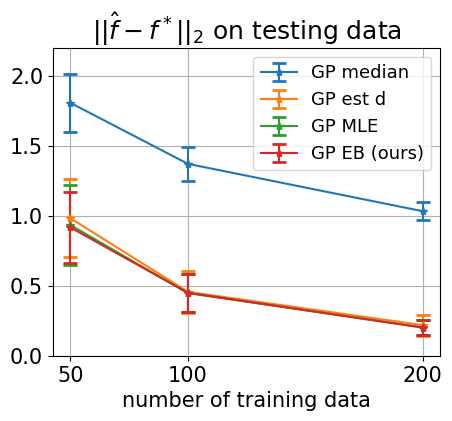}\subcaption{}
\end{minipage}
\caption{
Additional GP baselines on Swiss Roll data at training size 50, 100, 200.
The in-sample error (a) 
and out-of-sample error (b) 
are averaged from 200 repeated runs.}
\label{fig:additional-EB-swiss-roll}
\end{figure}

\subsection{Additional details of numerical experiments}
 \label{app:exp_detail}

\subsubsection{Swiss Roll data}

We generate $n$ samples $X_i$, $i=1,\ldots,n$,
by parametrizing $X \in \R^3$ in two variables $(u,v)$ as
\[
X(u,v) = [ u \cos(u), v, u \sin(u) ],
\]
and sample $u$ and $v$ i.i.d. from the distribution
\[
u \sim  \mbox{Unif}\Big(\frac{2\pi}{2},\frac{9\pi}{2}\Big),
\quad  v \sim \mbox{Unif}(0, 15).
\]
The true regression function $f^*$, written as a function of $(u,v)$, is set to be
\[
f_{\rm Swiss}(u,v)= 4 \left( \frac{ u-7\pi/2 }{ 3 \pi / 2} \right)^2 + \frac{\pi}{45} v.
\]
Finally, the predictor $X$ is rescaled by 
\[
X_1 \leftarrow (X_1 + 15)/30,
\quad  X_2 \leftarrow  X_2/15,
\quad X_3 \leftarrow (X_3 + 15)/30,
\]
so that $X$ is lying inside $[0,1]^3$.

\subsubsection{Mixed dimension data}

The data samples $\{ X_i, Y_i\}$ are generated according to the following procedure: 
With $1/2$ probability, we randomly pick $X_i$ from the Swiss roll as defined above, 
and $Y_i$ generated from the same $f^*$ on the 2D manifold therein.
With $1/2$ probability, 
we draw $X_i$ from the 1D curve 
\[
X(t)=  \frac{7\pi}{2} [ \cos(\pi  t) \cos(4 \pi t), \, 
	   			1 +  \cos(\pi t) \sin(4 \pi t),\,
				 \sin(\pi t)], 
\]
where $t$ is sampled  i.i.d. from $t \sim {\rm Unif}(-1,1)$.
The true regression function $f^*$  on the 1D curve is set as $f^*(X(t)) = \bar f_{\rm Swiss}(X(t))$, where
\begin{equation*}
   \bar f_{\rm Swiss}(x_1, x_2, x_3 ) = f_{\rm Swiss}( \sqrt{ x_1^2+x_3^2}, x_2).
\end{equation*}
This design ensures that
 the function $f^*$ takes 
 the same value on the intersection of the surface and the curve, thus preserving the continuity of the function. 
 We rescale $X$ to be inside $[0,1]^3$ similarly as for the Swiss Roll data.

\begin{figure}[t!]
  \begin{minipage}{0.33\textwidth}
\hspace{15pt}
\includegraphics[height=0.86\linewidth]{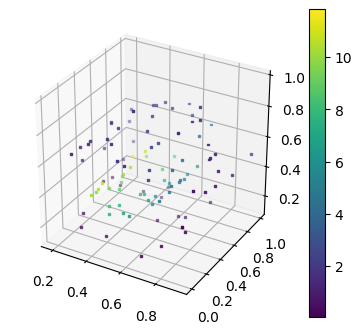}\subcaption{}
\end{minipage}
 \begin{minipage}{0.66\textwidth}
\includegraphics[height=0.43\linewidth]{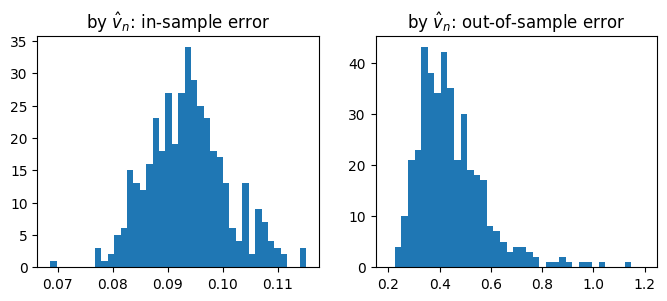}\subcaption{}
\end{minipage}
  \begin{minipage}{0.33\textwidth}
\includegraphics[height=0.86\linewidth]{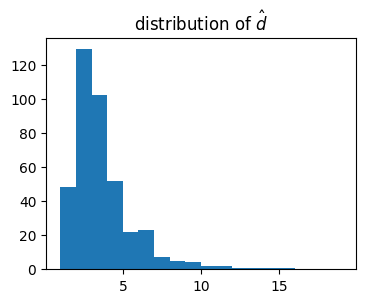}\subcaption{}
\end{minipage}
 \begin{minipage}{0.66\textwidth}
\includegraphics[height=0.43\linewidth]{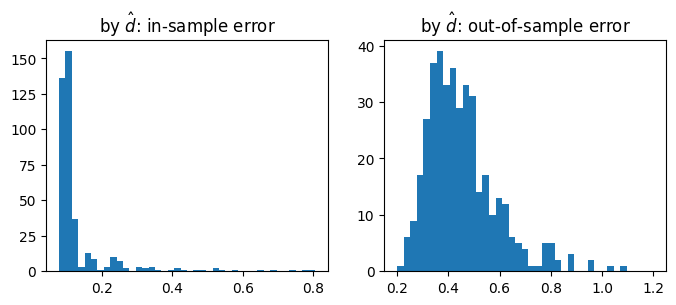}\subcaption{}
\end{minipage}
\caption{
Comparison of the proposed EB GP and GP with estimated dimension $\hat d$.
The dataset is Swiss Roll  with training size 100 and testing size 2000.
(a) One realization of the training set, the predictors $X_i$ are colored by the observed values $Y_i$.
(b)  Histograms of the in-sample and out-of-sample errors of the proposed EB GP.
(c) Histogram of estimated $\hat d$.
(d) Error histograms of the rescaled Gamma GP with estimated dimension $\hat d$.
The experiments are repeated for 400 runs to produce the histograms.}
\label{fig:outlier-swiss-roll}
\end{figure}

\subsection{Additional comparison of empirical Bayes approaches}
\label{app:subsec-additional-EB}

We conducted additional experiments to compare our empirical Bayes (EB) prior with alternatives on the Swiss roll data.
The baselines are:  

(ii') GP estimated $d$: 
the prior is the rescaled Gamma distribution, where the manifold dimension is estimated from data as proposed in \cite{yang2016bayesian}.
Specifically, following \cite{yang2016bayesian}, we adopt the estimator of the manifold dimension as
\begin{equation}\label{eq:hat-d-estimator-YD16}
\hat d = \text{the closest integer to }
 \frac{\log 2}{ \log \hat R_k(X_1) - \log \hat R_{\lceil k/2 \rceil}(X_1)},
\end{equation}
where $\hat R_k(x)$ is the distance from $x$ to its $k$-nearest neighbor in the training set $\{X_i\}_{i=1}^n$,
 $X_1$ is a random member of the training set,
and $k = \lceil \sqrt{n} \rceil$.

(iv) GP max-likelihood (GP MLE): selecting the kernel bandwidth $t$ based on maximizing the marginal likelihood. 
Note that this method differs from our theoretical setting because it corresponds to using a uniform prior.

(v) GP median heuristic (GP median): setting $t$ to be the median of the distances between samples. 

We use training size up to 200 since all methods give comparable performance on larger sample sizes. 
The experiments are repeated for 200 runs. 
The results are shown in Figure \ref{fig:additional-EB-swiss-roll}.
It can be seen that the median heuristic gave much larger errors, both the in-sample and out-of-sample ones.
The GP MLE perform similarly as the proposed EB approach on this example;
the GP with estimated $d$ gave comparable performance on the test data, and larger in-sample error with larger variance, especially at the small training size.

To further investigate the effect of estimating dimension $d$, we compare the errors of (ii') and our EB via their distributions. 
We choose training size 100 and increase the number of runs to 400.
The distribution of the errors are plotted as histograms in Figure \ref{fig:outlier-swiss-roll}(b) and (d) for our EB  (called ``by $\hat v_n$'')
and (ii') (called ``by $\hat d$'') respectively. 
It can be seen that the out-of-sample errors of the two methods are comparable, 
yet the in-sample error of (ii') with estimated $d$ has a longer tail distribution, which are ``outlier'' errors that can be as large as 0.8.
In comparison, the proposed EB produce a more concentrated in-sample error around its average and up to 0.12,
showing a more stable performance. 

The reason of these outlier errors by (ii') is likely due to the outlier errors in estimating the dimension, that is, with a small chance the estimated $\hat d$ can be far from the true $d$ (which is 2 here).
This is verified by the histogram of the estimated $\hat d$ shown in Figure \ref{fig:outlier-swiss-roll}(c),
which has a long tail up to 15. 
The training size is 100, and the Swiss Roll data at this sample size barely reveal the underlying manifold if one only considers $k$NN distance at a random point, see Figure \ref{fig:outlier-swiss-roll}(a).
As a result, the dimension estimator \eqref{eq:hat-d-estimator-YD16} gives unstable performance at such low sample size.
On this example, The error in $\hat d$ affects the in-sample error more visibly, 
possibly due to that both models are already getting large testing errors at this low training sample size.  
In summary, this result suggests that our EB prior based on the averaged kernel affinity statistic $\hat v_n(t)$ can be more robust than EB based on manifold dimension estimation in practice, especially at relatively small sample size.

\end{appendix}

\end{document}